%%%%%%%%%%%%%%%%%%%%%%%%%%%%%%%%%%%%%%%%%%%%%%%%%%%%%%%%%%%%%%%%%%%%%%%%%
%               Spin-harmonic structures on nilmanifolds                    %
%                  G. Bazzoni, L. Martín, V. Muñoz                      %
%                           30 December 2021                                 %
%%%%%%%%%%%%%%%%%%%%%%%%%%%%%%%%%%%%%%%%%%%%%%%%%%%%%%%%%%%%%%%%%%%%%%%%%

\documentclass[11pt,reqno]{amsart}
\usepackage{latexsym,amsmath,amsfonts,amscd,amssymb}
\usepackage{graphics}
\usepackage[all]{xy}
\usepackage{slashed}
\usepackage{amsthm}
\usepackage{stmaryrd}
\usepackage{tabu}
\usepackage{longtable}
\usepackage{color}
\definecolor{cobalt}{RGB}{61,89,171}
\usepackage[colorlinks,citecolor=cobalt,linkcolor=cobalt,urlcolor=cobalt,pdfpagemode=UseNone,backref = page]{hyperref}
\usepackage{makecell}

\usepackage{adjustbox}

\setlength{\oddsidemargin}{15pt} \setlength{\evensidemargin}{15pt}
\setlength{\textwidth}{432pt} \setlength{\textheight}{630pt}
\setlength{\topmargin}{0pt}

\setlength{\parskip}{.15cm} \setlength{\baselineskip}{.5cm}

\newcommand{\Spin}{\operatorname{Spin}}

\newcommand{\SU}{\operatorname{SU}}
\newcommand{\UU}{\operatorname{U}}
\newcommand{\SO}{\operatorname{SO}}
\newcommand{\Sp}{\operatorname{Sp}}

\newcommand{\tr}{\operatorname{tr}}

\newcommand{\im}{\operatorname{im}}

\newcommand{\Stab}{\operatorname{Stab}}

\newcommand{\Cl}{\operatorname{Cl}}

\newcommand{\sD}{\slashed{D}}

\DeclareMathOperator{\Sym}{Sym}

\DeclareMathOperator{\End}{End}

\DeclareMathOperator{\pr}{pr}

\DeclareMathOperator{\PSpin}{P_{\Spin}}
\DeclareMathOperator{\PSO}{P_{\SO}}

\newcommand{\la}{\langle}
\newcommand{\ra}{\rangle}

\newcommand{\Ad}{\operatorname{Ad}}

\newcommand{\onabla}{\overline{\nabla}}

\newcommand{\bk}{{\mathbf{k}}}

\newcommand{\cDD}{{\mathcal D}}%incompatibilidad con paquete xypic - diagramas conmutativos

%incompatibilidad con paquete xypic - diagramas conmutativos

\newcommand{\cN}{{\mathcal N}}
%incompatibilidad con paquete xypic - diagramas conmutativos

%incompatibilidad con paquete xypic - diagramas conmutativos

\newcommand{\fX}{{\mathfrak X}}

\newcommand{\CC}{{\mathbb C}}

\newcommand{\HH}{{\mathbb H}}

\newcommand{\RR}{{\mathbb R}}

\newcommand{\ClC}{{\CC\mathrm{l}}}

\renewcommand{\a}{\alpha}
\renewcommand{\b}{\beta}

\newcommand{\g}{\gamma}
\newcommand{\e}{\varepsilon}

\renewcommand{\l}{\lambda}
\renewcommand{\k}{\kappa}
\newcommand{\m}{\mu}
\newcommand{\n}{\nu}
\renewcommand{\o}{\omega}
\newcommand{\p}{\phi}

\newcommand{\vp}{\varphi}

\newcommand{\s}{\sigma}
\renewcommand{\t}{\tau}

\newcommand{\G}{\Gamma}
\renewcommand{\O}{\Omega}
\renewcommand{\S}{\Sigma}
\newcommand{\D}{\Delta}
\renewcommand{\L}{\Lambda}

\newcommand{\frX}{{\mathfrak{X}}}

\newcommand{\frspin}{{\mathfrak{spin}}}

\newcommand{\frg}{{\mathfrak{g}}}

\newcommand{\frn}{{\mathfrak{n}}}

%incompatibilidad con paquete xypic - diagramas conmutativos

\newcommand{\frsu}{{\mathfrak{su}}}
\newcommand{\imag}{\mathbf{i}}
\newcommand{\jota}{{\mathrm{j}}}

\newcommand{\rI}{\mathrm{I}}
\newcommand{\rL}{\mathrm{L}}

\newcommand{\rA}{\mathrm{A}}
\newcommand{\rG}{\mathrm{G}}

\newcommand{\rN}{\mathrm{N}}

\newtheorem{theorem}{Theorem}[section]
\newtheorem{proposition}[theorem]{Proposition}
\newtheorem{lemma}[theorem]{Lemma}
\newtheorem{corollary}[theorem]{Corollary}

\theoremstyle{definition}
\newtheorem{definition}[theorem]{Definition}

\theoremstyle{remark}

\newtheorem{remark}[theorem]{Remark}

\title{Spin-harmonic structures and nilmanifolds}

\author[G. Bazzoni]{Giovanni Bazzoni}
\address{Dipartimento di Scienza ed Alta Tecnologia, Universit\`a degli Studi dell'Insubria, Via Valleggio 11, 22100 Como, Italy}
\email{giovanni.bazzoni@uninsubria.it}

\author[L. Mart\'{\i}n-Merch\'an]{Luc\'{\i}a Mart\'{\i}n-Merch\'an}
\address{Dipartimento di Matematica “G. Peano”, Università degli Studi di Torino, Via Carlo Alberto 10, 10123 Torino, Italy}
\email{lucia.martinmerchan@unito.it}

\author[V. Mu\~{n}oz]{Vicente Mu\~{n}oz}
\address{Departamento de Algebra, Geometr\'{\i}a y Topolog\'{\i}a, 
Universidad de M\'alaga, Campus de Teatinos, 29071 M\'alaga, Spain}
\email{vicente.munoz@uma.es}

\keywords{Spinors, geometric structures, Dirac operator, nilmanifolds}
\subjclass[2010]{Primary: 57N16. Secondary: 15A66, 53C27, 22E25.}

\begin{document}

\begin{abstract}
We introduce spin-harmonic structures, a class of geometric structures on Riemannian manifolds of low dimension which are defined by a harmonic unit-length spinor. Such structures are related to $\SU(2)$ ($\dim=4,5$), $\SU(3)$ ($\dim=6$) and $\rG_2$ ($\dim=7$) structures; in dimension 8, a spin-harmonic structure is equivalent to a balanced $\Spin(7)$ structure. As an application, we obtain examples of compact 8-manifolds endowed with non-integrable $\Spin(7)$ structures of balanced type.
% ; in fact, the construction of examples of compact balanced was the main motivation
\end{abstract}

\maketitle

%%%%%%%%%%%%%%%%%%%%%%%%%%%%%%%%%%%%%%%%%%%%%%%%%%%%%%%%%%%%%%%%%%%%%%%%%%
%                                                                        %
%                INTRODUCTION                                            %
%                                                                        %
%%%%%%%%%%%%%%%%%%%%%%%%%%%%%%%%%%%%%%%%%%%%%%%%%%%%%%%%%%%%%%%%%%%%%%%%%%

\section{Introduction}\label{sec:1}

In 1980 Thomas Friedrich proved a remarkable inequality involving the scalar curvature of a compact, spin Riemannian manifold and the first eigenvalue of the Dirac operator, see \cite{Friedrich}. This triggered a deep analysis of spin Riemannian manifolds; particular emphasis was put on which compact manifolds admitted parallel, twistor or Killing spinors, see for instance \cite{Baer,BFGK,Moroianu-Semmelmann}. In particular, it was soon clarified that Riemannian manifolds endowed with a parallel spinor are related to Riemannian manifolds with {\em special} holonomy, i.e.~Riemannian manifolds whose Riemannian holonomy is contained in $\SU(n)$, $\Sp(n)$, $\rG_2$, or $\Spin(7)$; notice that the Ricci curvature of a compact Riemannian manifold endowed with a parallel spinor vanishes.

Relaxing the requirement to have a parallel spinor, it was later shown that many non-integrable $\rG$ structures, $\rG\subset\SO(n)$ being a closed subgroup, can be understood in terms of nowhere vanishing spinors, generalizing the case of parallel spinors. 
For instance, in \cite{ACFH15} the authors described $\SU(3)$ and $\rG_2$ structures in dimensions 6 and 7 respectively using a unit-length spinor. Not only does the spinorial approach offer an alternative frame for telling apart different classes of such structures, but also provides a unifying language showing how the same spinor is responsible for the emerging of both structures. 

$\SU(2)$ structures in dimension 5 have been introduced by Conti and Salamon in \cite{CS07} and classified by Bedulli and Vezzoni in \cite{BV09} in terms of the exterior derivatives of the corresponding defining forms -- see Section \ref{sec:su(2)}. In \cite{CS07}, the study of $\SU(2)$ structures in dimension 5 was certainly motivated by spinors, concretely, generalized Killing spinors. However, no spinorial description of such structures is available; the first goal of this paper is to tackle this question. We do this in Section \ref{sec:su(2)}.

As for $\Spin(7)$ structures on 8-dimensional manifolds, they can be described in terms of a triple cross product on each tangent space; an equivalent description can be given in terms of the so-called fundamental 4-form $\Omega$. The different types of $\Spin(7)$ structures were classified by Fern\'andez in \cite{MF86} using the triple cross product: there exist two pure classes, called {\em balanced} and {\em locally conformally parallel}. An equivalent classification is obtained by considering the fundamental form: balanced $\Spin(7)$ structures are characterized by the equation $\star(d\Omega)\wedge\Omega=0$, while the 4-form of a locally conformally parallel $\Spin(7)$ structure satisfies $d\Omega=\Omega\wedge\theta$ for a closed 1-form $\theta$, called the {\em Lee form}. In \cite{Ivanov} Ivanov discovered that the unit-length spinor which characterizes balanced $\Spin(7)$ structures is {\em harmonic}, that is, it lies in the kernel of the Dirac operator $\sD$, but gave no further application of this fact. Notice that Hitchin proved in \cite{Hitchin2} than every compact spin 8-manifold carries a harmonic spinor; not much is known, however, about zeroes of harmonic spinors (see \cite{Baer2}).

A systematic spinorial approach to $\Spin(7)$, along the lines of \cite{ACFH15}, was taken by the second author in \cite{MM18}. In particular, the observation that balanced $\Spin(7)$ structures are equivalent to unit-length harmonic spinors was exploited in \cite{MM18} to construct examples of balanced $\Spin(7)$ structures on 8-dimensional nilmanifolds and solvmanifolds. There it became clear that the spinorial approach has some practical advantages on the ``classical'' one, which uses the 4-form. The principle we follow in this paper is that albeit both the equation $\sD\eta=0$ for a unit-length spinor and the equation $\star(d\Omega)\wedge\Omega=0$ for a 4-form are non-linear, the first one seems to be more tractable, at least if one is interested in constructing examples of balanced $\Spin(7)$ structures on compact quotients of simply connected nilpotent and solvable Lie groups, that is, on nilmanifolds and solvmanifolds.

Indeed, the second goal of this paper is to construct examples of balanced $\Spin(7)$ structures on 8-dimensional nilmanifolds. The first known example of such a structure is a nilmanifold described by Fern\'andez in \cite{MF87}. Further examples are discussed in \cite{MC95,Ivanov}. Notice that the classification of 8-dimensional nilpotent Lie algebras is not known. Even if it were, however, it is not immediately clear how to sift through them in order to find those admitting balanced $\Spin(7)$ structures (for instance, the balanced condition is not of cohomological type).

We describe briefly the idea behind the construction. As we pointed out, it is very natural to consider $\Spin(7)$ structures in dimension 8 defined by a chiral, unit-length, and harmonic spinor. Nothing hinders, however, to consider $\rG_2$, $\SU(3)$ and $\SU(2)$ structures in dimensions 7, 6 and 5 respectively, such that the defining spinor is harmonic. Using the spinorial approach of \cite{ACFH15}, one can precisely track which classes of $\rG_2$ and $\SU(3)$ are defined by harmonic spinors; moreover, our spinorial description also allows to pinpoint which classes of $\SU(2)$ structures arise from a harmonic spinor. While $\Spin(7)$ structures defined by a harmonic spinor form a pure class, the same is not true in lower dimensions; for instance, in dimension 5, the requirement to be harmonic for the corresponding spinor turns out to be quite loose.

Viceversa, beginning with an $\SU(2)$ structure on a 5-manifold (resp.\ an $\SU(3)$ structure on a 6-manifold, or a $\rG_2$ structure on a 7-manifold), defined by a unit-length harmonic spinor, one can multiply by a flat torus $T^k$, $k=3,2,1$, to obtain a $\Spin(7)$ structure in dimension 8 defined by a harmonic spinor, that is, a balanced structure.

In order to construct such examples, we need a formula for the Dirac operator acting on a particular class of spinors on a nilmanifold $\Gamma\backslash\rG$; namely, we restrict to {\em left-invariant} spinors, those which come from left-invariant spinors on the Lie group $\rG$; for more details, we refer the reader to Section \ref{sec:Dirac-Operator}. The following formula is obtained in Proposition \ref{invariant-Dirac-operator} and expresses the Dirac operator on invariant spinors in a purely algebraic way:
\[
4 \sD \p  = -\sum_{i=1}^n{(e^i \wedge de^i + i(e_i)de^i)}\p \,. 
\]

In Section \ref{sec:harm-nilm} we rely on the existing classification of nilpotent Lie algebras up to dimension 6 (see for instance \cite{BM2}) for solving the equation $\sD\eta=0$ in the space of left-invariant spinors on low dimensional nilmanifolds. In particular, we show which metric nilpotent Lie algebras in dimensions 4, 5, and 6 admit a harmonic spinor -- see Theorems \ref{4-harmonic}, \ref{5-harmonic} and \ref{6-harmonic-decomposable}, and Subsection \ref{6-harmonic-non-decomposable}. We point out here that, although the proof is achieved by a case-by-case analysis, ours is the first systematic spinorial approach to the study of geometric structures on nilmanifolds.

% . 

%%%ESTO IGUAL PONERLO EN LA SECCION 6, AQUI SOBRA QUIZAS

% Using $h$-principle, Crowley and Nordstrom proved in \cite{Crowley-Nordstrom} that every compact 7-manifold endowed with a $\rG_2$ structure also admits a coclosed $\rG_2$ structure; now coclosed $\rG_2$ structure are not a pure class of $\rG_2$ structures, actually the structure is coclosed if and only if the torsion lives in $\chi_1\oplus\chi_3$. The 
% product of a compact 7-manifold, endowed with a coclosed $\rG_2$ structure of pure type $\chi_3$, with a circle, admits a balanced $\Spin(7)$ structure. The results of \cite{Crowley-Nordstrom} do no guarantee that the coclosed structure is of type $\chi_3$, hence one can not infer that, in general, the product of a coclosed $\rG_2$ manifold with a circle carries a balanced $\Spin(7)$ structure. Thus, albeit our manifolds are products, one can not combine the above results to deduce the existence of a balanced $\Spin(7)$ structure; moreover, our structures have the advantage of being invariant.

This paper is organized as follows: in Section \ref{sec:Preliminaries} we review the necessary preliminaries on Clifford algebras and spinor bundles. Section \ref{Spinors} reviews the spinorial description of $\Spin(7)$, $\rG_2$ and $\SU(3)$ structures; we introduce the notion of {\em spin-harmonic} geometric structure, that is, a geometric structure defined by a harmonic unit-length spinor. In Section \ref{sec:su(2)} we carry out the spinorial classification of $\SU(2)$ structures on 5-manifolds. In Section \ref{sec:Dirac-Operator} we consider left-invariant spinors on simply connected Lie groups, finding a general formula for the Dirac operator -- see Proposition \ref{invariant-Dirac-operator} --  which we specialize to the case of nilpotent and (a certain kind of) solvable Lie groups. Using this formula, in Section \ref{sec:harm-nilm} we tackle nilpotent Lie algebras (and nilmanifolds) in dimensions 4, 5, and 6. In dimension 4, a non-abelian nilpotent Lie algebra admits no metric with harmonic spinors. In dimension 5 we classify metric nilpotent Lie algebras and determine those which admit harmonic spinors. Finally, in dimension 6, either we provide a metric on the Lie algebra which admits harmonic spinors, or we show that no such metric exists.

\noindent\textbf{Acknowledgements.} We are grateful to Anna Fino for useful conversations. We also thank Spiro Karigiannis for reading the manuscript carefully and checking the computations. The authors were partially supported by Project MINECO (Spain) MTM2015-63612-P. The first author was supported by a {\em Juan de la Cierva - Incorporaci\'on} Fellowship of Spanish Ministerio de Ciencia, Innovaci\'on y Universidades. The second author acknowledges financial support by an FPU Grant (FPU16/03475).

%%%%%%%%%%%%%%%%%%%%%%%%%%%%%%%%%%%%%%%%%%%%%%%%%%%%%%%%%%%%%%%%%%%%%%%%%%
%                                                                        %
%                PRELIMINARIES                                           %
%                                                                        %
%%%%%%%%%%%%%%%%%%%%%%%%%%%%%%%%%%%%%%%%%%%%%%%%%%%%%%%%%%%%%%%%%%%%%%%%%%

\section{Preliminaries}\label{sec:Preliminaries}

In this section we recall some basic aspects about the representation theory of Clifford algebras, in the real and the complex case, as well as generalities on spinor bundles; further details can be found in \cite{F00} and \cite{LM89}. 

% \newpage

%%%%%%%%%%%%%%%%%%%%%%%%%%%%%%%%%%%%%%%%%%%%%%%%%%%%%%%%%%%%%%%%%%%%%%%%%%%%%%%%%%%%%%%%%%%%
\subsection{Representations of the real Clifford algebra}\label{Cl-representations}
%%%%%%%%%%%%%%%%%%%%%%%%%%%%%%%%%%%%%%%%%%%%%%%%%%%%%%%%%%%%%%%%%%%%%%%%%%%%%%%%%%%%%%%%%%%%

If $n\not\equiv 3 \pmod{4}$, the real Clifford algebra $\Cl_n$ of $\left(\RR^n,\sum_{j=1}^n x_j^2\right)$ is isomorphic to the algebra of $l$-dimensional matrices with coefficients in the (skew) field $\bk$, $\bk \in \{ \RR, \CC, \HH \}$; we denote this algebra by $\bk(l)$. If $n \equiv 3 \pmod{4}$, $\Cl_n$ is isomorphic to $\bk(l)\oplus \bk(l)$. In low dimensions, the following isomorphisms hold (see \cite[Chapter 1, Theorem 4.3]{LM89}):

\begin{minipage}[t]{0.48\textwidth}%\left
\begin{itemize}
 \item $\Cl_1=\CC$;
 \item $\Cl_2=\HH$;
 \item $\Cl_3=\HH \oplus \HH$;
 \item $\Cl_4=\HH(2)$;
\end{itemize}
\end{minipage}
\begin{minipage}[t]{0.48\textwidth}%\left
\begin{itemize}
 \item $\Cl_5=\CC(4)$;
 \item $\Cl_6=\RR(8)$;
 \item $\Cl_7=\RR(8)\oplus \RR(8)$;
 \item $\Cl_{8}=\RR(16)$.
\end{itemize}
\end{minipage}

Isomorphisms in higher dimensions are determined by the property $\Cl_{n+8}=\Cl_{n}\otimes \Cl_8$. As a consequence, there is a unique equivalence class of irreducible representations of $\Cl_n$ if $n \not\equiv 3 \pmod{4}$ and two different ones if $n\equiv 3 \pmod{4}$; these are determined by the image of the volume form, which can be $\rI$ or $-\rI$ \cite[Chapter 1, Proposition 5.9]{LM89}. 

By construction, the even part of the Clifford algebra $\Cl_n$, denoted $\Cl_n^0$, is isomorphic to the Clifford algebra $\Cl_{n-1}$; using this, one can construct irreducible representations of $\Cl_{n-1}$ from irreducible representations of $\Cl_n$ by using the following result, which is essentially a reformulation of \cite[Chapter 1, Proposition 5.12]{LM89}.

\begin{proposition} \label{restriction}
Let $W$ be a $\bk$-vector space and let $\rho_n \colon \Cl_n \to \End_{\bk}(W)$ be an irreducible representation. Write $\RR^n=\RR^{n-1}\oplus\RR$, where the second factor is generated by a unit-length vector $e_n$, and denote by $i_{n-1} \colon \Cl_{n-1}\to\Cl^0_n$ the extension to $\Cl_{n-1}$ of the map $\RR^{n-1}\to\Cl^0_n$, $v\mapsto ve_n$; define $\rho_{n-1}= \rho_n \circ i_{n-1} \colon \Cl_{n-1} \to\End_\bk(W)$. Then,
\begin{enumerate}
\item If $n\equiv 0\pmod{4}$ the representation $\rho_{n-1}$ splits into two irreducible and inequivalent representations, $\rho_{n-1}^\pm$. These are the eigenspaces $W^{\pm}$ of the endomorphism,
$\rho_n(\nu_n)\colon W \to W$, where $\nu_n$ is the volume form in $\RR^n$.

\item If $n\equiv 1,2\pmod{8}$, the representation $\rho_{n-1}$ splits into two irreducible equivalent representations.

\item If $n\equiv 3,5,6,7 \pmod{8}$, the representation $\rho_{n-1}$ is irreducible.
\end{enumerate}
\end{proposition}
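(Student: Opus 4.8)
The plan is to follow the blueprint of \cite[Chapter 1, Proposition 5.12]{LM89} and translate it into the present notation. The key observation is that the map $i_{n-1}\colon \Cl_{n-1}\to \Cl_n^0$, $v\mapsto ve_n$, is precisely the standard isomorphism of algebras identifying $\Cl_{n-1}$ with the even subalgebra $\Cl_n^0$; one checks this on generators, since $(v e_n)(w e_n) = -vw\,e_n^2\cdot(\text{sign})$ reproduces the Clifford relations of $\Cl_{n-1}$ up to the expected sign, so that $\rho_{n-1} = \rho_n\circ i_{n-1}$ is indeed a representation of $\Cl_{n-1}$. Thus the restriction $\rho_{n-1}$ is, up to this isomorphism, the restriction of the irreducible $\Cl_n$-module $W$ to $\Cl_n^0$, and the whole statement reduces to understanding how an irreducible module over a full matrix algebra (or sum of two) behaves when restricted to the even part, which itself is a full matrix algebra over $\RR$, $\CC$ or $\HH$.

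First I would recall the concrete list of algebra isomorphisms for $\Cl_n$ and $\Cl_n^0\iso\Cl_{n-1}$ given in the excerpt, and note the dimension count: $\dim_\RR W$ is fixed, while the ``size'' of an irreducible module over $\bk(l)$ (resp.\ $\bk(l)\oplus\bk(l)$) is determined by $\bk$ and $l$. Comparing the type $\bk$ occurring for $\Cl_n$ with the type $\bk'$ occurring for $\Cl_{n-1}$ as one runs $n$ through residues mod $8$ then forces the three alternatives. Concretely: when $n\equiv 0\pmod 4$, $\Cl_n=\bk(l)$ is simple but $\Cl_{n-1}=\Cl_n^0$ is a sum of two simple ideals (this is the $n\equiv 3\pmod 4$ case for $\Cl_{n-1}$), and the two ideals are exchanged by the grading automorphism, so $W$ restricted to $\Cl_n^0$ splits as $W^+\oplus W^-$ into two inequivalent irreducibles; identifying these ideals with the $\pm1$-eigenspaces of the action of the volume form $\nu_n$ (which is central in $\Cl_n^0$ and squares to $1$ when $n\equiv 0\pmod 4$) gives the stated description $W^\pm$. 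When $n\equiv 1,2\pmod 8$ one has $\Cl_{n-1}=\Cl_n^0$ of real or complex matrix type while $\Cl_n$ is of a ``larger'' type, so the unique irreducible $\Cl_n$-module has real dimension equal to twice that of the irreducible $\Cl_{n-1}$-module, forcing a splitting into two copies of the same irreducible. When $n\equiv 3,5,6,7\pmod 8$ the dimensions match (using $\Cl_{n-1}=\Cl_n^0$ and the periodicity/Table), so $\rho_{n-1}$ stays irreducible.

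I would organize the proof as a short case analysis on $n \bmod 8$, in each case pinning down the pair $(\Cl_n,\Cl_n^0)$ from the eight-fold list plus periodicity $\Cl_{n+8}=\Cl_n\otimes\RR(16)$, and then invoking the elementary representation theory of $\bk(l)$: a simple algebra has a unique irreducible, $A\oplus A$ has exactly two, and restriction to a subalgebra is controlled by dimension together with whether the relevant central elements (here, the volume form $\nu_n$, with $\nu_n^2=(-1)^{n(n+1)/2}$) act as scalars. For part (1) the extra content is the identification of $W^\pm$ as $\nu_n$-eigenspaces: since $\nu_n\in\Cl_n^0$ is central in $\Cl_n^0$ and $\nu_n^2=1$ for $n\equiv 0\pmod 4$, the operator $\rho_n(\nu_n)$ is an involution commuting with the $\Cl_n^0$-action, hence $W=W^+\oplus W^-$ as $\Cl_n^0$-modules, and a dimension/irreducibility count shows each $W^\pm$ is irreducible; inequivalence follows because $\nu_n$ acts by $+1$ on one and $-1$ on the other. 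The main obstacle is really bookkeeping rather than conceptual: one must be careful with the sign of $e_n^2$ and of $\nu_n^2$ in each residue class (these control whether $i_{n-1}$ really lands in the even part with the right relations, and whether $\nu_n$ is a genuine involution), and one must correctly read off from the periodicity table which of $\RR$, $\CC$, $\HH$ governs $\Cl_{n-1}$ versus $\Cl_n$ in each of the eight cases. Once those signs and types are tabulated, each of the three conclusions is immediate.
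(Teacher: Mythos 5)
The paper gives no proof of this proposition; it simply cites it as a reformulation of \cite[Chapter 1, Proposition 5.12]{LM89}, and your sketch reproduces exactly the argument behind that reference—identify $i_{n-1}$ as the standard isomorphism $\Cl_{n-1}\cong\Cl_n^0$, read off the Morita type $\bk(l)$ (or $\bk(l)\oplus\bk(l)$) of $\Cl_n$ and $\Cl_{n-1}$ from the mod-$8$ table plus periodicity, compare dimensions, and use the centrality of $\nu_n$ in $\Cl_n^0$ (with $\nu_n^2=1$ for $4\mid n$)—so it is correct and takes essentially the same approach. One minor point to make explicit: in case (1), the dimension count only forces each $W^\pm$ to be irreducible once you know both are nonzero, which follows because for $n$ even $\nu_n$ anticommutes with every vector, so $\rho_n(\nu_n)$ cannot be $\pm\Id$ (equivalently, both simple ideals of $\Cl_n^0$ must act nontrivially on the faithful $\Cl_n$-module $W$).
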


In this paper, we will work with the following $6$-dimensional real representation of $\Cl_6$:
\vskip -0.5 cm
\noindent\begin{minipage}[t]{0.48\textwidth}%\left
\begin{align*}
e_1 = &+ E_{18} + E_{27} - E_{36} - E_{45}, \\
e_2 = & -E_{17} + E_{28} + E_{35} - E_{46}, \\
e_3 = & -E_{16} + E_{25} - E_{38} + E_{47},
\end{align*}
\end{minipage}
\noindent\begin{minipage}[t]{0.48\textwidth}%\left
\begin{align*}
e_4 = & -E_{15} - E_{26} - E_{37} - E_{48}, \\
e_5 = & -E_{13} - E_{24} + E_{57} + E_{68}, \\
e_6 = & + E_{14} - E_{23} - E_{58} + E_{67},
\end{align*}
\end{minipage}
% 
% \begin{align*}
% e_1 = &+ E_{18} + E_{27} - E_{36} - E_{45}, \\
% e_2 = & -E_{17} + E_{28} + E_{35} - E_{46}, \\
% e_3 = & -E_{16} + E_{25} - E_{38} + E_{47}, \\
% 
% \end{align*}

where the matrices $E_{ij}$ denotes the skew-symmetric endomorphism of $\RR^8$ that maps the $i^{th}$ vector of the canonical base to the $j^{th}$ one and is zero on the orthogonal complement. 

%%%%%%%%%%%%%%%%%%%%%%%%%%%%%%%%%%%%%%%%%%%%%%%%%%%%%%%%%%%%%%%%%%%%%%%%%%%%%%%%%%%%%%%%%%%%
\subsection{Representations of the complex Clifford algebra}\label{ClC-representations}
%%%%%%%%%%%%%%%%%%%%%%%%%%%%%%%%%%%%%%%%%%%%%%%%%%%%%%%%%%%%%%%%%%%%%%%%%%%%%%%%%%%%%%%%%%%%

Let $\ClC_n$ be the complex Clifford algebra of $\left(\CC^n, \sum_{j=1}^n{z_j^2}\right)$. A construction of an irreducible representation of $\ClC_n$ can be found in \cite{F00}. There exist a $2^k$-dimensional complex vector space $\D_{2k}$ and isomorphisms
\begin{align*}
 \k_{2k} \colon & \ClC_{2k} \to \End_\CC(\D_{2k}), \\
 \tilde{\k}_{2k+1} \colon & \ClC_{2k+1} \to \End_\CC(\D_{2k})\oplus \End_\CC(\D_{2k})\,.
\end{align*}
Let $\pr_1\colon \End_\CC(\D_{2k})\oplus \End_\CC(\D_{2k}) \to \End_\CC(\D_{2k})$ be the projection onto the first summand. The complex representation of $\ClC_n$ is defined as $\k_{n}$ if $n=2k$ or $\pr_1\circ \tilde{\k}_n$ if $n=2k+1$. 

Then $\D_{2k}$ is irreducible as a representation of $\ClC_n$ and is used to define the complex spin representation: this is the restriction of $\k_n$ to $\Spin(n)\subset\ClC^0_n$. This representation is faithful and irreducible if $n=2k+1$; however, if $n=2k$, it splits into two irreducible summands $\D_{2k}^\pm$, which are the eigenspaces of eigenvalue $\pm 1$ of the $\Spin(n)$-equivariant endomorphism $\k_n(\nu_n^\CC)$, where $\nu_n^\CC=\imag^k\nu_n$.

Depending on the dimension, the complex vector space $\D_{2k}$ is endowed with a real structure $\varphi$ or a quaternionic structure $\jota_2$. These are antilinear endomorphisms of $\D_{2k}$ such that $\vp^2=\rI$ and $\jota_2^2=-\rI$; they commute or anticommute with the Clifford product, determining a real or quaternionic representation of $\Spin(n)$. The precise result is contained in the following proposition (see \cite[Chapter 1]{F00}):

\begin{proposition}\label{real-and-quaternionic-structures}
Suppose $n=2k+r$, with $r\in\{0,1\}$. 
\begin{enumerate}
\item If $k\equiv 0,3\pmod{4}$, then $\D_{2k}$ has a real structure $\vp$ with  $\vp \circ \k_n(v)=(-1)^{k+1} \k_n(v) \circ \vp$ for any $v\in \RR^{n}$.
\item If $k\equiv 1,2 \pmod{4}$, then $\D_{2k}$ has a quaternionic structure $\jota_2$ with  $\jota_2 \circ \k_n(v)=(-1)^{k+1} \k_n(v) \circ \jota_2$ for any $v\in \RR^{n}$.
\end{enumerate}
For the cases in which $\D_{2k}$ is decomposable as a $\Spin(n)$ representation, one has

\begin{minipage}[t]{0.48\textwidth}%\left
\begin{itemize}
 \item $\vp(\D_{8p}^{\pm}) =  \D_{8p}^{\pm}$;
 \item $\vp(\D_{8p+6}^\pm) =  \D_{8p+6}^\mp$;
\end{itemize}
\end{minipage}
\begin{minipage}[t]{0.48\textwidth}%\left
\begin{itemize}
 \item $\jota_2(\D_{8p+2}^\pm) = \D_{8p+2}^\mp$;
 \item $\jota_2(\D_{8p+4}^{\pm})=  \D_{8p+4}^\pm$.
\end{itemize}
\end{minipage}
\end{proposition}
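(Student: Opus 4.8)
The plan is to establish the two parts of Proposition \ref{real-and-quaternionic-structures} by an explicit induction on $n$, following the standard pattern (as in \cite[Chapter 1]{F00}), and then to read off the behaviour with respect to the decomposition $\D_{2k} = \D_{2k}^+ \oplus \D_{2k}^-$ from the sign relation between $\vp$ (resp.\ $\jota_2$) and $\k_n(\nu_n^\CC)$.

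First I would construct the real/quaternionic structure explicitly. For $n=1$ one has $\D_0 \iso \CC$ with $\vp$ complex conjugation; this is the base case, $k=0 \equiv 0 \pmod 4$. The inductive step uses the tensor-product construction of $\D_{2k}$: writing $\ClC_{n+2} \iso \ClC_n \ox \ClC_2$ (up to the usual twist), one has $\D_{2(k+1)} \iso \D_{2k} \ox \CC^2$, and the Clifford multiplication by the two new generators is built from Pauli-type matrices. If $\psi$ is the (anti)linear structure on $\D_{2k}$, I would set $\psi' = \psi \ox \tau$ on $\D_{2k} \ox \CC^2$, where $\tau$ is a fixed antilinear map on $\CC^2$ chosen so that $\psi'$ has the right square and the right commutation sign with the two new Clifford generators; a short computation pins down $\tau$ in each case. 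Tracking the square $\psi'^2 = \psi^2 \cdot \tau^2$ and the commutation sign through this step reproduces exactly the $4$-periodicity: $k \mapsto k+2$ flips $(-1)^{k+1}$ back to itself after two steps, and the type (real vs.\ quaternionic) cycles $0,1,2,3 \mapsto \RR,\HH,\HH,\RR$, matching the stated congruences $k \equiv 0,3$ (real) and $k \equiv 1,2$ (quaternionic). The case $n=2k+1$ odd is immediate: the representation $\D_{2k}$ of $\ClC_{2k+1}$ is obtained by restriction along $\ClC_{2k} \inc \ClC_{2k+1}$ (via $\pr_1 \circ \tilde\k$), so the same $\psi$ works and the commutation sign with $v \in \RR^{2k} \subset \RR^{2k+1}$ is unchanged; one only needs to check the sign is still correct on the extra generator $e_{2k+1}$, which in the model equals $\pm\, \k_{2k}(\nu_{2k})$ up to a power of $\imag$, so its commutation behaviour is determined by that of $\nu_{2k}$.

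Next I would handle the decomposition statements. Recall $\D_{2k}^\pm$ are the $\pm 1$-eigenspaces of $A := \k_n(\nu_n^\CC) = \imag^k \k_n(\nu_n)$, where $\nu_n = e_1 \cdots e_n$. Since $\vp$ (or $\jota_2$) is antilinear and satisfies $\psi \circ \k_n(v) = (-1)^{k+1}\k_n(v)\circ\psi$, applying this $n$ times gives $\psi \circ \k_n(\nu_n) = (-1)^{n(k+1)} \k_n(\nu_n) \circ \psi$, and antilinearity contributes a conjugation on the scalar $\imag^k$, namely $\psi \circ (\imag^k \,\cdot\,) = (-\imag)^k (\,\cdot\,) \circ \psi = (-1)^k \imag^k(\,\cdot\,)\circ\psi$. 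Combining, $\psi \circ A = (-1)^{k + n(k+1)} A \circ \psi$. In the decomposable cases $n = 2k$, so $n(k+1) = 2k(k+1)$ is even, and the sign reduces to $(-1)^k$. Thus for $k$ even ($k \equiv 0 \pmod 2$, covering $n = 8p$ and $n=8p+4$, i.e.\ $k=4p$ and $k=4p+2$) we get $\psi \circ A = A \circ \psi$, so $\psi$ preserves each eigenspace: $\vp(\D_{8p}^\pm) = \D_{8p}^\pm$ and $\jota_2(\D_{8p+4}^\pm) = \D_{8p+4}^\pm$. For $k$ odd (covering $n = 8p+2$ and $n = 8p+6$, i.e.\ $k = 4p+1$ and $k=4p+3$) we get $\psi \circ A = -A \circ \psi$, so $\psi$ swaps the eigenspaces: $\jota_2(\D_{8p+2}^\pm) = \D_{8p+2}^\mp$ and $\vp(\D_{8p+6}^\pm) = \D_{8p+6}^\mp$. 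This matches all four listed identities.

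The main obstacle I expect is bookkeeping the signs and the normalizing powers of $\imag$ consistently across the inductive construction — in particular making sure the antilinear map $\tau$ on the $\CC^2$-factor is chosen with the correct square at each step (so that the $4$-periodicity in $k$ emerges rather than a $2$-periodicity), and matching the convention $\nu_n^\CC = \imag^k \nu_n$ used in the excerpt when passing between even and odd $n$. Once the commutation relation of part (1)--(2) is in hand with the stated sign $(-1)^{k+1}$, the decomposition claims are a purely formal consequence as sketched above, with no further choices involved; so I would present the inductive construction carefully and then treat the eigenspace behaviour as a short corollary.
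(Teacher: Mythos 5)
The paper does not prove this proposition; it is stated with the citation ``(see \cite[Chapter 1]{F00})'', so there is no in-paper argument to compare against. Your proposal reconstructs exactly the standard textbook argument that the citation points to: the inductive tensor-product construction of $\vp$ and $\jota_2$ and their commutation signs, and then the eigenspace behaviour as a formal corollary. The second half of your argument is fully correct and worth keeping as written: from $\psi\circ\k_n(v)=(-1)^{k+1}\k_n(v)\circ\psi$ you get, for $A=\k_n(\nu_n^\CC)=\imag^k\k_n(\nu_n)$ and $n=2k$,
\[
\psi\circ A=(-1)^{k+n(k+1)}A\circ\psi=(-1)^k A\circ\psi,
\]
since the antilinearity of $\psi$ contributes $(-1)^k$ from conjugating $\imag^k$ and $n(k+1)=2k(k+1)$ is even. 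This gives preservation of $\D_{2k}^{\pm}$ for $k$ even ($n\equiv0\pmod4$) and interchange for $k$ odd ($n\equiv2\pmod4$), matching all four listed identities. The first half is a correct plan but is left as a sketch; in particular you defer the choice of the antilinear $\tau$ on the $\CC^2$ factor at each inductive step. That choice is the only genuinely delicate point (one must verify both $\tau^2$ and the commutation sign against the Pauli-type generators, and the outcome depends on $k\bmod 4$, which is why the type cycles $\RR,\HH,\HH,\RR$ rather than with period~$2$); if you carry it out explicitly the proof is complete and coincides with the cited reference.
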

% Moreover, $\vp(\D_{8p}^{\pm})\subset \D_{8p}^{\pm}$, $\vp(\D_{8p+6}^\pm)\subset \vp(\D_{8p+6}^\mp)$, $\jota_2(\D_{8p+2}^\pm) \subset \D_{8p + 2}^\mp$ and $\jota_2(\D_{8k+4}^{\pm})=\D_{8k+4}^\pm$.
We denote also by $(\D_{8p}^+)_{\pm}$, $(\D_{8p}^-)_{\pm}$ and $(\D_{8p+6})_\pm$ the eigenspaces of eigenvalue $\pm 1$ of $\vp$ on $\D_{8p}^+$, $\D_{8p}^-$ and $(\D_{8p+6})_\pm$ respectively. If $n=8p+q$ with $0\leq q \leq 7$ then $\Cl_n$ is isomorphic via $\tilde{\k}_n$ if $k\equiv 3\pmod{4}$, or via $\k_n$ otherwise, to:

\begin{minipage}[t]{0.44\textwidth}%\left
\begin{itemize}
 \item[$q=0$:] $\End_\RR((\D_{8p}^+)_+\oplus (\D_{8p}^-)_-)$,
 \item[$q=1$:] $\End_\CC(\D_{8p})$, 
 \item[$q=2$:] $\End_\HH(\D_{8p+2})$,
 \item[$q=3$:] $\End_\HH(\D_{8p+2})\oplus \End_\HH(\D_{8p+2})$,
\end{itemize}
\end{minipage}
\begin{minipage}[t]{0.52\textwidth}%\left
\begin{itemize}
 \item[$q=4$:] $\End_\HH(\D_{8p+4})$,
 \item[$q=5$:] $\End_\CC(\D_{8p+4})$,
 \item[$q=6$:] $\End_\RR((\D_{8p+6})_+)$,
 \item[$q=7$:] $\End_\RR((\D_{8p+6})_+)\oplus \End_\RR((\D_{8p+6})_+)$.
\end{itemize}
\end{minipage} 

\begin{remark}
If $n\equiv 2,3 \pmod{8}$ then $\jota_2$ is a quaternionic structure that commutes with the Clifford product and if $n\equiv 4 \pmod{8}$ then $\nu_4 \jota_2$ has the same property. That explains the notations $\End_\HH(\D_{8p+2})$ and $\End_\HH(\Delta_{8p+4})$.
\end{remark}

In addition, the representation $\D_{2k}$ is equipped with a hermitian product $h$ that makes the Clifford product by vectors on $\RR^{2k}$ and $\RR^{2k+1}$ a skew-symmetric endomorphism. We construct from it a scalar product on the irreducible representation of the Clifford algebra using standard results of real and quaternionic structures on irreducible representations applied to the $\Spin(2k+1)$ module $\D_{2k}$.
\begin{enumerate}
\item  If $k\equiv 0,3\pmod{4}$ the restriction of $h$ to $(\D_{2k})_{\pm}$ is real valued. Moreover, the spaces $\Delta_{2k}^\pm$ are orthogonal if $k\equiv 0 \pmod{4}$ because the multiplication by $\n_n^\CC$ is unitary.
\item If $k\equiv 1,2 \pmod{4}$ then $h(\jota_2\p,\jota_2\eta)=\overline{h(\p,\eta)}$, hence $\jota_2$ is an isometry for the real part of $h$.
\end{enumerate}
In both cases, we denote by $\la\cdot,\cdot\ra$ the real part of $h$.

%%%%%%%%%%%%%%%%%%%%%%%%%%%%%%%%%%%%%%%%%%%%%%%%%%%%%%%%%%%%%%%
\subsection{Spinor bundles}\label{Spinorial}
%%%%%%%%%%%%%%%%%%%%%%%%%%%%%%%%%%%%%%%%%%%%%%%%%%%%%%%%%%%%%%%

Let $(M,g)$ be an oriented $n$-dimensional spin manifold and let $\Ad\colon \PSpin(M) \to \PSO(M)$ be a spin structure. Let $W$ be a $\bk$ vector space and $\rho_n \colon \Cl_n \to \End_{\bk}(W)$ an irreducible representation. Recall that for $n\equiv 0 \pmod{4}$ there is a splitting $W=W^+\oplus W^{-}$ into $\Spin(n)$ irreducible representations (see Proposition \ref{restriction}).
\begin{definition}
A {\em real spinor bundle} over $M$ is $\S(M)=\PSpin(M) \times_{\rho_n} W$, for an irreducible representation $\rho_n \colon \Cl_n \to \End_\bk(W)$. If $n\equiv 0 \pmod{4}$, the {\em positive} and {\em negative subbundles} are $\S^{\pm}(M)= \PSpin(M) \times_{\rho_n} W^{\pm}$.
\end{definition}

Let $\Cl(M)$ denote the bundle whose fiber over $p\in M$ is the Clifford algebra of $(T_pM,g_p)$; the spinor bundle is a $\Cl(M)$-module with the Clifford product by a vector field $X\in\fX(M)$ given by 
\[
X [\tilde{F},v]=\left[\tilde{F}, \sum_i{ X^i\rho_n(e_i)v}\right]\,;
\]
here $X^i$ are the coordinates of $X$ with respect to the orthonormal frame $F=\Ad(\tilde{F})$. The Clifford multiplication extends to $\L^k T^*M$ in the following way:
\begin{itemize}
 \item the product with a covector is defined by $X^*\phi = X\phi$, with canonical identification between the tangent and the cotangent bundle given by the metric: $X^*=g(X,\cdot)$.
\item If the product is defined on $\L^l T^*M$ when $l\leq k$, we define 
\[(X^*\wedge \beta) \phi = X (\beta \phi) + (i(X)\beta) \phi,\]
where $i(X)\beta $ denotes the contraction, $\beta \in \L^kT^*M$ and $X\in\fX(M)$. This product is extended linearly to $\L ^{k+1}T^*M$. 
\end{itemize}

The relation among representations of $\Cl_n$ determine relations among spinor bundles. For instance, we have the following result:  

\begin{lemma} \label{Spinor-Bundle-Relation}
Let $(M,g)$ be an $n$-dimensional spin manifold with $n=8p+8-m$ and $4\leq m<8$. Consider the Riemannian manifold $(M\times \RR^m, g + g_m)$, where $g_m$ is the canonical metric on $\RR^m$ with orthonormal basis $(e_{n+1},\ldots,e_{8p+8})$. Denote by $\pr_1\colon M \times \RR^m \to M$ the canonical projection.
\begin{enumerate} 
\item There is a bijection between spin structures on $M$ and spin structures on $M\times \RR^m$. 
\item \label{S-B-R-II} The spinor bundles are related by $\S^+(M\times \RR^m)= \pr_1^*\S(M)$ with Clifford product $X(\phi,t)=(X e_{n+1} \p,t)$ for $X\in\fX(M)$.
\end{enumerate}
\end{lemma}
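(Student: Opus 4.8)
The plan is to track through the dimension-shift identity $\Cl_{n-1}\cong\Cl_n^0$ iteratively, or more efficiently to use the periodicity $\Cl_{n+8}\cong\Cl_n\otimes\Cl_8=\Cl_n\otimes\RR(16)$ together with the explicit matrix description collected in Subsection \ref{ClC-representations}. For part (1), recall that a spin structure on an oriented Riemannian manifold $Y$ is a $\ZZ_2$-reduction of $\PSO(Y)$, classified (once one exists) by $H^1(Y;\ZZ_2)$, and existence is governed by $w_2(Y)$. First I would observe that $T(M\times\RR^m)=\pr_1^*TM\oplus\underline{\RR^m}$, so $w(M\times\RR^m)=\pr_1^*w(M)$ and in particular $w_2(M\times\RR^m)=\pr_1^*w_2(M)$; since $\RR^m$ is contractible, $\pr_1^*\colon H^*(M;\ZZ_2)\to H^*(M\times\RR^m;\ZZ_2)$ is an isomorphism. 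Hence $M\times\RR^m$ is spin iff $M$ is, and the set of spin structures on each is a torsor over the same group $H^1(M;\ZZ_2)$; concretely, pulling back an $\SO(n)$-frame bundle reduction along the clutching $\Spin(n)\hookrightarrow\Spin(n+m)$ (an embedding of Lie groups covering $\SO(n)\hookrightarrow\SO(n+m)$) sends spin structures on $M$ bijectively to spin structures on $M\times\RR^m$, compatibly with the $H^1(\cdot;\ZZ_2)$-actions. This gives the stated bijection.

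For part (2), the key algebraic input is the iterated form of Proposition \ref{restriction}: writing $n=8p+8-m$ with $4\le m<8$, one has $n+m=8p+8\equiv 0\pmod 8$, so $\Cl_{8p+8}=\RR(2^{4p+4})$ with a unique irreducible module $W$ that splits as $W^+\oplus W^-$ under $\Spin(8p+8)$. Restricting $W^+$ first along $\Spin(8p+7)\subset\Cl_{8p+8}^0\cong\Cl_{8p+7}$ and then continuing down to $\Spin(n)$ via $v\mapsto v\,e_{8p+8}$ at each stage (exactly the map $i_{n-1}$ of Proposition \ref{restriction}, composed), one checks that after $m$ steps the resulting $\Spin(n)$-module is irreducible and is the spinor module defining $\S(M)$; this is where one uses that $m\ge 4$ so that no further splitting occurs in the final stages (the cases $n\equiv 3,5,6,7\pmod 8$ of Proposition \ref{restriction}(3)), while $W^+$ rather than $W^-$ is singled out by the eigenvalue of the volume-form action. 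Concretely, the composite embedding $\Cl_n\to\Cl_{8p+8}^0$ sends $v\in\RR^n$ to $v\,e_{n+1}\cdots e_{8p+8}$ up to sign, but after unwinding one sees that Clifford multiplication by $X\in\fX(M)$ on $\S(M)$ corresponds to Clifford multiplication by $X\,e_{n+1}$ on $\S^+(M\times\RR^m)$ — this is the formula asserted in \eqref{S-B-R-II}. At the bundle level, since $\PSpin(M\times\RR^m)=\pr_1^*\PSpin(M)$ via the clutching of part (1), one gets $\S^+(M\times\RR^m)=\pr_1^*\PSpin(M)\times_{\rho}W^+\cong\pr_1^*\bigl(\PSpin(M)\times_{\rho|_{\Spin(n)}}W^+\bigr)=\pr_1^*\S(M)$.

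The main obstacle is bookkeeping rather than conceptual: one must be careful that the iterated restriction map really lands in $\Cl_{8p+8}^0$ (it does, since each $v\,e_j$ is even of degree $2$ in the relevant sense), that the $m$ successive applications of Proposition \ref{restriction} do not produce a spurious splitting — which is exactly where the hypothesis $4\le m<8$ enters, forcing $n\pmod 8\in\{3,4,5,6,7\}$ and, crucially, $n\not\equiv 1,2\pmod 8$ at the last step so that part (2) of Proposition \ref{restriction} is never invoked on the final reduction (and at the single step $n=8p+4$ one lands in the $q=4$ case where the module stays irreducible) — and finally that one picks $W^+$ with the correct orientation convention so that the Clifford action comes out as $X\mapsto X\,e_{n+1}$ with a $+$ sign. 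A clean way to organize this is to do the case $m=4$ explicitly (one application of Proposition \ref{restriction}(1) to pass from $\Spin(8p+8)$ to the splitting, then Proposition \ref{restriction}(3) four — here just the remaining — times), and then note that $\S^+(M\times\RR^4\times\RR^{m-4})=\pr^*\S^+(M\times\RR^4)=\pr^*(\pr_1^*\S(M))$, so the general case reduces to $m=4$ by composing projections; the Clifford-multiplication formula is preserved because at each extra $\RR$-factor the new basis vector acts by the volume-form chirality operator, which restricts to the identity on the relevant chiral summand.
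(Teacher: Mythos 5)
Your part (1) is correct, though it travels a different road than the paper. The paper's argument is wholly constructive: it identifies $\PSO(M\times\RR^m)=\pr_1^*\PSO(M)$ and then exhibits the bijection directly, sending $\PSpin(M)$ to $\pr_1^*\PSpin(M)\times_{\Spin(n)}\Spin(8p+8)$ and inverting by pulling back along $i\colon M\hookrightarrow M\times\RR^m$ and taking the preimage of $\PSO(M)$. Your route via $w_2$ and the $H^1(\cdot;\ZZ_2)$-torsor structure is also valid and perhaps cleaner conceptually, though less explicit about the actual bundle isomorphism used later in part (2).

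For part (2), your strategy (iterate Proposition~\ref{restriction}) is the right one, but there is a concrete error at the crux. You claim the composite embedding $\Cl_n\to\Cl^0_{8p+8}$ sends $v\mapsto v\,e_{n+1}\cdots e_{8p+8}$ ``up to sign,'' and that ``after unwinding'' the Clifford action becomes $X\mapsto Xe_{n+1}$. That intermediate claim is false, and the unwinding is precisely the point. Composing the maps $i_k$ of Proposition~\ref{restriction} telescopes: for $v\in\RR^n\subset\Cl_{n+1}$ one has
\[
i_{n+1}(v\,e_{n+1}) = (v\,e_{n+2})(e_{n+1}e_{n+2}) = v\,e_{n+2}e_{n+1}e_{n+2} = v\,e_{n+1},
\]
since $e_{n+2}e_{n+1}e_{n+2}=-e_{n+1}e_{n+2}^2=e_{n+1}$. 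Hence the composite sends $v$ to $v\,e_{n+1}$ exactly, not to a longer product, and this is what the paper computes: $\rho_n(v)=\rho_{n+1}(ve_{n+1})=\rho_{n+2}(ve_{n+2}e_{n+1}e_{n+2})=\rho_{n+2}(ve_{n+1})=\dots=\rho_{8p+8}(ve_{n+1})$. Note also that your claimed formula $v\mapsto ve_{n+1}\cdots e_{8p+8}$ cannot be right on its face, since that element has degree $m+1$ and hence only lies in the even part $\Cl^0_{8p+8}$ when $m$ is odd. Similarly, your proposed reduction to $m=4$ ``and then compose projections'' does not work as stated: after the first application you land on $M\times\RR^4$, which has dimension $n+4\not\equiv 0\pmod 8$, so the lemma does not apply again in the form you invoke, and you switch between $\S$ and $\S^+$ at a stage where only one of them is defined.

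Finally, a bookkeeping remark that cuts both ways. You assert ``$4\le m<8$ forces $n\pmod 8\in\{3,4,5,6,7\}$,'' which is arithmetically wrong: $n=8p+8-m$ with $m\in\{4,5,6,7\}$ gives $n\pmod 8\in\{1,2,3,4\}$. In fact the paper's stated hypothesis appears to be a misprint: only for $n\equiv 4,5,6,7\pmod 8$ (that is, $m\in\{1,2,3,4\}$, as in Lemma~\ref{Harmonic-Spinors-Relation}, where this lemma is actually used) does the iterated restriction of $W^+$ remain irreducible as a $\Cl_n$-module at every step, so that $\S^+(M\times\RR^m)$ and $\pr_1^*\S(M)$ have the same rank. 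For $n\equiv 1,2,3\pmod 8$ there is a further splitting and the identification fails. You sensed that only certain residues work, which is good, but the set you wrote down is neither the one implied by the stated hypothesis nor the one that actually makes the lemma true, so that sentence needs to be fixed along with the telescoping computation above.
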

\begin{proof}
Denote by $i\colon M\hookrightarrow M \times \RR^m$ the canonical inclusion. First of all, $\PSO (M\times \RR^m)=\pr_1^* \PSO (M)$. Therefore, each spin structure on $M$ determines a spin structure on $M\times \RR^m$ by $\PSpin (M\times \RR^m)=\pr_1^* \PSpin(M)\times_{\Spin(n)}\Spin(8p+8)$. Conversely, given a spin structure $\PSpin(M\times \RR^m)$ on $M\times \RR^m$, $i^*(\PSpin(M\times \RR^m))$ is a $\Spin(8p+8)$ structure. Taking the preimage of $\PSO(M)\subset \mathrm{P}_{\SO(8p+8)}(M)$,
we get a spin structure on $M$. 

Moreover, there is an isomorphism between the bundles $\PSpin(M)\times_{\Spin(n)}W^+$ and $\PSpin(M)\times_{\Spin(n)}\Spin(8p+8)\times_{\Spin(8p+8)} W^+$, given by $ [\tilde{F},v]\mapsto [[\tilde{F},1],v]$. Thus, taking into account Proposition \ref{restriction}, we get $\S^+(M\times \RR^m)=\pr_1^*\S(M)$.

The relation between Clifford products is follows from the equality $\rho_n(v)=\rho_{8p+8}(ve_{n+1})$, for $v \in \RR^n$; this is obtained using the definition of $\rho_n$ in Proposition \ref{restriction} as follows:
\[
\rho_n(v)=\rho_{n+1}(ve_{n+1})=\rho_{n+2}(ve_{n+2}e_{n+1}e_{n+2})=\rho_{n+2}(ve_{n+1})=\dots =\rho_{8p+8}(ve_{n+1})\,.
\]

\end{proof}

The scalar product $\la\cdot, \cdot\ra$ on $W$ defines a scalar product on the spinor bundle that we also denote by $\la\cdot, \cdot\ra$; the Clifford product with a vector field is a skew-symmetric endomorphism. The Levi-Civita connection $\overline{\nabla}$ of $g$ induces a connection $\nabla$ on the spinor bundle which is $\la\cdot, \cdot\ra$-metric and acts as a derivation with respect to the Clifford product with a vector field. Moreover, the complex and quaternionic structures on $W$ determine complex and quaternionic structures on the spinor bundle, which are isometries of $\la\cdot, \cdot\ra$ and parallel with respect to $\nabla$.

\begin{definition} \label{Def-Dirac}
The {\em Dirac operator} is the differential operator $\sD\colon \Gamma(\S(M)) \to \Gamma (\S(M))$ given locally by the expression
\[
\sD\p = \sum_{i=1}^n{X_i \nabla_{X_i}\p}\,, 
\]
where $(X_1,\dots, X_n)$ is a local orthonormal frame of $M$.
\end{definition}

\begin{definition} \label{Def-harmonic}
A spinor $\eta\in\Gamma(\S(M))$ is called {\em harmonic} if $\sD\eta=0$.
\end{definition}

There is a relation between positive harmonic spinors in different dimensions; we follow the notation of Lemma \ref{Spinor-Bundle-Relation}:

\begin{lemma}\label{Harmonic-Spinors-Relation}
For $m\in \{1,2,3,4\}$, let $(M,g)$ be an $(8p-m)$-dimensional spin Riemannian manifold.
Let $\p$ be a unit-length harmonic spinor of $M$. Then, $\eta=\pr_1^*\p$ is a unit-length harmonic spinor on $M\times \RR^m$.
\end{lemma}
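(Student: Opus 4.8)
The plan is to reduce the claim to the standard splitting of the Dirac operator on a Riemannian product, using the bundle isomorphism supplied by Lemma \ref{Spinor-Bundle-Relation}. Put $N=\dim M=8p-m$, so that $\dim(M\times\RR^m)=8p$; writing $N=8(p-1)+(8-m)$ with $8-m\in\{4,5,6,7\}$, the manifold $M$ is of the type treated in Lemma \ref{Spinor-Bundle-Relation}. First I would record the bundle identification: the telescoping computation in the proof of that lemma (the identity $\rho_N(v)=\rho_{8p}(ve_{N+1})$, which is insensitive to how many flat dimensions one adjoins) yields a bundle isomorphism $\Phi\colon\pr_1^*\S(M)\isom\S^+(M\times\RR^m)$ which is fibrewise isometric for $\la\cdot,\cdot\ra$ and intertwines the Clifford actions as recorded there; under this identification $\eta=\pr_1^*\p$ is the spinor in the statement. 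Fibrewise isometry already shows that $\eta$ is unitary whenever $\p$ is, so it only remains to prove $\sD_{M\times\RR^m}\eta=0$.

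For harmonicity I would use that $(M\times\RR^m,g+g_m)$ is a Riemannian product with flat second factor, so its Levi-Civita connection is the product connection: the induced spinorial connection $\nabla$ has vanishing connection form in the $\RR^m$-directions, and in the $M$-directions it is the pullback of the spinorial connection $\nabla^M$ of $M$. Since $\eta=\pr_1^*\p$ is constant along $\RR^m$, this gives $\nabla_{e_j}\eta=0$ for $N<j\le 8p$, and $\nabla_{X_i}\eta=\pr_1^*(\nabla^M_{X_i}\p)$ for a local orthonormal frame $(X_1,\dots,X_N)$ of $M$. Inserting these into Definition \ref{Def-Dirac} annihilates the $\RR^m$-terms and leaves
\[
\sD_{M\times\RR^m}\eta=\sum_{i=1}^{N}X_i\,\pr_1^*(\nabla^M_{X_i}\p),
\]
and the compatibility of Clifford products built into $\Phi$ shows the right-hand side to be the image, under a fibrewise isometry onto $\S^-(M\times\RR^m)$, of $\pr_1^*\big(\sum_{i=1}^{N}X_i\,\nabla^M_{X_i}\p\big)=\pr_1^*(\sD_M\p)=0$. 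Hence $\eta$ is unitary and harmonic, as claimed.

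The step that takes the most care is this last reassembly: one must be sure that Clifford-multiplying a pulled-back section $\pr_1^*\chi$ by a vector tangent to $M$ and summing over an orthonormal frame genuinely produces $\pr_1^*(\sD_M\p)$ up to a parallel bundle isometry, rather than some twisted variant. This is exactly what the identity $\rho_N(v)=\rho_{8p}(ve_{N+1})$ delivers, so the only genuine verification is that the proof of Lemma \ref{Spinor-Bundle-Relation} applies verbatim when the adjoined flat factor is $\RR^m$ with $m\in\{1,2,3,4\}$, rather than in the range used there (one also has to fix, for $\dim M\equiv 7\pmod 8$, which of the two inequivalent real spinor bundles of $M$ is meant, but the statement $\eta=\pr_1^*\p$ makes that choice for us). A purely bookkeeping point is that $\sD_{M\times\RR^m}$ maps $\Gamma(\S^+)$ to $\Gamma(\S^-)$, which is why the target of the displayed identity is read through $\S^-(M\times\RR^m)\cong\pr_1^*\S(M)$; since the outcome is $0$, nothing depends on this.
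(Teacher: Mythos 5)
Your proof is correct and follows the same route as the paper's: both reduce $\sD_{M\times\RR^m}$ to the pullback of $\sD_M$ by observing that the product Levi-Civita connection kills the $\RR^m$-derivatives of $\eta=\pr_1^*\p$, and both invoke the Clifford-compatibility from Lemma \ref{Spinor-Bundle-Relation} (the paper does this concretely by left-multiplying by $e_{n+1}$ to turn $\sD\eta$ into $-\pr_1^*\sD\p$, while you phrase it as applying the fibrewise isometry $\Phi$). Your parenthetical remark about the apparent mismatch between the range $4\le m<8$ in Lemma \ref{Spinor-Bundle-Relation} and $m\in\{1,2,3,4\}$ here is a sharp observation that the paper glosses over; the resolution is that the telescoping identity and irreducibility hold exactly for $1\le m\le 4$, so the inequality in Lemma \ref{Spinor-Bundle-Relation} should be read as a typo for $1\le m\le 4$.
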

\begin{proof}
Take $(X_1,\dots,X_n)$ a local orthonormal frame of $TM$ and $(e_{n+1},\dots, e_{8p+8})$ an orthonormal basis of $\RR^m$; observe that $\overline{\nabla}{}^{M\times \RR^m}_{X_i}X_j=\overline{\nabla}{}^{M}_{X_i}X_j$, $\overline{\nabla}{}^{M\times \RR^m}_{e_i}{X_j}=\overline{\nabla}{}^{M\times \RR^m}_{X_j}{e_i}=0$ and $\overline{\nabla}{}^{\RR^m}_{e_i}e_j=0$. Therefore, $\nabla^{M\times \RR^m}_{X_i}\eta
=\pr_1^*(\nabla^M_{X_i}\p)$ and $\nabla^{M\times \RR^m}_{e_i}\eta=0$. From the relation between $\S(M)$ and $\S^+(M\times \RR^m)$ proved in Lemma \ref{Spinor-Bundle-Relation} we deduce:
\[
e_{n+1}\sD\eta= \sum_{i=1}^{n} e_{n+1}X_i \nabla^{M\times \RR^m}_{X_i}\eta= - \pr_1^*\sD\p \,.
\]
The spinor $\eta$ is harmonic because the multiplication by $e_{n+1}$ is an isometry.
\end{proof}

%%%%%%%%%%%%%%%%%%%%%%%%%%%%%%%%%%%%%%%%%%%%%%%%%%%%%%%%%%%%%%%%%%%%%%%%%%
%                                                                        %
%        SPINORS AND GEOMETRIC STRUCTURES                                %
%                                                                        %
%%%%%%%%%%%%%%%%%%%%%%%%%%%%%%%%%%%%%%%%%%%%%%%%%%%%%%%%%%%%%%%%%%%%%%%%%%

%%%%%%%%%%%%%%%%%%%%%%%%%%%%%%%%%%%%%%%%%%%%%%%%%%%%%%%%%%%%%%%
\section{Spinors and geometric structures}\label{Spinors}
%%%%%%%%%%%%%%%%%%%%%%%%%%%%%%%%%%%%%%%%%%%%%%%%%%%%%%%%%%%%%%%

The purpose of this paper is to study geometric structures defined by unit-length harmonic spinors on Riemannian manifolds. 
This is interesting because a unit-length harmonic spinor defines different geometric structures according to the dimensions. We shall focus on dimensions 4, 5, 6, 7 and 8. In these dimensions, the relation between unit-length spinors and geometric structures on manifolds is summarized in the following result:
\begin{proposition}
Let $\rho_n \colon \Cl_n \to \End_\bk(W)$ an irreducible representation and let $\eta \in W$ be a unit-length spinor.
\begin{enumerate}
\item If $n=8$ and $\eta \in W^\pm$ then $\Stab_{\Spin(8)}(\eta)= \Spin(7)$.
\item If $n=7$ then $\Stab_{\Spin(7)}(\eta)= \rG_2$.
\item If $n=6$ then $\Stab_{\Spin(6)}(\eta)= \SU(3)$.
\item If $n=5$ then $\Stab_{\Spin(5)}(\eta)= \SU(2)$.
\item If $n=4$ then $\Stab_{\Spin(4)}(\eta)= \SU(2)$.
\end{enumerate}
\end{proposition}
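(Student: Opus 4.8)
The plan is to compute each stabilizer by exploiting the irreducibility of the relevant spin representation together with a dimension count, and then to pin down the isomorphism type of the stabilizer group using its action on spinor space. First I would recall the complex spin representations: $\Delta_8 = \Delta_8^+ \oplus \Delta_8^-$ with $\dim_\CC \Delta_8^\pm = 8$, $\Delta_7$ with $\dim_\CC \Delta_7 = 8$, $\Delta_6 = \Delta_6^+ \oplus \Delta_6^-$ with $\dim_\CC \Delta_6^\pm = 4$, and $\Delta_4 = \Delta_4^+\oplus\Delta_4^-$ with $\dim_\CC\Delta_4^\pm = 2$; for $\Spin(5)$ the representation $\Delta_4$ is the irreducible $4$-dimensional one. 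By Proposition \ref{real-and-quaternionic-structures} and the surrounding discussion, $W$ is the real form (or underlying real space) of the appropriate $\Delta_{2k}$, so in dimensions $8,7,6,5,4$ the real spin representation $W$ on which $\eta$ lives has real dimensions $8,8,8,8,8$ respectively (using that $\Delta_7,\Delta_5,\Delta_4$ carry quaternionic structures, so the irreducible real module has real dimension $8$). In each case $\Spin(n)$ acts on the unit sphere $S \subset W$; the orbit of $\eta$ is $\Spin(n)/\Stab_{\Spin(n)}(\eta)$, so $\dim \Stab_{\Spin(n)}(\eta) = \dim \Spin(n) - \dim(\text{orbit})$.

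Next I would show each orbit is in fact the full unit sphere $S^{r-1}\subset W\cong\RR^r$, so that $\dim(\text{orbit}) = r-1$. For $n=8$, $\eta\in W^+\cong\RR^8$: the group $\Spin(7)\subset\Spin(8)$ acts on $\RR^8$ as the real spin representation of $\Spin(7)$, which is transitive on $S^7$ (a classical fact, e.g. from the theory of triality or the transitive actions on spheres), hence $\Spin(8)$ is a fortiori transitive on $S^7\subset W^+$; then $\dim\Stab = 28 - 7 = 21 = \dim\Spin(7)$. For $n=7$, $\Spin(7)$ acts on $W\cong\RR^8$ transitively on $S^7$, so $\dim\Stab = 21 - 7 = 14 = \dim\rG_2$. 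For $n=6$, $\Spin(6)\cong\SU(4)$ acts on $\Delta_6^+\cong\CC^4$ as the standard representation, transitive on $S^7\subset\CC^4$, so $\dim\Stab = 15 - 7 = 8 = \dim\SU(3)$. For $n=5$, $\Spin(5)\cong\Sp(2)$ acts on $\Delta_4\cong\HH^2$ as the standard representation, transitive on $S^7\subset\HH^2$, so $\dim\Stab = 10 - 7 = 3 = \dim\SU(2)$. For $n=4$, $\Spin(4)\cong\SU(2)\times\SU(2)$ acts on $W^+\cong\HH$ with the first factor acting by left quaternion multiplication (transitively on $S^3$) and the second factor acting trivially on $\eta\in W^+$, so $\Stab = \{1\}\times\SU(2)\cong\SU(2)$ on the nose, giving the result directly.

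Finally I would identify the isomorphism type of the stabilizer, not merely its dimension. For $n=5$ the computation above already exhibits $\Stab_{\Spin(5)}(\eta)$ as the $\Sp(1)$ fixing a unit vector in $\HH^2$ under the $\Sp(2)$-action, which is $\Sp(1)\cong\SU(2)$; similarly $n=6$ gives the $\SU(3)\subset\SU(4)$ fixing a unit vector in $\CC^4$. For $n=4$ it is immediate as noted. For $n=7$ and $n=8$ I would argue that the connected stabilizer is the stated group by matching dimension together with the fact that $\Stab_{\Spin(7)}(\eta)$ acts on $\RR^8 = \RR\eta\oplus\RR^7$ preserving $\eta$ and hence preserving the orthogonal $\RR^7$, giving an injection into $\SO(7)$; a $14$-dimensional connected subgroup of $\SO(7)$ acting irreducibly on $\RR^7$ must be $\rG_2$ (the only such possibility by the classification of transitive/irreducible subgroups), and likewise $\Stab_{\Spin(8)}(\eta)$ for $\eta\in W^+$ injects into $\SO(W^-)\cong\SO(8)$ via its action, landing in a $21$-dimensional subgroup acting irreducibly, forcing $\Spin(7)$. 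One should also check connectedness of the stabilizer, which follows from transitivity of $\Spin(n)$ on the sphere together with the long exact sequence of the fibration $\Stab\to\Spin(n)\to S^{r-1}$ and $\pi_1(S^{r-1})=0$, $\pi_0(\Spin(n))=1$ for $r-1\geq 2$.

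I expect the main obstacle to be the last step: verifying that the relevant orbits are genuinely the full spheres and that the abstract $14$- and $21$-dimensional stabilizers are exactly $\rG_2$ and $\Spin(7)$ rather than some other group of the same dimension. This is where one must invoke either the explicit matrix model of $\Cl_6$ given in the paper (to write down the spin action concretely and read off the stabilizer) or a structural input such as the classification of compact connected groups acting transitively on spheres (Montgomery–Samelson–Borel) or the standard characterization of $\rG_2$ as the stabilizer of a generic $3$-form. The quaternionic/real structure bookkeeping from Proposition \ref{real-and-quaternionic-structures} needs to be tracked carefully to ensure the real dimension of $W$ is $8$ in each case, since that is what makes all five orbits land on $S^7$.
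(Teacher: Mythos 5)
The paper does not prove this proposition; it is presented as a collection of classical facts about stabilizers of spinors in low dimensions, with the text immediately moving on to their geometric interpretation, so there is no argument in the paper to compare against.

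Your argument is the standard one and is essentially sound: compute the real dimension of the irreducible spin module, observe that $\Spin(n)$ acts transitively on the unit sphere (via $\Spin(4)\cong\Sp(1)\times\Sp(1)$, $\Spin(5)\cong\Sp(2)$, $\Spin(6)\cong\SU(4)$, and, for $n=7,8$, the real spin representation and triality), then read off the stabilizer by a dimension count plus the low-dimensional isogenies. Two points deserve attention. First, your opening claim that the real module has dimension $8$ in all five cases is wrong for $n=4$: there $\eta$ lies in a half-spinor module $W^\pm\cong\HH$ of real dimension $4$; your $n=4$ argument bypasses the dimension count, so this is only a bookkeeping slip, but it should be corrected. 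Second, for $n=7,8$ the identification of the $14$- and $21$-dimensional connected stabilizers with $\rG_2$ and $\Spin(7)$ genuinely requires an external input, as you rightly flag. The cleanest is either the Montgomery--Samelson--Borel classification of compact groups acting transitively on spheres, or, for $n=8$, conjugating the standard $\Spin(7)\hookrightarrow\Spin(8)$ by a triality automorphism that exchanges the vector and half-spinor representations; for $n=7$ one can alternatively note that the stabilizer preserves the associative $3$-form $\Psi(X,Y,Z)=\la XYZ\eta,\eta\ra$ constructed later in the paper, hence sits inside $\rG_2$, and equality follows from the dimension count. Your appeal to ``a $14$-dimensional connected subgroup of $\SO(7)$ acting irreducibly must be $\rG_2$'' would need a justification that the stabilizer does act irreducibly, which you do not supply; either of the two routes just mentioned avoids this.
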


This proposition means that a unit-length spinor in dimension 8 determines a $\Spin(7)$ structure on the underlying manifold, and analogously for the other dimensions.

Motivated by Definition \ref{Def-harmonic}, we give the following definition:
\begin{definition} \label{harmonic-structure}
Let $(M,g)$ be a Riemannian spin manifold of dimension $n \in\{4,\ldots,8\}$, and let $\eta \in \G(\S(M))$ be a unit-length section. We say that $\eta$ determines a {\em spin-harmonic structure} on $M$ if $\sD\eta=0$.
Moreover, if $n\equiv 0 \pmod{4}$, we say that the spin-harmonic structure is {\em positive} or {\em negative} if $\eta \in \G(\S^\pm(M))$.
\end{definition}

\begin{remark}
For dimensions $n>8$, the action of $\Spin(n)$ on the sphere of unit-length spinors is not transitive. Therefore the stabilizers of the spinors may be different groups,
so it makes no sense to define a geometric structure via a unit-length spinor unless we require the constancy of the stabilizer (this happens for instance when one has
a parallel spinor).
\end{remark}

From now on, we denote a generic spinor by $\phi$ and a fixed unit-length spinor by $\eta$. 

More specifically, our motivation is constructing $8$-dimensional nilmanifolds with invariant balanced $\Spin(7)$ structures. As we shall see later, these structures are characterized by the presence of a positive spin-harmonic structure. Lemma \ref{Harmonic-Spinors-Relation} guarantees that if $n\in \{4,5,6,7\}$, $M$ is an $n$-dimensional spin manifold with a spin-harmonic structure and $T^{8-n}$ is an $(8-n)$-dimensional flat torus, then $M\times T^{8-n}$ has a $\Spin(7)$ balanced structure. In section \ref{sec:harm-nilm} we will construct such spin-harmonic structures on low dimensional nilmanifolds.

Spin-harmonic structures have already appeared, under disguise, in the papers \cite{ACFH15} and \cite{MM18}; we proceed to review the relevant results and to relate spin-harmonic structures with the different kinds of $\Spin(7)$, $\rG_2$ and $\SU(3)$ structures. There is no spinorial description of $\SU(2)$ structures in dimension 5; we will carry out this classification in Section \ref{sec:su(2)}. We will not study the condition in dimension $4$; in fact, as we shall see in Theorem 
\ref{4-harmonic}, there are no invariant harmonic spinors on $4$-dimensional nilmanifolds.

%%%%%%%%%%%%%%%%%%%%%%%%%%%%%%%%%%%%%%%%%%%%%%%%%%%%%%%%%%%%%%%%%%%%%%%%%%%%%%%%%%%%%%%%%%%%%%%%%%%%%%%%%%%%%%%%%%%
\subsection{Positive spin-harmonic \texorpdfstring{$\Spin(7)$}{Lg} structures in dimension 8}\label{subsec:spin(7)}
%%%%%%%%%%%%%%%%%%%%%%%%%%%%%%%%%%%%%%%%%%%%%%%%%%%%%%%%%%%%%%%%%%%%%%%%%%%%%%%%%%%%%%%%%%%%%%%%%%%%%%%%%%%%%%%%%%%

Let $(M,g)$ be an $8$-dimensional Riemannian manifold; a $\Spin(7)$ structure is characterized by the presence of a triple cross product on each tangent space; in turn, this is determined by a $4$-form $\Omega$ (see \cite[Definition 6.13]{SW}).

As usual, a way to measure the lack of integrability of a geometric structure is provided by its intrinsic torsion (see \cite{Salamon89}). In this case, the intrinsic torsion of a $\Spin(7)$ structure is a section of the bundle $T^*M\otimes \frspin(7)^\perp$, which is isomorphic to $\L^3T^*M$ via the alternating map. The Hodge star defines an isomorphism $\star\colon \L^3 T^*M\to \L^5 T^*M$. Therefore, the different classes of $\Spin(7)$ structures are determined by the exterior derivative of $\Omega$.

For a fixed $\Spin(7)$ form $\Omega$ on $\RR^8$, the decomposition of the space of $3$-forms of $\RR^8$ into irreducible $\Spin(7)$ invariant subspaces is given by (see \cite[Theorem 9.8]{SW}):
\begin{align*}
\L^3 (\RR^8)^* = & \L^3_8(\RR^8)^* \oplus \L^3_{48}(\RR^8)^*.
\end{align*}
where $\L^3_8(\RR^8)^*=i(\RR^8) \Omega$ and $\L^3_{48}(\RR^8)^*=\{\t\in\L^3(\RR^8)^* \mid \tau \wedge\Omega=0\}$. We have denoted by $\L^k_l(\RR^8)^*$ an $l$-dimensional invariant subspace of $\L^k(\RR^8)^*$; moreover, the induced bundle on $M$ will be denoted by $\L^k_l T^*M$. According to this discussion, there exist $\t_1 \in \L^1T^*M$ and $\t_3 \in \L^3_{48}T^*M$ such that:
\[
d\Omega=\t_1\wedge \O + \star\, \t_3\,. 
\]

In \cite{MF86}, Fern\'andez distinguished $\Spin(7)$ structures in the following pure classes:

\begin{definition}\label{spin7-clasif}
A $\Spin(7)$-structure given by $\Omega$ is said to be:
\begin{enumerate}
\item {\em parallel}, if $d\Omega=0$;
\item {\em locally conformally parallel}, if $\tau_3=0$;
\item {\em balanced}, if $\tau_1=0$.
\end{enumerate} 
\end{definition}

A Riemannian manifold $(M,g)$ admitting a $\Spin(7)$ structure is spin and the positive part of its spinor bundle has a unit-length section. Conversely, a spin $8$-dimensional manifold whose spinor bundle admits a positive unit-length section $\eta$ can be endowed with a $\Spin(7)$ structure by the formula
\[
\Omega(W,X,Y,Z)=\frac{1}{2} \la(-WXYZ + WZYX)\eta,\eta\ra\,.
\]

As for spin-harmonic structures, the following result was proved by the second author in \cite{MM18}:
\begin{theorem}\label{Spin7-Dirac}
The spinor $\eta$ determines a positive spin-harmonic structure if and only if the induced $\Spin(7)$ structure is balanced.
\end{theorem}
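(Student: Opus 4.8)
The plan is to express the Dirac operator $\sD\eta$ explicitly in terms of the covariant derivative $\nabla\eta$ and then to relate both sides to the exterior derivative of $\Omega$ via the formula $\Omega(W,X,Y,Z)=\tfrac12\la(-WXYZ+WZYX)\eta,\eta\ra$. First I would recall that since $\eta$ is a unitary section of $\S^+(M)$, the subbundle of $\S^+(M)$ pointwise orthogonal to $\eta$ (for the real scalar product $\la\cdot,\cdot\ra$) can be identified $\Spin(7)$-equivariantly with $TM$ via $X\mapsto X\eta$ (this is the standard fact that $\S^+\cong\RR\eta\oplus TM$ as $\Spin(7)$-modules, with $TM$ the $7$-dimensional piece). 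Writing $\nabla_{X}\eta = \Theta(X)\eta$ where $\Theta(X)\in\frspin(7)^\perp\subset\Lambda^2T^*M$ acts by Clifford multiplication — this is possible because $\nabla\eta\perp\eta$ (as $|\eta|$ is constant) and because the stabilizer of $\eta$ is $\Spin(7)$, so the "infinitesimal motion" $\Theta(X)$ lives in the complement $\frspin(7)^\perp$ — one identifies $\Theta\in T^*M\otimes\frspin(7)^\perp$ with the intrinsic torsion of the $\Spin(7)$ structure. The key classical input (from \cite{MF86}, \cite{SW}) is that under the isomorphism $T^*M\otimes\frspin(7)^\perp\cong\Lambda^3T^*M$, the intrinsic torsion corresponds (up to a universal nonzero constant) to $d\Omega$, and the decomposition $\Lambda^3 = \Lambda^3_8\oplus\Lambda^3_{48}$ matches the splitting $d\Omega=\tau_1\wedge\Omega+\star\tau_3$ with $\tau_1$ the $\Lambda^3_8$-component and $\tau_3$ the $\Lambda^3_{48}$-component.

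Next I would compute $\sD\eta=\sum_i X_i\nabla_{X_i}\eta=\sum_i X_i\,\Theta(X_i)\,\eta$, i.e.\ contract the $T^*M$-slot of $\Theta$ against Clifford multiplication. The point is that this contraction is exactly the $\Spin(7)$-equivariant projection $T^*M\otimes\frspin(7)^\perp\to\S^+$ composed with Clifford action, and $\S^+\cong\RR\eta\oplus TM$; Schur's lemma forces $\sD\eta$ to lie in the copy of $TM$ inside $\S^+$ (the $\eta$-component vanishes because $\langle\sD\eta,\eta\rangle$ involves $\langle X_i\Theta(X_i)\eta,\eta\rangle$ which is antisymmetric in a way that kills it — more concretely $\Theta(X_i)$ being a $2$-form makes $X_i\Theta(X_i)$ a combination of Clifford products of odd degree orthogonal to $\eta$). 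So $\sD\eta = Y\eta$ for a unique vector field $Y$, and the claim reduces to showing $Y$ is, up to nonzero constant, the vector field dual to $\tau_1$. This is a representation-theoretic matching: among the two $\Spin(7)$-irreducible pieces $\Lambda^3_8\oplus\Lambda^3_{48}$ of the intrinsic torsion, only $\Lambda^3_8\cong TM$ can map nontrivially to the $TM$-summand of $\S^+$ under the equivariant contraction, and the $48$-dimensional piece maps to zero (there is no $\Spin(7)$-map $\Lambda^3_{48}\to TM$). Hence $\sD\eta=0$ iff $\tau_1=0$ iff the structure is balanced. To pin down that the map $\Lambda^3_8\to TM$ is actually nonzero (not just "allowed"), I would either cite \cite{MM18} directly — the statement is attributed there — or verify it on one explicit tangent-space model using the $6$-dimensional representation of $\Cl_6$ fixed in Section~\ref{Cl-representations} together with Lemma~\ref{Spinor-Bundle-Relation}, reducing to a finite computation with the matrices $E_{ij}$.

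I expect the main obstacle to be bookkeeping the normalization/sign constants in the chain of identifications $\nabla\eta\leftrightarrow$ intrinsic torsion $\leftrightarrow d\Omega\leftrightarrow\tau_1\wedge\Omega+\star\tau_3$, and in particular confirming that the coefficient relating $\sD\eta$ to $\tau_1$ is genuinely nonzero rather than accidentally zero — this is the only place where an abstract Schur-lemma argument is insufficient and an honest (if short) computation in a concrete $\Spin(7)$-frame is required. Everything else is formal: the decomposition of $\S^+$, the fact that $\nabla\eta$ takes values in $\frspin(7)^\perp\otimes\eta$, and the $\Spin(7)$-module structure of $\Lambda^3(\RR^8)^*$ are all quoted from the preliminaries and from \cite{SW, MF86}. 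Since the statement is explicitly credited to \cite{MM18}, the cleanest writeup is to organize the proof around the equivalence "intrinsic torsion lives in $\Lambda^3_{48}$" $\Longleftrightarrow$ "$\sD\eta=0$", invoking the computation of the Dirac operator in terms of intrinsic torsion (a Gray–Hervella–type identity) and citing \cite{MM18} for the explicit constant.
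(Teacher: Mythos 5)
The paper itself does not prove Theorem~\ref{Spin7-Dirac}; it quotes it from \cite{MM18}, so there is no ``paper's own proof'' to compare against. Evaluating your sketch on its own merits: the high-level Schur-lemma strategy (view $\nabla\eta$ as a section of $T^*M\otimes\frspin(7)^\perp\cong\Lambda^3_8\oplus\Lambda^3_{48}$, view $\sD$ as an equivariant contraction, and argue that only the $8$-dimensional summand can survive) is the right shape and is presumably what \cite{MM18} does. But the way you set up the module identifications is wrong, and the fix matters.

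The claim that ``$\S^+\cong\RR\eta\oplus TM$ as $\Spin(7)$-modules, with $TM$ the $7$-dimensional piece, via $X\mapsto X\eta$'' is incorrect on two counts. First, $TM$ is $8$-dimensional, while the orthogonal complement of $\eta$ inside $\S^+$ is $7$-dimensional (it is the standard $7$-dimensional $\Spin(7)$-module, not $TM$). Second, and more importantly, Clifford multiplication by a vector reverses chirality in even dimension, so $X\eta$ lies in $\S^-$, not in $\S^+$. Consequently $\sD\eta=\sum_i X_i\nabla_{X_i}\eta$ is a section of $\S^-(M)$, not of $\S^+(M)$, and the assertion that ``Schur's lemma forces $\sD\eta$ to lie in the copy of $TM$ inside $\S^+$'' does not parse. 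The correct picture is: $TM\to\S^-$, $X\mapsto X\eta$, is a $\Spin(7)$-equivariant isomorphism ($T^*M$ and $\S^-$ are both the $8$-dimensional spin representation of $\Spin(7)$), and $\sD\eta\in\Gamma(\S^-)$ is of the form $Y\eta$ for a unique vector field $Y$. The ad hoc antisymmetry argument you give for the vanishing of an ``$\eta$-component'' is then unnecessary and, as written, does not actually establish anything: $\la X_i\nabla_{X_i}\eta,\eta\ra=-\la\nabla_{X_i}\eta,X_i\eta\ra$ is not obviously zero; the real reason $\sD\eta\perp\eta$ is simply that they have opposite chirality.

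With the chirality fixed, the equivariance argument is sound: since $T^*M\otimes\frspin(7)^\perp\cong\Lambda^3_8\oplus\Lambda^3_{48}$ with $\Lambda^3_8\cong T^*M\cong\S^-$ and $\Lambda^3_{48}$ irreducible of a different dimension, any $\Spin(7)$-equivariant map to $\S^-$ kills the $48$-dimensional piece. So abstractly $\sD\eta$ is proportional (with a universal constant) to $\tau_1^\sharp\cdot\eta$. You correctly flag that the entire content of the theorem is that this constant is nonzero, and that the formal argument cannot supply this. Since you defer that verification to citation rather than carrying it out, the proposal is an honest outline rather than a proof; to complete it you would need either the explicit Dirac computation in a model frame (as you suggest, using the $\Cl_6$ matrices and Lemma~\ref{Spinor-Bundle-Relation}) or a normalization-free argument such as exhibiting a single non-balanced $\Spin(7)$ structure on which $\sD\eta\ne0$.
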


\begin{remark}
 Spin-harmonic structures are thus especially relevant in dimension 8, since they represent a pure class of $\Spin(7)$ structures.
\end{remark}

%%%%%%%%%%%%%%%%%%%%%%%%%%%%%%%%%%%%%%%%%%%%%%%%%%%%%%%%%%%%%%%
\subsection{Spin-harmonic \texorpdfstring{$\rG_2$}{Lg} structures in dimension 7}\label{subsec:G2}
%%%%%%%%%%%%%%%%%%%%%%%%%%%%%%%%%%%%%%%%%%%%%%%%%%%%%%%%%%%%%%%

A $\rG_2$ structure on a Riemannian $7$-dimensional manifold $(M,g)$  is characterized by the presence of a cross product on $(TM,g)$, which is determined by a $3$-form $\Psi$ (see \cite[Lemma 2.6]{SW})

The torsion of a $\rG_2$ structure is a section of the bundle $T^*M\otimes \frg_2^\perp$. The splitting of $\RR^7\otimes \frg_2^\perp$ into four $\rG_2$ invariant irreducible subspaces determines four subbundles, $\chi_1,\chi_2,\chi_3,\chi_4$ which, in turn, determine pure types of $\rG_2$ structures.

Such classes are completely determined by differential equations for $\Psi$ and $*\Psi$. In order to state the precise result, we recall the decomposition of $\L^2 (\RR^7)^*$ and $\L^3(\RR^7)^*$ into $\rG_2$ irreducible parts for a fixed $\rG_2$ form $\Psi$ of $\RR^7$ (see \cite[Theorem 8.5]{SW}):
\begin{align*}
\L^2 (\RR^7)^* =& \L^2_7 (\RR^7)^* \oplus \L^2_{14}(\RR^7)^*,\\
\L^3 (\RR^7)^* = & \L_1^3(\RR^7)^*  \oplus \L^3_7 (\RR^7)^* \oplus \L^3_{27}(\RR^7)^*\,,
\end{align*}
where $\L^2_7 (\RR^7)^*=i(\RR^7)\Psi$, $\L^2_{14}(\RR^7)^*=\frg_2$, $\L_1^3(\RR^7)^*=\la \Psi \ra$, $\L^3_7 (\RR^7)^*= i(\RR^7)(\star\Psi)$ and $\L^3_{27}(\RR^7)^*=\{\omega \mid \Psi\wedge\omega=0,\star\Psi\wedge\omega=0\}$. Then we have (see \cite[Proposition 1]{Bryant}):

\begin{proposition}\label{G2-Torsions}
There exist $\t^1 \in C^\infty(M)$, $\t^4 \in \L^1 T^*M$, $\t^2 \in \L^2_{14}T^*M$ and $\t^3 \in \L^3_{27}T^*M$ such that: 
\begin{align*}
d\Psi=& \t^1(\star\Psi) + 3 \t^4\wedge \Psi +\star\t^3,\\
d(\star\Psi)=& 4\t^4\wedge(\star\Psi) + \t^2 \wedge \Psi.
\end{align*}
Moreover, the torsion is a section of $\chi_j$ if and only if $\t^k=0$ for $k\neq j$.
\end{proposition}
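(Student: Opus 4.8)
The statement to prove is Proposition~\ref{G2-Torsions}: the existence and uniqueness of torsion components $\tau^1,\tau^2,\tau^3,\tau^4$ in the decomposition of $d\Psi$ and $d(\star\Psi)$, together with the characterization of the pure classes by vanishing of all but one component.

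\textbf{The plan.} The proof is essentially representation-theoretic bookkeeping, following Bryant's approach. The intrinsic torsion of a $\rG_2$ structure lives in $\Gamma(T^*M\otimes\frg_2^\perp)$, and one knows $\RR^7\otimes\frg_2^\perp \cong \RR^7\otimes\RR^7 \cong \RR\oplus\Sym^2_0(\RR^7)^*\oplus\L^2(\RR^7)^* \cong \RR \oplus S^2_0 \oplus \L^2_7 \oplus \L^2_{14}$, which decomposes into $\rG_2$-irreducibles of dimensions $1,27,7,14$; these are the four spaces $\chi_1,\chi_2,\chi_3,\chi_4$. So abstractly the intrinsic torsion splits as $\xi = \xi_1+\xi_2+\xi_3+\xi_4$ with $\xi_j\in\Gamma(\chi_j)$.

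\textbf{Key steps.} First I would recall that the intrinsic torsion is captured by $\onabla\Psi$ (equivalently by $d\Psi$ and $d(\star\Psi)$, since the $\rG_2$-structure is determined by $\Psi$ and the Levi-Civita connection is torsion-free, so $d\Psi$ and $d(\star\Psi)$ are the antisymmetrizations of $\onabla\Psi$, $\onabla(\star\Psi)$ and contain the full intrinsic torsion information). Next, decompose the target bundles: $d\Psi$ is a $4$-form, and $\L^4(\RR^7)^*\cong\L^3(\RR^7)^*$ via $\star$, so $d\Psi \in \Gamma(\L^4 T^*M)$ decomposes into pieces isomorphic to $\L^3_1\oplus\L^3_7\oplus\L^3_{27}$, i.e.\ dimensions $1,7,27$; similarly $d(\star\Psi)\in\Gamma(\L^5 T^*M)\cong\Gamma(\L^2 T^*M)$ decomposes as $\L^2_7\oplus\L^2_{14}$, dimensions $7,14$. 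Matching $\rG_2$-types, the scalar piece of $d\Psi$ must be a multiple of $\star\Psi$ (call it $\tau^1\star\Psi$), the $\L^3_7$ piece is $3\tau^4\wedge\Psi$ for some $1$-form $\tau^4$ (using $\L^3_7 = i(\RR^7)(\star\Psi)$ and the identity relating $X\wedge\Psi$ for the $\L^4_7$-component; the coefficient $3$ is a normalization from the standard contraction identities for $\Psi$), the remaining $\L^3_{27}$ piece is $\star\tau^3$ with $\tau^3\in\L^3_{27}T^*M$; and $d(\star\Psi)$ splits as $4\tau^4\wedge(\star\Psi) + \tau^2\wedge\Psi$ with $\tau^2\in\L^2_{14}T^*M=\frg_2$. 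The coefficient $4$ and the fact that the \emph{same} $1$-form $\tau^4$ appears in both equations is the one genuinely substantive point: it follows from the Bianchi-type identity $d(\star\Psi)\wedge\Psi = -d\Psi\wedge\star\Psi$ combined with $0 = d(d\Psi)$... more precisely from $d\Psi\wedge\Psi = 0$ (since $\L^4\wedge\L^3\subset\L^7$ is $1$-dimensional and one checks $\star\Psi\wedge\Psi = 7\,\mathrm{vol} \ne 0$ while the other components pair to zero), differentiating to get $d\Psi\wedge d\Psi = $ (something), and tracking how the $7$-dimensional component of $\onabla\Psi$ enters both $d\Psi$ and $d(\star\Psi)$ through the single vector-valued torsion component $\xi_4\in\Gamma(\chi_4\cong T^*M)$. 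Finally, for the last sentence: since each $\tau^j$ is, up to a nonzero universal constant, the image of the component $\xi_j$ of the intrinsic torsion under an injective $\rG_2$-equivariant map $\chi_j\to\L^\bullet T^*M$, the torsion lies in $\chi_j$ (all other components vanish) if and only if $\tau^k=0$ for all $k\ne j$.

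\textbf{Main obstacle.} The only real subtlety is pinning down the numerical coefficients ($1$, $3$, $4$) and, relatedly, verifying that the $1$-form appearing in the $d\Psi$ equation is the same (up to the stated factor) as the one in the $d(\star\Psi)$ equation rather than an independent $1$-form. Purely by counting $\rG_2$-irreducibles one gets the \emph{shape} of both equations immediately; the coupling of the two $\tau^4$'s and the precise constants require either an explicit computation in a standard $\rG_2$-frame (using the contraction identities $i(e_i)\Psi\wedge i(e_j)\Psi = $ explicit, $\Psi\wedge\star\Psi = 7\,\mathrm{vol}$, $i(X)\Psi\wedge\star\Psi = 3\,X^\flat\wedge\mathrm{vol}$, etc.) or invocation of the identity $3\,d\Psi\wedge\Psi = $ const $\cdot\, \Psi\wedge d(\star\Psi)$-type relations. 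I would simply carry out this normalization in a fixed frame, or cite \cite{Bryant} for the constants, since everything else is forced by Schur's lemma applied to the $\rG_2$-module decompositions already listed in the excerpt.
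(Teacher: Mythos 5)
The paper does not prove this proposition; it records it verbatim from \cite[Proposition 1]{Bryant}, so there is no internal proof to compare against. Your sketch is, in outline, the standard representation-theoretic argument behind Bryant's result: decompose $\RR^7\otimes\frg_2^\perp$ into the four $\rG_2$-irreducibles, observe that $d\Psi$ and $d(\star\Psi)$ are $\rG_2$-equivariant linear images of the single intrinsic torsion tensor, invoke Schur's lemma to get the shape of the two equations, and compute the constants in a fixed $\rG_2$-frame. You explicitly defer the frame computation, which is reasonable given the paper also cites Bryant for it.

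Two slips worth flagging. First, you assert $d\Psi\wedge\Psi = 0$, which is false in general: wedging the first displayed equation with $\Psi$ kills the $\tau^3$ and $\tau^4$ terms (the former because $\L^3_{27}$ is defined by $\omega\wedge\Psi = 0$, the latter because $\Psi\wedge\Psi = 0$ for a $3$-form), leaving $d\Psi\wedge\Psi = 7\,\tau^1\,\mathrm{vol}$, which vanishes only when $\tau^1 = 0$. Accordingly, the Bianchi-type identities you float do not enforce the coupling of the two $\tau^4$ terms; the actual mechanism — which you do state correctly at the end of the paragraph — is that both $7$-dimensional pieces are the images of the single $\chi_4$-component of the intrinsic torsion under two injective $\rG_2$-equivariant maps, and a frame computation shows both scalars are nonzero (conventionally normalized to $3$ and $4$). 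Second, your list ``dimensions $1,27,7,14$; these are $\chi_1,\chi_2,\chi_3,\chi_4$'' does not match the paper's convention, under which $\chi_2\cong\frg_2$ is $14$-dimensional and $\chi_3\cong\L^3_{27}$ is $27$-dimensional; your $\tau^j$ assignments in the displayed equations are nevertheless consistent with the paper, so this is only a labeling inconsistency. With these corrections the proposal is a sound outline of the cited proof.
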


A Riemannian manifold $(M,g)$ admitting a $\rG_2$ structure is spin and its spinor bundle has a unit-length section. Conversely, the spinor bundle $\S(M)$ of a spin $7$-manifold $M$ has a unit-length section $\eta$ and the $3$-form of the $\rG_2$ structure is given by \cite{ACFH15}:
\[
\Psi(X,Y,Z)=\la XYZ\eta,\eta\ra\,.
\]

The relationship between $\rG_2$-structures and harmonic spinors is characterized by the following result:
\begin{theorem} \cite[Theorem 4.8]{ACFH15} \label{G2-Dirac}
The spinor $\eta$ determines a spin-harmonic structure if and only if the induced $\rG_2$ structure is of type $\chi_{2}\oplus\chi_3$.
\end{theorem}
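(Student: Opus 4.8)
The plan is to compute the Dirac operator $\sD\eta$ explicitly in terms of the torsion forms $\t^1,\t^2,\t^3,\t^4$ of the $\rG_2$ structure determined by $\eta$, and then read off when it vanishes. The key observation is that, for a $\rG_2$ structure, the covariant derivative $\nabla\eta$ of the defining unitary spinor is governed entirely by the intrinsic torsion: since $\Stab_{\Spin(7)}(\eta)=\rG_2$, the Levi-Civita connection differs from a $\rG_2$ connection (which would preserve $\eta$) by the intrinsic torsion tensor $T\in\Gamma(T^*M\otimes\frg_2^\perp)$, and $\frg_2^\perp\subset\frspin(7)$ acts on the spinor line $\la\eta\ra\oplus(\text{something})$ in a controlled way. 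Concretely one writes $\nabla_X\eta = \sum_Y c(X,Y)\, Y\cdot\eta$ for a tensor $c$ built from the torsion, using that $T_X\in\frg_2^\perp\cong\RR^7$ acts on $\eta$ by Clifford multiplication by a vector. Plugging this into $\sD\eta = \sum_i X_i\cdot\nabla_{X_i}\eta$ collapses the double sum, via the Clifford relations $X_iX_i=-1$ and the antisymmetrization of the off-diagonal terms, into a sum of a scalar multiple of $\eta$ plus Clifford multiplication of $\eta$ by a $1$-form and by a $3$-form.

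First I would recall from \cite{ACFH15} the dictionary between $\nabla\eta$ and the torsion forms: there is an identity of the shape
\[
\nabla_X\eta = -\tfrac{1}{4}\,(X\lrcorner T_X^{(2)})\cdot\eta + (\text{terms in }\t^1,\t^3,\t^4)\cdot\eta,
\]
or, more usefully, the packaged statement that $\sD\eta$ is a specific universal linear combination of $\t^1\,\eta$, $\t^4\cdot\eta$, $\t^2\cdot\eta$ and $\t^3\cdot\eta$ under Clifford multiplication. The second step is to identify which of these four contributions actually land in which $\Spin(7)$-subrepresentation of the spinor space $W=\D_8$ (an $8$-dimensional real rep of $\Cl_7$, so $\la\eta\ra\oplus\la\eta\ra^\perp$ with $\la\eta\ra^\perp\cong\RR^7$ as a $\rG_2$-module). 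Here the key representation-theoretic facts are: Clifford multiplication $\L^1\to\End(W)$ sends $T^*M\cong\RR^7$ into $\Hom(\la\eta\ra,\la\eta\ra^\perp)$; Clifford multiplication by $\t^1\in C^\infty(M)$ (a scalar times $\eta$) stays in $\la\eta\ra$; the $3$-form $\Psi$ itself acts on $\eta$ as a scalar; and a generic $3$-form in $\L^3_{27}$ acting on $\eta$ lands in $\la\eta\ra^\perp\cong\RR^7\cong\L^3_7$. The upshot should be: the $\la\eta\ra$-component of $\sD\eta$ is (a nonzero multiple of) $\t^1$, while the $\la\eta\ra^\perp$-component is a (nonzero, injective) combination of $\t^4$ and the $\L^3_{27}$-piece $\t^3$; the $\t^2\in\L^2_{14}\cong\frg_2$ piece acts trivially on $\eta$ because $\frg_2=\Stab(\eta)$ kills $\eta$.

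Granting that computation, the theorem follows immediately: $\sD\eta=0$ forces $\t^1=0$, $\t^3=0$ and $\t^4=0$, with no constraint on $\t^2$; by Proposition \ref{G2-Torsions} this is exactly the condition that the torsion lie in $\chi_2$ — wait, one must be careful, since $\t^2\in\chi_2$ alone would be the class $\cW_2$, whereas the claim is $\chi_2\oplus\chi_3$. The resolution is that $\t^3$ does \emph{not} fully drop out: $\sD\eta$ sees the $\L^3_7$-part of $\star\t^3$, but the $\L^3_{27}$-torsion $\t^3$ lives in $\L^3_{27}$, and Clifford-multiplying $\eta$ by an element of $\L^3_{27}$ gives \emph{zero} because $\L^3_{27}\cdot\eta$ would have to land in a trivial or $\RR^7$ summand that it is orthogonal to — more precisely $\Psi\wedge\omega=0=\star\Psi\wedge\omega$ for $\omega\in\L^3_{27}$ translates under Clifford multiplication exactly into $\omega\cdot\eta=0$. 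So in fact $\t^3$ is invisible to $\sD\eta$, and $\sD\eta=0\iff \t^1=\t^4=0$, i.e. the torsion lies in $\chi_2\oplus\chi_3$, as claimed.

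The main obstacle I anticipate is getting the representation-theoretic bookkeeping exactly right — in particular, pinning down precisely which Clifford-multiplication maps $\L^k_l T^*M\to\End(\S(M))$ are zero on $\eta$, which are injective, and which image summand they hit, together with tracking all the combinatorial constants so that no accidental cancellation among $\t^1$ and $\t^4$ is possible. This is where one genuinely uses the structure of $\D_8$ as an irreducible $\Cl_7$-module and the branching $\D_8|_{\rG_2}\cong\RR\oplus\RR^7$; the cleanest route is to fix the explicit $6$-dimensional real representation of $\Cl_6$ given in the preliminaries (extended to $\Cl_7$), choose a standard $\eta$, and verify the four multiplication facts by direct inspection, or else to quote the corresponding lemmas of \cite{ACFH15} verbatim. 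Once those four facts are in hand, the proof is a one-line comparison with Proposition \ref{G2-Torsions}.
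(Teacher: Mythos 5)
This theorem is not proved in the paper; it is cited from \cite[Theorem 4.8]{ACFH15}, so there is no in-paper argument to compare against. Your overall strategy --- write $\sD\eta$ as a $\rG_2$-equivariant linear expression in the torsion components $\t^1,\t^2,\t^3,\t^4$ and decide, via the branching $W|_{\rG_2}\cong\RR\oplus\RR^7$ and Schur's lemma, which pieces can contribute --- is exactly the right one, and you do arrive at the correct conclusion $\sD\eta=0\iff\t^1=\t^4=0$.

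The middle of the argument, however, has a genuine logical gap. You first assert that the $\la\eta\ra^\perp$-component of $\sD\eta$ is an injective combination of $\t^4$ \emph{and} $\t^3$, which, if true, would give torsion class $\chi_2$ and not $\chi_2\oplus\chi_3$. You then notice that this conflicts with the statement of the theorem and go back to repair the claim. The repaired fact ($\L^3_{27}\cdot\eta=0$) is correct, but a proof cannot invoke its conclusion to locate and fix its own mistake; the vanishing has to be established beforehand. Moreover, the justification you give --- that the wedge identities $\Psi\wedge\omega=0=\star\Psi\wedge\omega$ ``translate under Clifford multiplication exactly into $\omega\cdot\eta=0$'' --- is not an argument: those identities define $\L^3_{27}$, but there is no such translation principle. (The fragment ``$\sD\eta$ sees the $\L^3_7$-part of $\star\t^3$'' is also garbled, since $\star\t^3\in\L^4$.) What does work, and what you half-state in passing, is Schur's lemma itself: $\L^3_{27}$ is a $27$-dimensional $\rG_2$-irreducible not occurring in $W|_{\rG_2}\cong\RR\oplus\RR^7$, so the $\rG_2$-equivariant map $\omega\mapsto\omega\cdot\eta$ vanishes identically. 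State this together with $\frg_2\cdot\eta=0$ at the outset and the proof is clean. A route that bypasses even this piece of Schur: write $\nabla_X\eta=S(X)\cdot\eta$ for a unique $S\in\End(TM)$, so that $\sD\eta=-(\tr S)\,\eta + \omega_S\cdot\eta$ with $\omega_S=\sum_{i<j}(S_{ij}-S_{ji})\,e^{ij}$; the $\chi_3$ piece is the traceless symmetric part of $S$ and contributes to neither $\tr S$ nor $\omega_S$ by elementary linear algebra; the $\L^2_{14}\cong\frg_2$ part of $\omega_S$ (the $\chi_2$ piece) annihilates $\eta$; and the surviving $\tr S \leftrightarrow \t^1$ and $\L^2_7$ part of $\omega_S \leftrightarrow \t^4$ act nontrivially into the orthogonal summands $\la\eta\ra$ and $\la\eta\ra^\perp$, so there is no possible cancellation between them.
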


%%%%%%%%%%%%%%%%%%%%%%%%%%%%%%%%%%%%%%%%%%%%%%%%%%%%%%%%%%%%%%%
\subsection{Spin-harmonic \texorpdfstring{$\SU(3)$}{Lg} structures in dimension 6}\label{subsec:su(3)}
%%%%%%%%%%%%%%%%%%%%%%%%%%%%%%%%%%%%%%%%%%%%%%%%%%%%%%%%%%%%%%%

Let $(M,g)$ be a $6$-dimensional Riemannian manifold. An $\SU(3)$ structure on $M$ consists in a compatible almost complex structure $J$ and a complex volume form $\Theta$ (see \cite{Hitchin,Salamon89}). We denote by $\Theta_+$ and $\Theta_-$ the real and imaginary part of $\Theta$ and we define the fundamental 2-form $\omega$ by $\omega(X,Y)=g(JX,Y)$ for $X,Y\in\fX(M)$. 

The space $\RR^6\otimes \frsu(3)^\perp$ decomposes into seven $\SU(3)$-invariant irreducible subspaces; accordingly the intrinsic torsion of an $\SU(3)$ structure, which is a section of $T^*M\otimes \frsu(3)^{\perp}$, decomposes into the subbundles $\chi_1$, $\chi_{\bar{1}}$, $\chi_2$, $\chi_{\bar{2}}$, $\chi_3$, $\chi_4$, $\chi_5$ (see \cite{CS02}).

These are related to differential equations for $\omega$, $\Theta_{+}$ and $\Theta_{-}$. Before formulating the result, we recall the decomposition of $\L^2(\RR^6)^*$ and $\L^3(\RR^6)^*$ into $\SU(3)$ irreducible representations. For this, we consider the $\UU(3)$ decomposition $\L^n (\CC^6)^* = \oplus_{p+q=n} \L^{p,q}(\CC^6)^*$ and we denote the real part of a complex vector space $V$ by $\llbracket V\rrbracket$. For a fixed $\SU(3)$ structure $(\o,\Theta_+,\Theta_-)$ on $\RR^6$, the splitting is:
\begin{align*}
\L^2 (\RR^6)^* =& \la \o \ra \oplus \llbracket\L^{1,1}_0 (\CC^6)^* \rrbracket \oplus i(\RR^6) \Theta_{+}, \\
\L^3 (\RR^6)^* = & \la \Theta_{+} \ra \oplus \la \Theta_{-} \ra \oplus \llbracket\L^{2,1}_0 (\CC^6)^*\rrbracket\oplus \RR^6 \wedge \o_0.
\end{align*}
where  $\L^{1,1}_0 (\CC^6)^*$ and $\L^{2,1}_0 (\CC^6)^*$ are the spaces of primitive forms, that is, forms of $\L^{1,1}(\CC^6)^*$ and  $\L^{2,1}(\CC^6)^*$ which are orthogonal to $\o$ and $\o \wedge (\CC^6)^*$, respectively. 
The associated bundles of $M$ will be denoted respectively by $\llbracket \L^{1,1}_0 (T^*M\otimes \CC)\rrbracket$ and $\llbracket\L^{2,1}_0 (T^*M\otimes\CC)\rrbracket$.

\begin{proposition}\cite[Section 2.5]{BV07}\label{SU(3)-Torsions}
There exist $\t^{1},\tau^{\bar{1}} \in C^\infty(M)$, $\t^4,\t^5\in \L^1 T^*M$, $\t^{2},\t^{\bar{2}} \in\llbracket\L^{1,1}_0 (T^*M\otimes \CC)\rrbracket$ and $\t^3 \in\llbracket\L^{2,1}_0 (T^*M\otimes\CC) \rrbracket$ such that:
\begin{align*}
d\omega   & =  -\frac{3}{2}\t^{\bar{1}}\Theta_+ + \frac{3}{2} \t^{1} \Theta_{-} + \t^{3} + \t^4 \wedge \omega,\\
d\Theta_+ & = \t^{1} \omega^2 - \t^{2} \wedge \omega+ \t^5 \wedge \Theta_+,\\
d\Theta_- & = \t^{\bar{1}} \omega^2 - \t^{\bar{2}} \wedge \omega+ J\t^5 \wedge \Theta_+\,.
\end{align*}
Moreover, the intrinsic torsion is a section of $\chi_j$ if and only if $\t^k=0$ for $k\neq j$.
\end{proposition}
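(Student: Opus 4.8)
The plan is to derive the torsion decomposition of an $\SU(3)$ structure directly from the known behaviour of exterior differentiation with respect to the $\UU(3)$-bidegree, combined with the $\SU(3)$-irreducible splitting of $\L^2(\RR^6)^*$ and $\L^3(\RR^6)^*$ recalled just above the statement. The key observation is that the intrinsic torsion $\xi\in\G(T^*M\otimes\frsu(3)^\perp)$ is, by a general principle (see \cite{Salamon89}), completely encoded in the covariant derivatives $\onabla\o$, $\onabla\Theta_+$, $\onabla\Theta_-$ with respect to the Levi-Civita connection; and since $d=\Alt\circ\onabla$ on forms, the exterior derivatives $d\o$, $d\Theta_\pm$ determine $\xi$ as well, because the map from the torsion module to $\L^3(\RR^6)^*\oplus\L^4(\RR^6)^*\oplus\L^4(\RR^6)^*$ given by taking the $\Alt$ of the three derivatives is $\SU(3)$-equivariant and injective on each irreducible summand $\chi_j$. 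So the first step is to invoke this equivariant-injectivity fact, reducing the proposition to a purely representation-theoretic matching.

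The second step is to decompose the target spaces under $\SU(3)$ and match summands. We have $\L^4(\RR^6)^*\iso\L^2(\RR^6)^*$ via $\star$, so both $d\Theta_+$ and $d\Theta_-$ live in a copy of $\la\o\ra\oplus\llbracket\L^{1,1}_0\rrbracket\oplus i(\RR^6)\Theta_+$ (dimensions $1+8+6$), while $d\o\in\L^3(\RR^6)^*=\la\Theta_+\ra\oplus\la\Theta_-\ra\oplus\llbracket\L^{2,1}_0\rrbracket\oplus(\RR^6\wedge\o_0)$ (dimensions $1+1+12+6$). The torsion module $T^*M\otimes\frsu(3)^\perp$ decomposes into the seven pieces $\chi_1,\chi_{\bar1},\chi_2,\chi_{\bar2},\chi_3,\chi_4,\chi_5$ of dimensions $1,1,8,8,12,6,6$. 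One then writes $d\o=-\tfrac32\t^{\bar1}\Theta_++\tfrac32\t^1\Theta_-+\t^3+\t^4\wedge\o$ by reading off the four components of $d\o$ in $\L^3(\RR^6)^*$: the $\la\Theta_\pm\ra$ parts give the two scalars $\t^1,\t^{\bar1}$, the $\llbracket\L^{2,1}_0\rrbracket$ part gives $\t^3$, and the $\RR^6\wedge\o_0$ part gives $\t^4$. Similarly $d\Theta_+$ has a scalar ($\o^2$) component, an $\llbracket\L^{1,1}_0\rrbracket\wedge\o$ component, and an $i(\RR^6)\Theta_+\wedge\cdots$-type component, producing $\t^1$, $\t^2$, and $\t^5$; and $d\Theta_-$ likewise produces $\t^{\bar1}$, $\t^{\bar2}$, and (by compatibility of $J$) $J\t^5$. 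The numerical normalizations ($-\tfrac32$, $\tfrac32$, the absence of extra coefficients in front of $\t^4\wedge\o$, etc.) are fixed by evaluating both sides on a standard basis adapted to $(\o,\Theta_+,\Theta_-)$ on $\RR^6$; this is the routine computation I would not grind through, citing \cite{BV07} and \cite{CS02} instead. The final sentence — that $\xi$ is a section of $\chi_j$ exactly when $\t^k=0$ for $k\ne j$ — then follows because the matching is a bijection between the seven torsion summands and the seven $\t$-components.

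The main obstacle is the bookkeeping of \emph{which} scalar, $\llbracket\L^{1,1}_0\rrbracket$-valued, or $1$-form-valued torsion component feeds into \emph{which} of the three structure equations, and with what coefficient: the scalars $\t^1,\t^{\bar1}$ each appear in two of the three equations, and $\t^5$ appears in the $d\Theta_+$ equation directly but in the $d\Theta_-$ equation twisted by $J$, reflecting the fact that the $\chi_5$-module sits inside $T^*M$ in a way that mixes $\Theta_+$ and $\Theta_-$. Getting these cross-appearances and the constants right requires care, but it is ultimately a finite linear-algebra check on $\RR^6$ with the standard $\SU(3)$ model, and it is exactly the content of \cite[Section 2.5]{BV07} and \cite[Section 2.5]{CS02}, from which the stated normalizations are taken.
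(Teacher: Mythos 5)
The paper itself offers no proof of this proposition — it is taken verbatim from \cite[Section 2.5]{BV07}, and there is nothing in the paper to compare your sketch against. Judged on its own terms, your strategy is the standard one, and it correctly identifies the two ingredients: (i) the intrinsic torsion is equivalent to the collection of Levi-Civita covariant derivatives of $\o$, $\Theta_+$, $\Theta_-$; (ii) the alternation map sends this collection to $(d\o,d\Theta_+,d\Theta_-)$ and one matches $\SU(3)$-irreducibles on both sides.

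There is, however, a real gap at step (ii). You assert that the map from $T^*M\otimes\frsu(3)^\perp$ to $\L^3\oplus\L^4\oplus\L^4$ is ``$\SU(3)$-equivariant and injective on each irreducible summand $\chi_j$.'' Equivariance gives you, via Schur's lemma, only the dichotomy ``zero or injective'' on each $\chi_j$; it does not give injectivity. For a general $\rG$-structure the alternation map can and does kill pieces of the torsion (this is precisely why for some groups the exterior derivatives of the defining forms do not determine the torsion, and one must instead work with $\onabla$ directly). So the non-vanishing of the alternation map on each of the seven $\chi_j$ is a specific computational fact about $\SU(3)\subset\SO(6)$ — indeed it is the heart of the Chiossi--Salamon result — and cannot be waved through as a general principle. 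Likewise, the cross-pairings you flag at the end (that $\t^1$ multiplies $\Theta_-$ in $d\o$ and $\o^2$ in $d\Theta_+$, that $\t^5$ reappears in $d\Theta_-$ twisted by $J$, and the coefficients $\mp\tfrac32$) are not determined by the dimension count: there are four copies of the trivial representation and three copies of $\RR^6$ in the target, and which two-dimensional (resp.\ one-dimensional) subspaces the images of $\chi_1\oplus\chi_{\bar 1}$ (resp.\ $\chi_4$, $\chi_5$) carve out inside them can only be found by evaluating on the model $\SU(3)$ structure on $\RR^6$. Since these computations are exactly the content of the proposition, citing \cite{BV07} and \cite{CS02} for them means the proposal proves the framing but not the statement. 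That is an acceptable account of \emph{why} the proposition is true and where it comes from, but as a proof it is incomplete at precisely the two points that matter.
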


A Riemannian manifold $(M,g)$ with an $\SU(3)$ structure is spin and its spinor bundle has a unit-length section. Conversely, a spin $6$-dimensional manifold has a unit-length spinor; the following proposition explains how the spinor induces the $\SU(3)$ structure.

\begin{proposition}\cite[Section 2]{ACFH15}\label{su3-spinors}
The spinor bundle of $M$ splits as
\[
\S(M)=\la \eta \ra \oplus \la \jota\eta \ra \oplus  TM \eta\,.
\]
The fundamental form $\omega$ and the real part of the complex 3-form $\Theta_+$ of the $\SU(3)$ structure determined by $\eta$ are given by 
\[
 \omega(X,Y)=\la \jota X\eta,Y\eta\ra \quad \textrm{and} \quad \Theta_+=-\la XYZ\eta,\eta\ra\,.
\]
\end{proposition}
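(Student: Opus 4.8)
The statement to prove is Proposition~\ref{su3-spinors}, which describes the splitting of the spinor bundle of a 6-dimensional spin manifold and gives the formulas for $\omega$ and $\Theta_+$ in terms of the unitary spinor $\eta$. Here is how I would proceed.

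\textbf{The plan.} First I would reduce the problem to a pointwise (representation-theoretic) statement on the fiber $W = \D_6$ (or its real form), since all the maps involved---Clifford multiplication, the quaternionic structure $\jota$, the scalar product $\langle\cdot,\cdot\rangle$---are defined fiberwise and are $\Spin(6)$-equivariant, so it suffices to verify everything for a fixed unitary $\eta$ in the model representation $W$. Recall from Proposition~\ref{restriction} that for $n=6$ the representation $\rho_6$ of $\Cl_6$ is irreducible of real dimension $8$ (case $n\equiv 6\pmod 8$), carrying the quaternionic structure $\jota_2$ of Proposition~\ref{real-and-quaternionic-structures}, which here we abbreviate $\jota$; note $\jota$ \emph{commutes} with Clifford multiplication by vectors (since $k=3$, $(-1)^{k+1}=1$). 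The explicit matrices for $e_1,\dots,e_6$ given in Subsection~\ref{Cl-representations} make this entirely computable.

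\textbf{Key steps.} (1) \emph{The splitting.} Fix unitary $\eta\in W$. The subspace $TM\cdot\eta$, i.e.\ $\mathrm{span}_\RR\{e_i\eta : i=1,\dots,6\}$, has real dimension $6$: the $e_i\eta$ are linearly independent because $\langle e_i\eta, e_j\eta\rangle = \langle e_i^\top e_j \eta,\eta\rangle$ and $e_i^\top e_j = -e_ie_j$ for $i\neq j$ (skew-symmetry of Clifford multiplication by unit vectors, together with $e_i^2=-1$), so the Gram matrix of $\{e_i\eta\}$ is the identity---they form an orthonormal set. Next, $\langle e_i\eta,\eta\rangle=0$ since $e_i$ is skew and $e_i^2=-1$ forces $\langle e_i\eta,\eta\rangle = -\langle\eta,e_i\eta\rangle$; similarly $\langle \jota\eta,\eta\rangle = 0$ because $\jota$ is antilinear and an isometry of the real part of $h$, combined with $h(\jota\eta,\eta) = -\overline{h(\eta,\jota\eta)}$-type identities (one checks $\langle\jota\eta,\eta\rangle$ and $\langle\jota\eta,e_i\eta\rangle$ vanish using that $\jota$ commutes with $e_i$ and is a quaternionic structure). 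Counting dimensions $1+1+6 = 8 = \dim_\RR W$ then gives the orthogonal direct sum decomposition $W = \langle\eta\rangle\oplus\langle\jota\eta\rangle\oplus W\cdot\eta$, which globalizes to the bundle statement.

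(2) \emph{The formula for $\omega$.} Define $\omega(X,Y) := \langle\jota X\eta, Y\eta\rangle$. One checks this is skew-symmetric in $X,Y$ (using $\jota$ commutes with Clifford multiplication, $\jota$ is an isometry, and skew-symmetry of Clifford multiplication) and that the endomorphism $J$ it defines via $\omega(X,Y) = g(JX,Y)$ satisfies $J^2 = -\mathrm{Id}$ and $g(JX,JY) = g(X,Y)$, so $(J,g)$ is almost Hermitian; identifying the stabilizer of $\eta$ in $\Spin(6)$ with $\SU(3)$ (the Proposition preceding Definition~\ref{harmonic-structure}), $\omega$ is then forced to be the fundamental form of this $\SU(3)$ structure up to normalization, and the normalization is fixed by evaluating on one pair of basis vectors.

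(3) \emph{The formula for $\Theta_+$.} Set $\Theta_+ := -\langle XYZ\eta,\eta\rangle$ as a trilinear form; verify it is alternating (Clifford relations: $e_ie_j = -e_je_i$ for $i\neq j$, and $\langle e_ie_je_k\eta,\eta\rangle = 0$ whenever two indices coincide, since $e_ie_i\eta = -\eta$ reduces to $\pm\langle e_k\eta,\eta\rangle = 0$). Then check it is of type $(3,0)+(0,3)$ with respect to $J$ and has the correct norm, so it is the real part of a complex volume form compatible with the $\SU(3)$ structure. The cleanest route is to do this computation in the explicit basis of Subsection~\ref{Cl-representations}, picking $\eta = (1,0,\dots,0)^\top\in\RR^8$, computing $e_i\eta$, $\jota\eta$, and the triple products explicitly, and reading off that the resulting $\omega$ and $\Theta_+$ are the standard $\SU(3)$ forms on $\RR^6$.

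\textbf{Main obstacle.} The conceptual content is light; the real work is the bookkeeping in step (3)---confirming that $-\langle XYZ\eta,\eta\rangle$ is genuinely alternating and that the pair $(\omega,\Theta_+)$ satisfies the $\SU(3)$ compatibility relations ($\omega\wedge\Theta_+ = 0$, $\Theta_+\wedge\Theta_- = \tfrac{2}{3}\omega^3$, etc.) with the right constants. I expect the sign conventions for the Clifford action on covectors (the identification $X^* = g(X,\cdot)$ and the extension to $\Lambda^\bullet T^*M$ from Subsection~\ref{Spinorial}) to be the most error-prone point, and I would pin everything down by the single explicit model computation with the matrices $E_{ij}$ already provided, which simultaneously verifies the splitting, both formulas, and all normalizations at once. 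This is essentially the computation already carried out in \cite{ACFH15}, to which one can defer for the routine verifications.
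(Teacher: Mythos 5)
The paper cites Proposition~\ref{su3-spinors} from \cite{ACFH15} without giving a proof, so there is no internal argument to compare against; what follows evaluates your proposal on its own terms. Your overall plan --- reduce to a pointwise claim in the model representation, establish orthogonality and count dimensions for the splitting, then verify the two formulas, falling back on the explicit matrices of Subsection~\ref{Cl-representations} for normalizations --- is the right strategy, and the skeleton of steps~(1) and~(3) is sound.

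However, there is a genuine error in your identification of $\jota$ that breaks both the orthogonality argument in step~(1) and the skew-symmetry argument in step~(2). You take $\jota$ to be ``the quaternionic structure $\jota_2$ of Proposition~\ref{real-and-quaternionic-structures}'' and assert that it \emph{commutes} with Clifford multiplication by vectors. Two corrections are needed. First, for $n=6$ one has $k=3\equiv 3\pmod 4$, so Proposition~\ref{real-and-quaternionic-structures} furnishes a \emph{real} structure $\vp$ (with $\vp^2=\rI$), not the quaternionic structure $\jota_2$. Second, and more importantly, $\vp$ is not the operator $\jota$ of Proposition~\ref{su3-spinors}: in this paper, in the $6$-dimensional setting, $\jota$ denotes Clifford multiplication by the volume form $\nu_6$ (this is stated explicitly at the start of Subsection~\ref{subsec:6-nilm}), which \emph{anticommutes} with Clifford multiplication by vectors, satisfies $\jota^2=-\rI$, and is skew-symmetric for $\la\cdot,\cdot\ra$. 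The operator $\vp$, by contrast, is a symmetric orthogonal involution which acts as the identity on the real form $W=(\D_6)_+$ realizing $\Cl_6\cong\End_\RR(W)$. With your identification one gets $\la\jota\eta,\eta\ra=\la\eta,\eta\ra=1\neq 0$, so the three summands in the asserted splitting are not mutually orthogonal, and a short computation (using that your $\jota$ commutes with vectors and is symmetric) gives $\omega(X,Y)+\omega(Y,X)=2g(X,Y)\la\jota\eta,\eta\ra=2g(X,Y)$, so $\omega$ acquires a nontrivial symmetric part and is not a $2$-form. With the correct $\jota=\nu_6\cdot$, the facts you need ($\la\jota\eta,\eta\ra=0$, $\la\jota\eta,e_i\eta\ra=0$, skew-symmetry of $\omega$) follow at once from skew-symmetry of $\jota$ together with the anticommutation, your dimension count is unaffected, and the plan for step~(3) goes through once $J$ is built from the corrected $\omega$. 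The rest of the proposal is fine modulo this fix.
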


Proposition \ref{su3-spinors} guarantees the existence and uniqueness of $S\in \End(TM)$ and $\g \in T^*M$ such that:
\[ 
\nabla_X\eta= S(X)\eta + \g(X)\jota\eta\,.
\]

The relation between harmonic spinors and $\SU(3)$ structures is  given by the following result:
\begin{theorem} \cite[Theorem 3.7]{ACFH15} \label{SU(3)-Dirac}
The spinor $\eta$ determines a spin-harmonic structure if and only if its induced $\SU(3)$ structure is in the class $\chi_{2\bar{2}345}$ and verifies $\delta \omega = -2\g$.
\end{theorem}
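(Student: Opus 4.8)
\textbf{Proof plan for Theorem~\ref{SU(3)-Dirac}.}

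The plan is to compute $\sD\eta$ explicitly using the splitting $\S(M) = \la\eta\ra \oplus \la\jota\eta\ra \oplus TM\eta$ from Proposition~\ref{su3-spinors} and the structure equation $\nabla_X\eta = S(X)\eta + \g(X)\jota\eta$, and then to translate the vanishing of each component into conditions on the intrinsic torsion via Proposition~\ref{SU(3)-Torsions}. First I would pick a local orthonormal frame $(X_1,\dots,X_6)$ and write
\[
\sD\eta = \sum_{i=1}^6 X_i\nabla_{X_i}\eta = \sum_{i=1}^6 X_i\big(S(X_i)\eta + \g(X_i)\jota\eta\big).
\]
Here $\sum_i X_i\g(X_i) = \g^\sharp$ acting by Clifford multiplication, so the second term contributes $\g^\sharp\cdot(\jota\eta)$, which (since $\jota$ is parallel and commutes appropriately with Clifford multiplication) lies in $TM\eta \oplus \la\eta\ra$-type pieces; the first term $\sum_i X_i S(X_i)\eta$ is built from the symmetric-plus-skew endomorphism $S$, and $\sum_i X_i S(X_i)$ decomposes according to the $\SU(3)$-irreducible pieces of $\End(TM) \cong \mathfrak{su}(3)\oplus\mathfrak{su}(3)^\perp$. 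The key computational input is the dictionary, already implicit in \cite{ACFH15}, between the tensor $(S,\g)$ and the torsion forms $\t^1,\t^{\bar1},\t^2,\t^{\bar2},\t^3,\t^4,\t^5$ appearing in Proposition~\ref{SU(3)-Torsions}: the covariant derivatives of $\omega$ and $\Theta_\pm$ expressed in terms of $\eta$ via the formulas $\omega(X,Y)=\la\jota X\eta,Y\eta\ra$ and $\Theta_+ = -\la XYZ\eta,\eta\ra$ convert $\nabla\eta$ into $d\omega$, $d\Theta_+$, $d\Theta_-$.

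The heart of the argument is then matching components. Projecting $\sD\eta$ onto $\la\eta\ra$, onto $\la\jota\eta\ra$, and onto $TM\eta$ gives three independent scalar/vector equations. I expect: the $\la\eta\ra$- and $\la\jota\eta\ra$-components force the trace-type and $\chi_1$-type pieces of the torsion to vanish, i.e. $\t^1 = \t^{\bar1} = 0$, which rules out $\chi_1$ and $\chi_{\bar1}$; and, crucially, together with the contribution of $\g^\sharp$ they pin down the relation between $\g$ and $\delta\omega$. Indeed $\delta\omega$ is, up to sign and a universal constant, the part of $d\omega$ (equivalently of $\sum X_i S(X_i)$) living in $\L^1$, which by Proposition~\ref{SU(3)-Torsions} is governed by $\t^4$; chasing constants one gets $\delta\omega = -2\g$ precisely as the condition that the $\chi_4$-contribution to $\sD\eta$ cancels against the $\g^\sharp\jota\eta$-contribution rather than vanishing separately. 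The $TM\eta$-component then kills $\chi_3$-independent obstructions but leaves $\chi_2,\chi_{\bar2},\chi_3,\chi_4,\chi_5$ free (subject to $\delta\omega=-2\g$), giving exactly the class $\chi_{2\bar2345}$.

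The main obstacle is bookkeeping: one must carry the correct normalization constants through the identifications $W \leftrightarrow \L^\bullet T^*M$ (the Clifford-module structure, the factor conventions in $\omega$ and $\Theta_+$, and the precise constants in Proposition~\ref{SU(3)-Torsions} taken from \cite{BV07}) so that the coefficient in $\delta\omega = -2\g$ comes out right. The cleanest route, which I would adopt, is to avoid ad hoc frames and instead use the $\SU(3)$-equivariance: both $\sD\eta$ and the torsion transform as $\SU(3)$-modules, so it suffices to check the correspondence on each irreducible summand of $T^*M\otimes\mathfrak{su}(3)^\perp$ separately, reducing the problem to a finite number of representation-theoretic constants that can be fixed by testing on explicit model tensors (e.g. evaluating both sides of $d\omega = \cdots$ on a standard basis adapted to $J$). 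This is essentially the computation of \cite[Theorem~3.7]{ACFH15}, and I would cite that source for the detailed constant-chasing while reproducing the structural steps above.
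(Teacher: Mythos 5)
The paper itself does not prove this theorem; it imports it verbatim from \cite[Theorem~3.7]{ACFH15}, and the surrounding text (Proposition~\ref{su3-spinors}, the structure equation $\nabla_X\eta=S(X)\eta+\g(X)\jota\eta$, and the Corollary that follows) is there precisely to set up the dictionary your sketch invokes. Your plan is a faithful reconstruction of the argument in that source: compute $\sD\eta=\sum_iX_i\nabla_{X_i}\eta=m(S)\eta+\g^\sharp\jota\eta$, split along $\S(M)=\la\eta\ra\oplus\la\jota\eta\ra\oplus TM\eta$, observe that the $\la\eta\ra$- and $\la\jota\eta\ra$-projections kill the trace and $\la J\ra$-parts of $S$ (hence $\t^1=\t^{\bar1}=0$, i.e.\ $\chi_1=\chi_{\bar1}=0$), that the $\frsu(3)$- and traceless-symmetric parts of $S$ (carrying $\chi_2,\chi_{\bar2},\chi_3$) lie in $\ker m$ and are therefore unconstrained, and that the surviving $TM\eta$-component pins $\tau^4$ against $\g$, which after chasing constants is exactly $\delta\omega=-2\g$. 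This matches, up to the constants you defer, the proof that the paper is pointing at.

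One small correction to the bookkeeping: since $\jota$ (Clifford multiplication by the volume form, in the paper's convention for $n=6$) anticommutes with Clifford multiplication by a vector and preserves the summand $TM\eta$, the term $\g^\sharp\jota\eta=-\jota(\g^\sharp\eta)$ lies purely in $TM\eta$; it does not spill into $\la\eta\ra$ as you hedge. This is actually helpful for your argument, because it means the two scalar projections involve $S$ alone and cleanly kill $\chi_1,\chi_{\bar1}$, while the $\g$-dependence is confined to the vector component where the $\delta\omega=-2\g$ cancellation happens.
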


We finally relate Theorem \ref{SU(3)-Dirac} and Proposition \ref{SU(3)-Torsions}.

\begin{corollary}
The $\SU(3)$ structure is spin-harmonic if and only if it lies in $\chi_{2\bar{2}345}$ and satisfies $\t^4= \t^5$.
\end{corollary}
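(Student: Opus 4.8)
The plan is to combine Theorem \ref{SU(3)-Dirac} with Proposition \ref{SU(3)-Torsions}: Theorem \ref{SU(3)-Dirac} tells us the spin-harmonic condition is equivalent to ``the $\SU(3)$ structure lies in $\chi_{2\bar2345}$ (i.e.\ $\t^1=\t^{\bar 1}=\t^3=0$) and $\delta\omega=-2\g$'', so it suffices to re-express the analytic condition $\delta\omega=-2\g$ in terms of the torsion forms $\t^4,\t^5$ appearing in Proposition \ref{SU(3)-Torsions}, under the standing assumption $\t^1=\t^{\bar 1}=\t^3=0$. The claim is that, on $\chi_{2\bar2345}$, one has $\delta\omega=-2\g \iff J\t^4=-6\t^5$.

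First I would compute $\delta\omega=-\star d\star\omega$. Since $\star\omega=\tfrac12\omega^2$ in dimension $6$, this reduces to controlling $d(\omega^2)=2\,d\omega\wedge\omega$. Using the expression for $d\omega$ from Proposition \ref{SU(3)-Torsions} with $\t^1=\t^{\bar 1}=\t^3=0$, namely $d\omega=\t^4\wedge\omega$, we get $d\star\omega = \t^4\wedge\omega^2$, hence $\delta\omega=-\star(\t^4\wedge\omega^2)$. A standard identity for $\SU(3)$ (equivalently $\UU(3)$) structures gives $\star(\alpha\wedge\omega^2)=-2J\alpha$ for a $1$-form $\alpha$ (up to the sign conventions fixed by the paper's definition of $\star$ and of $J$ acting on forms), so $\delta\omega = 2J\t^4$ — or the appropriate signed version; I would pin down the constant and sign by testing on the flat model $\CC^3$ with the standard structure. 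Thus the left-hand condition $\delta\omega=-2\g$ becomes $2J\t^4 = -2\g$, i.e.\ $\g = -J\t^4$.

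Next I need to relate $\g$ to $\t^5$. Recall $\g$ is defined by $\nabla_X\eta = S(X)\eta + \g(X)\jota\eta$, so $\g$ is the ``$\jota\eta$-component'' of the intrinsic torsion; on $\chi_{2\bar2345}$ the only torsion classes contributing a $\la\jota\eta\ra$-term should be $\chi_4$ and $\chi_5$, and by the spinorial dictionary of \cite{ACFH15} (the same one underlying Theorem \ref{SU(3)-Dirac} and Proposition \ref{su3-spinors}) $\g$ is a fixed linear combination $a\,\t^4 + b\,J\t^5$ — or, after the identification of $\chi_4$ with $\delta\omega$-type data, purely $\chi_5$ contributes to $\g$ in the combination that matters here. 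I would extract the precise normalization either by quoting the covariant-derivative formulas for $\eta$ from \cite{ACFH15} (which express $S$ and $\g$ in terms of all seven torsion forms) and substituting $\t^1=\t^{\bar1}=\t^3=0$, or — more self-containedly — by differentiating the identities $\Theta_+=-\la XYZ\eta,\eta\ra$ and $\omega(X,Y)=\la\jota X\eta,Y\eta\ra$ and comparing with Proposition \ref{SU(3)-Torsions}. Combining $\g=-J\t^4$ with the expression of $\g$ in terms of $\t^4,\t^5$ then yields a linear relation between $\t^4$ and $\t^5$; matching it against the target $J\t^4=-6\t^5$ fixes the remaining constants, and applying $J$ (an isometry with $J^2=-\Id$ on $1$-forms) converts freely between $\g=-J\t^4$ and the stated form. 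Finally I would remark that on $\chi_{2\bar2345}$ the conditions $J\t^4=-6\t^5$ and $\delta\omega=-2\g$ are visibly equivalent once both have been rewritten as the same linear constraint, completing the proof.

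The main obstacle I anticipate is bookkeeping of normalization constants and sign conventions: there are three independent sources of sign/scaling choices — the paper's definitions of $\star$, of $J$ and its extension to forms, and of the torsion forms $\t^4,\t^5$ in Proposition \ref{SU(3)-Torsions} (taken from \cite{BV07}) versus the covariant-derivative normalization of $\eta$ in \cite{ACFH15} — and the coefficient $6$ in $J\t^4=-6\t^5$ is precisely what must survive all of them. I would resolve this by never relying on a remembered constant but always recomputing $\delta\omega$ and $\g$ on the standard $\SU(3)$ structure on $\RR^6=\CC^3$, where every quantity is explicit, and reading off the universal constants there; the rest of the argument is then purely formal linear algebra on $\L^1T^*M$ with the operators $J$ and $\star$.
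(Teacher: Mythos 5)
Your approach matches the paper's: Theorem \ref{SU(3)-Dirac} reduces the claim to rewriting $\delta\omega=-2\gamma$ via Proposition \ref{SU(3)-Torsions}, and both legs of your plan --- computing $\delta\omega$ from $d\omega=\tau^4\wedge\omega$ via the Hodge star, and relating $\gamma$ to $\tau^5$ by covariantly differentiating $\Theta_+=-\langle XYZ\eta,\eta\rangle$ --- are exactly the paper's two steps, which yield $\delta\omega = -\star(\tau^4\wedge\omega^2)=J\tau^4$ and $\tau^5=\tfrac13\gamma$, whence the constant $6$. The one substantive gap in your sketch is the one you yourself flag: a priori $\gamma$, being a section of $T^*M$, could receive contributions from \emph{both} $T^*M$-isotypic pieces $\chi_4$ and $\chi_5$ of the intrinsic torsion, so Schur-type equivariance alone only gives $\gamma = a\,\tau^4 + b\,J\tau^5$; the fact that $a=0$ --- i.e.\ that on the pure $\chi_4$ stratum the $\jota\eta$-component of $\nabla\eta$ vanishes, so $\gamma$ is governed by $\tau^5$ alone --- is precisely what the paper extracts from \cite[Theorem 3.13]{ACFH15} before it computes $d\Theta_+$ on the $\chi_5$ stratum, and you would need to cite or rederive that torsion table to make the coefficient $6$ unambiguous (your normalization caveat on $\star(\alpha\wedge\omega^2)$ is a lesser issue and is indeed resolved by testing on flat $\CC^3$, as you propose).
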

\begin{proof}
First, $\delta\o =  -\star(\t^4\wedge \o^2)= J\t^4$. 
To find an expression for $\gamma$ in terms of the torsion forms we first observe that, according to 
\cite[Theorem 3.13]{ACFH15}, it only depends on the projection of the intrinsic torsion $\Gamma$ to $\chi_5$. Therefore, we assume that $\gamma \in \chi_5$ for this computation; observe that in this case $d\Theta_+=\t^5 \wedge \Theta_+$, due to Proposition \ref{SU(3)-Torsions}. 

If the torsion lies in $\chi_5$ then, $\nabla_X \eta = \g(X)\jota\eta$ and therefore, for orthonormal vectors: 
 $\nabla_W\Theta_+ (X,Y,Z)=-2\g(W)\la XYZ\eta,\jota\eta\ra$
$=2\g(W)\la J(X)YZ\eta,\eta\ra=-2\g(W)\Theta_-(X,Y,Z),
$

where we used that $X \jota \eta=-\jota X \eta=-J(X)\eta$ and that $\Theta_-(X,Y,Z)=\Theta_+(J(X),Y,Z)$.
Therefore, 
\begin{align*}
d\Theta_{+}& (W,X,Y,Z) = \\ &= \nabla_W\Theta_+(X,Y,Z)-\nabla_X\Theta_+ (W,Y,Z )+\nabla_Y\Theta_+(X,W,Z)
-\nabla_Z\Theta_+(X,Y,W) \\ 
&=  -2 \g\wedge\Theta_-(W,X,Y,Z).
\end{align*}
In addition, one can observe that $\a \wedge \Theta_-=-J\a\wedge \Theta_+$ for $\a \in \xi^*$; this implies that, $\t^5=2J\gamma$. Therefore, the equality $\delta \omega= -2\gamma$ is equivalent to $\t^4=\t^5$.
\end{proof}

%%%%%%%%%%%%%%%%%%%%%%%%%%%%%%%%%%%%%%%%%%%%%%%%%%%%%%%%%%%%%%%%%%%%%%%%%%
%                                                                        %
%     SPIN-HARMONIC SU(2) STRUCTURES ON 5-DIMENSIONAL MANIFOLDS          %
%                                                                        %
%%%%%%%%%%%%%%%%%%%%%%%%%%%%%%%%%%%%%%%%%%%%%%%%%%%%%%%%%%%%%%%%%%%%%%%%%%

%%%%%%%%%%%%%%%%%%%%%%%%%%%%%%%%%%%%%%%%%%%%%%%%%%%%%%%%%%%%%%%
\section{Spin-harmonic \texorpdfstring{$\SU(2)$}{Lg} structures on 5-dimensional manifolds}\label{sec:su(2)}
%%%%%%%%%%%%%%%%%%%%%%%%%%%%%%%%%%%%%%%%%%%%%%%%%%%%%%%%%%%%%%%

%%%%%%%%%%%%%%%%%%%%%%%%%%%%%%%%%%%%%%%%%%%%%%%%%%%%%%%%%%%%%%%
\subsection{\texorpdfstring{$\SU(2)$}{Lg} structures}\label{subsec:su(2)-structures}
%%%%%%%%%%%%%%%%%%%%%%%%%%%%%%%%%%%%%%%%%%%%%%%%%%%%%%%%%%%%%%%

An $\SU(2)$ structure on a Riemannian manifold $(M,g)$ is determined by an orthogonal splitting $TM= \xi \oplus \la \a^\sharp \ra$, where $\a$ is a unit-length $1$-form and the distribution $\xi=\ker\a$ is endowed with three almost complex structures $J_k \colon \xi \to \xi$, $k=1,2,3$ which are isometries with respect to the induced metric, and satisfy $J_1\circ J_2=J_3$ and $J_k\circ J_l=-J_l\circ J_k$ for $k\neq l$. The vector field $\a^\sharp$ is denoted by $R$. The three fundamental $2$-forms are given by $\o_k(X,Y)=g(J_kX,Y)$, $k=1,2,3$, $X,Y\in\fX(M)$.

In fact, $\SU(2)$ structures are characterized by the forms $(\a,\o_1,\o_2,\o_3)$, as the following result states:

\begin{proposition}\cite[Proposition 1]{CS07}\label{SU2-forms} 
$\SU(2)$ structures on a $5$-manifold are in one-to-one correspondence with $(\a,\o_1,\o_2,\o_3) \in \L^1 T^*M \times (\L^2 T^*M)^3$, such that:
\begin{enumerate}
\item $\o_i \wedge \o_j=0$ for $i\neq j$, $\o_1^2 = \o_2^2= \o_3^2$ and $\a \wedge \o_1^2 \neq 0$,
\item If $i(X)\o_1= i(Y)\o_2$, then $\o_3(X,Y)\geq 0$.
\end{enumerate}
\end{proposition}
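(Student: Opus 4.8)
The plan is to reduce the statement to a fibrewise linear-algebra bijection on a $5$-dimensional real vector space and there to exhibit two mutually inverse explicit constructions. An $\SU(2)$ structure on $M$ is a section of the bundle whose fibre over $p$ is the set of $\SU(2)$ reductions of the frame bundle of $T_pM$ (so in particular it records the fibre metric), while a quadruple $(\a,\o_1,\o_2,\o_3)$ satisfying (1)--(2) is a section of a subbundle of $\L^1T^*M\oplus(\L^2T^*M)^3$; since the maps I construct below are given in both directions by algebraic formulas (polynomial one way, rational with nonvanishing denominator the other) in the pointwise data, smoothness will be automatic. So it suffices to prove: on $V=\RR^5$, $\SU(2)$ structures correspond bijectively to quadruples $(\a,\o_1,\o_2,\o_3)\in V^*\times(\L^2V^*)^3$ satisfying (1)--(2).

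For the forward map I would start from $(g,\a,\xi=\ker\a,J_1,J_2,J_3)$ with $\a$ unitary, set $R=\a^\sharp$, extend the $J_k$ to $V$ by $J_kR=0$, and put $\o_k=g(J_k\cdot,\cdot)$; then $\ker\o_k=\la R\ra$. On $(\xi,g|_\xi)$ the triple $(J_1,J_2,J_3)$ is the standard quaternionic structure of $\HH\cong\RR^4$, so $(\o_1|_\xi,\o_2|_\xi,\o_3|_\xi)$ is a hyperkähler triple; hence $\o_i\wedge\o_j=2\d_{ij}\,\mathrm{vol}_{g|_\xi}$ in $\L^4\xi^*\subset\L^4V^*$, which gives $\o_i\wedge\o_j=0$ for $i\neq j$, $\o_1^2=\o_2^2=\o_3^2$ and $\a\wedge\o_1^2=2\,\a\wedge\mathrm{vol}_{g|_\xi}\neq0$, i.e.\ (1). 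For (2): if $i(X)\o_1=i(Y)\o_2$, then since both sides annihilate $R$ we may take $X,Y\in\xi$; the equation becomes $J_1X=J_2Y$, so $Y=J_2^{-1}J_1X=J_3X$ (using $J_2^{-1}=-J_2$ and $J_2J_1=-J_1J_2=-J_3$), whence $\o_3(X,Y)=g(J_3X,J_3X)=|X|_g^2\geq0$.

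For the inverse map I would take $(\a,\o_1,\o_2,\o_3)$ satisfying (1)--(2). Each $\o_k$ has rank $4$ (since $\o_k^2=\o_1^2\neq0$, the latter from $\a\wedge\o_1^2\neq0$), and $\ker\o_k=\{v:i(v)(\o_k^2)=0\}$ is determined by $\o_k^2$, so $\o_1^2=\o_2^2=\o_3^2$ forces a common line $\ell:=\ker\o_1=\ker\o_2=\ker\o_3$; moreover $i(R)(\a\wedge\o_1^2)=\a(R)\,\o_1^2$ for $R\in\ell$, so $\a|_\ell\neq0$ and $V=\xi\oplus\ell$ with $\xi:=\ker\a$, and I normalise $R\in\ell$ by $\a(R)=1$. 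Each $\o_k|_\xi$ is then symplectic on the $4$-plane $\xi$. Now I would invoke Hitchin's theory of stable forms, equivalently \cite[Proposition~1]{CS07}: $(\o_1+\imag\o_2)^2=\o_1^2-\o_2^2+2\imag\,\o_1\wedge\o_2=0$ on $\xi$, so $\o_1+\imag\o_2$ is a decomposable complex $2$-form of maximal rank whose kernel is the $(1,0)$-space of a complex structure $J_3$ on $\xi$ with $J_3^2=-\mathrm{id}$ and $\o_1+\imag\o_2$ of type $(2,0)$; from $\o_3\wedge\o_1=\o_3\wedge\o_2=0$ one gets $\o_3\wedge(\o_1+\imag\o_2)=0$, forcing $\o_3$ to be of type $(1,1)$ for $J_3$, so $g_\xi(X,Y):=\o_3(X,J_3Y)$ is symmetric and nondegenerate, and conditions (1)--(2) then force $g_\xi$ to be positive definite (this is exactly where the openness/stability of the orbit is used). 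Repeating this recipe on $(\o_2,\o_3)$ and $(\o_3,\o_1)$ produces $J_1,J_2$; one checks $(J_1,J_2,J_3)$ is a quaternionic structure with $J_1J_2=J_3$, the orientation $J_1J_2=-J_3$ being precisely the one excluded by (2) (it would make the $Y$ solving $i(X)\o_1=i(Y)\o_2$ equal to $-J_3X$, giving $\o_3(X,Y)=-|X|_{g_\xi}^2<0$). Finally I extend $g_\xi$ to $V$ by declaring $\ell\perp\xi$ and $|R|=1$; then $\a=R^\sharp$ is unitary and $(g,\a,\xi,J_1,J_2,J_3)$ is an $\SU(2)$ structure that the forward map sends back to $(\a,\o_1,\o_2,\o_3)$.

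It then remains to check that the two maps are mutually inverse: ``forms $\to$ structure $\to$ forms'' is the identity by construction, and for ``structure $\to$ forms $\to$ structure'' one recovers $\ell=\la R\ra=\ker\o_k$, hence $\xi$ and $R$, then $g$ by the uniqueness in the stable-form reconstruction, and then $J_k$ as the $g$-dual of $\o_k|_\xi$, i.e.\ the original one. The step I expect to cost the most effort is the linear-algebra heart of the inverse map — verifying that three $2$-forms subject only to the algebraic constraints (1) genuinely reconstruct a positive definite metric together with a compatible quaternionic structure, and that (2) supplies exactly the positivity/orientation datum removing the residual sign ambiguity. This is the content of \cite[Proposition~1]{CS07} (compare Hitchin's treatment of stable forms), which one may simply cite; the remainder is the bundle-theoretic reduction and routine bookkeeping.
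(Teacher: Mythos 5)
The paper gives no proof of this proposition; it is quoted directly from \cite[Proposition 1]{CS07}, so there is no in-text argument to compare your proposal against. Your write-up is a correct outline of the argument one finds in Conti--Salamon (with the $4$-dimensional linear-algebra core in the spirit of Hitchin's stable forms). The forward direction is worked out in full and is fine, including the computation $\omega_i\wedge\omega_j=2\delta_{ij}\,\mathrm{vol}_{g|_\xi}$ and the deduction $Y=J_3X$ that verifies condition (2). The inverse direction is also structured correctly: rank $4$ for each $\omega_k$, the identification of $\ker\omega_k$ with the kernel of $v\mapsto i(v)\omega_k^2$ (hence a common line $\ell$), the observation $\alpha|_\ell\neq 0$ giving the splitting $V=\xi\oplus\ell$, and the recovery of $J_3$ from the decomposable form $\omega_1+\imag\,\omega_2$ with $\omega_3$ then forced to be of type $(1,1)$. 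The one caveat, which you flag yourself, is that the heart of the reconstruction --- positive-definiteness of $g_\xi$, the quaternionic relations among $J_1,J_2,J_3$, and the fact that condition (2) selects the orientation $J_1J_2=J_3$ rather than $J_1J_2=-J_3$ --- is deferred to \cite[Proposition 1]{CS07}, i.e.\ to the very statement being proved. As a blind proof this last step is therefore circular rather than self-contained; but since the paper itself simply cites \cite{CS07} here, and since you correctly isolate exactly where the nontrivial linear algebra sits and handle the surrounding steps cleanly, the proposal is faithful to the intended argument and would be complete once that one stable-form lemma is spelled out.
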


\begin{proposition} \label{local-form} \cite[Corollary 3]{CS07}
Let  $(\a,\o_1,\o_2,\o_3)$ be an $SU(2)$ structure on a 5-manifold. There is a local frame of the cotangent bundle, $(e^1,\dots, e^5)$, such that $\a=e^5$, $\o_1=e^{12}+e^{34}$, $\o_2= e^{13}-e^{24}$, $\o_3=e^{14} + e^{23}$. 
\end{proposition}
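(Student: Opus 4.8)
The plan is to reduce the statement to a pointwise linear-algebra normal form and then upgrade it to a smooth local coframe. The assertion is local, so I would fix $p\in M$ and first produce an orthonormal basis of $T_pM$ realizing the three prescribed forms, working from the defining data $(\xi, J_1, J_2, J_3, R)$ of the $\SU(2)$ structure rather than directly from $(\alpha,\omega_1,\omega_2,\omega_3)$; the equivalence of the two viewpoints is Proposition \ref{SU2-forms}.

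For the fibrewise step: since $\alpha$ is a unit $1$-form, $R_p=\alpha_p^\sharp$ is a unit vector spanning $\xi_p^\perp$, and I set $e_5=R_p$. On the $4$-dimensional space $\xi_p$ the $J_k$ are $g$-orthogonal complex structures, hence $g$-skew-symmetric, with $J_1J_2=J_3$ and $J_kJ_l=-J_lJ_k$ for $k\neq l$. Choose any unit vector $e_1\in\xi_p$ and set $e_2=J_1e_1$, $e_3=J_2e_1$, $e_4=J_3e_1$. Skew-symmetry of the $J_k$ gives $e_1\perp e_i$ for $i=2,3,4$, and combining the isometry property of the $J_k$ with the quaternionic relations gives $e_i\perp e_j$ for $2\le i<j\le 4$; since the $J_k$ are isometries, the $e_i$ are unit. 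Thus $(e_1,\dots,e_5)$ is an orthonormal basis of $T_pM$, its dual coframe $(e^1,\dots,e^5)$ satisfies $\alpha=e^5$, and a short computation of $\omega_k(e_i,e_j)=g(J_ke_i,e_j)$ via the quaternionic relations (for instance $\omega_1(e_3,e_4)=g(J_1J_2e_1,J_3e_1)=1$, while $\omega_2(e_2,e_4)=g(J_2J_1e_1,J_3e_1)=-1$ accounts for the minus sign) yields $\omega_1=e^{12}+e^{34}$, $\omega_2=e^{13}-e^{24}$, $\omega_3=e^{14}+e^{23}$.

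For smoothness: $\xi=\ker\alpha$ is a rank-$4$ subbundle of $TM$, hence locally trivial, so Gram--Schmidt applied to a smooth local frame of $\xi$ produces a smooth unit local section $e_1$ of $\xi$; then $e_2=J_1e_1$, $e_3=J_2e_1$, $e_4=J_3e_1$ and $e_5=R$ are smooth local vector fields and their dual coframe has the required form. Equivalently, the coframes putting $(\alpha,\omega_1,\omega_2,\omega_3)$ in standard shape are exactly the local sections of the $\SU(2)$-subbundle of the coframe bundle that defines the structure, and principal bundles admit local sections.

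I do not anticipate a real obstacle; the only delicate point is the sign bookkeeping in the quaternionic identities, so that the $-e^{24}$ in $\omega_2$ and the orientation/positivity condition (2) of Proposition \ref{SU2-forms} are consistent with the convention $J_1J_2=J_3$ fixed above. Once that convention is pinned down, every sign is forced.
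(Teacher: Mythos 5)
The paper does not contain its own proof of this proposition; it is quoted verbatim from Conti--Salamon \cite[Corollary~3]{CS07}. So the relevant question is simply whether your blind proof is correct, and it is. Your strategy — pass from $(\a,\o_1,\o_2,\o_3)$ to the equivalent data $(\xi,J_1,J_2,J_3,R)$ via Proposition~\ref{SU2-forms}, set $e_5=R$, pick a unit $e_1\in\xi$, and define $e_2=J_1e_1$, $e_3=J_2e_1$, $e_4=J_3e_1$ — is exactly the standard ``quaternionic frame'' construction, and the sign checks you carry out are the right ones: with the paper's convention $J_1J_2=J_3$ and $J_kJ_l=-J_lJ_k$ for $k\neq l$, one indeed gets $J_2J_1=-J_3$ so that $\o_2(e_2,e_4)=g(-J_3e_1,J_3e_1)=-1$, which forces the $-e^{24}$ term, and the orientation condition of Proposition~\ref{SU2-forms}(2) is satisfied since $i(e_1)\o_1=e^2=i(e_4)\o_2$ while $\o_3(e_1,e_4)=1\geq 0$.

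Two minor points of presentation rather than substance. First, Gram--Schmidt is more than is needed to produce $e_1$: any nowhere-zero local section of $\xi$, normalized, suffices. Second, since $\o_k$ is defined in the paper (Definition~\ref{SU2-forms-spinors}) via projection to $\xi$, one should remark explicitly that $i(e_5)\o_k=0$, which is immediate because $e^{12}+e^{34}$, $e^{13}-e^{24}$, $e^{14}+e^{23}$ involve only $e^1,\dots,e^4$; this closes the gap between checking the values of $\o_k$ on $\xi\times\xi$ and asserting the full identity on $TM$. With these small remarks the argument is complete and, as far as one can tell without the original source in hand, it is essentially the same argument as in \cite{CS07}.
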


An almost complex structure $J_k\colon\xi\to\xi$ defines an almost complex structure on $\xi^*$ by $(J_k\beta)(X)=
\beta(J_kX)$ for $\b \in \xi^*$ and $X\in\xi$; one has $(J_k\circ J_l)\beta=(J_l\circ J_k)\b$, but $(J_1\circ J_2)\b= -J_3\b$. The next lemma will be used in the next section:

\begin{lemma}\label{hodge-star}
For $\b \in \xi^*$, $\star_\xi(\b \wedge \o_k)= -J_k\b$.
 %$J_kV^*=-i(V)\o_k$
\end{lemma}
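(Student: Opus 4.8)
The statement is a local, purely linear-algebraic identity on the 4-dimensional space $\xi$, so the plan is to verify it in the standard frame furnished by Proposition \ref{local-form}. Fix the local coframe $(e^1,\dots,e^5)$ with $\a=e^5$, $\o_1=e^{12}+e^{34}$, $\o_2=e^{13}-e^{24}$, $\o_3=e^{14}+e^{23}$, so that $\xi^*$ is spanned by $e^1,\dots,e^4$ with volume form $e^{1234}$ (the orientation induced on $\xi$). Both sides of the claimed identity $\star_\xi(\b\wedge\o_k)=-J_k\b$ are $\RR$-linear in $\b\in\xi^*$, so it suffices to check it on the basis vectors $\b=e^1,e^2,e^3,e^4$ for each $k=1,2,3$.

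First I would pin down the action of $J_k$ on $\xi^*$. From $\o_k(X,Y)=g(J_kX,Y)$ one reads off $J_k$ on $\xi$ in the frame dual to $(e^1,\dots,e^4)$, and then the induced action on $\xi^*$ via $(J_k\b)(X)=\b(J_kX)$. Concretely, for $k=1$ this gives $J_1e^1=-e^2$, $J_1e^2=e^1$, $J_1e^3=-e^4$, $J_1e^4=e^3$ (up to the sign convention, which I would fix consistently so that $\o_1(\cdot,\cdot)=g(J_1\cdot,\cdot)$ holds on the nose), and similarly for $J_2,J_3$ from $\o_2,\o_3$; note one must be careful that $J_k$ on forms is built from $J_k$ on vectors, which introduces the transpose/sign bookkeeping flagged in the paragraph before the lemma (where it is observed that $(J_1\circ J_2)\b=-J_3\b$). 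The second ingredient is the list of wedge products $e^i\wedge\o_k$ for each $i$ and $k$: e.g. $e^1\wedge\o_1=e^{134}$, $e^2\wedge\o_1=e^{234}$, $e^3\wedge\o_1=-e^{123}$, $e^4\wedge\o_1=-e^{124}$ (using $e^1\wedge e^{12}=0$, etc.), and the analogous six products for $\o_2$ and $\o_3$.

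Then I would apply $\star_\xi$ with respect to the metric and orientation $e^{1234}$ on $\xi$: $\star_\xi e^{134}=-e^2$, $\star_\xi e^{234}=e^1$, $\star_\xi e^{123}=e^4$, $\star_\xi e^{124}=-e^3$ (with $\star_\xi e^{ijk}=\mathrm{sgn}(ijkl)\,e^l$). Comparing, for $k=1$: $\star_\xi(e^1\wedge\o_1)=\star_\xi e^{134}=-e^2=-J_1e^1$, and the remaining three cases $\b=e^2,e^3,e^4$ come out the same way; repeating verbatim for $k=2,3$ completes the verification. The result is frame-independent because any two local $\SU(2)$-frames differ by an $\SU(2)$-valued transition function, which preserves $\a$, all $\o_k$, all $J_k$, the metric on $\xi$ and its orientation, hence commutes with $\star_\xi$ and with $\b\mapsto J_k\b$.

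The only real obstacle is bookkeeping of signs: getting the sign conventions in $\o_k(X,Y)=g(J_kX,Y)$, in the induced $J_k$ on $\xi^*$, and in the Hodge star $\star_\xi$ mutually consistent, so that the single overall sign $-1$ in $\star_\xi(\b\wedge\o_k)=-J_k\b$ is reproduced uniformly across all $12$ cases rather than, say, $+J_k\b$ for some $k$ and $-J_k\b$ for others. (If the chosen conventions were to yield $+J_k\b$, one resolves it by noting that the lemma's sign is dictated by the convention for $J_k$ on forms used in the subsequent section; I would simply adopt that convention from the outset.) Everything else is a direct computation in a fixed $4$-dimensional coframe, so no deeper idea is needed.
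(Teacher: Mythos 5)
Your plan is sound, and it would go through with careful bookkeeping, but it is a different (and heavier) route than the paper's. You propose a brute-force coordinate check on all twelve cases $\b\in\{e^1,\dots,e^4\}$, $k=1,2,3$. The paper instead observes the algebraic identity $\o_k=-(\rI+\star_\xi)(e^1\wedge J_ke^1)$, wedges with $e^1$ so the $\rI$-term dies, and then applies the four-dimensional contraction identity $\star_\xi\big(e^1\wedge\star_\xi\t\big)=i(e_1)\t$ to land directly on $-J_ke^1$; only the single case $\b=e^1$ is checked, which suffices because both sides are $\SU(2)$-equivariant maps $\xi^*\to\xi^*$ and $\SU(2)$ acts transitively on the unit sphere of $\xi^*$. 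You invoke equivariance only for frame-independence, not to reduce the number of cases. Finally, the one case you wrote out has a sign slip: with orientation $e^{1234}$ and your own rule $\star_\xi e^{ijk}=\operatorname{sgn}(ijkl)\,e^l$, the permutation $(1,3,4,2)$ is even, so $\star_\xi e^{134}=+e^2$, and together with $J_1e^1=-e^2$ this gives $\star_\xi(e^1\wedge\o_1)=e^2=-J_1e^1$, as the lemma asserts; the intermediate ``$-e^2$'' in your display is inconsistent with the conventions you set up (it equals $+J_1e^1$, not $-J_1e^1$).
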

\begin{proof}
We compute the equality for $\b=e^1$. Using that $J_ke^1=-(J_ke_1)^*$ and that $\o_k= -( \rI + \star_\xi)(e^1\wedge J_ke^1)$, we get:
$\star_\xi (e^1\wedge \o_k)= -\star_\xi(e^1 \wedge\star_\xi(e^1 \wedge J_ke^1))= -(i(e_1)(e^1\wedge J_ke^1))=-J_ke^1$.
%The second is obvious: $J_kV^*= -(J_kV)^*=-i(V)\o_k$.
\end{proof}

As usual, $\SU(2)$ structures are classified by the intrinsic torsion, which is a section of $T^*M\otimes \frsu(2)^\perp$. In the following, we denote the intrinsic torsion by an $\SU(2)$ equivariant map,
\[
\Xi \colon \PSO(M) \to T^*M \otimes \frsu(2)^\perp,
\]
where $\PSO(M)$ is the frame bundle of $M$. Proposition \ref{differentials-SU2} below shows that $\Xi$ is determined by $(d\a,d\o_1,d\o_2,d\o_3)$. In order to state it, we recall the irreducible decomposition of some $\SU(2)$ modules (see \cite{BV09}).

\begin{proposition}\label{irreducible-decomposition} 
Let $\RR^5$ be endowed with the $\SU(2)$ structure $(\a,\o_1,\o_2,\o_3)$. Then
\begin{enumerate}
\item $\L^1 (\RR^5)^*= \la \a \ra \oplus \xi^*$,
\item $\L^2 (\RR^5)^*= \a \wedge \xi^* \oplus (\oplus_{k=1}^3\la \o_k \ra) \oplus \frsu(2)$,
\item $\L^3 (\RR^5)^*= \L^3 \xi^*  \oplus (\oplus_{k=1}^3\la \a \wedge \o_k \ra) \oplus \a \wedge \frsu(2)$,
\item $\End(\xi)= \la \rI \ra \oplus (\oplus_{k=1}^3 \s_k(\xi)) \oplus (\oplus_{k=1}^3\la J_k \ra) \oplus \frsu(2)$, where 
\[\s_k(\xi)=\left\{S\in \Sym_0(\xi)\mid SJ_l=(-1)^{\delta_k^l+1}J_l S, \ l=1,2,3\right\}, \ k=1,2,3\,.
\]
\end{enumerate}
Moreover, the map $E_k \colon \s_k(\xi) \to \frsu(2)$, $E_k(S)=i(S)\o_k$ is an isomorphism.
\end{proposition}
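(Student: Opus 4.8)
The plan is to verify Proposition \ref{irreducible-decomposition} by a combination of two observations: first, that these are decompositions of $\SU(2)$-modules into isotypic (in fact irreducible) components, and second, that the listed summands exhaust the module by a dimension count. Throughout I would fix the local frame $(e^1,\dots,e^5)$ of Proposition \ref{local-form}, so that $\a=e^5$, $\xi^*=\la e^1,e^2,e^3,e^4\ra$, and $\frsu(2)$ sits inside $\L^2\xi^*$ as the space of $J_k$-invariant (anti-self-dual, in the appropriate orientation) $2$-forms, the orthogonal complement of $\la\o_1,\o_2,\o_3\ra$ in $\L^2_-\xi^*$. The key input is that $\SU(2)$ acts on $\xi\cong\HH\cong\CC^2$ through its standard representation, and every summand below is built functorially from $\xi$ and trivial summands, so one only needs: (a) that each listed piece is $\SU(2)$-invariant, (b) that the pieces are mutually orthogonal, and (c) that the dimensions add up.

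The concrete steps are as follows. For (1), $\L^1(\RR^5)^*=\la\a\ra\oplus\xi^*$ is immediate since $\a$ spans the trivial summand and $\xi^*$ is the standard $4$-dimensional real representation. For (2), decompose $\L^2(\RR^5)^*=\a\wedge\xi^*\oplus\L^2\xi^*$; the first summand is $\cong\xi^*$, and $\L^2\xi^*$ ($6$-dimensional) splits under $\SU(2)$ as $\L^2_+\xi^*\oplus\L^2_-\xi^*$, each $3$-dimensional, with $\L^2_+\xi^*=\la\o_1,\o_2,\o_3\ra$ the three fundamental forms (these are $\SU(2)$-invariant, being the Kähler forms of the quaternionic structure) and $\L^2_-\xi^*=\frsu(2)$, on which $\SU(2)$ acts by the adjoint representation. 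Orthogonality of $\la\o_k\ra$ with $\frsu(2)$ is the self-dual/anti-self-dual splitting; pairwise orthogonality of the $\o_k$ follows from $\o_i\wedge\o_j=0$ for $i\ne j$ (Proposition \ref{SU2-forms}(1)). For (3), apply the Hodge star $\star$ on $\RR^5$, which is an $\SU(2)$-equivariant isometry $\L^2\to\L^3$; since $\star(\a\wedge\xi^*)$, $\star\o_k$ and $\star\frsu(2)$ must then match $\L^3\xi^*$, $\a\wedge\o_k$ and $\a\wedge\frsu(2)$ (using $\star_\xi\o_k\propto\o_k$ and $\star(\b\wedge\o_k)=-\a\wedge\star_\xi(\b\wedge\o_k)$ type identities, cf.\ Lemma \ref{hodge-star}), the decomposition (3) is the image of (2) under $\star$. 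For (4), $\End(\xi)\cong\xi^*\ox\xi$ is $16$-dimensional; split $\xi^*\ox\xi=\Sym(\xi)\oplus\L^2\xi$, i.e.\ $\Sym(\xi)$ ($10$-dim) and $\L^2\xi\cong\L^2_+\oplus\L^2_-$ ($3+3$); inside $\Sym(\xi)$ the trace part is $\la\rI\ra$ and $\Sym_0(\xi)$ ($9$-dim) decomposes under the three anticommutation relations with the $J_l$ into $\sigma_1(\xi)\oplus\sigma_2(\xi)\oplus\sigma_3(\xi)$, each $3$-dimensional — one checks, working over $\HH$, that these are precisely the eigenspaces for the commuting involutions $S\mapsto J_lSJ_l^{-1}$, and each is isomorphic as an $\SU(2)$-module to the standard $3$-dimensional adjoint piece. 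The $\la J_k\ra$ account for the $\L^2_+\xi$ (trivial summands, since the $J_k$ are $\SU(2)$-invariant) and the last $\frsu(2)$ for $\L^2_-\xi$. Finally, for the isomorphism $E_k\colon\sigma_k(\xi)\to\frsu(2)$, $S\mapsto i(S)\o_k$: this is manifestly $\SU(2)$-equivariant, both spaces are $3$-dimensional, so it suffices to check injectivity, which follows because $S\in\sigma_k(\xi)$ with $i(S)\o_k=0$ forces $S$ to be both symmetric and skew for $g(\cdot,J_k\cdot)$, hence $S=0$; alternatively exhibit the image explicitly on a basis of $\sigma_k(\xi)$.

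The main obstacle I expect is the bookkeeping in part (4): correctly identifying the $\sigma_k(\xi)$ as $3$-dimensional and pairwise orthogonal, and matching each to the right $\SU(2)$-isotypic component. Concretely, one must pin down which symmetric endomorphisms anticommute with exactly two of the $J_l$ and commute with the third; the cleanest route is to use the identification $\xi\cong\HH$, write $J_1,J_2,J_3$ as left multiplication by $\ima,\jota,\ima\jota$, and observe that $S\in\Sym_0(\xi)$ anticommuting with $J_1,J_2$ and commuting with $J_3$ is right multiplication by a purely imaginary quaternion lying in the $\la\ima,\jota\ra$-plane composed appropriately — a short but error-prone computation. Everything else is either a dimension count or an application of the Hodge-star equivariance, both of which are routine once the frame of Proposition \ref{local-form} is fixed. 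The orthogonality statements all reduce to the relations $\o_i\wedge\o_j=0$ ($i\ne j$) and $\o_k^2\ne0$ from Proposition \ref{SU2-forms}, together with the standard self-dual/anti-self-dual decomposition of $\L^2$ of a $4$-dimensional oriented inner product space.
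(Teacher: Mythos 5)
The paper itself supplies no proof of this proposition---it is quoted from \cite{BV09}---so there is no in-paper argument to compare against; what follows is an assessment of your proposal on its own terms. Your proof is correct and self-contained. Parts (1)--(3) reduce exactly as you say to the self-dual/anti-self-dual splitting of $\L^2\xi^*$ on the oriented $4$-plane $\xi$, with $\la\o_1,\o_2,\o_3\ra=\L^2_+\xi^*$ (each a trivial $\SU(2)$-summand, since $\SU(2)=\{g\in\SO(\xi):gJ_k=J_kg\}$) and $\frsu(2)=\L^2_-\xi^*$ the adjoint summand, combined with the $\SU(2)$-equivariance of the Hodge star $\L^2(\RR^5)^*\to\L^3(\RR^5)^*$; I checked that $\star_5$ indeed carries $\a\wedge\xi^*$, $\la\o_k\ra$, $\frsu(2)$ to $\L^3\xi^*$, $\la\a\wedge\o_k\ra$, $\a\wedge\frsu(2)$ respectively. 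For (4), the $\s_k(\xi)$ are correctly identified as the three nontrivial joint eigenspaces of the commuting conjugation involutions $S\mapsto J_l S J_l^{-1}$ on $\Sym_0(\xi)$ (the $(+,+,+)$ eigenspace is $\la\rI\ra$ and lies outside $\Sym_0$), and since these involutions satisfy the quaternionic relation $C_1C_2=C_3$, the three nontrivial eigenspaces are forced to be equidimensional and exhaust the $9$-dimensional space; your dimension count then closes the argument, and the injectivity of $E_k$ is a valid one-line check as you state it (if $S\in\s_k$ commutes with $J_k$ and is $g$-symmetric, then $i(S)\o_k=2\o_k(S\cdot,\cdot)$, which vanishes only if $J_kS=0$, i.e.\ $S=0$). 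One small slip in wording: where you describe $\frsu(2)$ as ``the orthogonal complement of $\la\o_1,\o_2,\o_3\ra$ in $\L^2_-\xi^*$'' you mean its orthogonal complement in $\L^2\xi^*$ (equivalently, all of $\L^2_-\xi^*$). That aside, the argument is sound.
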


%%%%%%%

\begin{proposition} \label{differentials-SU2}  \cite[Proposition 9]{CS07}
As an $\SU(2)$-module, $\RR^5\otimes\frsu(2)^\perp$ decomposes as:
\[
\RR^5\otimes \frsu(2)^\perp=  7\RR \oplus 4(\RR^4)^* \oplus 4\frsu(2),
\]
where $7\RR$ means 7 copies of the trivial representation $\RR$, and so on.
Let $\tau_0^l,\tau_0^{kl} \in C^{\infty}(M)$, $k,l=1,2,3$, $\tau_1^k\in \xi^*$ and $\tau_2^k \in \frsu(2)$, $k=1,2,3,4$, be such that
\begin{align*}
d\a = &  \sum_{l=1}^3\tau_0^l\o_l + \a \wedge \tau_1^4 + \tau_2^4\, , \\
d\o_k =& \sum_{l=1}^3\tau_0^{kl}\a \wedge \o_l + \tau_1^k \wedge \o_k + \a \wedge \tau_2^k\,, 
\end{align*}
Then $\tau_0^{kk}=\tau_0^{ll}$ and $\tau_0^{kl}=-\tau_0^{l k}$ for $l \neq k$. Moreover,
\[
\Xi(u)=((\tau_0^{11},\tau_0^{jk},\tau_0^{l}),(u^*\tau_0^j,u^*\tau_1^4),(u^*\tau_0^j,u^*\tau_2^4))\,.
\]
\end{proposition}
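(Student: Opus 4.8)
The plan is to extract the intrinsic torsion directly from the exterior derivatives of $(\a,\o_1,\o_2,\o_3)$, using only the $\SU(2)$-module decompositions of Proposition~\ref{irreducible-decomposition} and the algebraic constraints on the $\o_k$ from Proposition~\ref{SU2-forms}. I would proceed in four steps: (i) establish the module decomposition of $\RR^5\otimes\frsu(2)^\perp$; (ii) expand $d\a$ and $d\o_k$ in the decompositions of $\L^2(\RR^5)^*$ and $\L^3(\RR^5)^*$, thereby \emph{defining} the components $\tau_0^l,\tau_0^{kl},\tau_1^4,\tau_1^k,\tau_2^4,\tau_2^k$; (iii) derive the relations $\tau_0^{kk}=\tau_0^{ll}$ and $\tau_0^{kl}=-\tau_0^{lk}$; (iv) identify this collection of components with $\Xi$. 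The content is concentrated in step (iv).

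For (i): by Proposition~\ref{irreducible-decomposition}(2), $\mathfrak{so}(5)=\L^2(\RR^5)^*=\a\wedge\xi^*\oplus(\oplus_{k=1}^3\la\o_k\ra)\oplus\frsu(2)$, the last summand being the structure algebra (the structure group fixes each $\o_k$, so $\oplus_k\la\o_k\ra$ is the trivial isotypic part and the adjoint summand must be the structure algebra); hence $\frsu(2)^\perp\cong\xi^*\oplus 3\RR$. Tensoring with $\RR^5=\la\a\ra\oplus\xi^*$ and decomposing $\xi^*\otimes\xi^*=\Sym^2\xi^*\oplus\L^2\xi^*$ — where $\Sym^2\xi^*=\la\rI\ra\oplus(\oplus_k\s_k(\xi))$ with each $\s_k(\xi)$ a $3$-dimensional irreducible, hence $\cong\frsu(2)$ (Proposition~\ref{irreducible-decomposition}(4), via $E_k$), and $\L^2\xi^*=(\oplus_k\la\o_k\ra)\oplus\frsu(2)$ — one collects summands to get $7\RR\oplus 4(\RR^4)^*\oplus 4\frsu(2)$; the count $7+16+12=35=5\cdot 7$ serves as a check.

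For (ii) and (iii): decomposing $d\a$ along Proposition~\ref{irreducible-decomposition}(2) gives $d\a=\a\wedge\tau_1^4+\sum_l\tau_0^l\o_l+\tau_2^4$ with $\tau_1^4\in\xi^*$, $\tau_0^l\in C^\infty(M)$, $\tau_2^4\in\frsu(2)$; decomposing $d\o_k$ along Proposition~\ref{irreducible-decomposition}(3), its $\a\wedge\frsu(2)$-part is $\a\wedge\tau_2^k$, its $\oplus_l\la\a\wedge\o_l\ra$-part is $\sum_l\tau_0^{kl}\a\wedge\o_l$, and its $\L^3\xi^*$-part is uniquely $\tau_1^k\wedge\o_k$ for some $\tau_1^k\in\xi^*$, since Lemma~\ref{hodge-star} shows $\b\mapsto\star_\xi(\b\wedge\o_k)=-J_k\b$ is an automorphism of $\xi^*$. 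This yields the two displayed formulas. For the relations a short computation suffices: differentiate $\o_i\wedge\o_j=0$ ($i\neq j$) and $\o_k^2=\o_l^2$, substitute the formulas, and use $\o_i\wedge\o_j=0$, $\o_i^2=\o_j^2$, $\a\wedge\o_i^2\neq 0$, together with $\o_k\wedge\frsu(2)=0$ (the wedge pairing $\L^2\xi^*\times\L^2\xi^*\to\L^4\xi^*$ is $\SU(2)$-invariant, and $\la\o_k\ra$, $\frsu(2)$ are non-isomorphic isotypic summands); what remains is $(\tau_0^{ij}+\tau_0^{ji})\,\a\wedge\o_1^2=0$ and $(\tau_0^{kk}-\tau_0^{ll})\,\a\wedge\o_1^2=0$.

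For (iv): the intrinsic torsion is the $\frsu(2)^\perp$-part of the Levi-Civita connection in an adapted frame, so it equals $\Xi\cdot(\a,\o_1,\o_2,\o_3)$, i.e.\ it determines and is determined by $(\nabla\a,\nabla\o_1,\nabla\o_2,\nabla\o_3)$ — here one uses that the common $\SO(5)$-stabiliser of $(\a,\o_k)$ is $\SU(2)$ (Proposition~\ref{SU2-forms}), so $\frsu(2)$ annihilates these forms. Alternating, $(\nabla\a,\nabla\o_k)$ maps to $(d\a,d\o_k)$, so it remains to prove that the $\SU(2)$-equivariant alternation map $\delta\colon\RR^5\otimes\frsu(2)^\perp\to\L^2(\RR^5)^*\oplus 3\,\L^3(\RR^5)^*$ sending the intrinsic torsion to $(d\a,d\o_1,d\o_2,d\o_3)$ is injective. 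By Schur's lemma this reduces to an injectivity check on each isotypic component: using (i) and $\L^3\xi^*\cong\xi^*$, the source is $7\RR\oplus 4(\RR^4)^*\oplus 4\frsu(2)$ and the target is $12\RR\oplus 4(\RR^4)^*\oplus 4\frsu(2)$, so one must verify that a $4\times 4$ matrix over $\HH=\End_{\SU(2)}((\RR^4)^*)$, a $4\times 4$ real matrix on the $\frsu(2)$-part, and a $7\times 12$ real matrix on the trivial part all have maximal rank. I expect this linear-algebra verification — making $\delta$ explicit via the identifications of Section~\ref{sec:Preliminaries} — to be the main obstacle; everything else is routine bookkeeping. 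Granting injectivity, the image of $\delta$ has codimension $5$, cut out precisely by the relations of (iii), so the $\tau$'s form a free coordinate system on it; transporting to the frame bundle $\PSO(M)$ then gives the stated expression for $\Xi(u)$.
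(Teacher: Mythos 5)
The paper does not actually prove this proposition; it is quoted directly from Conti--Salamon \cite[Proposition~9]{CS07}, so there is no proof in the present paper to compare against. Judged on its own terms, your steps (i)--(iii) are correct and well organised: the decomposition of $\RR^5\otimes\frsu(2)^\perp$ follows from Proposition~\ref{irreducible-decomposition} exactly as you say (with the dimension check $7+16+12=35$), the existence and uniqueness of the $\tau$'s is immediate from the decompositions of $\L^2(\RR^5)^*$ and $\L^3(\RR^5)^*$ together with Lemma~\ref{hodge-star}, and differentiating $\o_i\wedge\o_j=0$ and $\o_k^2=\o_l^2$ and killing the cross terms by Schur-orthogonality of the wedge pairing does give $\tau_0^{kl}=-\tau_0^{lk}$ and $\tau_0^{kk}=\tau_0^{ll}$.

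The genuine gap is in step (iv), and you yourself flag it. You reduce the claim to the injectivity of the $\SU(2)$-equivariant alternation map $\delta\colon\RR^5\otimes\frsu(2)^\perp\to\L^2(\RR^5)^*\oplus 3\,\L^3(\RR^5)^*$, and by Schur's lemma to the maximal rank of a $4\times 4$ quaternionic matrix (on the $(\RR^4)^*$-isotypic part), a $4\times 4$ real matrix (on the $\frsu(2)$-part), and a $7\times 12$ real matrix (on the trivial part) -- but you do not compute a single entry of any of them. Dimension counting cannot settle this: alternation $T^*\otimes\L^k\to\L^{k+1}$ has a large kernel in general, so injectivity is precisely the nontrivial content that makes $(d\a,d\o_1,d\o_2,d\o_3)$ determine the intrinsic torsion. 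Moreover, even granting injectivity, the closing claim $\Xi(u)=((\tau_0^{11},\tau_0^{jk},\tau_0^{l}),(u^*\tau_0^j,u^*\tau_1^4),(u^*\tau_0^j,u^*\tau_2^4))$ is an \emph{explicit} formula; producing it requires writing down the inverse of $\delta$ on its image, not merely knowing that an inverse exists. Until the rank computations and the explicit inversion are carried out, the argument is an outline rather than a proof.
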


%%%%%%%%%%%%%%%%%%%%%%%%%%%%%%%%%%%%%%%%%%%%%%%%%%%%%%%%%%%%%%%
\subsection{Spinorial point of view}\label{subsec:spinors}
%%%%%%%%%%%%%%%%%%%%%%%%%%%%%%%%%%%%%%%%%%%%%%%%%%%%%%%%%%%%%%%

Let $\rho_5 \colon \Cl_5 \to \End_\CC(W)$ be an irreducible representation with complex structure $\jota_1=\rho_5(\nu_5)$. Take also a quaternionic stucture $\jota_2$ that anticommutes with the Clifford product (see Propositon \ref{real-and-quaternionic-structures}), and define $\jota_3=\jota_1\circ \jota_2$. For our purposes we shall define $\e_1=1$ and $\e_2=\e_3=-1$; we have that:   
$
\jota_k X \p= \e_k X \jota_k \p,
$
for every spinor $\p$.

Let $(M,g)$ be a spin Riemannian manifold and let $\Ad\colon \mathrm{P}_{\Spin(5)}M \to \mathrm{P}_{\SO(5)}M$ be a spin structure. The spinor bundle $\S(M)=\mathrm{P}_{\Spin(5)}(M)\times_{\rho_5}W$ has a unit-length section $\eta$. Define $\Stab(\eta)$ as the subbundle whose fiber at $p\in M$ is the stabilizer of the spinor $\eta(p)$ under the action of $\Spin(5)$. It is an $\SU(2)$ reduction of $\mathrm{P}_{\Spin(5)}(M)$, and the projection $\Ad(\Stab(\eta))$ is an $\SU(2)$ structure because the kernel of $\Ad$ is $\pm 1$ and $-1\notin \Stab(\eta_p)$.

We first explain the decomposition of the spinor bundle of $M$ and write the forms that determine the structure by means of spinors. For that purpose consider the map $\rho_\eta \colon \Spin(5) \to W$, $\rho_\eta(g)=g\eta$, whose differential is $d\rho_\eta\colon \L^2 \RR^5 \to W $, $d\rho_\eta(\gamma)=\gamma \eta $. 

\begin{lemma} \label{decomposition-spinorial-bundle}
The restriction $d\rho_\eta \colon \frsu(2)^\perp \to \la \eta \ra^\perp$ is an isomorphism, 
hence there is a decomposition of $\la \eta \ra ^\perp$ with respect to the $\SU(2)$ structure determined by $\eta$, $(\a, \omega_1,\omega_2,\omega_3)$:
 \[
 \S(M)= \la \eta \ra \oplus( \oplus_{k=1}^3 \la \o_k \eta \ra )\oplus \xi^* \eta. %(\a \wedge \xi) \eta.
 \]
\end{lemma}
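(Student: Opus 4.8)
The key claim is that $d\rho_\eta\colon \frsu(2)^\perp \to \la\eta\ra^\perp$ is an isomorphism, where $\frsu(2)^\perp$ is the orthogonal complement of $\frsu(2) = \Stab_{\frspin(5)}(\eta)$ inside $\frspin(5) \cong \L^2\RR^5$. My plan is to exploit the equivariance of $d\rho_\eta$ together with a dimension count. First I would note that $d\rho_\eta$ is $\SU(2)$-equivariant, since $\rho_\eta$ intertwines the $\Spin(5)$-action and $\SU(2) = \Stab(\eta)$ acts trivially on $\eta$; concretely $d\rho_\eta(\Ad_g\gamma) = g\cdot d\rho_\eta(\gamma)$ for $g\in\SU(2)$, $\gamma\in\L^2\RR^5$. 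The kernel of $d\rho_\eta$ on all of $\L^2\RR^5$ is precisely $\frsu(2)$: by definition $\gamma\eta = 0$ iff $\exp(t\gamma)\eta = \eta$ for all $t$ iff $\gamma$ lies in the Lie algebra of the stabilizer. Hence the restriction of $d\rho_\eta$ to $\frsu(2)^\perp$ is injective.

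For surjectivity, a dimension count suffices: $\dim_\RR \frsu(2)^\perp = \dim\frspin(5) - \dim\frsu(2) = 10 - 3 = 7$, while $\la\eta\ra^\perp$ inside the real $8$-dimensional space underlying $W$ (here $W$ carries the real scalar product $\la\cdot,\cdot\ra$ = real part of the hermitian product, making $W$ an $8$-dimensional real inner product space) has real dimension $7$. An injective linear map between real vector spaces of equal finite dimension is an isomorphism. I would also need to check that $d\rho_\eta$ actually lands in $\la\eta\ra^\perp$: differentiating $\la g\eta, g\eta\ra = \la\eta,\eta\ra$ at $g = \Id$ gives $2\la\gamma\eta,\eta\ra = 0$, which holds because Clifford multiplication by the $2$-form $\gamma\in\L^2\RR^5$ is skew-symmetric for $\la\cdot,\cdot\ra$ (Clifford multiplication by vectors is skew, and a $2$-form is a product of two vectors up to the bracket term which multiplies by a scalar and is killed on the orthogonal). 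So the image is contained in $\la\eta\ra^\perp$, and by the dimension argument it is all of it.

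For the second assertion, the decomposition $\S(M) = \la\eta\ra \oplus (\oplus_{k=1}^3 \la\o_k\eta\ra) \oplus \xi^*\eta$, I would transport the $\SU(2)$-module decomposition through the isomorphism just established. By part (2) of Proposition~\ref{irreducible-decomposition}, as an $\SU(2)$-module we have $\frsu(2)^\perp \cong (\oplus_{k=1}^3\la\o_k\ra) \oplus \a\wedge\xi^*$ (the complement of $\frsu(2)$ inside $\L^2\RR^5$), and $\a\wedge\xi^* \cong \xi^*$ as $\SU(2)$-modules since wedging with the invariant form $\a$ is an equivariant isomorphism onto its image. Applying $d\rho_\eta$ pointwise, $\o_k$ maps to $\o_k\eta$ and $\a\wedge\beta$ maps to $(\a\wedge\beta)\eta$; I would identify the latter with $\beta\eta$ for $\beta\in\xi^*$ using the Clifford relation $(\a\wedge\beta)\eta = \a(\beta\eta) + (i(\a^\sharp)\beta)\eta = \a(\beta\eta)$ together with the fact that Clifford multiplication by $\a = e^5$ acts (up to sign and the complex structure $\jota_1$) as an isomorphism preserving the summand $\xi^*\eta$, or more simply just rename the summand $(\a\wedge\xi^*)\eta$ as $\xi^*\eta$ via this equivariant bijection. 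The one point requiring a little care — and the main obstacle — is verifying that the three lines $\la\o_k\eta\ra$ and the $4$-dimensional space $\xi^*\eta$ are genuinely in direct sum and together span $\la\eta\ra^\perp$; this is automatic once one knows $d\rho_\eta|_{\frsu(2)^\perp}$ is an isomorphism and that the listed subspaces are the images of the corresponding summands of $\frsu(2)^\perp$, which are in direct sum there. Adding back $\la\eta\ra$ gives the full spinor bundle $\S(M)$, completing the proof.
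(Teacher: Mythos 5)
Your proof is correct and follows essentially the same route as the paper: injectivity of $d\rho_\eta|_{\frsu(2)^\perp}$ from $\ker d\rho_\eta = \frsu(2)$, surjectivity onto $\la\eta\ra^\perp$ by the dimension count $10-3 = 8-1 = 7$, then transport of the decomposition of $\L^2(\RR^5)^*$ from Proposition~\ref{irreducible-decomposition}(2) and identification of $(\a\wedge\xi^*)\eta$ with $\xi^*\eta$ as the unique $4$-dimensional irreducible $\SU(2)$-summand. You simply spell out more of the intermediate steps (equivariance, the skew-symmetry argument showing the image lies in $\la\eta\ra^\perp$) than the paper's terse proof does.
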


\begin{proof}
The kernel of $d\rho_\eta$ is $\frsu(2)$ because $\Stab(\eta)=\SU(2)$ and $\im d\rho_\eta \subset \la \eta \ra^\perp$.
By Proposition \ref{irreducible-decomposition}(2), we have 
$\S(M)= \la \eta \ra \oplus( \oplus_{k=1}^3 \la \o_k \eta \ra )\oplus (\a \wedge \xi^*) \eta$. Now
$(\a\wedge \xi^*)\eta=\xi^*\eta$ because these are irreducible representations of the same dimension.
\end{proof}

We can write the forms that determine the $\SU(2)$ structure in terms of spinors.

\begin{lemma} \label{decomposition-2}
The spinors $\eta$, $\jota_1\eta$, $\jota_2\eta$, $\jota_3 \eta$ are orthogonal and the spaces $\HH_\eta= \la \eta, \jota_1\eta, \jota_2\eta, \jota_3 \eta  \ra$ and $\HH_\eta^\perp$ are $\jota_k$-invariant, $k=1,2,3$.

Moreover, there exists a subspace $\xi \subset \RR^5$ such that $\xi\eta=\HH_\eta^\perp$; $\xi$ inherits a quaternionic structure determined by $J_k(X)\eta= \jota_k(X\eta)$.
\end{lemma}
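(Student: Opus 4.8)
The plan is to prove the two assertions of Lemma \ref{decomposition-2} separately, starting with the orthogonality and $\jota_k$-invariance statements, and then extracting the subspace $\xi$ from Lemma \ref{decomposition-spinorial-bundle}.

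\textbf{Step 1: Orthogonality of $\eta,\jota_1\eta,\jota_2\eta,\jota_3\eta$.} Recall that $\la\cdot,\cdot\ra$ is the real part of the hermitian product $h$, and that (from Subsection \ref{ClC-representations}) $\jota_2$ satisfies $h(\jota_2\phi,\jota_2\psi)=\overline{h(\phi,\psi)}$, while $\jota_1=\rho_5(\nu_5)$ is unitary for $h$. First I would show $\la\eta,\jota_1\eta\ra=0$: since $\jota_1$ is a complex structure compatible with $h$ (i.e.\ $h(\jota_1\phi,\psi)=\imag\,h(\phi,\psi)$ up to sign conventions), $h(\eta,\jota_1\eta)$ is purely imaginary, so its real part vanishes. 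Next, $\la\eta,\jota_2\eta\ra=0$ because $h(\jota_2\eta,\jota_2(\jota_2\eta))=\overline{h(\eta,\jota_2\eta)}$ forces $-h(\jota_2\eta,\eta)=\overline{h(\eta,\jota_2\eta)}$, i.e.\ $h(\eta,\jota_2\eta)=-\overline{h(\eta,\jota_2\eta)}$, so it is purely imaginary; and similarly $\la\eta,\jota_3\eta\ra=0$ using $\jota_3=\jota_1\circ\jota_2$ together with the relation between $\jota_1$, $\jota_2$ and $h$. The cross terms $\la\jota_i\eta,\jota_j\eta\ra$ for $i\neq j$ reduce to the previous cases using that each $\jota_k$ is an isometry of $\la\cdot,\cdot\ra$ (stated in the spinor-bundle paragraph) and the quaternionic identities $\jota_i\jota_j=\pm\jota_k$; for instance $\la\jota_1\eta,\jota_2\eta\ra=\la\eta,\jota_1^{-1}\jota_2\eta\ra=\pm\la\eta,\jota_3\eta\ra=0$.

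\textbf{Step 2: $\jota_k$-invariance of $\HH_\eta$ and $\HH_\eta^\perp$.} The span $\HH_\eta=\la\eta,\jota_1\eta,\jota_2\eta,\jota_3\eta\ra$ is visibly $\jota_k$-invariant for each $k$, since $\jota_k$ permutes the four generators up to sign by the quaternion relations. Because each $\jota_k$ is an isometry of $\la\cdot,\cdot\ra$, it preserves the orthogonal complement $\HH_\eta^\perp$ as well.

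\textbf{Step 3: Producing $\xi$.} By Lemma \ref{decomposition-spinorial-bundle}, the spinor space decomposes as $\la\eta\ra\oplus(\oplus_k\la\o_k\eta\ra)\oplus\xi^*\eta$, where $\xi=\ker\a$ is the $4$-dimensional distribution of the $\SU(2)$ structure. I would identify $\HH_\eta$ with $\la\eta\ra\oplus(\oplus_k\la\o_k\eta\ra)$: indeed $\o_k\eta=\rho_5(\o_k)\eta$, and from the explicit relation $\o_k=-(\rI+\star_\xi)(\cdot)$ in $\frsu(2)^\perp$ one checks that Clifford multiplication by $\o_k$ acts on $\eta$ as (a multiple of) $\jota_k$ — this is where I would invoke that $\jota_1=\rho_5(\nu_5)$ and that the $\o_k$ generate, under Clifford multiplication, exactly the operators $\jota_1,\jota_2,\jota_3$ on $\eta$; alternatively one argues by dimension count since both subspaces are the $4$-dimensional $\Stab(\eta)$-invariant complement to $\xi^*\eta$. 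Consequently $\HH_\eta^\perp=\xi^*\eta$. Identifying $\xi\cong\xi^*$ via the metric, we get a subspace $\xi\subset\RR^5$ with $\xi\eta=\HH_\eta^\perp$. Finally, since $\HH_\eta^\perp$ is $\jota_k$-invariant and $X\mapsto X\eta$ is an isomorphism $\xi\to\HH_\eta^\perp$ (injective because $\xi\cap\frsu(2)=0$ inside $\L^2\RR^5$, or directly from Lemma \ref{decomposition-spinorial-bundle}), the formula $J_k(X)\eta:=\jota_k(X\eta)$ defines endomorphisms $J_k$ of $\xi$, and the quaternion relations for the $J_k$ follow immediately from those for the $\jota_k$.

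\textbf{Main obstacle.} The delicate point is Step 3: matching the Clifford action of the $2$-forms $\o_k$ on the spinor $\eta$ with the structures $\jota_k$, and more generally making the identification $\HH_\eta = \la\eta\ra\oplus\bigoplus_k\la\o_k\eta\ra$ precise rather than merely dimensional. One must be careful that $\o_k\eta$ genuinely lands in $\HH_\eta$ (and not just in some $4$-dimensional $\SU(2)$-submodule), which requires tracking the sign conventions relating $\rho_5(\nu_5)$, the volume form, and the chosen quaternionic structure $\jota_2$; once those conventions are pinned down the rest is formal.
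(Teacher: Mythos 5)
Your proof is correct and follows essentially the paper's route: orthogonality from the isometry property of the $\jota_k$ together with $\jota_k^2=-\rI$, and the identification $\HH_\eta^\perp=\xi\eta$ via $\SU(2)$-representation theory and the equivariant injection $X\mapsto X\eta$. One caution on Step~3: the first alternative you sketch — computing directly that Clifford multiplication by $\o_k$ acts on $\eta$ as a multiple of $\jota_k$ — is precisely Lemma~\ref{su2-equalities}(1), which appears \emph{after} this lemma and whose proof uses the $J_k$ being constructed here, so it would be circular; your second alternative is the right one and matches the paper, namely: each $\jota_k$ commutes with $\Spin(5)$ and $\SU(2)=\Stab(\eta)$ fixes $\eta$, so $\HH_\eta$ lies in (and by dimension equals) the four-dimensional trivial $\SU(2)$-isotypic component of $\S(M)$, whence $\HH_\eta^\perp=\xi^*\eta$ is the unique $4$-dimensional irreducible, and Schur applied to the injective equivariant map $X\mapsto X\eta$ from $\RR^5=\RR\oplus\CC^2$ then produces $\xi=\CC^2$.
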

\begin{proof}
 The orthogonality of the mentioned spinors follows from the fact that the endomorphisms $\jota_k$ are isometries. From this property it also follows that the subspace $\HH_\eta ^\perp$ is $\jota_k$-invariant.
 
In addition, $\HH_\eta ^\perp$ is $\SU(2)$-irreducible as a consequence of Lemma \ref{decomposition-spinorial-bundle}, and the map $X \mapsto  X\eta$ is injective and $\SU(2)$-equivariant. Since 
$\RR^5=\RR \oplus \CC^2$ as $\SU(2)$ modules, necessarily $\HH_\eta^\perp = \xi \eta$ for some $\xi \subset \RR^5$.
Finally, the endomorphisms $J_k$ define a quaternionic structure on $\xi$, since $\jota_2$ is a quaternionic structure on $\HH_\eta^\perp$.
\end{proof}

\begin{definition} \label{SU2-forms-spinors}
Let $(M,g)$ be a Riemannian manifold with a spin structure and let $\eta\in\S(M)$ be a unit spinor. The $\SU(2)$ structure $(\alpha,\o_1,\o_2,\o_3)$ defined by $\eta$ is given by:
\begin{enumerate}
 \item $\o_k(X,Y)= g(J_k X_\xi,Y_\xi)$, where $Z_\xi$ is the orthogonal projection of a vector field $Z$ to $\xi$.
 \item $\RR^5\cong \xi \oplus \la R \ra$ as oriented vector spaces, where $\xi$ is oriented by $\o_1^2|_\xi$, and $R=\alpha^\sharp$.
\end{enumerate}
\end{definition}

\begin{lemma} \label{su2-equalities}
The following equalities hold:
\begin{enumerate}
\item $ \o_k \eta= -2\e_k \jota_k\eta$, with $\e_1=1$ and $\e_2=\e_3=-1$,
\item $\a \eta = - \jota_1\eta$,
\item $\a \jota_2 \eta =-\jota_3 \eta$ and $\a \jota_3\eta=\jota_2\eta$.
\item $\nu \eta= -\jota_1\eta$, where $\nu$ is the positively-oriented unit-length volume form.
\end{enumerate}
\end{lemma}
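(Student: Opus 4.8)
The plan is to work in the explicit $6$-dimensional real $\Cl_6$-representation given in Subsection~\ref{Cl-representations} and to use Lemma~\ref{Spinor-Bundle-Relation} to realize the $5$-dimensional spinor module as the positive part of a $\Cl_6$-module. Concretely, identify $\RR^5 = \la e_1,\ldots,e_5\ra \subset \RR^6$, take $W = \RR^8$ with the matrices $e_1,\ldots,e_6$ as above, and set $\rho_5(v) = \rho_6(v e_6)$ for $v \in \RR^5$. Then $\jota_1 = \rho_5(\nu_5) = \rho_6(e_1 e_2 e_3 e_4 e_5 e_6)$; one computes this product of the six explicit matrices once and for all, and similarly fixes the quaternionic structure $\jota_2$ (which by Proposition~\ref{real-and-quaternionic-structures} exists and anticommutes with Clifford multiplication for $n=5$, $k=2$) and $\jota_3 = \jota_1 \circ \jota_2$. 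The unit spinor $\eta$ can be normalized to be, say, the first basis vector of $\RR^8$, using transitivity of $\Spin(5)$ on the unit sphere of $W$.

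**Main steps.** First I would record the anticommutation/commutation relations: $\jota_1$ anticommutes with $\rho_5(v)$ for all $v$ (it is $\rho_5$ of the volume form in dimension $5$, where $k=2$, so the sign in Proposition~\ref{real-and-quaternionic-structures} is $(-1)^{k+1} = -1$ — more directly, $\nu_5$ anticommutes with each $e_j$ since $5$ is odd), and $\jota_2$ anticommutes with $\rho_5(v)$ by construction; hence $\jota_3 = \jota_1\jota_2$ commutes with $\rho_5(v)$. For part~(2), $\a\eta = -\jota_1\eta$: by Definition~\ref{SU2-forms-spinors}, $\a = R^* = (\alpha^\sharp)^*$ where $R$ spans the orthogonal complement of $\xi$ in $\RR^5$; since $\jota_1 = \rho_5(\nu_5)$ and $\nu_5 = e_1e_2e_3e_4e_5$, writing $\RR^5 \cong \xi \oplus \la R\ra$ and using that Clifford multiplication by the volume element, restricted to a unit vector $R$ with $\xi \eta = \HH_\eta^\perp$, acts as $\pm$ the product of Clifford multiplications by an orthonormal basis of $\xi$, one extracts the scalar. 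The cleanest route: pick the explicit $R$ (one of $e_1,\ldots,e_5$ in the frame adapted to $\eta$), compute $\rho_5(R)\eta$ and $\jota_1\eta = \rho_5(\nu_5)\eta$ directly from the matrices, and check the sign; by $\Spin(5)$-equivariance this holds for every $\SU(2)$-frame. Part~(3) then follows formally: $\a\jota_2\eta = \rho_5(R)\jota_2\eta = -\jota_2\rho_5(R)\eta = -\jota_2(-\jota_1\eta) = \jota_2\jota_1\eta = -\jota_1\jota_2\eta = -\jota_3\eta$ (using $\jota_1\jota_2 = -\jota_2\jota_1$, which holds because both are antilinear and $\jota_1^2 = -\rI$, $\jota_2^2 = -\rI$, $(\jota_1\jota_2)^2 = \jota_3^2$; alternatively just verify $\jota_1\jota_2 = -\jota_2\jota_1$ on the explicit model). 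Similarly $\a\jota_3\eta = \rho_5(R)\jota_3\eta = \jota_3\rho_5(R)\eta = \jota_3(-\jota_1\eta) = -\jota_3\jota_1\eta$, and one checks $-\jota_3\jota_1 = \jota_2$ on $\eta$, i.e.\ $\jota_1\jota_3 = \jota_2$ up to the sign fixed by the model.

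**Part (1).** For $\o_k\eta = -2\e_k\jota_k\eta$: by Lemma~\ref{decomposition-spinorial-bundle} the spinor $\o_k\eta$ lies in $\la\eta\ra^\perp$, and in fact in the line $\la\o_k\eta\ra$; by Lemma~\ref{decomposition-2} and the orthogonal decomposition $\S(M) = \HH_\eta \oplus \HH_\eta^\perp$ it must be a multiple of $\jota_k\eta$ (one should check $\o_k\eta \in \HH_\eta$, which follows since $\o_k \in \frsu(2)^\perp$ pairs into the $\la\o_k\eta\ra$ summand, and that summand is spanned by $\jota_k\eta$ because $J_k$ on $\xi$ is defined by $\jota_k$). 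To pin the constant, use $\o_k = \o_k^{\mathrm{flat}}$ in the adapted coframe of Proposition~\ref{local-form}: $\o_1 = e^{12} + e^{34}$ etc.; Clifford multiplication by a $2$-form $e^{ij}$ is $\rho_5(e_i)\rho_5(e_j)$, so $\o_1\eta = (\rho_5(e_1)\rho_5(e_2) + \rho_5(e_3)\rho_5(e_4))\eta$, which one evaluates on the explicit model and compares with $\jota_1\eta$; the factor $-2$ and the signs $\e_1 = 1$, $\e_2 = \e_3 = -1$ drop out of this one finite computation. The signs $\e_k$ are exactly what reconciles the orientation convention in Definition~\ref{SU2-forms-spinors}(2) (orient $\xi$ by $\o_1^2|_\xi$) with the product $\jota_3 = \jota_1\jota_2$, which is where the asymmetry between $k=1$ and $k=2,3$ originates.

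**Expected main obstacle.** The genuinely delicate point is bookkeeping of signs and normalizations: matching the $-2$ and the $\e_k$ requires fixing, consistently, (i) the scaling of the hermitian product $h$ and hence of $\la\cdot,\cdot\ra$, (ii) the precise choice of $\jota_2$ among its $\SU(2)$-worth of possibilities, and (iii) the orientation of $\xi$. All three are already pinned down earlier (the scalar product in Subsection~\ref{ClC-representations}, the structures in Proposition~\ref{real-and-quaternionic-structures}, and Definition~\ref{SU2-forms-spinors}), so the proof reduces to one explicit matrix computation in the model of Subsection~\ref{Cl-representations}; the risk is purely clerical. A cleaner alternative that avoids the matrices: prove everything abstractly up to sign using only the (anti)commutation relations and the decomposition lemmas, then fix the overall sign in each identity by evaluating a single coefficient (e.g.\ $\la\o_1\eta,\jota_1\eta\ra$) against the known relation $|\eta| = 1$ and the skew-symmetry of Clifford multiplication by vectors, which forces $\la\o_1\eta,\jota_1\eta\ra \neq 0$ and determines its sign from the orientation convention.
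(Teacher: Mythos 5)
The preliminary ``commutation relations'' you record are backwards for $\jota_1$ and $\jota_3$, and both justifications you give for the $\jota_1$ statement are flawed. In $\Cl_5$ the volume element $\nu_5$ \emph{commutes} with every vector, since the volume form of $\Cl_n$ is central exactly when $n$ is odd; hence $\jota_1 = \rho_5(\nu_5)$ commutes with Clifford multiplication by vectors, and since $\jota_2$ anticommutes (by construction), $\jota_3 = \jota_1\jota_2$ anticommutes as well. Your first justification misapplies Proposition~\ref{real-and-quaternionic-structures}, which governs the antilinear structures $\vp$ and $\jota_2$, not the $\CC$-linear operator $\rho_5(\nu_5)$ (note also that $\jota_1$ is $\CC$-linear, not antilinear as you later assert when arguing $\jota_1\jota_2=-\jota_2\jota_1$); your second, ``$\nu_5$ anticommutes with each $e_j$ since $5$ is odd,'' has the parity reversed. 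In your part~(3) the two sign errors --- taking $\a\jota_3\eta=\jota_3\a\eta$ rather than $-\jota_3\a\eta$, and claiming $-\jota_3\jota_1=\jota_2$ when in fact $\jota_3\jota_1=\jota_2$ --- happen to cancel, so the final identity is right but the derivation is not sound.

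The paper's proof is also shorter than your matrix-computation plan for part~(1): it works in the adapted coframe of Proposition~\ref{local-form} and uses the identity $J_k(X)\eta = \jota_k(X\eta)$ from Lemma~\ref{decomposition-2} together with the correct commutation statements above, giving $\o_1\eta=(e_1e_2+e_3e_4)\eta=e_1J_1(e_1)\eta+e_3J_1(e_3)\eta=\jota_1(e_1^2+e_3^2)\eta=-2\jota_1\eta$, with the extra sign $\e_k$ for $k=2,3$ coming precisely from the anticommutation of $\jota_2,\jota_3$ with vectors. Parts~(2) and~(3) then drop out of $\nu_5\eta=-e_5\eta$ (which follows from part~(1)) and the quaternionic algebra $\jota_1\jota_2=\jota_3=-\jota_2\jota_1$. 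You never invoke $J_k(X)\eta=\jota_k(X\eta)$, which is the conceptual heart of the lemma; without it your part~(1) reduces to a brute-force calculation with the explicit $8\times 8$ matrices, valid but opaque. Correcting the commutation statements and using that identity would bring your abstract route into line with the paper's argument.
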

\begin{proof}
Take an orthonormal oriented frame $(e_1,e_2,e_3,e_4,e_5)$ such that $\o_1=e^{12} + e^{34}$, $\o_2= e^{13} - e^{24}$, $\o_3=e^{14} + e^{23}$ and $\a=e^5$.
Since $J_1(e_1)=e_2$ and $J_1(e_3)=e_4$,
\[
\o_1 \eta= (e_1e_2 + e_3e_4)\eta = e_1J_1(e_1)\eta + e_3J_1(e_3)\eta= \jota_1(e_1^2 + e_3^2)\eta = -2\jota_1\eta\,.
\]
For $k\in \{2,3\}$ the computation is similar, but one has to take into account that $\jota_2$ and $\jota_3$ anticommute with the Clifford product with a vector.

Finally, $e_{12}\eta=-\jota_1\eta=e_{34}\eta$ implies $\nu\eta = -e_5\eta$. The second and third equalities are a consequence of the latter one, together with the fact that $\jota_1\jota_2=\jota_3$. For instance, $\a \jota_2 \eta =-\jota_2\a \eta = \jota_2\jota_1\eta=-\jota_3\eta$.

For the last equality, observe that in terms of the previous frame we have:
$\nu=e^{12345}=e^1\wedge (J_1(e_1))^*\wedge e^3 \wedge (J_1(e_3))^* \wedge e^5$. Taking into account the previous equalities and that $(e^k\wedge(J_1(e^k))^*)\eta = -\jota_1\eta$ for $k\in \{1,3\}$ as before, we obtain:
$$
\nu \eta=-\jota_1 e^1J_1(e^1)e^3J_1(e^3)\eta=-\jota_1\eta.
$$
\end{proof}

\begin{remark} \label{orthogonal-clifford-product}
The subspaces $\L^2 \xi^* \eta$ and $\xi^* \eta$ are orthogonal.
\end{remark}

\begin{lemma} 
For $\e_1=1$ and $\e_2=\e_3=-1$, $\o_k(X,Y)=\e_k\la X\jota_k\eta,Y\eta\ra$.
Moreover, $\a(X)=-\la X\eta,\jota_1\eta\ra$.
\end{lemma}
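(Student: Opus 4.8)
The plan is to compute both sides of each claimed identity using the local orthonormal frame $(e_1,\dots,e_5)$ in which $\o_1=e^{12}+e^{34}$, $\o_2=e^{13}-e^{24}$, $\o_3=e^{14}+e^{23}$, $\a=e^5$, exactly as in Lemma \ref{su2-equalities}, and then to leverage the already-established equalities $\o_k\eta=-2\e_k\jota_k\eta$, $\a\eta=-\jota_1\eta$. The key point is that by Definition \ref{SU2-forms-spinors}(1), for $X,Y\in\fX(M)$ the value $\o_k(X,Y)$ only depends on the $\xi$-components $X_\xi,Y_\xi$, so it suffices to verify $\o_k(X,Y)=\e_k\la X\jota_k\eta,Y\eta\ra$ for $X,Y$ ranging over the basis vectors $e_1,e_2,e_3,e_4$ of $\xi$, and separately to check that both sides vanish when either argument is $e_5=R$.

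First I would treat the $\xi$-part. Fix $Y=e_j$ with $j\le 4$ and consider the element $i(e_j)\o_k \in \xi^*$; by Lemma \ref{hodge-star}-type bookkeeping (or directly from the explicit formulas) one has $i(e_j)\o_k = \pm (J_k e_j)^*$ with a sign, and in fact $(i(e_j)\o_k)\eta = \pm (J_k e_j)\eta = \pm\jota_k(e_j\eta)$, using $J_k(X)\eta=\jota_k(X\eta)$ from Lemma \ref{decomposition-2}. Then for $X=e_i$ with $i\le 4$, pairing with $X\eta$ and using that Clifford multiplication by a unit vector is an isometry and that the $\jota_k$ are isometries, I can rewrite $\la X\jota_k\eta, Y\eta\ra$ as $\pm\la\jota_k(e_i\eta),e_j\eta\ra$ and match it against $\o_k(e_i,e_j)=g(J_ke_i,e_j)$. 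The cleanest route is actually to expand $\la X\jota_k\eta,Y\eta\ra$ by writing $\jota_k\eta = -\tfrac12\e_k\,\o_k\eta = -\tfrac12\e_k\sum (\text{basis 2-forms})\eta$ and using the Clifford relations $e_ae_b=-e_be_a$, $e_a^2=-1$ to collapse $X\o_k Y$ acting on $\eta$; the term surviving is $-2\,g(J_kX_\xi,Y_\xi)\,\eta$ (up to the sign $\e_k$), and then $\la\eta,\eta\ra=1$ gives the result. For arguments in the $\la R\ra$-direction: $\o_k(e_5,\cdot)=0$ by definition, and on the spinor side $\la e_5\jota_k\eta,e_j\eta\ra = \e_k^{-1}\ldots$ — here I would use Lemma \ref{su2-equalities}(2),(3) to move $\a=e^5$ past $\jota_k\eta$ ($\a\jota_2\eta=-\jota_3\eta$, $\a\jota_3\eta=\jota_2\eta$, $\a\jota_1\eta=\a\a\eta=-\eta$... careful with signs) producing a spinor in $\HH_\eta$, which by Lemma \ref{decomposition-2} and Remark \ref{orthogonal-clifford-product} is orthogonal to $\xi^*\eta\ni e_j\eta$; hence both sides vanish.

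For the last identity $\a(X)=-\la X\eta,\jota_1\eta\ra$: write $X=X_\xi + \a(X)R$. If $X=e_j\in\xi$ then $e_j\eta\in\xi^*\eta=\HH_\eta^\perp$ by Lemma \ref{decomposition-2}, while $\jota_1\eta\in\HH_\eta$, so the pairing is zero, matching $\a(e_j)=0$. If $X=R=e_5$, then using Lemma \ref{su2-equalities}(2), $e_5\eta=\a\eta=-\jota_1\eta$, so $\la e_5\eta,\jota_1\eta\ra = -\la\jota_1\eta,\jota_1\eta\ra = -1 = -\a(R)\cdot(-1)$; so $-\la X\eta,\jota_1\eta\ra = 1 = \a(R)$. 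Assembling by linearity in $X$ gives the claim.

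The main obstacle I anticipate is sign bookkeeping: the signs $\e_k$, the anticommutation of $\jota_2,\jota_3$ with Clifford multiplication versus the commutation of $\jota_1$, and the orientation/sign conventions implicit in $\o_2=e^{13}-e^{24}$ (as opposed to $+e^{24}$) all conspire, and one must be careful that $i(e_j)\o_2$ picks up the correct sign so that $(i(e_j)\o_2)\eta = \jota_2(e_j\eta)$ holds without an extra minus. I would organize the computation so that I only ever verify the identity on the single representative $X=e_1$, $Y=e_2$ (and the analogous transverse case), exactly mirroring the proof strategy of Lemma \ref{su2-equalities}, and then invoke $\SU(2)$-equivariance together with the fact that both sides are tensorial in $(X,Y)$ to conclude in general; this keeps the sign analysis confined to one explicit line per value of $k$.
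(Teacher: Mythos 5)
Your first outlined route is essentially the paper's own proof: for $X,Y\in\xi$ one writes $\o_k(X,Y)=g(J_kX,Y)=\la J_kX\eta,Y\eta\ra=\la\jota_k(X\eta),Y\eta\ra=\e_k\la X\jota_k\eta,Y\eta\ra$, using that $v\mapsto v\eta$ is an isometry of $T_pM$ onto $T_pM\cdot\eta$ (which follows from skew-symmetry of Clifford multiplication and $|\eta|=1$) together with $J_k(X)\eta=\jota_k(X\eta)$ and the (anti)commutation of $\jota_k$ with vector Clifford multiplication; the $R$-cases are killed by the orthogonality $\HH_\eta\perp\HH_\eta^\perp$, and $\a(X)=g(X,R)=\la X\eta,R\eta\ra=-\la X\eta,\jota_1\eta\ra$. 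Your alternative ``cleanest route'' (expanding $\jota_k\eta=-\tfrac{\e_k}{2}\o_k\eta$ and collapsing $X\o_kY$ in the Clifford algebra) is a genuinely different, more computational path; it does work, but it requires tracking the degree-$4$ part of $YX\o_k$ (the volume form $\nu_4$ acts on $\eta$ by $-1$, not just by a factor you can drop), so the ``surviving term'' you quote is the \emph{net} of the degree-$0$ and degree-$4$ contributions. The paper's isometry observation buys you all of this for free.

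There is one step in your plan I would flag as insufficient as stated: reducing to the single pair $(e_1,e_2)$ by $\SU(2)$-equivariance. The tensor $T(X,Y)=\o_k(X,Y)-\e_k\la X\jota_k\eta,Y\eta\ra$ is indeed $\SU(2)$-invariant, hence lies in $(\L^2T_pM^*)^{\SU(2)}=\la\o_1,\o_2,\o_3\ra$, i.e.\ $T=\sum_la_l\o_l$. But evaluating at $(e_1,e_2)$ only pins down $a_1$; to conclude $T=0$ you must also check, say, $(e_1,e_3)$ and $(e_1,e_4)$ (or equivalently argue that $T$ is of type $(1,1)$ for both $J_l$, $l\neq k$). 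The paper's phrasing avoids this entirely by proving the identity pointwise for \emph{arbitrary} $X,Y\in\xi$ in one line. Apart from that, your argument for $\a$ (splitting $X=X_\xi+\a(X)R$, orthogonality for the $\xi$-part, $R\eta=-\jota_1\eta$ for the $R$-part) is correct and equivalent to the paper's.
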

\begin{proof}
The tensor $(X,Y)\mapsto\la X\jota_k\eta,Y\eta\ra$ is skew-symmetric because $\jota_k$ is an isometry, $\jota_k^2=-\rI$ and $\la \jota_k \eta,\eta\ra=0$. If $X,Y\in \xi$,
\[
\o_k(X,Y)=  g(J_kX,Y)=\la J_kX\eta,Y\eta\ra=\e_k\la X\jota_k\eta,Y\eta\ra.
\] 
Moreover, $\o_k(R,Y)= 0=\e_k\la R \jota_k\eta, Y\eta\ra$, because $R \jota_k\eta \in \HH_{\eta}$ and $Y\eta \in \HH_{\eta}^\perp$. Finally, $\a(X)=\la X\eta, R\eta\ra=-\la X\eta,\jota_1\eta\ra$.
\end{proof}

Our next purpose is to compute the Dirac operator of $\eta$ in order to relate it with the torsion of the $\SU(2)$ structure. We first introduce some notation.

\begin{definition} \label{covariant-derivative} 
Lemmas \ref{decomposition-spinorial-bundle} and \ref{su2-equalities} guarantee the existence and uniqueness of $S\in \End(\xi)$, $V_\xi \in \xi$, $\Theta_l \in \xi^*$ and $\p_l \in C^\infty(M)$, $l=1,2,3$, such that:
\begin{equation}\label{eqn:1}
\nabla_X \eta = S(X_\xi)\eta + \a(X) V_\xi \eta + \sum_{l=1}^3{(\Theta_l(X_\xi) + \a(X)\p_l)\jota_l\eta}\,,
\end{equation}
where $X=X_{\xi}+ \a(X)R$.
\end{definition}

\begin{definition}\label{endomorphism-S} According to Proposition \ref{irreducible-decomposition}, there is a decomposition of $S\in \End(\xi)$:
\[
S(X)= \mu\, \rI + \sum_{l=1}^3{S_l} + \sum_{l=1}^3{\l_l}J_l + S_0\, ,
\]
where $S_k \in \s_k(\xi)$ and $S_0 \in \frsu(2)$.
\end{definition}

We now compute the Dirac operator of $\eta$ in terms of the tensors we introduced; we use the notation of Definition \ref{covariant-derivative}.
\begin{proposition} \label{Dirac-su2-spinors}
Let $\eta \in \S(M)$ be a unit-length spinor. The Dirac operator is
\begin{align*}
\sD\eta = & (-4\mu + \p_1)\eta - 4\l_1 \jota_1\eta + (4\l_2 + \p_3)\jota_2\eta + (4\l_3 - \p_2)\jota_3\eta \\
 & + (J_1(V_{\xi} + \Theta_1^\sharp)- J_2(\Theta_2^\sharp)- J_3(\Theta_3^\sharp)) \eta.
\end{align*} 
\end{proposition}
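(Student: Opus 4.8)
The plan is to compute $\sD\eta = \sum_{i=1}^5 e_i \nabla_{e_i}\eta$ directly by plugging in the expression for $\nabla_X\eta$ from Definition \ref{covariant-derivative} and simplifying using the Clifford-algebraic identities already established. First I would fix an orthonormal frame $(e_1,e_2,e_3,e_4,e_5)$ adapted to the $\SU(2)$ structure as in the proof of Lemma \ref{su2-equalities}, so that $e_5 = R$, $\xi = \langle e_1,\dots,e_4\rangle$, and $J_1 e_1 = e_2$, $J_1 e_3 = e_4$, and similarly for $J_2, J_3$. With $X = e_i$ we have $\alpha(e_i) = \delta_i^5$, so
\[
\sD\eta = \sum_{i=1}^4 e_i\bigl(S(e_i)\eta + \sum_{l=1}^3 \Theta_l(e_i)\jota_l\eta\bigr) + e_5\bigl(V_\xi \eta + \sum_{l=1}^3 \p_l \jota_l\eta\bigr)\,.
\]
So the computation splits into two pieces: the "$\xi$-part" $\sum_{i=1}^4 e_i S(e_i)\eta + \sum_{i=1}^4\sum_l \Theta_l(e_i)\, e_i\jota_l\eta$, and the "$R$-part" $e_5 V_\xi\eta + \sum_l \p_l\, e_5\jota_l\eta$.

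For the $R$-part, I would use Lemma \ref{su2-equalities}: $e_5\jota_1\eta$ comes from $\alpha\eta = -\jota_1\eta$ hence (applying $e_5$ again, using $e_5^2 = -1$) $e_5\jota_1\eta = \eta$; and $e_5\jota_2\eta = -\jota_3\eta$, $e_5\jota_3\eta = \jota_2\eta$. Also $e_5 V_\xi\eta = -V_\xi e_5\eta + 2\langle e_5,V_\xi\rangle\eta = -V_\xi\jota_1\eta$ since $V_\xi\in\xi$ is orthogonal to $e_5$; and $V_\xi\jota_1\eta = -\jota_1(V_\xi\eta)$ because... wait, $\jota_1$ commutes with Clifford multiplication while $\jota_2,\jota_3$ anticommute — so $V_\xi\jota_1\eta = \jota_1(V_\xi\eta)$, and using $X\eta\in\HH_\eta^\perp$ with $\jota_1(X\eta) = J_1(X)\eta$ from Lemma \ref{decomposition-2}, this gives $e_5 V_\xi\eta = -J_1(V_\xi)\eta$. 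Hence the $R$-part contributes $\p_1\eta - \p_3\jota_3\eta\cdot(-1)$... I would track the signs carefully: it yields $\p_1\eta + \p_3\jota_2\eta - \p_2\jota_3\eta - J_1(V_\xi)\eta$ (with the sign of the $\jota_2,\jota_3$ terms matching the statement $+\p_3\jota_2\eta - \p_2\jota_3\eta$).

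For the $\xi$-part, the term $\sum_i e_i S(e_i)\eta$ is handled by decomposing $S = \mu\,\rI + \sum_l S_l + \sum_l\l_l J_l + S_0$ (Definition \ref{endomorphism-S}) and evaluating each summand. The key Clifford facts are: $\sum_{i=1}^4 e_i\cdot e_i = -4$ on spinors, so the $\mu\,\rI$ term gives $4\mu\eta$; for $S_0\in\frsu(2)\subset\L^2\xi^*$ one has $\sum_i e_i S_0(e_i)\eta = 2 S_0\eta$ and $S_0\eta = 0$ since $\frsu(2) = \ker d\rho_\eta$; for $S_l\in\s_l(\xi)\subset\Sym_0(\xi)$ a symmetric traceless endomorphism satisfies $\sum_i e_i S_l(e_i)\eta = 0$ (this is the standard fact that a symmetric traceless endomorphism contracts trivially into Clifford multiplication — I would verify via the isomorphism $E_l\colon\s_l(\xi)\to\frsu(2)$, $S_l\mapsto i(S_l)\o_l$, reducing again to the $\frsu(2)$ case); and for $\l_l J_l$ the computation $\sum_i e_i J_l(e_i)\eta = \o_l\eta = -2\e_l\jota_l\eta$ by Lemma \ref{su2-equalities}(1), giving $-2\l_l\e_l\jota_l\eta$, i.e.\ $-2\l_1\jota_1\eta + 2\l_2\jota_2\eta + 2\l_3\jota_3\eta$. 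Hmm, the statement has coefficient $-4\l_1$, $+4\l_2$, $+4\l_3$, so I would recheck: $\sum_{i=1}^4 e_i J_l(e_i) = e_1 e_2 + e_2(-e_1) + e_3 e_4 + e_4(-e_3) = 2(e_{12} + e_{34}) = 2\o_1$ as a Clifford element, so indeed $\sum_i e_i J_1(e_i)\eta = 2\o_1\eta = -4\jota_1\eta$ — good, the factor is $-4\l_1\jota_1\eta$, and similarly $+4\l_2\jota_2\eta + 4\l_3\jota_3\eta$ using $\o_2\eta = 2\jota_2\eta$, $\o_3\eta = 2\jota_3\eta$.

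Finally the term $\sum_{i=1}^4\sum_{l=1}^3\Theta_l(e_i)\,e_i\jota_l\eta = \sum_l \jota_l(\Theta_l^\sharp\cdot$-ish$)$: since $\jota_1$ commutes and $\jota_2,\jota_3$ anticommute with Clifford multiplication, $\sum_i\Theta_l(e_i) e_i\jota_l\eta = \e_l\,\jota_l(\Theta_l^\sharp\eta) = \e_l J_l(\Theta_l^\sharp)\eta$ (again using $\jota_l(X\eta) = (-1)^{?}J_l(X)\eta$; from Lemma \ref{decomposition-2}, $J_l(X)\eta = \jota_l(X\eta)$, while the anticommutation introduces the $\e_l$). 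Wait — more carefully, $\sum_i\Theta_l(e_i) e_i\jota_l\eta$: for $l=1$, $\jota_1$ commutes, so $= \jota_1(\Theta_1^\sharp\eta) = J_1(\Theta_1^\sharp)\eta$; for $l\in\{2,3\}$, $\jota_l$ anticommutes, so $= -\jota_l(\Theta_l^\sharp\eta) = -J_l(\Theta_l^\sharp)\eta$. This gives $(J_1(\Theta_1^\sharp) - J_2(\Theta_2^\sharp) - J_3(\Theta_3^\sharp))\eta$, which combines with the $-J_1(V_\xi)\eta$ from the $R$-part into $(J_1(V_\xi + \Theta_1^\sharp)\cdot(-1)$... no: $-J_1(V_\xi) + J_1(\Theta_1^\sharp)$, hmm the statement has $+J_1(V_\xi + \Theta_1^\sharp)$. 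I would recheck the sign of $e_5 V_\xi\eta$: $e_5 V_\xi\eta = -V_\xi e_5\eta = -V_\xi\jota_1\cdot(-1)\eta$? We have $\alpha\eta = -\jota_1\eta$ i.e.\ $e_5\eta = -\jota_1\eta$, so $e_5 V_\xi\eta = -V_\xi e_5\eta + 2g(e_5,V_\xi)\eta = -V_\xi(-\jota_1\eta) = V_\xi\jota_1\eta = \jota_1(V_\xi\eta) = J_1(V_\xi)\eta$ — so it is $+J_1(V_\xi)\eta$, matching the statement. Assembling all pieces:
\[
\sD\eta = (4\mu + \p_1)\eta - 4\l_1\jota_1\eta + (4\l_2 + \p_3)\jota_2\eta + (4\l_3 - \p_2)\jota_3\eta + (J_1(V_\xi + \Theta_1^\sharp) - J_2(\Theta_2^\sharp) - J_3(\Theta_3^\sharp))\eta\,.
\]

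The main obstacle I anticipate is bookkeeping the numerous signs $\e_l$ and the commutation/anticommutation dichotomy between $\jota_1$ and $\jota_2,\jota_3$ consistently across all four families of terms, together with establishing cleanly that symmetric traceless endomorphisms and $\frsu(2)$-valued endomorphisms contract trivially (or to zero on $\eta$) under $\sum_i e_i\cdot(-)(e_i)$; the rest is the standard "Clifford contraction" computation of a Dirac operator from an explicit covariant-derivative formula, where one exploits that $S_0\eta = 0$ because $\frsu(2) = \ker d\rho_\eta$ and reduces the $\s_l(\xi)$ terms to this via the isomorphism $E_l$ of Proposition \ref{irreducible-decomposition}.
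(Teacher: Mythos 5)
Your proposal follows essentially the same route as the paper's proof: substitute the ansatz of Definition~\ref{covariant-derivative} into $\sD=\sum_ie_i\nabla_{e_i}\eta$, split into the $\xi$- and $R$-parts, decompose $S$ according to Definition~\ref{endomorphism-S}, and evaluate each piece using Lemmas~\ref{decomposition-2} and~\ref{su2-equalities}. The only real variation is in how the $\s_l(\xi)$- and $\frsu(2)$-components of $S$ are disposed of: the paper introduces the $\SU(2)$-equivariant map $m\colon\End(\xi)\to\S(M)$, $e_i\otimes e_j^*\mapsto e_ie_j\eta$, and identifies $\ker(m)=\frsu(2)\oplus(\oplus_k\s_k(\xi))$ by comparing with $\im(m)=\HH_\eta$ via Proposition~\ref{irreducible-decomposition}, whereas you argue directly that a symmetric traceless endomorphism contracts to zero under $\sum_ie_i(\cdot)(e_i)$ and that the skew part contributes $2S_0\eta=0$ since $\frsu(2)=\ker d\rho_\eta$; both justifications are correct and of comparable length. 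One small caution: your parenthetical fallback to ``verify via the isomorphism $E_l$, reducing again to the $\frsu(2)$ case'' would not work as stated, since the Clifford contraction $S\mapsto\sum_ie_iS(e_i)\eta$ is not intertwined with $E_l$ in any evident way; fortunately your primary argument (symmetric plus traceless implies vanishing contraction) stands on its own and makes that fallback unnecessary.
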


\begin{proof}
Let $(e_1,\dots,e_4,R)$ be an oriented orthonormal local frame. From (\ref{eqn:1}), we have
\[
 \sD\eta= m(S) + R V_{\xi}\eta + \sum_{k=1}^3 \left(\left(\sum_{i=1}^4 \Theta_k(e_i) e_i \right) + \phi_k R\right)\jota_k\eta \,,
\]
where $m \colon \End(\xi) \to \S(M)$, $e_i\otimes e_j^* \mapsto e_ie_j\eta.$
Note that $m$ is $\SU(2)$ equivariant and $\im(m)= \HH_{\eta}$. Using Proposition \ref{irreducible-decomposition}, we obtain $\ker(m)= \frsu(2) \oplus (\oplus_{k=1}^3 \s_k(\xi))$.
Moreover, $m(\rI)=-4\eta$ and $m(J_k)=-4\e_k\jota_k\eta$.

In addition, $R V_{\xi}\eta= J_1(V_{\xi})\eta$. Finally, 
 \[
\sum_{i=1}^4{\Theta_k(e_i)e_i}\jota_k\eta=\e_k J_k\Theta_k^\sharp\eta \quad \textrm{and} \quad \sum_{k=1}^3{\phi_k R \jota_k\eta} = \p_1 \eta - \p_2\jota_3\eta + \p_3\jota_2\eta\,.
 \]
\end{proof}

Next, we proceed to write the torsion in terms of the forms $(\alpha,\o_1,\o_2,\o_3)$ defined by a unit-length spinor $\eta \in \S(M)$ as in Lemma \ref{SU2-forms-spinors}.

\begin{proposition} \label{prop:cov-a-ok}
The covariant derivatives of the forms $(\alpha,\o_1,\o_2,\o_3)$ are governed by the formulae
\begin{align*}
(\onabla_Z \o_k)(X,Y) & =\e_k\la\nabla_Z \eta,(XY-YX)\jota_k\eta\ra, \quad k=1,2,3\,,\\
(\onabla_Z \a) (X) & = 2\la\nabla_Z\eta, X\jota_1\eta\ra\,,
\end{align*}
where $\onabla$ is the Levi-Civita connection and $\nabla$ is the spinorial connection.
\end{proposition}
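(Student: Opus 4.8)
The plan is to derive both formulae by differentiating the spinorial expressions for $\o_k$ and $\a$ obtained in the previous lemma, namely $\o_k(X,Y)=\e_k\la X\jota_k\eta,Y\eta\ra$ for $X,Y\in\fX(M)$ (valid on all of $TM$, since both sides vanish when one argument is $R$) and $\a(X)=-\la X\eta,\jota_1\eta\ra$. The key structural inputs are: the spinorial connection $\nabla$ is compatible with $\la\cdot,\cdot\ra$, it acts as a derivation with respect to Clifford multiplication by a vector field, and the endomorphisms $\jota_k$ are parallel (all recorded in Section \ref{Spinorial}). These facts let us commute $\nabla$ past $\jota_k$ and past Clifford products freely, at the cost of the usual correction terms $(\onabla_Z X)\phi$ which cancel against the corresponding terms in $\onabla_Z$ of the form, since $\onabla$ and $\nabla$ are compatible in the sense that $\nabla_Z(X\phi)=(\onabla_Z X)\phi + X\nabla_Z\phi$.

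Concretely, for the first formula I would start from $\o_k(X,Y)=\e_k\la X\jota_k\eta,Y\eta\ra$ and apply $\onabla_Z$. By the Leibniz rule for the tensor $\o_k$,
\[
(\onabla_Z\o_k)(X,Y)=Z\big(\o_k(X,Y)\big)-\o_k(\onabla_Z X,Y)-\o_k(X,\onabla_Z Y).
\]
Expanding $Z(\o_k(X,Y))=\e_k\la\nabla_Z(X\jota_k\eta),Y\eta\ra+\e_k\la X\jota_k\eta,\nabla_Z(Y\eta)\ra$ and using $\nabla_Z(X\jota_k\eta)=(\onabla_Z X)\jota_k\eta+X\jota_k\nabla_Z\eta$ (parallelism of $\jota_k$) together with the analogous expansion for $Y\eta$, the terms involving $\onabla_Z X$ and $\onabla_Z Y$ cancel against $-\o_k(\onabla_Z X,Y)-\o_k(X,\onabla_Z Y)$. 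This leaves
\[
(\onabla_Z\o_k)(X,Y)=\e_k\la X\jota_k\nabla_Z\eta,Y\eta\ra+\e_k\la X\jota_k\eta,Y\nabla_Z\eta\ra.
\]
To reach the stated form one moves the Clifford factors across the inner product using skew-symmetry of Clifford multiplication and the fact that $\jota_k$ is an isometry: $\la X\jota_k\nabla_Z\eta,Y\eta\ra=-\la\jota_k\nabla_Z\eta,XY\eta\ra=-\e_k\la\nabla_Z\eta,\jota_k XY\eta\ra$, and similarly $\la X\jota_k\eta,Y\nabla_Z\eta\ra=-\la\jota_k\eta,XY\nabla_Z\eta\ra=\e_k\la\nabla_Z\eta,\jota_k XY\eta\ra$ after also moving $\jota_k$ across — here one must track the sign from $\jota_2,\jota_3$ anticommuting with Clifford multiplication, but $\e_k^2=1$ absorbs it. Combining and using $\jota_k XY\eta=\mp\text{(something)}$ one rewrites $\jota_k XY$ in terms of $(XY-YX)\jota_k$, exploiting that the symmetric part $XY+YX=-2g(X,Y)$ is a scalar and contributes a term symmetric in $X,Y$ which must vanish since the left side is antisymmetric; so only the commutator $XY-YX$ survives, giving $(\onabla_Z\o_k)(X,Y)=\e_k\la\nabla_Z\eta,(XY-YX)\jota_k\eta\ra$.

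The second formula is the same computation but shorter: from $\a(X)=-\la X\eta,\jota_1\eta\ra$ one gets $(\onabla_Z\a)(X)=-\la X\nabla_Z\eta,\jota_1\eta\ra-\la X\eta,\jota_1\nabla_Z\eta\ra$ after the $\onabla_Z X$ terms cancel as before; then $-\la X\nabla_Z\eta,\jota_1\eta\ra=\la\nabla_Z\eta,X\jota_1\eta\ra$ by skew-symmetry of Clifford multiplication, and $-\la X\eta,\jota_1\nabla_Z\eta\ra=\la\jota_1 X\eta,\nabla_Z\eta\ra=\la X\jota_1\eta,\nabla_Z\eta\ra$ since $\jota_1$ is an isometry commuting with Clifford multiplication; adding the two gives $2\la\nabla_Z\eta,X\jota_1\eta\ra$. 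The main obstacle is purely bookkeeping: keeping the signs straight when sliding $\jota_k$ across Clifford multiplication ($\jota_1$ commutes, $\jota_2,\jota_3$ anticommute, per the construction in Section \ref{subsec:spinors}) and correctly identifying which part of $XY$ contributes — but the antisymmetry of the left-hand side forces the symmetric scalar part to drop out, which is what makes the commutator $XY-YX$ appear naturally. No genuinely hard step is involved once the parallelism and derivation properties of $\nabla$ are invoked.
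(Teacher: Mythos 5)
Your strategy coincides with the paper's: differentiate the spinorial expressions $\o_k(X,Y)=\e_k\la X\jota_k\eta,Y\eta\ra$ and $\a(X)=-\la X\eta,\jota_1\eta\ra$, using metric compatibility of $\nabla$, the derivation property with respect to Clifford multiplication, and parallelism of the $\jota_k$ (the paper implements the Leibniz-cancellation by choosing $X,Y,Z$ with $\onabla X|_p=\onabla Y|_p=\onabla Z|_p=0$, but that is only a cosmetic difference). Your treatment of $\a$ is correct.

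The $\o_k$ computation, however, contains a genuine sign error that your closing hand-wave does not repair. You assert $-\la\jota_k\nabla_Z\eta,XY\eta\ra=-\e_k\la\nabla_Z\eta,\jota_k XY\eta\ra$, citing that $\jota_k$ is an isometry. But an isometry with $\jota_k^2=-\rI$ is automatically skew-symmetric, so the sign incurred in moving $\jota_k$ across $\la\cdot,\cdot\ra$ is always $-1$, for every $k$; the sign $\e_k$ only governs sliding $\jota_k$ past a single Clifford factor. With your signs, your two terms become $\mp\la\nabla_Z\eta,\jota_k XY\eta\ra$ and sum to zero, and the subsequent appeal to antisymmetry of the left-hand side to conjure the commutator $XY-YX$ has no force — antisymmetrizing $0$ gives $0$. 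The correct bookkeeping produces the commutator directly: moving only the vector Clifford factors (each a $-1$) and keeping track of where $\jota_k$ ends up, the first term becomes $\e_k\la\nabla_Z\eta,XY\jota_k\eta\ra$ and the second $-\e_k\la\nabla_Z\eta,YX\jota_k\eta\ra$; they already carry opposite vector orders, so $XY-YX$ appears with no further antisymmetrization. That is precisely what the paper's proof does. So: right plan, right first reduction, but the Clifford/$\jota_k$ sign tracking in the $\o_k$ case must be redone before this counts as a proof.
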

\begin{proof}
 Take $X,Y,Z\in T_pM$ and extend them to vector fields with $\onabla X|_p=\onabla Y|_p=\onabla Z|_p=0$. Then,
\begin{align*}
(\onabla_Z \o_k)(X,Y) &= Z(\o_k(X,Y)) = \e_k \la \jota_k X \nabla_Z \eta, Y\eta\ra +\e_k \la \jota_kX\eta,Y\nabla_Z\eta\ra \\
& = \e_k\la \nabla_Z \eta,(XY-YX)\jota_k\eta\ra, \\
(\onabla_Z \a)(X) & = Z(\a(X))= -\la X\nabla_Z \eta, \jota_1\eta\ra-\la X\eta, \jota_1\nabla_Z\eta\ra\\
 & = 2\la\nabla_Z\eta, X\jota_1\eta\ra.
\end{align*}
\end{proof}

%%%%%% Esto no depende de espinores; ¿lo pondríais en la subsección sobre estructuras SU(2) ?
After computing the differentials, we prove a technical result:
\begin{lemma} \label{technical-Lemma}
For $X,Y \in \xi$, one has:
\begin{align*}
\o_1(S(X),Y) - \o_1(S(Y),X) & =  2( \m \o_1 -\l_3\o_2 + \l_2\o_3 + i(S_1)\omega_1)(X,Y)\,, \\
\o_2(S(X),Y) - \o_2(S(Y),X) & =  2( \l_3 \o_1 + \m \o_2 - \l_1\o_3 + i(S_2)\omega_2)(X,Y)\,, \\
\o_3(S(X),Y) - \o_3(S(Y),X) & =  2( -\l_2 \o_1 + \l_1 \o_2 + \m \o_3 + i(S_3)\omega_3)(X,Y)\,.
\end{align*} \end{lemma}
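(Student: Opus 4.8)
The three displayed identities are symmetric in structure, so the strategy is to prove the first one in detail and note that the other two follow by the same argument with the roles of $(\o_1,\o_2,\o_3)$ and $(J_1,J_2,J_3)$ permuted. Fix $X,Y\in\xi$. The starting point is the decomposition of $S\in\End(\xi)$ from Definition \ref{endomorphism-S}, namely $S=\m\,\rI+\sum_l S_l+\sum_l \l_l J_l+S_0$ with $S_l\in\s_l(\xi)$ and $S_0\in\frsu(2)$. I would substitute this into $\o_1(S(X),Y)-\o_1(S(Y),X)$ and analyze the contribution of each summand separately, using throughout the defining relations $\o_k(X,Y)=g(J_kX,Y)$, $J_1J_2=J_3$, and $J_kJ_l=-J_lJ_k$ for $k\neq l$.

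First I would handle the scalar part $\m\,\rI$: it contributes $\m\big(\o_1(X,Y)-\o_1(Y,X)\big)=2\m\,\o_1(X,Y)$. Next, the terms $\l_l J_l$: the key computation is that $\o_1(J_lX,Y)-\o_1(J_lY,X)=g(J_1J_lX,Y)-g(J_1J_lY,X)$, which by the quaternion relations equals $-2g(J_1J_lX,Y)$ unless $l=1$, in which case it equals $g(-X,Y)-g(-Y,X)=0$ — wait, more carefully: for $l=1$ one gets $g(J_1^2X,Y)-g(J_1^2Y,X)=-g(X,Y)+g(Y,X)=0$; for $l=2$ one gets $2g(J_1J_2X,Y)=2g(J_3X,Y)=2\o_3(X,Y)$ up to a sign I would track from the skew-symmetry of $J_1J_2$; for $l=3$ one gets $2g(J_1J_3X,Y)=-2g(J_2X,Y)=-2\o_2(X,Y)$, again modulo a careful sign check. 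These produce exactly the $-\l_3\o_2+\l_2\o_3$ terms (the precise sign assignment is the one bookkeeping point that must be done honestly, using that $J_1J_2$ and $J_1J_3$ are themselves almost complex structures hence skew-adjoint). For the $\frsu(2)$-part $S_0$: since $S_0$ commutes appropriately or lies in $\frsu(2)=\{S : SJ_l=J_lS \text{ for all } l\}$ as identified in Proposition \ref{irreducible-decomposition}(4), one checks $\o_1(S_0X,Y)-\o_1(S_0Y,X)=g(J_1S_0X,Y)-g(J_1S_0Y,X)$; because $J_1S_0$ is again skew-adjoint (composition of the skew-adjoint $J_1$ with the skew-adjoint $S_0$... so it is symmetric — then the expression vanishes). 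So $S_0$ contributes nothing. Finally, for $S_l$ with $l\neq 1$: using $S_lJ_1=(-1)^{\delta_l^1+1}J_1S_l=-J_1S_l$ (since $l\neq1$ forces the exponent even, giving $S_lJ_l=J_lS_l$ but $S_lJ_1=-J_1S_l$ for $l=2,3$), together with $S_l$ symmetric, one finds $\o_1(S_lX,Y)-\o_1(S_lY,X)=g(J_1S_lX,Y)-g(J_1S_lY,X)$; writing this out and using that $S_l$ is symmetric shows $J_1S_l$ is symmetric (product of skew-adjoint $J_1$ with symmetric $S_l$ satisfying the anticommutation), hence the whole thing vanishes. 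For $l=1$: here $S_1J_l=(-1)^{\delta_1^l+1}J_lS_1$, so $S_1J_1=J_1S_1$ while $S_1J_2=-J_2S_1$, $S_1J_3=-J_3S_1$; now $J_1S_1$ is skew-adjoint (since $J_1S_1=-S_1J_1$... let me recheck — $S_1J_1=J_1S_1$ means they commute, so $(J_1S_1)^*=S_1^*J_1^*=-S_1J_1=-J_1S_1$, skew-adjoint), and $\o_1(S_1X,Y)-\o_1(S_1Y,X)=2g(J_1S_1X,Y)=2\,(i(S_1)\o_1)(X,Y)$ after identifying $i(S_1)\o_1$ with the $2$-form $g(J_1S_1\cdot,\cdot)$ via the isomorphism $E_1$ of Proposition \ref{irreducible-decomposition}. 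Summing the four contributions yields precisely $2(\m\o_1-\l_3\o_2+\l_2\o_3+i(S_1)\o_1)(X,Y)$.

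**Main obstacle.** The computational heart — and the only place real care is required — is the consistent tracking of signs: specifically (i) pinning down the correct signs in the cross terms $J_1J_2=\pm J_3$, $J_1J_3=\mp J_2$ and propagating them through the skew-symmetrization, and (ii) correctly identifying which of the tensors $J_lS_k$ and $S_0$ are symmetric versus skew-adjoint from the (anti)commutation rules defining $\s_k(\xi)$ and $\frsu(2)$ in Proposition \ref{irreducible-decomposition}(4), since only the skew-adjoint ones survive the skew-symmetrization $\o_k(S(X),Y)-\o_k(S(Y),X)$. Once the symmetry type of each building block is settled, the vanishing of the $S_0$- and off-diagonal $S_l$-contributions is automatic, and the surviving terms assemble into the claimed formula essentially by definition of the isomorphism $E_k(S_k)=i(S_k)\o_k$. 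The other two identities are obtained verbatim after cycling $(1,2,3)$, with the signs of the $\l$-terms following the cyclic pattern visible in the statement.
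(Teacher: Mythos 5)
Your proposal is correct and follows essentially the same route as the paper: decompose $S$ into its $\SU(2)$-irreducible pieces from Definition~\ref{endomorphism-S}, note that only those summands $T$ for which $J_1T$ is skew-adjoint survive the skew-symmetrization $\o_1(TX,Y)-\o_1(TY,X)$, and reassemble the surviving terms into the stated formula. A small cleanup is warranted: the parenthetical self-corrections (``$J_1S_0$ is again skew-adjoint\ldots so it is symmetric'') should simply state the correct symmetry type outright, the phrase ``forces the exponent even'' should read odd (one has $\delta_l^1+1=1$ for $l\neq 1$, so $(-1)^1=-1$), and ``$S_lJ_1=-J_1S_1$'' should read $S_lJ_1=-J_1S_l$; none of these affect the validity of the argument.
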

\begin{proof}
We prove the first equality, the others being similar. We analyze each irreducible part separately. 

Clearly $\o_1(\m X ,Y) - \o_1(Y,\m X)= 2\m \o_1(X,Y)$. Taking into account that $S_k J_1= \e_k J_1 S_k$, we have that $S_kJ_1$ is skew-symmetric for $k=1$ and symmetric for $k\in \{2,3\}$. Hence,
\[
\sum_{k=1}^3{g(J_1 S_k(X),Y) - g(J_1 S_k(Y),X)}= 2\omega_1(S_1(X),Y)\,.
\]

Finally we conclude:
\begin{align*}
\sum_{k=1}^3{\l_k g(J_1J_k(X),Y) - \l_kg(J_1J_k(Y), X)} & =  -2\l_3g(J_2(X), Y) + 2\l_2g(J_3(X), Y ) \\
 & =  2(-\l_3\o_2 + \l_2\o_3)(X,Y).
\end{align*}
Using that $S_0\in \frsu(2)$ we get, $\o_1(S_0(X),Y)+ \o_1(X,S_0(Y))=0$.
\end{proof}

\begin{proposition} \label{dalpha}
Let $\eta \in \S(M)$ be a unit-length spinor and let $\a$ be the 1-form of the $\SU(2)$ structure determined by $\eta$. Then
(with the notations of Proposition \ref{differentials-SU2}),
\[
d\a= \a \wedge \tau_1^4 + \sum_{k=1}^3 \tau_0^k\o_k + \tau_2^4\,,
\]
where:
\begin{enumerate}
\item[\textbullet] $\tau_0^1= -4\m$, $\tau_0^2=4\l_3$, $\tau_0^3 = -4 \l_2$,
\item[\textbullet] $\tau_1^4= 2J_1V_\xi^\sharp$,
\item[\textbullet] $\tau_2^4 = -4i(S_1)\omega_1$.
\end{enumerate}
\end{proposition}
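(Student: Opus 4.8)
The plan is to compute $d\a$ directly from the formula $(\onabla_Z\a)(X)=2\la\nabla_Z\eta,X\jota_1\eta\ra$ proved just above, using the covariant derivative expression \eqref{eqn:1} for $\nabla_Z\eta$, and then to match the result against the decomposition $d\a = \a\wedge\tau_1^4 + \sum_k\tau_0^k\o_k+\tau_2^4$ of Proposition \ref{differentials-SU2}. Since $d\a(Z,X) = (\onabla_Z\a)(X) - (\onabla_X\a)(Z)$, I would first substitute \eqref{eqn:1} into this antisymmetrization. The terms coming from $S(Z_\xi)\eta$, $\a(Z)V_\xi\eta$ and the four spinors $\eta,\jota_l\eta$ in \eqref{eqn:1} are orthogonal to each other and to $X\jota_1\eta$ in controlled ways, so the pairing $\la\nabla_Z\eta, X\jota_1\eta\ra$ picks out only a few pieces; in particular, by Lemma \ref{su2-equalities} and Remark \ref{orthogonal-clifford-product}, $X\jota_1\eta$ decomposes into a part in $\HH_\eta$ (when $X\in\la R\ra$, via $R\jota_1\eta = -\a\jota_1\eta = \eta$) and a part in $\xi^*\eta\subset\HH_\eta^\perp$ (when $X\in\xi$). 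This orthogonality lets me separate the three output components cleanly.

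The three cases to treat are: (i) $X,Z\in\xi$, which produces the $\tau_2^4 + \sum\tau_0^k\o_k$ part; (ii) $X = R$ or $Z = R$, which produces $\a\wedge\tau_1^4$. For case (i), when both arguments lie in $\xi$ only the $S(X_\xi)\eta$ and $\Theta_l(X_\xi)\jota_l\eta$ terms of \eqref{eqn:1} contribute, and pairing against $X\jota_1\eta$ (with $X\in\xi$, so $X\jota_1\eta = \e_1^{-1}\cdot(\text{something in }\xi^*\eta)$ — more precisely $X\jota_1\eta = J_1(X)\eta$ up to sign by Lemma \ref{decomposition-2}) gives, after antisymmetrization, exactly $\la\nabla_Z\eta - \cdots, \cdots\ra = \tfrac12(\o_1(S(X),Z) - \o_1(S(Z),X)) + (\text{$\Theta$-terms})$. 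Here is where Lemma \ref{technical-Lemma} enters: its first identity rewrites $\o_1(S(X),Z)-\o_1(S(Z),X)$ as $2(\m\o_1 - \l_3\o_2 + \l_2\o_3 + i(S_1)\o_1)(X,Z)$. Tracking the overall sign from $(\onabla_Z\a)(X) = 2\la\nabla_Z\eta, X\jota_1\eta\ra$ and from $\a\eta = -\jota_1\eta$, this should yield $\tau_0^1 = -4\m$, $\tau_0^2 = 4\l_3$, $\tau_0^3 = -4\l_2$ and $\tau_2^4 = -4i(S_1)\o_1$ (the $\Theta_l$ terms on $\xi\times\xi$ must cancel or contribute nothing to these components, which I would verify by noting $\Theta_l(X_\xi)\jota_l\eta$ pairs with $J_1(Z)\eta$ symmetrically in the wrong way — this needs a short check). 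For case (ii), setting $Z=R$ and $X\in\xi$, the relevant term in $\nabla_R\eta$ is $V_\xi\eta + \sum\p_l\jota_l\eta$; pairing $V_\xi\eta$ against $X\jota_1\eta = J_1(X)\eta$ gives $g(V_\xi, J_1(X)) = -\o_1(V_\xi, X) = (J_1 V_\xi^\sharp)(X)$, hence $\tau_1^4 = 2J_1 V_\xi^\sharp$ after accounting for the factor $2$ and the antisymmetrization with $(\onabla_X\a)(R)$ (which vanishes since $\a(R)\equiv 1$ is constant and $X\in\xi$ means... actually one must check $(\onabla_X\a)(R) = 2\la\nabla_X\eta, R\jota_1\eta\ra = 2\la\nabla_X\eta,\eta\ra = 0$ because $\nabla$ is metric and $\eta$ is unitary).

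The main obstacle I anticipate is bookkeeping of signs and factors — specifically reconciling the $\e_k$'s from Lemma \ref{su2-equalities}, the sign in $m(J_k) = -4\e_k\jota_k\eta$, and the factor $2$ in the covariant-derivative-of-$\a$ formula — so that the three $\tau_0^k$ come out with exactly the signs $(-4\m, 4\l_3, -4\l_2)$ matching the pattern of the first identity in Lemma \ref{technical-Lemma}. A secondary subtlety is confirming that the $\Theta_l$ and $\p_l$ terms in \eqref{eqn:1} do not leak into the $\o_k$-components of $d\a$; this follows from the equivariance/orthogonality structure but should be spelled out. Everything else is a routine expansion, and I would present it by handling the $(\xi,\xi)$ and the $(\xi,R)$ directions in turn, invoking Lemma \ref{technical-Lemma} at the key moment for the former.
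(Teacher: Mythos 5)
Your proposal is correct and follows essentially the same route as the paper's proof: antisymmetrize $(\onabla_Z\a)(X)=2\la\nabla_Z\eta,X\jota_1\eta\ra$, substitute \eqref{eqn:1}, split into the $(\xi,\xi)$ and $(R,\xi)$ cases, use orthogonality of $\HH_\eta$ and $\xi^*\eta$ to discard the $\Theta_l$ and $\phi_l$ terms, and invoke the first identity of Lemma \ref{technical-Lemma} to read off the $\o_k$ and $\frsu(2)$ components. Two small slips worth noting (neither affects the argument): since $\a$ and $R$ act identically by Clifford multiplication, the chain is $R\jota_1\eta=\a\jota_1\eta=\jota_1\a\eta=-\jota_1^2\eta=\eta$, without the stray minus sign you wrote; and the vanishing of the $\Theta_l$-contributions on $\xi\times\xi$ is purely a matter of orthogonality (they produce elements of $\HH_\eta$, whereas $Y\jota_1\eta=J_1(Y)\eta\in\xi^*\eta\subset\HH_\eta^\perp$), not a symmetry cancellation, so the ``short check'' you defer is immediate.
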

\begin{proof}
Proposition \ref{prop:cov-a-ok} implies that $\frac{1}{2}d\a(X,Y)=\la\nabla_X \eta, Y\jota_1\eta\ra- \la\nabla_Y\eta, X\jota_1\eta\ra$. In order to compute $d\a|_\xi$ consider $X,Y\in \xi$; according to equation \eqref{eqn:1}, the orthogonal projection of $\nabla_{X}\eta$ to $\xi \eta$ is $S(X)\eta$. So that $\la\nabla_X \eta, Y\jota_1\eta\ra=\la S(X)\eta,J_1(Y)\eta \ra$.   Taking into account the previous observation, and Lemma \ref{technical-Lemma} we obtain:
\begin{align*}
\frac{1}{2}d\a(X,Y)& = \la X\eta,J_1S(Y)\eta\ra - \la Y\eta,J_1S(X)\eta\ra\\
& = -2(\m \o_1 -\l_3\o_2 + \l_2\o_3+ i(S_1)\omega_1)(X,Y).
\end{align*}

Finally, we compute $d\a(R,Y)$. Arguing as before, equation \eqref{eqn:1} implies that $\la \nabla_R \eta, \jota_1 Y \eta\ra=\la V_{\xi}\eta, \jota_1 Y \eta \ra$. In addition, $\la \nabla_Y\eta, j_1R\eta\ra=\la \nabla_Y\eta,\eta \ra=0$, according to Lemma \ref{su2-equalities}.  Thus,
$$
\frac{1}{2}d\a(R,Y)=\la V_\xi \eta, \jota_1Y\eta\ra- \la \jota_1 R \eta, \nabla_Y\eta\ra= \la V_\xi \eta, J_1(Y)\eta\ra.
$$
\end{proof}

\begin{proposition}\label{domegak}
Let $\eta \in \S(M)$ be a unit-length spinor and let $(\o_1,\o_2,\o_3)$ be the $2$-forms of the $\SU(2)$ structure determined by $\eta$. Then
\[
d\omega_k= \a \wedge \tau_2^k + \sum_{l=1}^3 \tau_0^{kl}\a\wedge \o_l + \tau_1^k \wedge \o_k\,,
\]
where:
\begin{enumerate}
\item[\textbullet]
$\tau_0^{kk}=4\l_1$, 
$\tau_0^{12}=4\l_2 + 2\p_3$,
$\tau_0^{13}=4\l_3 - 2\p_2$, 
$\tau_0^{23}=4\m - 2\p_1$,
\item[\textbullet] 
$\tau_1^k= -2\sum_{l\neq k}{\e_k J_l \Theta_l}$,
\item[\textbullet]
$\tau_2^1 = 4i(S_0)g$,
$\tau_2^2=4i(S_3)\o_3 $,
$\tau_2^3=-4i(S_2)\o_2$.
\end{enumerate}
\end{proposition}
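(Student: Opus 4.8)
The plan is to mimic the proof of Proposition \ref{dalpha}: differentiate $\o_k$ using the formula $(\onabla_Z\o_k)(X,Y)=\e_k\la\nabla_Z\eta,(XY-YX)\jota_k\eta\ra$, split the arguments according to $\RR^5\cong\xi\oplus\la R\ra$, and identify each irreducible piece of the resulting $3$-form against the decomposition in Proposition \ref{irreducible-decomposition}(3), namely $\L^3(\RR^5)^*=\L^3\xi^*\oplus(\oplus_k\la\a\wedge\o_k\ra)\oplus\a\wedge\frsu(2)$. Recall $d\o_k(X,Y,Z)=(\onabla_X\o_k)(Y,Z)-(\onabla_Y\o_k)(X,Z)+(\onabla_Z\o_k)(X,Y)$.

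First I would compute the purely horizontal part $d\o_k|_{\L^3\xi^*}$. Plugging in $X,Y,Z\in\xi$ and using \eqref{eqn:1}, only the $S$-term and the $\Theta_l$-terms survive (the $\a(X)$-terms vanish on $\xi$); the $S$-contribution is handled exactly as in Lemma \ref{technical-Lemma}, which shows that the symmetrized combination $\o_k(S(X),Y)-\o_k(S(Y),X)$ reduces, on $\xi$, to a multiple of $\o_k$ itself plus the $\frsu(2)$-part $i(S_k)\o_k$ — but a $2$-form wedged appropriately must be examined: the cyclic sum over three horizontal vectors of terms like $\o_k(S(X),Y)$ together with the $\Theta$-terms $\la\nabla_Z\eta,\cdot\ra$ must be reorganized. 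The key algebraic identity I expect to need is Lemma \ref{hodge-star}, $\star_\xi(\b\wedge\o_k)=-J_k\b$, which converts the $\Theta_l$-contributions (which produce terms of the form $\Theta_l\wedge(\text{something in }\L^2\xi^*)$) into the stated $\tau_1^k\wedge\o_k$ shape after using that $\L^3\xi^*\cong\xi^*$ is irreducible of dimension $4$. This is where I'd verify $\tau_1^k=-2\sum_{l\neq k}\e_k J_l\Theta_l$ and that the diagonal torsion component $\tau_0^{kk}$ equals $4\l_1$ — the latter by pairing $d\o_k|_\xi$ against $\a\wedge\o_k$, which vanishes on $\L^3\xi^*$, so actually $\tau_0^{kk}$ must come out of a different slot; I'd double-check which one, likely by contracting with $R$.

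Next I would compute the mixed part, i.e.\ $d\o_k(R,X,Y)$ for $X,Y\in\xi$. Here the $\a(X)$-terms in \eqref{eqn:1} (the $V_\xi$ and $\p_l$ terms) come into play, evaluated at $R$, alongside $(\onabla_X\o_k)(R,Y)$-type terms. Using Lemma \ref{su2-equalities} (especially $\a\eta=-\jota_1\eta$, $\a\jota_2\eta=-\jota_3\eta$, $\a\jota_3\eta=\jota_2\eta$) and the orthogonality relations among $\eta,\jota_1\eta,\jota_2\eta,\jota_3\eta$ and $\xi^*\eta$ (Lemma \ref{decomposition-2}, Remark \ref{orthogonal-clifford-product}), each term collapses to a scalar multiple of one of $\o_1,\o_2,\o_3$ or lands in $\frsu(2)$; reading off coefficients yields $\tau_0^{12}=4\l_2+2\p_3$, $\tau_0^{13}=4\l_3-2\p_2$, $\tau_0^{23}=4\m-2\p_1$ and the $\frsu(2)$-valued $\tau_2^k$ in terms of $i(S_0)g$, $i(S_3)\o_3$, $i(S_2)\o_2$ (using the isomorphism $E_k\colon\s_k(\xi)\to\frsu(2)$ of Proposition \ref{irreducible-decomposition}). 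Finally one checks the claimed antisymmetry/coincidence relations $\tau_0^{kk}=\tau_0^{ll}$ and $\tau_0^{kl}=-\tau_0^{lk}$ from Proposition \ref{differentials-SU2} are consistent with these formulae — this follows from Remark \ref{quaternionic-ieta} by exploiting the symmetry under $\eta\mapsto\jota_k\eta$.

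The main obstacle will be the bookkeeping in the horizontal part: disentangling how the $S$-endomorphism's various irreducible components ($\mu\,\rI$, the $S_l\in\s_l(\xi)$, the $\l_lJ_l$, and $S_0\in\frsu(2)$) distribute among the $\L^3\xi^*$ slot versus the $\a\wedge\o_l$ and $\a\wedge\frsu(2)$ slots once one also contracts with $R$, and making sure the $\Theta_l$-terms are correctly converted via Lemma \ref{hodge-star}. A careful sign audit — tracking the $\e_k$ factors and the anticommutation of $\jota_2,\jota_3$ with Clifford multiplication throughout — is essential; everything else is a direct, if lengthy, computation parallel to Propositions \ref{Dirac-su2-spinors} and \ref{dalpha}.
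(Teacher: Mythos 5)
Your overall strategy — split $d\o_k$ into its purely horizontal piece ($X,Y,Z\in\xi$) and its $R$-contracted piece, feed in the covariant-derivative formula for $\o_k$, and identify each irreducible component — is the same skeleton the paper uses, and your identification of Lemma \ref{hodge-star} as the tool that produces $\tau_1^k$ is correct. But there are two points where your plan departs from the paper in ways worth noting.

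First, the paper's treatment of the horizontal part hinges on a shortcut you do not mention. After writing $\e_k\frac{1}{2}d\omega_k(X,Y,Z)=\la X\nabla_X\eta+Y\nabla_Y\eta+Z\nabla_Z\eta,\,XYZ\jota_k\eta\ra$, the authors complete the orthonormal frame by $W\in\xi$ and $R$, substitute $X\nabla_X\eta+Y\nabla_Y\eta+Z\nabla_Z\eta=\sD\eta-W\nabla_W\eta-R\nabla_R\eta$, and use the identity $XYZ\jota_k\eta=-\e_k J_kW\eta$. This lets them plug in the already-computed expression for $\sD\eta$ from Proposition \ref{Dirac-su2-spinors}, collapsing the whole cyclic sum to three inner products. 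Your proposal instead attacks the cyclic sum term by term, which would also work but is considerably longer.

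Second, your expectation that Lemma \ref{technical-Lemma} is what handles the $S$-contributions in the horizontal part is misplaced, and this is the one spot where I think your plan as stated would stall. Lemma \ref{technical-Lemma} is a statement about the $2$-tensor $\o_k(S(X),Y)-\o_k(S(Y),X)$, not a $3$-form, and it is used in the paper only for the $i(R)d\o_k$ computation (the $T^k$ terms). For $X,Y,Z\in\xi$ the $S$-contributions to $\e_k\la\nabla_\bullet\eta,\cdot\,\jota_k\eta\ra$ simply vanish outright: for $X\perp Y$ in $\xi$, $XY\jota_k\eta\in\HH_\eta$ (the $\frsu(2)$ part of $XY$ annihilates $\jota_k\eta$ and the $\o_l$ parts map $\jota_k\eta$ into $\HH_\eta$), whereas $S(Z)\eta\in\xi\eta=\HH_\eta^\perp$, so the inner product is zero. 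You would need to see this orthogonality directly — or use the paper's Dirac-operator substitution, which bypasses it entirely. With that correction the rest of your plan (the $\Theta_l$ bookkeeping via Lemma \ref{hodge-star}, the $R$-contraction to get $\tau_0^{kl}$ and $\tau_2^k$, and the check against Remark \ref{quaternionic-ieta}) matches the paper's computation.
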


\begin{proof}
Suppose that $X,Y,Z$ are orthonormal; then according to Proposition \ref{prop:cov-a-ok} we have $\onabla_Z\omega(X,Y)=2\e_k \la \nabla_Z \eta, XY \jota_k\eta\ra$, thus:
\begin{align} \label{eqn:3}
\e_k \frac{1}{2}d\omega_k(X,Y,Z) &= \la \nabla_X \eta, YZ \jota_k\eta\ra  - \la\nabla_Y \eta, XZ \jota_k\eta\ra + \la\nabla_Z \eta, XY \jota_k\eta\ra.
\end{align}
We first assume that $X,Y,Z\in \xi$. Then,
\begin{align*}
\e_k \frac{1}{2}d\omega_k(X,Y,Z) &= \la X\nabla_X\eta + Y\nabla_Y \eta + Z\nabla_Z \eta, XYZ\jota_k \eta \ra.
\end{align*}
Suppose in addition that $W\in \xi$ has length one, it is orthogonal to $\la X,Y,Z\ra$ and that the orthonormal frame $(X,Y,Z,W,R)$ is positively oriented, then
\begin{enumerate}
\item $X\nabla_X\eta + Y\nabla_Y \eta + Z\nabla_Z \eta = \sD\eta - W \nabla_W \eta - R \nabla_R \eta $,
\item The positively-oriented unit-length volume form is $\nu= X^*\wedge Y^* \wedge Z^* \wedge W^* \wedge R^*$. From the equality $\nu \eta = - \jota_1\eta= R\eta$ (see Lemma \ref{su2-equalities} (2) and (4)) we obtain
$XYZW\eta=\eta$ and thus, $XYZ\eta=-W\eta$. Therefore,
$$
XYZ\jota_k\eta=\e_k \jota_k XYZ\eta= -\e_k \jota_k W \eta= -\e_k J_k(W)\eta.
$$
\end{enumerate}
Therefore,
\[
\frac{1}{2}d\omega_k(X,Y,Z)= -\la\sD\eta,J_kW\eta\ra + \la W\nabla_W\eta, J_kW\eta\ra + \la R\nabla_{R} \eta, J_kW \eta\ra\,.
\]
From Proposition \ref{Dirac-su2-spinors} we obtain that the orthogonal projection of $-\sD\eta$ to $\xi \eta$ is
$
 (-J_1(V_{\xi} + \Theta_1^\sharp)+ J_2(\Theta_2^\sharp)+J_3(\Theta_3^\sharp)) \eta.
$
Since $J_l(\a^\sharp)^*=-J_l(\a)$ if $\a \in \xi^*$ we have:
$$
-\la\sD\eta, J_k(W)\eta\ra= (-J_1(V_{\xi})^* + \sum_{l=1}^3 {\e_l J_l(\Theta_l)}) (J_k(W)).
$$
Morever, $\la W\nabla_W\eta, J_k(W)\eta\ra=\e_k\la\nabla_W\eta,\jota_k\eta \ra =\e_k \Theta_k(W)$ according to equation \eqref{eqn:1}. Taking into account the same equation and the fact that the spinor $R \jota_k\eta=-\e_k\jota_k\jota_1\eta$ is perpendicular to $\xi\eta$, we obtain 
$
\la R\nabla_{R\eta} \eta, J_kW \eta\ra= \la J_1 V_{\xi} \eta, J_k W \eta\ra= (J_1V_{\xi})^*(J_kW)\,.
$

From the previous discussion, we deduce: %%(J_1\Theta_l^#\eta,J_kW\eta)=-(\Theta_1^{\#}\eta,J_1J_kW\eta)
\[
\frac{1}{2}d\omega_k(X,Y,Z)=  \sum_{l=1}^{3} \e_l (J_l\Theta_l)(J_kW) + \e_k \Theta_k(W) = \sum_{l\neq k} \e_l J_l\Theta_l(J_kW)\,.
\]

The previous equality implies that  $\star_\xi (\t_1^k \wedge \o_k)= 2 \sum_{l\neq k} \e_l J_k (J_l\Theta_l)$, since $(X,Y,Z,W)$ is a positive frame. Taking into account Lemma \ref{hodge-star}, we obtain $\tau_1^k=  -2 \sum_{l\neq k} \e_l J_l\Theta_l$.

Suppose that $X,Y \in \xi$ are orthonormal vectors; we now compute $i(R)d\omega$ by using equation \eqref{eqn:3}.
To arrange the second and the third summands of equation \eqref{eqn:3}, we observe that if $Z\in \xi$, then:
$$
 \a Z \jota_k\eta = \a \e_k J_k(Z)\eta = \e_k J_k(Z) \jota_1\eta = \e_k(J_1(J_k(Z)))\eta.
 $$  
 Thus,
\[
\frac{1}{2}d\omega_1(R,X,Y)= \e_k\la\nabla_{R} \eta, XY \jota_k\eta\ra - \la S(X) \eta, J_1(J_k(Y)) \eta\ra + \la S(Y)\eta,J_1(J_k(X)) \eta\ra.
\]

We first deal with the summand $\e_k\la\nabla_{R} \eta, XY \jota_k\eta\ra$. According to equation \eqref{eqn:1} we have:  $\la \nabla_R \eta, XY \jota_k\eta\ra = \la V_{\xi}\eta, XY \jota_k\eta\ra+ \sum_{l=1}^3 \p_l\la \jota_l\eta,XY \jota_k \eta\ra$. Due to Remark \ref{orthogonal-clifford-product}, $\la V_\xi \eta, XY\jota_k \eta\ra=\la -J_k(V_\xi)\eta,XY\eta\ra=0$. We now observe that $\la \jota_l\eta,XY \jota_k \eta\ra=\e_k \e_l \la J_k(J_l(X))\eta,Y\eta\ra$ and we compute:
\begin{align*}
\e_1 \la\nabla_R \eta, XY \jota_1\eta\ra & =+ \p_3 \o_2 - \p_2\o_3\,, \\
\e_2 \la\nabla_R \eta, XY \jota_2\eta\ra & =  -\p_3 \o_1 - \p_1 \o_3\,, \\
\e_3 \la\nabla_R \eta, XY \jota_3\eta\ra & = +\p_2 \o_1 +\p_1\o_2\,.\end{align*} 
We now deal the summand $T^k(X,Y)=- \la S(X) \eta, J_1(J_k(Y)) \eta\ra + \la S(Y)\eta,J_1(J_k(X)) \eta\ra$. 
From Definition \ref{endomorphism-S}, one can check:
\begin{align*}
T^1(X,Y)=&2\la S_0(X)\eta,Y\eta\ra + 2\sum_{k=1}^3\l_k \la J_k(X)\eta,Y\eta\ra \\
=& 2(i(S_0)g +\l_1\o_1+ \l_2 \o_2 + \l_3 \o_3)(X,Y) .
\end{align*}
In addition, $T^2(X,Y)=\omega_3(S(X),Y)-\omega_3(S(Y),X)$ and  $T^3(X,Y)=-(\omega_2(S(X),Y)- \omega_3(S(Y),X))$. Taking into account Lemma \ref{technical-Lemma} we obtain:
\begin{align*}
T^2(X,Y)&=  2(-\l_2 \o_1 + \l_1 \o_2 + \m \o_3 + i(S_3)\omega_3)(X,Y)\, , \\
T^3(X,Y)&= 2(-\l_3 \o_1 - \m \o_2 + \l_1\o_3 - i(S_2)\omega_2)(X,Y)\, .
\end{align*}
In sum,
$i(R) d \o_1=4 i(S_0)g + 4 \l_1\o_1+ (4 \l_2 + 2 \p_3)\o_2 + (4\l_3-2\p_2 \o_3)$.
Thus, $\t_0^{kk}=4 \l_1$, $\t_0^{12}=4 \l_2 + 2 \p_3$, $\t_0^{13}=4\l_3-2\p_2 $ and $\t_2^0=4 i(S_0)g$. 
The remaining equalities are obtained similarly.

\end{proof}

The previous results allow us to write the equations for $\SU(2)$ structures induced by a harmonic spinor. 
We equate $\sD\eta=0$ in Proposition \ref{Dirac-su2-spinors}, and use the values of $d\a$ and $d\o_k$ computed in
Propositions \ref{dalpha} and \ref{domegak}. Rewriting with the notations of Proposition \ref{differentials-SU2}, we get:

\begin{corollary}
The spinor $\eta$ is harmonic if and only if $\SU(2)$ structure determined by $\eta$, $(\a,\o_1,\o_2,\o_3)$, verifies:
\begin{align*}
d\a& =  + \tau_0^{23}\o_1 + \tau_0^{13}\o_2 - \tau_0^{12}\o_3  + \frac{1}{2} \sum_{k=1}^3(\a \wedge \tau_1^k) + \tau_2^4\,, \\
d\o_1 & = + \tau_0^{12}\a\wedge\o_2 + \tau_0^{13}\a \wedge \o_3  + \tau_1^1 \wedge \o_1 + \a \wedge \tau_2^1 \,,\\
d\o_2 &= - \tau_{0}^{12}\a \wedge \o_1 + \tau_0^{23}\a \wedge \o_3 + \tau_1^2 \wedge \o_2 + \a \wedge \tau_2^2  \,,\\
d\o_3 & =  - \tau_0^{13}\a \wedge \o_1 - \tau_0^{23} \a \wedge \o_2  + \tau_1^3 \wedge \o_3 + \a \wedge \tau_2^3\,.
\end{align*}
\end{corollary}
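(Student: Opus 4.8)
The strategy is to combine the Dirac-operator formula from Proposition \ref{Dirac-su2-spinors} with the expressions for $d\a$ and the $d\o_k$ from Propositions \ref{dalpha} and \ref{domegak}, and simply translate the vanishing of the individual components of $\sD\eta$ into relations among the torsion coefficients $\tau_0^{kl}$, $\tau_1^k$, $\tau_2^k$, $\tau_2^4$. Concretely, by Lemma \ref{decomposition-spinorial-bundle} the spinor bundle decomposes as $\S(M)= \la\eta\ra \oplus (\oplus_{k=1}^3 \la\o_k\eta\ra) \oplus \xi^*\eta$, and after identifying $\o_k\eta$ with a multiple of $\jota_k\eta$ via Lemma \ref{su2-equalities}(1), the five summands in Proposition \ref{Dirac-su2-spinors} give five independent scalar/vector equations, namely
\[
4\mu+\p_1=0,\quad \l_1=0,\quad 4\l_2+\p_3=0,\quad 4\l_3-\p_2=0,\quad J_1(V_\xi+\Theta_1^\sharp)-J_2\Theta_2^\sharp-J_3\Theta_3^\sharp=0.
\]

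Next I would substitute these relations into the differential formulas. From Proposition \ref{domegak} we have $\tau_0^{kk}=4\l_1$, so $\l_1=0$ gives $\tau_0^{kk}=0$ for all $k$; this kills the $\Theta_\pm$-type ($\chi_1,\chi_{\bar1}$ in the $\SU(2)$ vocabulary) pieces, i.e.\ the terms $\tau_0^{kk}\a\wedge\o_k$ disappear from each $d\o_k$. For the off-diagonal $\tau_0$'s, Proposition \ref{domegak} gives $\tau_0^{12}=4\l_2+2\p_3$, $\tau_0^{13}=4\l_3-2\p_2$, $\tau_0^{23}=4\m-2\p_1$, while Proposition \ref{dalpha} gives $\tau_0^1=-4\m$, $\tau_0^2=4\l_3$, $\tau_0^3=-4\l_2$. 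Using $\p_3=-4\l_2$, $\p_2=4\l_3$, $\p_1=-4\m$ from the harmonicity equations yields $\tau_0^{12}=4\l_2-8\l_2=-4\l_2=\tau_0^3$, $\tau_0^{13}=4\l_3-8\l_3=-4\l_3=-\tau_0^2$, and $\tau_0^{23}=4\m+8\m=12\m$, hence $\tau_0^1=-4\m=-\tfrac13\tau_0^{23}$; these substitutions produce exactly the coefficients in the asserted $d\a$ and the asserted $\a\wedge\o_l$ terms in each $d\o_k$. For the $\tau_1^k$ part of $d\a$, Proposition \ref{dalpha} gives $\tau_1^4=2J_1V_\xi^\sharp$, and the last (vector) harmonicity equation rewrites $J_1V_\xi$ in terms of the $\Theta_l$; comparing with $\tau_1^k=-2\sum_{l\neq k}\e_k J_l\Theta_l$ from Proposition \ref{domegak} shows $\tau_1^4=-\tfrac12\sum_{k=1}^3(\tau_1^k)^{\flat}$ up to the usual $\sharp/\flat$ bookkeeping, which is the term $-\tfrac12\sum_k(\a\wedge\tau_1^k)$. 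Finally the $S_0$- and $S_j$-parts ($\tau_2^4$ and $\tau_2^k$) are untouched by the harmonicity equations and carry over verbatim.

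The only real bookkeeping hazard — and the step I expect to be the main obstacle — is the sign/normalization tracking through the identifications $\o_k\eta=-2\e_k\jota_k\eta$ with $\e_1=1,\e_2=\e_3=-1$, together with the sign conventions $J_1J_2=J_3$ versus $(J_1\circ J_2)\b=-J_3\b$ on covectors and the contraction-vs-Hodge identity of Lemma \ref{hodge-star}. These are exactly the places where a stray factor of $\e_k$ or a sign in the cyclic $(1,2,3)$ ordering could flip a coefficient; so the proof amounts to doing this substitution carefully and checking that the four displayed identities come out with precisely the coefficients stated. There is no conceptual difficulty beyond this: all the ingredients (the decomposition of $\S(M)$, the Dirac formula, and the torsion formulas) are already in hand, and the corollary is a pure ``plug in $\sD\eta=0$ and simplify'' statement.
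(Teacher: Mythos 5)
Your proposal is correct and is essentially the paper's own argument: the paper proves this corollary simply by setting $\sD\eta=0$ in Proposition \ref{Dirac-su2-spinors}, substituting the resulting identities $\p_1=-4\m$, $\l_1=0$, $\p_3=-4\l_2$, $\p_2=4\l_3$ and $J_1(V_\xi+\Theta_1^\sharp)=J_2\Theta_2^\sharp+J_3\Theta_3^\sharp$ into Propositions \ref{dalpha} and \ref{domegak}, and re-expressing in the notation of Proposition \ref{differentials-SU2}, exactly as you describe. Your caution about $\sharp/\flat$ bookkeeping and $\e_k$-signs is well placed — the paper's statements of Proposition \ref{Dirac-su2-spinors} ($4\mu$ vs.\ $-4\mu$) and of $\tau_1^k$ in Proposition \ref{domegak} ($\e_k$ vs.\ $\e_l$) contain typos whose resolution is required to make the substitution close cleanly — but this is an issue with the source, not a gap in your approach.
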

\begin{proof}
We equate $\sD\eta=0$ in Proposition \ref{Dirac-su2-spinors}, and we obtain $4\m=\p_1$, $\l_1=0$, $4\l_2=-\p_3$, $4\l_3=\p_2$, and $-J_1(V_1^*)= \sum_{k=1}^3 \e_k J_k(\Theta_k)$.
According to Propositions \ref{dalpha} and \ref{domegak}, the  $0$-forms are related as follows:
\begin{align*}
\t_0^{kk}=&4\l_1=0,\\
\t_0^{12}=& 4 \l_2 + 2 \p_3= -4 \l_2=-\t_0^3,\\
\t_0^{13}=& 4 \l_3- 2 \p_2 =-4 \l_3 = \t_0^2,\\
\t_0^{23}=& 4 \m - 2\p_1 = - 4\m=\t_0^1.
\end{align*}
In addition, $\t_1^4= 2J_1(V_\xi^*)=-2\sum_{k=1}^3 \e_k J_k(\Theta_k)= \frac{1}{2} \sum_{k=1}^3 \t_1^k$. 
\end{proof}

In \cite[Definition 1.5]{CS07} the authors defined {\em hypo} $\SU(2)$ structures as those verifying 
\[
 d\o_1=0 \quad \textrm{and} \quad  d(\a \wedge \o_k)=0, \quad k=2,3\,.
\]

The intersection between hypo and spin-harmonic stuctures is characterized by the equations:

\begin{minipage}[t]{0.4\textwidth}%\left
\begin{itemize}
 \item $d\a = - \tau_0^{23}\o_1 + \tau_2^4$;
 \item $d\o_1  = 0$;
\end{itemize}
\end{minipage}
\begin{minipage}[t]{0.56\textwidth}%\left
\begin{itemize}
 \item $d\o_2 = + \tau_0^{23}\a \wedge \o_3 + \a \wedge \tau_2^2$;
 \item $d\o_3 =   - \tau_0^{23} \a \wedge \o_2  + \a \wedge \tau_2^3$.
\end{itemize}
\end{minipage}

In section \ref{sec:harm-nilm} we present three nilmanifolds that admit $\SU(2)$ invariant structures in this intersection.

%%%%%%% Relate with other types of structures???

%%%%%%%%%%%%%%%%%%%%%%%%%%%%%%%%%%%%%%%%%%%%%%%%%%%%%%%%%%%%%%%%%%%%%%%%%%
%                                                                        %
%        DIRAC OPERATOR OF INVARIANT SPINORS ON LIE GROUPS               %
%                                                                        %
%%%%%%%%%%%%%%%%%%%%%%%%%%%%%%%%%%%%%%%%%%%%%%%%%%%%%%%%%%%%%%%%%%%%%%%%%%

%%%%%%%%%%%%%%%%%%%%%%%%%%%%%%%%%%%%%%%%%%%%%%%%%%%%%%%%%%%%%%%
\section{Dirac operator of invariant spinors on Lie groups}\label{sec:Dirac-Operator}
%%%%%%%%%%%%%%%%%%%%%%%%%%%%%%%%%%%%%%%%%%%%%%%%%%%%%%%%%%%%%%%

%%%%%%%%%%%%%%%%%%%%%%%%%%%%%%%%%%%%%%%%%%%%%%%%%%%%%%%%%%%%%%%
\subsection{Spin structures on Lie groups}\label{subsec:Spin-Structures}
%%%%%%%%%%%%%%%%%%%%%%%%%%%%%%%%%%%%%%%%%%%%%%%%%%%%%%%%%%%%%%%

Let $(\rG,g)$ be an $n$-dimensional connected, simply connected Lie group endowed with a left-invariant metric. Fix an orthonormal left-invariant frame $(e_1,\dots,e_n)$; the frame bundle of $\rG$ is $\PSO(\rG)=\rG\times \SO(n)$ and its unique spin structure is $\PSpin(\rG)=\rG\times \Spin(n)$. Fix also an irreducible representation  $\rho \colon \Cl_n \to \End_\bk(W)$. The spinor bundle of $\rG$ is $\S(\rG)=\rG\times W$ and the Clifford multiplication by a vector field $X(x)=\sum_{i=1}^n{X^i(x)e_i(x)}$ is given by $X(x)\p(x)=\sum_{i=1}^n X^i(x)\rho(e_i)\p(x)$ where $\{e_i \}_{i=1}^n$ is the canonical basis of $\RR^{n}$. Each spinor is identified with a map $\p \colon \rG \to W$ and we call the spinor $\p$ {\em left-invariant} if it is constant.

Let $\Gamma$ be a discrete subgroup of $\rG$ and 
$\pi \colon \rG \to \Gamma \backslash \rG$ be the canonical projection. We endow $\Gamma \backslash \rG$ with the metric, also denoted $g$, which pulls back to $g$ under $\pi$.

\begin{lemma} 
There is a bijective correspondence between homomorphisms $\e\colon \Gamma \to \{\pm 1\}$ and spin structures on $\Gamma \backslash \rG$:
$$
\e \longmapsto \PSpin(\Gamma \backslash \rG)^\e= \Gamma \backslash (\rG\times \Spin(n))\,,
$$ where the action is $y \cdot (x,\tilde{h}) = (yx,\e(y)\tilde{h})$, for $y\in \Gamma$.
\end{lemma}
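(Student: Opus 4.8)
The plan is to exhibit the claimed correspondence by hand and then verify it is a bijection, all by lifting everything to the universal cover $\rG$.

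First I would record the geometric setup and check that the forward map is well defined. Because the frame $(e_1,\dots,e_n)$ is left-invariant, left translation by $y\in\Gamma$ carries the orthonormal frame at $x$ to the one at $yx$, so the $\Gamma$-action on $\PSO(\rG)=\rG\times\SO(n)$ is $y\cdot(x,A)=(yx,A)$; it is free and properly discontinuous since $\Gamma$ acts so on $\rG$, and its quotient is $\PSO(\Gamma\backslash\rG)$. As $\rG$ is simply connected, $H^1(\rG;\ZZ/2)=0$ (equivalently $\pi_1(\rG)=0$), so $\rG$ has a unique spin structure up to isomorphism, namely $\rG\times\Spin(n)$ with the double cover $q(x,\tilde h)=(x,\lambda(\tilde h))$, where $\lambda\colon\Spin(n)\to\SO(n)$ has kernel $\{\pm1\}$. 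Now, given a homomorphism $\e\colon\Gamma\to\{\pm1\}$, the formula $\Phi^\e_y(x,\tilde h)=(yx,\e(y)\tilde h)$ defines a left $\Gamma$-action on $\rG\times\Spin(n)$ — the relation $\Phi^\e_{y_1}\circ\Phi^\e_{y_2}=\Phi^\e_{y_1y_2}$ is exactly the homomorphism property of $\e$ — which is free and properly discontinuous, commutes with right $\Spin(n)$-multiplication, and satisfies $q\circ\Phi^\e_y=(y\cdot)\circ q$ because $\e(y)\in\ker\lambda$. Hence $\Gamma\backslash(\rG\times\Spin(n))$ is a principal $\Spin(n)$-bundle over $\Gamma\backslash\rG$ whose descended double cover to $\PSO(\Gamma\backslash\rG)$ is $\lambda$-equivariant, i.e.\ a spin structure.

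The core of the injectivity and surjectivity arguments is an \emph{automorphism sublemma}: any bundle automorphism $\Psi$ of the spin structure $\rG\times\Spin(n)$ lying over a diffeomorphism $f$ of $\rG$ and covering the induced map $(x,A)\mapsto(f(x),A)$ on $\PSO(\rG)$ has the form $\Psi(x,\tilde h)=(f(x),c\,\tilde h)$ with $c\in\{\pm1\}$ constant. Indeed, $\Spin(n)$-equivariance forces $\Psi(x,\tilde h)=(f(x),c(x)\tilde h)$, the covering condition gives $c(x)\in\ker\lambda=\{\pm1\}$, and $c$ is then locally constant, hence constant since $\rG$ is connected.

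Then I would deduce bijectivity. For surjectivity, pull a given spin structure $P$ on $\Gamma\backslash\rG$ back along $\pi$: since $\pi$ is a local isometry, $\pi^*\PSO(\Gamma\backslash\rG)\cong\PSO(\rG)$, so $\pi^*P$ is a spin structure on $\rG$ and therefore $\pi^*P\cong\rG\times\Spin(n)$. The canonical deck-$\Gamma$-action on $\pi^*P$ (with quotient $P$) transports through this isomorphism to an action of $\Gamma$ on $\rG\times\Spin(n)$ by spin-structure automorphisms covering the $\Gamma$-action on $\PSO(\rG)$; by the sublemma each $y$ acts as $(x,\tilde h)\mapsto(yx,\e(y)\tilde h)$, the composition law forces $\e\in\Hom(\Gamma,\{\pm1\})$, and $P\cong\Gamma\backslash(\rG\times\Spin(n))=\PSpin(\Gamma\backslash\rG)^\e$. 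For injectivity, an isomorphism $\PSpin(\Gamma\backslash\rG)^{\e_1}\cong\PSpin(\Gamma\backslash\rG)^{\e_2}$ pulls back to a $\Gamma$-equivariant automorphism of $\rG\times\Spin(n)$ over $\Id_{\rG}$, which by the sublemma is multiplication by a constant $\delta\in\{\pm1\}$; equivariance between $\Phi^{\e_1}$ and $\Phi^{\e_2}$ reads $\delta\,\e_1(y)=\e_2(y)\,\delta$ for all $y$, hence $\e_1=\e_2$. I expect the only real obstacle to be bookkeeping: carefully tracking which maps cover which (a base diffeomorphism, its lift to $\PSO$, and its lift to $\PSpin$) and invoking correctly that the pullback of a spin structure along the Galois covering $\pi$ carries a canonical deck action with quotient the original structure; the sole analytic input is connectedness of $\rG$, used to promote locally constant $\{\pm1\}$-valued functions to constants.
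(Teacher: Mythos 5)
Your proof is correct and follows essentially the same strategy as the paper's: both classify the spin structures on $\Gamma\backslash\rG$ by analyzing $\Gamma$-equivariant lifts of the left-translation action from $\PSO(\rG)=\rG\times\SO(n)$ to $\PSpin(\rG)=\rG\times\Spin(n)$, and both use that such a lift must differ from the obvious one by a constant in $\ker(\Spin(n)\to\SO(n))=\{\pm1\}$, forcing a homomorphism $\e$. The only substantive difference is that the paper cites Friedrich's book for the bijection between spin structures on $\Gamma\backslash\rG$ and equivariant lifts, whereas you reprove that correspondence from scratch via the pullback-and-deck-action argument together with your automorphism sublemma, making your argument longer but self-contained.
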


\begin{proof}
Spin structures on $\Gamma \backslash \rG$ are in a bijective correspondence with liftings of the action $\PSO(\rG) \times \Gamma \to \PSO(\rG), y\cdot 
F_x = d (\rL_{y})_x(F_x)$ where $\rL_y$ denotes the left multiplication by $y$ (see \cite[page 43]{F00}). This action commutes with action of $\SO(n)$ on $\PSO(\rG)$ and therefore a lifting of this action commutes with the action of $\Spin(n)$ on $\PSpin(\rG)$.

According to the identification $\PSO(\rG)= \rG \times \SO(n)$ given by $(e_1,\dots,e_n)$, the action is
$y \cdot (x,h) = (yx,h)$.
A lifting of the action to $\PSpin(\rG)=\rG\times \Spin(n)$ must verify $y\cdot (x,1)
=(yx,\e(y)1)$ for a some map $\e \colon \Gamma \to \{\pm 1\}$, which is necessarily a homomorphism. 
The previous discussion shows that this property determines the action. %:
% $y\cdot (x,\tilde{h})= (x,1)\cdot y \cdot \tilde{h}=(x,\e(y)\tilde{h})$.
\end{proof}

% If$\rG/ \Gamma$ is parallelizable \footnote{This is always the case if $\rG$ is nilpotent; however there are examples of orientable solvmanifolds which are not parallelizable, see \cite{Auslander}.}
% \Gamma \ G is paralelizable (see Oprea-Tralle, Chapter 2, Thm 3.11)

The spinor bundle associated to $\PSpin(\Gamma \backslash \rG)^\e$ is $\S(\Gamma \backslash \rG)^\e=
\PSpin(\Gamma \backslash \rG)^\e \times_{\rho} W$, which is isomorphic to $ \Gamma \backslash (\rG\times W)$ via the induced action $y\cdot (x,v)=(yx,\e(y)v)$. 
Spinors are then identified with maps $\p \colon \rG \to W$ such that $\p(yx)=\e(y)\p(x)$ for $x\in \rG$, 
$y \in \Gamma$, and Clifford multiplication of a spinor $\p \colon \rG \to W$ with a vector field $X \in \frX(\Gamma \backslash \rG)$ with $X(\pi(x))= \sum_{i=1}^n{X^i(x)d\pi_{x}(e_i(x))}$ is given by
$X\p(x) = \sum_{i=1}^n{X^i(x) \rho(e_i)\p(x)}$. Moreover, a spinor $\p \in \S(\Gamma \backslash \rG)^\e$ lifts to a unique spinor $\bar{\p}\in \S(\rG)$ and both are identified with the same map $\rG \to W$. Using this identification, for a left-invariant vector field $X\in \frX(\rG)$ we have $\nabla_{d\pi_x(X)} \p(x)= \nabla_X \bar{\p}(x)$ and, according to \cite[page 60]{F00},
\begin{equation} \label{cov}
\nabla_X \bar{\p} = d_{X} \bar{\p} + \frac{1}{2}\sum_{j<k} g(\nabla_{X}e_j,e_k)e_je_k \bar{\p}.
\end{equation}

In the sequel we focus on a quotient $\Gamma\backslash G$ and on spinors that lift to left-invariant spinors on $G$; we call those {\em left-invariant spinors}. Of course, they are associated to the trivial spin structure and they are constant. Special examples are given by {\em nilmanifolds}, where $G$ is nilpotent, and {\em solvmanifolds}, where $G$ is solvable.

In particular, we restrict our attention to left-invariant {\em harmonic} spinors. Mind that the non existence of left-invariant harmonic spinors does not imply the non existence of harmonic spinors associated to the trivial spin structure. For instance, from Proposition \ref{invariant-Dirac-operator} one can deduce that a $3$-dimensional nilmanifold, quotient of the Heisenberg group, does not admit left-invariant harmonic spinors; however, Corollary 3.2 in \cite{Ammann-Baer} implies that every spin structure on such a nilmanifold admits a left-invariant metric with non-zero harmonic spinors.

%%%%%%%%%%%%%%%%%%%%%%%%%%%%%%%%%%%%%%%%%%%%%%%%%%%%%%%%%%%%%%%
\subsection{Dirac operator}\label{subsec:Dirac Operator}
%%%%%%%%%%%%%%%%%%%%%%%%%%%%%%%%%%%%%%%%%%%%%%%%%%%%%%%%%%%%%%%

Let $(\rG,g)$ be a Lie group endowed with a left-invariant metric, let $(e_1,\ldots,e_n)$ be a left-invariant orthonormal frame with dual coframe $(e^1,\ldots,e^n)$. Let $\Gamma$ be a discrete subgroup of $\rG$ and consider the spin structure associated to the trivial action on $\Gamma \backslash \rG$. We  follow the notation of the previous subsection. 

\begin{proposition}\label{invariant-Dirac-operator}
Let $\p$ be a left-invariant spinor. Then
\begin{equation}\label{eq:invariant-Dirac-operator}
4 \sD \p  = -\sum_{i=1}^n{(e^i \wedge de^i + i(e_i)de^i)}\p \,. 
\end{equation}
\end{proposition}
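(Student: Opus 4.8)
The plan is to start from the definition of the Dirac operator for a left-invariant spinor, $\sD\p=\sum_{i=1}^n e_i\nabla_{e_i}\p$, and substitute formula \eqref{cov}. Since $\p$ is left-invariant (hence constant as a map $\rG\to W$), the term $d_{e_i}\bar\p$ vanishes, leaving
\[
\sD\p=\frac12\sum_{i=1}^n\sum_{j<k}g(\nabla_{e_i}e_j,e_k)\,e_ie_je_k\p.
\]
The goal is then to recognize the right-hand side as $-\tfrac14\sum_i(e^i\wedge de^i+i(e_i)de^i)\p$, acting by Clifford multiplication on $\p$. First I would express the structure constants / Christoffel symbols $\G_{ij}^k:=g(\nabla_{e_i}e_j,e_k)$ in terms of the exterior derivatives of the coframe via the Koszul formula together with the Cartan structure equation $de^k=-\sum_{i<j}c_{ij}^k e^{ij}$, where $c_{ij}^k=e^k([e_i,e_j])$; for a left-invariant metric $g(\nabla_{e_i}e_j,e_k)=\tfrac12(c_{ij}^k-c_{jk}^i+c_{ki}^j)$ by the standard bi-invariant-frame computation (the metric coefficients being constant, all vector-field derivatives of $g$ drop out).

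Next I would unwind the Clifford multiplication in the definitions given in Subsection \ref{Spinorial}: for a $2$-form $e^{jk}$ one has $e^{jk}\p=e_je_k\p$ when $j\neq k$ (from $(X^*\wedge\b)\p=X(\b\p)+(i(X)\b)\p$ with $\b=e^k$, $i(e_j)e^k=0$), and for a $3$-form $e^{ijk}$ with distinct indices $e^{ijk}\p=e_ie_je_k\p$; the contraction terms $i(e_i)de^i$ produce $1$-forms which act as $e_i$. So the right-hand side of \eqref{eq:invariant-Dirac-operator} expands as
\[
-\sum_i\Big(e^i\wedge de^i+i(e_i)de^i\Big)\p
=\sum_i\sum_{j<k}c_{jk}^i\,e^i\wedge e^{jk}\,\p+\sum_i\sum_{j<k}c_{jk}^i\,i(e_i)(e^{jk})\,\p,
\]
and I would split the first sum by whether $i\in\{j,k\}$ or not: when $i\notin\{j,k\}$ it contributes the "$ijk$ all distinct" triple-Clifford terms, when $i\in\{j,k\}$ it cancels against (or combines with) the contraction terms $i(e_i)(e^{jk})=\pm e^{k}$ or $\pm e^{j}$, which are $1$-form (single-Clifford) contributions.

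The bookkeeping is where the main obstacle lies: one has to carefully match the antisymmetrized triple products $e_ie_je_k$ (which, for three distinct indices, already equal $e^{ijk}$ acting on $\p$ and are totally antisymmetric) against $\sum_{j<k}\G_{ij}^k e_ie_je_k$, using the antisymmetry $c_{jk}^i=-c_{kj}^i$ and the identity $\G_{ij}^k-\G_{ik}^j$-type combinations, and likewise show that the single-Clifford ($1$-form) terms in $\sD\p$ — which come only from indices where two of $i,j,k$ coincide, forcing $e_ie_je_k=\pm e_l$ — reproduce exactly the contraction part $-\sum_i i(e_i)de^i$. Concretely: in $\tfrac12\sum_i\sum_{j<k}\G_{ij}^k e_ie_je_k\p$, separate the part with $i,j,k$ pairwise distinct (call it the "cubic" part) from the part where $i$ equals $j$ or $k$ (the "linear" part, since $e_i^2=-1$). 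For the cubic part, substitute $\G_{ij}^k=\tfrac12(c_{ij}^k-c_{jk}^i+c_{ki}^j)$ and use the total antisymmetry of $e_ie_je_k$ over distinct indices to collapse the three terms, producing a clean multiple of $\sum c_{jk}^i e^{ijk}\p$; matching constants ($\tfrac12\cdot\tfrac12\cdot(\text{combinatorial factor})$ versus the $-\tfrac14$ and the sign from $de^i=-\sum c_{jk}^i e^{jk}$ pattern) pins down the coefficient. For the linear part, the only surviving structure constants are $c_{ij}^i$-type, and a short computation shows these assemble into $-\sum_i i(e_i)de^i$ acting on $\p$. I expect no conceptual difficulty beyond this combinatorial matching, so the writeup amounts to doing the index juggling cleanly and tracking the factor of $4$; the Heisenberg example mentioned after the statement can serve as a sanity check that the sign and normalization are right.
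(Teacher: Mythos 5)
Your approach is essentially the one the paper takes: start from the definition of $\sD$ and formula \eqref{cov}, note that $d_{e_i}\bar\p=0$ because $\p$ is constant as a map $\rG\to W$, feed in the Koszul formula (your expression $g(\nabla_{e_i}e_j,e_k)=\tfrac12(c_{ij}^k-c_{jk}^i+c_{ki}^j)$ is exactly what the paper uses, just written via $de^k(e_i,e_j)=-c_{ij}^k$), and then do Clifford-algebra bookkeeping using the product rule $(X^*\wedge\b)\phi=X(\b\phi)+(i(X)\b)\phi$. The only stylistic difference is in the final assembly: the paper first massages $\nabla_{e_i}\p$ into a compact form involving the Clifford element $de^i$, forms $\sum_i e_i\nabla_{e_i}\p$, and finishes with the two identities $e^i\,de^i\,\p=(e^i\wedge de^i-i(e_i)de^i)\p$ and $de^i\,e^i\,\p=(e^i\wedge de^i+i(e_i)de^i)\p$, whereas you propose to expand both sides in structure constants and match the all-distinct ``cubic'' triple products against the ``linear'' single-vector terms. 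Both routes involve the same antisymmetrization check (for the cubic part you will find the coefficient of $e_ae_be_c$ on each side collapses to $c_{ab}^c+c_{bc}^a+c_{ca}^b$), so your outline, once the index juggling is carried through, reproduces the paper's proof; you might find it slightly cleaner to adopt the paper's two Clifford identities at the end rather than doing the full case split by hand, and note that when $i\in\{j,k\}$ the wedge term $e^i\wedge e^{jk}$ simply vanishes rather than ``cancelling'' against anything.
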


\begin{proof}
First we compute the covariant derivative of $\p$ according to formula (\ref{cov}). 
Note that $d_{e_i}\p=0$ because $\p$ is left-invariant. We use Koszul formula to obtain
\[
 2\onabla_{e_i} e_j=(i(e_i)de^j + i(e_j)de^i)^\sharp - \sum_{k} de^k(e_i,e_j)e_k\, ,
\]
where $\onabla$ is the Levi-Civita connection and $\nabla$ is the spinor connection.
Therefore,
\begin{align*}
\nabla_{e_i} \p & = \frac{1}{4} \left( \sum_{j<k}{\left(de^j(e_i,e_k) + de^i(e_j,e_k) - de^k(e_i,e_j)\right)e_je_k} \right) \p 
\\
& =  \frac{1}{4} \left(de^i\p - 2\sum_{j,k}{de^k(e_i,e_j)e_je_k}\p + 2\sum_{k}{de^k(e_k,e_i)}\right)\p\,.
\end{align*}
From this we get:
\begin{align*}
4\sD \p& = \sum_{i=1}^n{e^i de^i}\p - 2\sum_{i<j,k}{de^k(e_i,e_j)e_ie_je_k}\p + 2\sum_{i,k}{de^k(e_k,e_i)e_i}\p \\
& = \sum_{i=1}^n{(e^i de^i - 2 de^i e_i + 2 i(e_i)de^i)}\p = -\sum_{i=1}^n{( e^i \wedge de^i + i(e_i)de^i)}\p\,,
\end{align*}
where we have used that $e^i de^i\p = (e^i\wedge de^i - i(e_i)de^i)\p$ and $(de^i) e^i\p = (e^i\wedge de^i + i(e_i)de^i)\p$.
\end{proof}

Since our focus is on nilmanifolds and solvmanifolds, we specialize Proposition \ref{invariant-Dirac-operator} to this setting. Recall that a frame $(e_1,\dots, e_n)$ of a nilpotent Lie group is called \emph{nilpotent} if
\[
 [e_i,e_j]=\sum_{k>i,j}c_{ij}^ke_k\,.
\]

\begin{corollary} \label{nilpotent-Dirac-operator}
Let $\rG$ be a nilpotent Lie group and let $(e_1,\dots, e_n)$ be an orthonormal nilpotent frame. Let $\p \colon \rG \to W$ be a left-invariant spinor; then %\sDx1
\begin{equation}\label{eq:Dirac-nilpotent}
4\sD\p = -\sum_{i=1}^n{(e^i \wedge de^i)}\p\,.  
\end{equation}
In particular, the operator $\sD$ is $\la\cdot,\cdot\ra$-symmetric on the space of invariant spinors.
\end{corollary}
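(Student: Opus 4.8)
The plan is to start from the general invariant Dirac formula in Proposition \ref{invariant-Dirac-operator}, namely $4\sD\p=-\sum_i(e^i\wedge de^i + i(e_i)de^i)\p$, and to show that for a nilpotent orthonormal frame the contraction terms $i(e_i)de^i$ sum to zero, so only the $e^i\wedge de^i$ part survives. First I would recall that for a left-invariant coframe the structure equations read $de^k=-\sum_{i<j}c_{ij}^k\,e^i\wedge e^j$, so that $de^k(e_i,e_j)=-c_{ij}^k$; hence $i(e_i)de^i = -\sum_j\big(\sum_k \text{(appropriate sign)}\big)e^j$, and more precisely $\sum_i i(e_i)de^i$ is, up to sign, the $1$-form $\sum_i\sum_j de^i(e_i,e_j)e^j=\sum_j\big(\sum_i c_{ij}^i\big)e^j$. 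The nilpotency condition $[e_i,e_j]=\sum_{k>i,j}c_{ij}^k e_k$ forces $c_{ij}^k=0$ unless $k>i$ and $k>j$; in particular $c_{ij}^i=0$ for every $i,j$ because one cannot have $i>i$. Therefore each coefficient $\sum_i c_{ij}^i$ vanishes, so $\sum_i i(e_i)de^i=0$, and \eqref{eq:invariant-Dirac-operator} collapses to \eqref{eq:Dirac-nilpotent}.

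For the symmetry statement, I would argue as follows. On the space of left-invariant spinors, identified with the fixed vector space $W$ with its scalar product $\la\cdot,\cdot\ra$, the operator $\sD$ is, by \eqref{eq:Dirac-nilpotent}, given by $\p\mapsto-\tfrac14\sum_i(e^i\wedge de^i)\p$, i.e.\ Clifford multiplication by the fixed element $\beta:=-\tfrac14\sum_i e^i\wedge de^i\in\Lambda^1T^*\rG\oplus\Lambda^3T^*\rG$ (since $e^i\wedge de^i$ is a $3$-form, $\beta$ is actually a pure $3$-form, which is convenient). Clifford multiplication by any real $k$-form $\gamma$ on $W$ is symmetric or skew-symmetric with respect to $\la\cdot,\cdot\ra$ depending only on $k\bmod 4$: multiplication by a vector is skew (this is recorded in the preliminaries, Subsection \ref{Spinorial}), hence multiplication by a $k$-form has symmetry sign governed by the parity $k(k+1)/2$, and for $k=3$ this gives a symmetric endomorphism (three skew factors with the combinatorial sign from the extension rule $(X^*\wedge\beta)\phi = X(\beta\phi)+(i(X)\beta)\phi$). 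So I would verify once that Clifford multiplication by a $3$-form is $\la\cdot,\cdot\ra$-symmetric, either by this parity bookkeeping or by direct computation on decomposable $3$-forms $X^*\wedge Y^*\wedge Z^*$ with $X,Y,Z$ orthonormal, where $X^*\wedge Y^*\wedge Z^*$ acts as the composition $XYZ$ of three anticommuting skew endomorphisms and hence $(XYZ)^*=-Z^*Y^*X^* \cdot(\pm)$ works out to $+XYZ$.

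The main obstacle, such as it is, is purely bookkeeping: one must be careful with the sign conventions relating $de^k$ to the structure constants and with the combinatorial sign in the symmetry of $3$-form Clifford multiplication, so that the two claims come out with the stated (absence of) signs. I do not anticipate any conceptual difficulty; the only genuine input is the nilpotency condition, which is exactly what kills the contraction term, and the elementary fact about the symmetry of Clifford multiplication by forms of degree $\equiv 3\pmod 4$. A clean way to present the second part is to note that since $\beta$ is a $3$-form and every homogeneous summand of the Clifford action of $\beta$ is symmetric, $\la\sD\p,\psi\ra=\la\beta\p,\psi\ra/(-4)=\la\p,\beta\psi\ra/(-4)=\la\p,\sD\psi\ra$ for all left-invariant $\p,\psi\in W$, which is the desired self-adjointness.
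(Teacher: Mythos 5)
Your proof is correct and fills in exactly the argument the paper leaves implicit: the nilpotency condition $[e_i,e_j]=\sum_{k>i,j}c_{ij}^ke_k$ gives $c_{ij}^i=0$, so each contraction $i(e_i)de^i$ vanishes term by term and Proposition~\ref{invariant-Dirac-operator} reduces to \eqref{eq:Dirac-nilpotent}; then $\sD$ acts by Clifford multiplication by the $3$-form $-\tfrac14\sum_i e^i\wedge de^i$, which is $\la\cdot,\cdot\ra$-symmetric by the standard sign $(-1)^{k(k+1)/2}$ for orthonormal $k$-vectors. This matches the intended reasoning; the only cosmetic remark is that you do not need to sum over $i$ before invoking nilpotency, since each $i(e_i)de^i$ is already zero individually.
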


Next, suppose that $\frg$ is a rank-1 extension of a nilpotent Lie algebra $\frn$, and let $\rG$ and $\rN$ be the associated simply connected Lie groups. As vector spaces $\frg= \la e_0 \ra \oplus \frn$; the Lie bracket in $\frg$ is given by
\[
 [e_0,X]_\frg=\cDD(X), \quad [X,Y]_\frg=[X,Y]_\frn \quad \textrm{for} \ X,Y\in\frn\,,
\]
where $\cDD\colon\frn\to\frn$ is a derivation. In terms of covectors, $\cDD$ can be seen as a linear map $\frn^*\to\frn^*$ such that $d_\frn \circ \cDD=\cDD \circ d_\frn$, where $d_\frn\colon\L^k\frn^*\to \L^{k+1}\frn^*$ is the Chevalley-Eilenberg differential. Extending $\alpha\in\L^k\frn^*$ by zero to $\la e_0\ra$, one has 
\begin{equation}\label{eqn:2}
d_{\frg}\a= d_{\frn}\a + (-1)^{k+1} \cDD(\a)\wedge e^0\,,
 \end{equation} 
where $d_\frg\colon\L^k\frg^*\to \L^{k+1}\frg^*$ is the Chevalley-Eilenberg differential. We also suppose that $\rG$ is endowed with an invariant metric which makes $e^0$ orthogonal to $\frn^*$.

\begin{corollary}\label{solvable-Dirac-operator}
Suppose that $(e_1,\dots, e_n)$ is an orthonormal frame of $\rN$ and let $\p \colon \rG \to W$ be a left-invariant spinor. Then 
\begin{equation}\label{eq:Dirac-rank1}
4 \sD\p = -\sum_{i=1}^n{(e^i \wedge d_{\frn}e^i+ i(e_i)d_{\frn}e^i + e^0 \wedge e^i \wedge \cDD(e^i))}\p - \tr(\cDD)e^0\p \,.  
\end{equation}
In particular if $\cDD$ is symmetric and $(e_1,\dots,e_n)$ is a basis of eigenvectors then $4\sD\p = -\sum_{i=1}^n{(e^i \wedge d_{\frn}e^i) + i(e_i)d_{\frn}e^i}\p - \tr(\cDD)e^0\p$ .
\end{corollary}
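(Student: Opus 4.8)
The plan is to derive \eqref{eq:Dirac-rank1} by applying the general formula \eqref{eq:invariant-Dirac-operator} of Proposition \ref{invariant-Dirac-operator} to the Lie group $\rG$, which carries the orthonormal left-invariant coframe $(e^0,e^1,\dots,e^n)$. So the starting point is
\[
4\sD\p = -\left(e^0\wedge d_\frg e^0 + i(e_0)d_\frg e^0\right)\p - \sum_{i=1}^n\left(e^i\wedge d_\frg e^i + i(e_i)d_\frg e^i\right)\p\,,
\]
and the task reduces to rewriting each term using \eqref{eqn:2}, which expresses $d_\frg$ in terms of $d_\frn$ and the derivation $\cDD$.

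First I would treat the $e^0$-term. Since $e^0$ vanishes on $\frn$ and $\cDD(e^0)$ need not be zero, one has $d_\frg e^0 = \sum_i (\cDD^* e^0)(e_i)\, e^i\wedge e^0$-type contributions; more efficiently, $d_\frg e^0(X,Y)=-e^0([X,Y]_\frg)$, which is nonzero only when one argument is $e_0$, giving $d_\frg e^0 = -\sum_i \langle \cDD(e_i),e_0\rangle\, e^0\wedge e^i$. As $\cDD$ preserves $\frn$, this pairing vanishes, so $d_\frg e^0=0$ and the $e^0$-term drops out entirely. Wait --- that is too quick: one must be careful that $\cDD$ maps $\frn$ to $\frn$, which is exactly the hypothesis, so indeed $d_\frg e^0 = 0$. (If instead $[e_0,X]$ had a component along $e_0$, i.e.\ $\tr$-type corrections, they would not; but here the rank-one extension is by a derivation of $\frn$, so this is fine.)

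Next I would expand the sum over $i=1,\dots,n$ using \eqref{eqn:2} with $k=1$, so that $d_\frg e^i = d_\frn e^i + \cDD(e^i)\wedge e^0$ (the sign $(-1)^{k+1}=(-1)^2=+1$). Substituting,
\[
e^i\wedge d_\frg e^i = e^i\wedge d_\frn e^i + e^i\wedge \cDD(e^i)\wedge e^0 = e^i\wedge d_\frn e^i - e^0\wedge e^i\wedge \cDD(e^i)\,,
\]
which, after the reordering $e^i\wedge\cDD(e^i)\wedge e^0 = e^0\wedge e^i\wedge\cDD(e^i)$ (two transpositions, sign $+1$)... I should double-check that sign against the stated formula, since \eqref{eq:Dirac-rank1} has $+\,e^0\wedge e^i\wedge\cDD(e^i)$ inside the outer minus sign; the bookkeeping of the two sign conventions (the one in \eqref{eqn:2} and the Clifford identification $X^*\phi=X\phi$) is the one spot where care is needed. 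For the contraction term, $i(e_i)d_\frg e^i = i(e_i)d_\frn e^i + i(e_i)\big(\cDD(e^i)\wedge e^0\big)$, and since $\cDD(e^i)\in\frn^*$ and $e_i\in\frn$, the Leibniz rule for contraction gives $i(e_i)\big(\cDD(e^i)\wedge e^0\big) = \big(i(e_i)\cDD(e^i)\big)e^0 - \cDD(e^i)\wedge i(e_i)e^0 = \cDD(e^i)(e_i)\, e^0$ (the second term vanishing as $i(e_i)e^0=0$). Summing $\sum_i \cDD(e^i)(e_i) = \sum_i \langle \cDD^*(e^i),e^i\rangle = \tr(\cDD)$, producing the $-\tr(\cDD)e^0\p$ term. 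Collecting everything yields \eqref{eq:Dirac-rank1}.

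Finally, for the last sentence: if $\cDD$ is symmetric with orthonormal eigenbasis $(e_1,\dots,e_n)$ and eigenvalues $\lambda_i$, then $\cDD(e^i)=\lambda_i e^i$, so $e^0\wedge e^i\wedge\cDD(e^i) = \lambda_i\, e^0\wedge e^i\wedge e^i = 0$; hence the middle family of terms disappears and \eqref{eq:Dirac-rank1} collapses to the stated simplified form. The main obstacle throughout is purely sign/convention tracking --- reconciling the Chevalley--Eilenberg sign in \eqref{eqn:2}, the wedge-reordering signs, and the Clifford-module conventions from Section \ref{Spinorial} --- rather than anything structurally deep; once the $d_\frg e^0=0$ observation is in place, the rest is a direct substitution into \eqref{eq:invariant-Dirac-operator}.
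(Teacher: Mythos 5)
Your proof is correct and follows exactly the route the paper's terse proof indicates: apply Proposition \ref{invariant-Dirac-operator} to the orthonormal coframe $(e^0,e^1,\dots,e^n)$ of $\frg$, observe that $d_\frg e^0=0$ because every bracket in $\frg$ lands in the ideal $\frn$, and then substitute \eqref{eqn:2} for $d_\frg e^i$, with the trace term arising from $\sum_i i(e_i)\bigl(\cDD(e^i)\wedge e^0\bigr)=\sum_i\cDD(e^i)(e_i)\,e^0=\tr(\cDD)\,e^0$ since the paper's induced action on covectors is the transpose. The only slip is the spurious minus sign in your displayed line $e^i\wedge\cDD(e^i)\wedge e^0=-e^0\wedge e^i\wedge\cDD(e^i)$; as you yourself note immediately afterward, moving a $1$-form past a $2$-form contributes a $+$ sign, which is precisely what \eqref{eq:Dirac-rank1} requires.
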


\begin{proof} 
The formula is deduced from Proposition \ref{invariant-Dirac-operator} and (\ref{eqn:2}). %\ref{nilpotent-Dirac-operator}. 
In addition, if $\cDD$ is symmetric and $(e^1,\dots, e^n)$ is a basis of eigenvectors of $\cDD$, then $e^i\wedge \cDD(e^i)=0$.
\end{proof}

%%%%%%%%%%%%%%%%%%%%%%%%%%%%%%%%%%%%%%%%%%%%%%%%%%%%%%%%%%%%%%%%%%%%%%%%%
\subsection{The operator \texorpdfstring{$\sD{}^2$}{Lg} on nilmanifolds}\label{subsec:D^2}     %
%%%%%%%%%%%%%%%%%%%%%%%%%%%%%%%%%%%%%%%%%%%%%%%%%%%%%%%%%%%%%%%%%%%%%%%%%

The square of the Dirac operator is an elliptic operator with positive eigenvalues. In this subsection we fix the trivial spin structure on a nilmanifold $\Gamma \backslash \rG$ associated to the trivial action and obtain
a formula for the square of the Dirac operator over the space of left-invariant spinors. This will allow us to understand the eigenvalues of the $5$-dimensional Dirac operator in Section \ref{sec:harm-nilm}. A straightforward computation gives the following result:

\begin{lemma} \label{square-formula}
Suppose $(e_1,\dots,e_n)$ is an orthonormal nilpotent frame of $\rG$ and $\p \colon \rG \to W$ a left-invariant spinor, then:
\begin{equation}\label{eq:square-Dirac}
16 \sD{}^2\p=\left(\sum_i -(de^i)^2 + \sum_{i < j}{(e^{ij} de^ide^j - de^jde^ie^{ij})} \right)\p \,. 
\end{equation}
\end{lemma}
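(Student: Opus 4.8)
The plan is to obtain \eqref{eq:square-Dirac} by iterating the nilpotent Dirac formula of Corollary~\ref{nilpotent-Dirac-operator}. The first observation is that $\sD\p$ is again left-invariant: since $4\sD\p=-\sum_i(e^i\wedge de^i)\p$ and the coefficients of the $3$-form $e^i\wedge de^i$ are structure constants of the Lie algebra, hence constant, Clifford multiplication by $\sum_i e^i\wedge de^i$ is a fixed endomorphism of the fibre $W$, so it sends the constant map $\p$ to a constant map. Therefore Corollary~\ref{nilpotent-Dirac-operator} applies a second time, to $\sD\p$, and gives
\[
16\,\sD^2\p=\Bigl(\sum_{j}e^j\wedge de^j\Bigr)\Bigl(\sum_{i}e^i\wedge de^i\Bigr)\p,
\]
where the product on the right means composition of Clifford-multiplication operators.

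Next I would move everything inside the Clifford algebra $\Cl_n$. Because the frame is nilpotent, $de^i$ is a combination of the $e^{jk}$ with $j,k<i$; in particular $i(e_i)de^i=0$, so the Clifford action of the $3$-form $e^i\wedge de^i$ coincides with the Clifford \emph{product} $e^i\,de^i$ taken in $\Cl_n$ (this follows from the recursive definition of Clifford multiplication of forms, together with the fact that $e^j\wedge e^k$ acts as the Clifford product $e^je^k$ when $j\neq k$). Reading all products as Clifford products and splitting the double sum into its diagonal and off-diagonal parts, we get
\[
16\,\sD^2\p=\Bigl(\sum_i (e^i\,de^i)^2+\sum_{i<j}\bigl(e^i\,de^i\,e^j\,de^j+e^j\,de^j\,e^i\,de^i\bigr)\Bigr)\p .
\]

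The last step is elementary bookkeeping with commutators in $\Cl_n$, and here nilpotency does all the work: since $de^i$ involves only indices $<i$, and since a $1$-form commutes (not anticommutes) with a $2$-form not containing its index, the element $de^i$ commutes with $e^i$ and, whenever $i<j$, also with $e^j$, while $de^j$ commutes with $e^j$. On the diagonal this gives $e^i\,de^i\,e^i\,de^i=de^i\,(e^i)^2\,de^i=-(de^i)^2$ because $(e^i)^2=-1$. For $i<j$, sliding the factors past one another yields $e^i\,de^i\,e^j\,de^j=e^{ij}\,de^i\,de^j$ and, with $e^je^i=-e^{ij}$ in the final move, $e^j\,de^j\,e^i\,de^i=-de^j\,de^i\,e^{ij}$. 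Plugging these into the previous display reproduces exactly \eqref{eq:square-Dirac}. The only real pitfall is the sign and index bookkeeping in $\Cl_n$ — in particular that $(e^i)^2=-1$, and that a $1$-form commutes rather than anticommutes with a disjoint $2$-form — but thanks to nilpotency every pair of factors that occurs either commutes or cleanly anticommutes, so there is no genuine obstacle.
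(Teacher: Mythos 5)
Your proof is correct, and it supplies exactly the ``straightforward computation'' the paper alludes to without writing out (Lemma~\ref{square-formula} is stated with no proof in the paper). The key observations — that $\sD\p$ is again constant so Corollary~\ref{nilpotent-Dirac-operator} can be iterated, that nilpotency makes $i(e_i)de^i=0$ so each $e^i\wedge de^i$ acts as the $\Cl_n$-product $e^i\,de^i$, and that $de^i$ commutes with both $e^i$ and $e^j$ for $j>i$ while $e^i,e^j$ anticommute — are precisely what is needed, and your sign bookkeeping (diagonal terms giving $-(de^i)^2$ via $(e^i)^2=-1$, off-diagonal terms giving $e^{ij}\,de^i\,de^j-de^j\,de^i\,e^{ij}$) is correct.
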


We discuss each summand of \eqref{eq:square-Dirac}. We use the juxtaposition of indices to denote Clifford products, for instance $e_{ij}=e_i e_j$. Moreover, each $\b = \sum_{i_1<\dots <i_k} \b_{i_1,\dots, i_k}e^{i_1\dots i_k} \in \L^k \frg^*$ is identified with the element $\sum_{i_1<\dots <i_k} \b_{i_1,\dots, i_k}e_{i_1\dots i_k}$ of the Clifford algebra. This identification does not depend on the orthonormal basis chosen. We also set
\[
\g_{ij} = e^{ij}de^ide^j - de^jde^i e^{ij}\,. 
\]

\begin{lemma} \label{dx^2}
Take $\o$ in $\L^2 \frg^*$. Using the previous identifications,
\[
\o \cdot \o = - \| \o \|^2 +  \o \wedge \o\,.
\]
\end{lemma}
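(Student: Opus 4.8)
The plan is to work purely inside the Clifford algebra $\Cl_n$, using the identification of $\omega\in\L^2\frg^*$ with the corresponding element $\sum_{i<j}\omega_{ij}e_{ij}$ described in the text. First I would reduce to an orthonormal basis: since both sides of the claimed identity are independent of the choice of orthonormal basis (the norm $\|\omega\|^2$, the wedge square $\omega\wedge\omega$, and the Clifford product are all invariantly defined), and since every $2$-form can be brought by an orthogonal change of basis to the normal form $\omega=\sum_{k=1}^r \lambda_k\, e^{2k-1}\wedge e^{2k}$ (the spectral theorem for skew-symmetric matrices), it suffices to verify the formula for such a normalized $\omega$.

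Next, for $\omega=\sum_k\lambda_k e_{2k-1,2k}$ I would expand $\omega\cdot\omega=\sum_{k,l}\lambda_k\lambda_l\, e_{2k-1,2k}\,e_{2l-1,2l}$ and split into diagonal and off-diagonal terms. For the diagonal terms, the relation $e_i^2=-1$ and anticommutation $e_ie_j=-e_je_i$ for $i\ne j$ give $(e_{2k-1,2k})^2=e_{2k-1}e_{2k}e_{2k-1}e_{2k}=-e_{2k-1}^2 e_{2k}^2=-1$, so $\sum_k\lambda_k^2(e_{2k-1,2k})^2=-\sum_k\lambda_k^2=-\|\omega\|^2$. For the off-diagonal terms $k\ne l$, the four indices $2k-1,2k,2l-1,2l$ are distinct, so $e_{2k-1,2k}$ and $e_{2l-1,2l}$ commute and their product is $e_{2k-1,2k,2l-1,2l}$, which is exactly the Clifford element corresponding to $e^{2k-1}\wedge e^{2k}\wedge e^{2l-1}\wedge e^{2l}$; summing over all ordered pairs $k\ne l$ reproduces $\omega\wedge\omega$ (each unordered pair contributes twice, matching the expansion of $(\sum_k\lambda_k e^{2k-1}\wedge e^{2k})^{\wedge 2}$). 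Combining the two contributions yields $\omega\cdot\omega=-\|\omega\|^2+\omega\wedge\omega$.

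Alternatively — and perhaps more cleanly — I would avoid the normal form and argue directly: for a general orthonormal basis write $\omega\cdot\omega=\sum_{i<j,\,k<l}\omega_{ij}\omega_{kl}\,e_{ij}e_{kl}$. The product $e_{ij}e_{kl}$ depends on the overlap of $\{i,j\}$ and $\{k,l\}$: when $\{i,j\}=\{k,l\}$ it equals $-1$, contributing $-\sum_{i<j}\omega_{ij}^2=-\|\omega\|^2$; when $\{i,j\}$ and $\{k,l\}$ share exactly one index the contribution cancels in pairs by the skew-symmetry $\omega_{ij}=-\omega_{ji}$ together with anticommutation (this is the standard "$\omega\cdot\omega$ has no $2$-vector part" observation); and when they are disjoint, $e_{ij}e_{kl}$ is the Clifford image of $e^{ij}\wedge e^{kl}$, summing to $\omega\wedge\omega$. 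The main (minor) obstacle is bookkeeping the one-index-overlap cancellation carefully — keeping track of signs from reordering Clifford generators and confirming that the symmetric sum over $\omega_{ij}\omega_{kl}$ against the antisymmetric-in-a-shared-index Clifford products vanishes — but this is the only place any care is needed, and the normal-form route sidesteps it entirely, so I would present that version.
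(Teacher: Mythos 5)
Your proposal is correct, and the route you say you would actually present (reduction to the skew-symmetric normal form $\omega=\sum_k\lambda_k e^{2k-1}\wedge e^{2k}$) is genuinely different from the paper's. The paper works in an arbitrary orthonormal basis and writes $\omega\cdot\omega=\sum_{i<j,\,k<l}\omega_{ij}\omega_{kl}e_{ij}e_{kl}$, then disposes of the one-index-overlap terms by the identity $e_{ij}e_{ik}+e_{ik}e_{ij}=0$ and collects the four-distinct-index terms using $e_{ijkl}+e_{klij}=2e_{ijkl}$ to land directly on
\[
\Bigl(\sum_{i<j}\omega_{ij}e_{ij}\Bigr)^2=-\sum_{i<j}\omega_{ij}^2+2\sum_{i<j<k<l}(\omega_{ij}\omega_{kl}+\omega_{il}\omega_{jk}-\omega_{ik}\omega_{jl})e_{ijkl}\,,
\]
which is the explicit coordinate expansion of $-\|\omega\|^2+\omega\wedge\omega$. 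Your normal-form argument buys you exactly what you say it does: since the $2r$ participating indices are pairwise disjoint in blocks, the one-index-overlap case never arises and the whole cancellation step disappears, at the modest price of invoking the spectral theorem for skew forms and the $O(n)$-equivariance of the identification $\L^\bullet\frg^*\cong\Cl_n$ (which the paper has already noted is basis-independent, so this is legitimate). The paper's argument, by contrast, avoids any normal form but must supply the anticommutation identity to kill the cross terms; it also yields, as a byproduct, the explicit coefficient formula for $\omega\wedge\omega$ that the authors do not strictly need here but which is in the spirit of the coordinate computations done throughout Section~5. Your second, "direct" alternative is essentially the paper's proof.
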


\begin{proof}
Let $(e_1,\dots,e_n)$ be an orthonormal basis and write $\o = \sum_{i<j}\o_{ij}e_{ij}$. If $i,j,k,l$ are distinct indices, 
then it is easy to obtain that $e_{ij}e_{ik} + e_{ik}e_{ij}=0$ and that $e_{ijkl} + e_{klij}= 2e_{ijkl}$. A combination of these properties leads to the equality:
\[
\left( \sum_{i<j}{\o_{ij} e_{ij}} \right)^2 = -\sum_{i<j}{\o_{ij}^2}  +  2\sum_{i<j<k<l}{(\o_{ij}\o_{kl} + \o_{il}\o_{jk} - \o_{ik}\o_{jl})}e_{ijkl}\,,
\]
which proves the lemma.
\end{proof}

\begin{remark} \label{real-structure}
The operator $e_{ijkl} \cdot $ verifies $(e_{ijkl}\cdot )^2=\rI$ and it is not an homotethy.
Let $\D_{\pm}$ be the eigenspace of $\S(G)$ associated to $\pm 1$ and take $\p_{\pm} \in \Delta_{\pm}$. Then,
\[
(\o_{ij} e^{ij} + \o_{kl} e^{kl})^2\p_\pm = -(\o_{ij} \mp \o_{kl})^2\p_\pm\,.
\]
This endomorphism is invertible except when $\o_{ij}=\pm \o_{kl}$; in this case the kernel is $\Delta_\pm$.
\end{remark}

\begin{lemma} \label{dxidxj} 
Let $(e_1,\dots,e_n)$ be an orthonormal nilpotent frame of $\frg$ and $i<j$. Then
\[ 
\g_{ij}= - 2 de^i \wedge i(e_i)de^j \wedge e^j + 2 \sum_{k<i}{i(e_k)de^i \wedge i(e_k)(de^j|_{\la e_i \ra^\perp}) \wedge e^{ij}}\,.
\]
\end{lemma}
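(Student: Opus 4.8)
The goal is to expand $\g_{ij} = e^{ij}de^i de^j - de^j de^i e^{ij}$ as a wedge-product expression. The starting point is the observation that in a nilpotent frame, $de^i \in \L^2\la e_1,\dots,e_{i-1}\ra^*$ — that is, $de^i$ is a sum of terms $e^{kl}$ with $k,l < i$. In particular, $i(e_i)de^i = 0$ and $i(e_j)de^i = 0$ for $j \geq i$, which already simplifies the Clifford products $e^i de^i$ and $e^j de^j$ considerably. Concretely, since $de^i$ lies in the Clifford subalgebra generated by vectors orthogonal to $e_i$ and to $e_j$ (as $i<j$), the product $e_i de^i = e^i \wedge de^i$ (no contraction term), and similarly $e_j \cdot de^j$ splits according to which generators of $de^j$ involve $e_i$.

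First I would rewrite $e^{ij}de^ide^j$ by moving the Clifford factors past one another using the contraction/wedge rules from Section~\ref{Spinorial}. Write $de^j = e^i \wedge i(e_i)de^j + de^j|_{\la e_i\ra^\perp}$, splitting off the part of $de^j$ that involves $e_i$; note $i(e_i)de^j \in \la e_1,\dots,e_{i-1}\ra^*$ and $de^j|_{\la e_i\ra^\perp} \in \L^2\la e_1,\dots,e_{j-1}\ra^* \cap \ker i(e_i)$. Since $de^i$ commutes in the Clifford algebra with $e_j$ (because $de^i$ only involves $e_1,\dots,e_{i-1}$) and since $e^{ij} = e_i e_j$, one can push $e_j$ through $de^i$ at the cost of a sign, then use $e_i e_j \cdot de^i \cdot e^i = e_i \cdot de^i \cdot e^i \cdot$ combined with $e_i e^i = e_i^2 = -\|e_i\|^2 = -1$ type identities — but one must be careful, since $de^i$ and $e_i$ do not simply commute ($e_i$ appears in $e^{ij}$ but not in $de^i$, so actually they \emph{do} commute). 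The computation is bookkeeping: track which of $e_i, e_j$ appear in each factor, commute them past the factors they're disjoint from picking up signs, and contract $e_i \cdot e^i$, $e_j \cdot e^j$ where they meet.

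The second term $de^j de^i e^{ij}$ is handled symmetrically; I expect the $-\|de^i\|^2$-type self-contraction pieces and most cross terms to cancel between the two halves of $\g_{ij}$, leaving exactly the two surviving families: the term $-de^i \wedge i(e_i)de^j \wedge e^j$ coming from the interaction of $de^i$ with the $e_i$-component of $de^j$, and the sum $\sum_{k<i} i(e_k)de^i \wedge i(e_k)(de^j|_{\la e_i\ra^\perp}) \wedge e^{ij}$ coming from double contractions between $de^i$ and $de^j$ along a common generator $e_k$ with $k < i$. The constraint $k < i$ arises precisely because $de^i$ only has components $i(e_k)de^i \neq 0$ for $k < i$, and one needs $e_k$ to also appear in $de^j|_{\la e_i\ra^\perp}$.

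The main obstacle is the sign- and index-bookkeeping in the double-contraction term: when $de^i = \sum_{k<l<i} c^i_{kl} e^{kl}$ and $de^j|_{\la e_i\ra^\perp} = \sum_{k<l<j,\, k,l\neq i} c^j_{kl} e^{kl}$ are multiplied as Clifford elements, each pair of factors sharing one index contributes a contraction, each pair sharing two indices contributes a scalar (these should cancel against the mirror term, or be absorbed), and disjoint pairs contribute a wedge; organizing this so that the residual double-contraction along a single $e_k$ assembles into the clean wedge $i(e_k)de^i \wedge i(e_k)(de^j|_{\la e_i\ra^\perp}) \wedge e^{ij}$ with the correct overall sign requires care. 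A clean way to do this is to first prove the auxiliary Clifford identity $\alpha\beta = \sum_{p\geq 0} (-1)^{\binom{p}{2}} (\alpha \bullet_p \beta)$ relating the Clifford product of forms to their "degree-$p$ contraction" bilinear operations (with $p=0$ the wedge), then apply it to $\alpha = de^i$, $\beta = de^j$, use the support constraints to kill all but $p \in \{0,1,2\}$ terms, and match. I would expect the whole proof to be two or three lines of "straightforward computation" in the paper once the support constraints on $de^i$, $de^j$ are made explicit.
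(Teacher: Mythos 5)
Your plan is essentially the paper's own proof. The paper likewise splits $de^j = e^i\wedge\alpha + \beta$ with $\alpha = i(e_i)de^j$ and $\beta = de^j|_{\la e_i\ra^\perp}$, commutes the Clifford factors through $e^{ij}$, and reduces $\g_{ij}$ to $-(de^i\alpha + \alpha\,de^i)e_j + (de^i\beta - \beta\,de^i)e^{ij}$ --- exactly your two ``surviving families.'' The only divergence is the low-level verification tool: where you propose a general degree-$p$ contraction identity for Clifford products of forms, the paper disposes of the $\beta$-term by a direct case check on simple 2-forms $de^i = e^{pq}$, $\beta = e^{lm}$.

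Two cautions. First, the support claim ``$i(e_i)de^j \in \la e_1,\dots,e_{i-1}\ra^*$'' is false: in a nilpotent frame $de^j = \sum_{k<l<j}c^j_{kl}e^{kl}$, so $i(e_i)de^j$ also picks up $e^l$ for $i < l < j$. This is harmless for the argument (the key identity $de^i\alpha + \alpha\,de^i = 2\,\alpha\wedge de^i$ holds for an arbitrary 1-form $\alpha$), but you should not rely on that containment. Second, the sign- and coefficient-bookkeeping you flag as ``the main obstacle'' is genuinely where things are delicate: if you carry the computation through carefully you will find that both pieces of $\g_{ij}$ come out with an extra factor of $2$ --- indeed the paper's own intermediate steps read $(de^i\alpha + \alpha\,de^i)e_j = 2\,de^i\wedge\alpha\wedge e^j$ and $de^i\beta - \beta\,de^i = 2\sum_{k<i}i(e_k)de^i\wedge i(e_k)\beta$ --- so the result of the computation is twice the right-hand side displayed in the lemma. (A check on $de^i=e^{12}$, $de^j=e^{3i}$ gives $\g_{ij}=2e^{123j}$, while the stated formula gives $e^{123j}$.) So do not treat this as ``two or three lines of straightforward computation'': actually do the bookkeeping, and reconcile the coefficient you obtain with the statement before relying on it.
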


\begin{proof}
We denote $\a= i(e_i)de^j \in \frg^*$ and $\b= de^j|_{\la e_i\ra}^\perp \in \L^2 \la e^i \ra^\perp$, 
that is, $de^j= e^i\wedge \alpha + \beta$. In this notation, we observe that
$ e_{ij}de^i de^j= e_{ij}de^i(e^i\wedge \a + \b)= de^i(- e^i\wedge \a + \b )e_{ij}$
and that $e_i\beta = \beta e_i$. Hence,
\[
 \g_{ij}= \left(de^i (-e^i\wedge \a + \b)- (e^i\wedge \a + \b)de^i \right) e_{ij} 
             =  - (de^i\a + \a de^i)e_j + (de^i\beta - \beta de^i) e_{ij}.
\]
We now identify the terms in the summand. On the one hand, if we write $de^i=\a\wedge \a' + \beta'$ where $\a'=i(\a^\sharp)de^i$ and $\b'=de^i|_{\la \a^\sharp \ra^\perp}$, we obtain:
\[ 
(de^i \a + \a de^i)e_j = 2 (\beta' \a) e^j = 2de^i \wedge \a \wedge e^j \, .
 \]
On the other hand, it is sufficient to prove $(de^i\beta - \beta de^i)= 2\sum_{k<i}i(e_k)de^i\wedge i(e_k)\beta$ 
in the case that $de^{i}=e^{pq}$ and $\b=e^{l m}$ with $l< m$ and $p<q$. We distinguish two cases:
\begin{enumerate}
\item If $(p,q)=(l,m)$ or $p,q \notin \{ l, m\}$, then $e^{pq}e^{l m}-e^{l m}e^{pq}=0$. 
In addition, we have $\sum_{k=1}^{j-1}{i(e_k)e^{pq}\wedge i(e_k)e^{l m}}=0$.
\item In other case; for instance if $p=l$ and $q\neq m$, then $e^{pq}e^{pm}- e^{pm}e^{pq}=2e^{qm}$ and $2\sum_{k=1}^{j-1}{i(e_k)e^{pq}\wedge i(e_k)e^{pm}}=2e^{qm}$. The other instances are similar.
\end{enumerate}

\end{proof}

From this we obtain:

\begin{corollary} \label{4-form}
Let $(e_1,\dots,e_n)$ be a nilpotent orthonormal frame of $\frg$ and let $\p$ be a left-invariant spinor; then,
\begin{align*}
16 \sD{}^2 \p = & \sum_{i=1}^n{(\|de^i \|^2 - de^i \wedge de^i)}\p  
          - 2  \sum_{i<j}{(de^i \wedge i(e_i)de^j \wedge e^j)}\p  \\
        + & \, 2 \sum_{k<i<j} {i(e_k)de^i \wedge i(e_k)(de^j|_{\la e_i \ra^\perp})\wedge e^{ij}}\p .
\end{align*}
\end{corollary}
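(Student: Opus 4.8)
The plan is to obtain the formula by simply assembling the three lemmas that precede it. The starting point is Lemma~\ref{square-formula}, which gives
\[
16\sD{}^2\p = \left(\sum_{i=1}^n -(de^i)^2 + \sum_{i<j}\g_{ij}\right)\p ,
\]
with $\g_{ij}= e^{ij}de^ide^j - de^jde^ie^{ij}$, so everything reduces to rewriting the two sums on the right-hand side.

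For the diagonal terms, I would invoke Lemma~\ref{dx^2} with $\o=de^i\in\L^2\frg^*$: under the identification of a $2$-form with the corresponding element of the Clifford algebra, it reads $(de^i)^2 = -\|de^i\|^2 + de^i\wedge de^i$, hence $-(de^i)^2 = \|de^i\|^2 - de^i\wedge de^i$. Summing over $i$ produces exactly the first term $\sum_i(\|de^i\|^2 - de^i\wedge de^i)\p$ of the claimed identity.

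For the off-diagonal terms, Lemma~\ref{dxidxj} --- applicable because $(e_1,\dots,e_n)$ is a nilpotent frame --- expresses, for $i<j$,
\[
\g_{ij}= - de^i \wedge i(e_i)de^j \wedge e^j + \sum_{k<i} i(e_k)de^i \wedge i(e_k)(de^j|_{\la e_i\ra^\perp}) \wedge e^{ij}.
\]
Summing over all $i<j$, the first summand contributes $-\sum_{i<j} de^i\wedge i(e_i)de^j\wedge e^j$, while the nested sum over $i<j$ and then $k<i$ collapses to the single triple sum $\sum_{k<i<j} i(e_k)de^i \wedge i(e_k)(de^j|_{\la e_i\ra^\perp})\wedge e^{ij}$. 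Adding the diagonal and off-diagonal contributions gives the stated formula, and since both $\sD{}^2$ and the right-hand side are evaluated on the constant (left-invariant) spinor $\p$, nothing further is needed.

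The computation is essentially formal, so I do not expect a genuine obstacle; the one thing to handle carefully is bookkeeping --- keeping the identification between elements of $\L^\bullet\frg^*$ and of the Clifford algebra consistent across Lemmas~\ref{square-formula}, \ref{dx^2} and~\ref{dxidxj}, and reindexing the ``$i<j$ then $k<i$'' double sum correctly as ``$k<i<j$''. The nilpotency of the frame is what guarantees that each $de^i$ only involves $e^k$ with $k<i$, which is precisely what makes Lemma~\ref{dxidxj} applicable term by term.
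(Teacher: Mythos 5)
Your proposal is correct and follows exactly the route the paper intends: the corollary is obtained by substituting Lemma~\ref{dx^2} (with $\o=de^i$) and Lemma~\ref{dxidxj} into the formula of Lemma~\ref{square-formula} and reindexing the double sum as $\sum_{k<i<j}$. The paper itself states the corollary immediately after Lemma~\ref{dxidxj} with no further argument, so your assembly is precisely the intended (and complete) proof.
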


%%%%%%%%%%%%%%%%%%%%%%%%%%%%%%%%%%%%%%%%%%%%%%%%%%%%%%%%%%%%%%%%%%%%%%%%%%
%                                                                        %
%        SPIN-HARMONIC STRUCTURES ON NILMANIFOLDS                        %
%                                                                        %
%%%%%%%%%%%%%%%%%%%%%%%%%%%%%%%%%%%%%%%%%%%%%%%%%%%%%%%%%%%%%%%%%%%%%%%%%%

%%%%%%%%%%%%%%%%%%%%%%%%%%%%%%%%%%%%%%%%%%%%%%%%%%%%%%%%%%%%%%%
\section{Spin-harmonic structures on nilmanifolds}\label{sec:harm-nilm}
%%%%%%%%%%%%%%%%%%%%%%%%%%%%%%%%%%%%%%%%%%%%%%%%%%%%%%%%%%%%%%%

In order to determine left-invariant harmonic structures on nilmanifolds one has to compute the Dirac operator associated to each left-invariant metric and study its kernel. 
In dimension $4$ and $5$ we give a list of all left-invariant metrics and compute the eigenvalues of the Dirac operator by means of the metric using Corollary \ref{4-form}. 
We will also give a list of $6$-dimensional nilmanifolds that admit left-harmonic structures and list one such metric on each algebra.

Note that the existence of left-invariant harmonic spinors on a nilmanifold $\Gamma \backslash \rG$ depends on the Lie algebra $\frg$. For this reason, we sometimes write that the Lie algebra $\frg$ admits left-invariant harmonic spinors.

For Lie algebras we use Salamon's notation: $(0,0,12,13)$ denotes the 4-dimensional Lie algebra with basis $(e_1,e_2,e_3,e_4)$ and dual basis $(e^1,e^2,e^3,e^4)$, with differential $de^1=de^2=0$, $de^3=e^{12}$ and $de^4=e^{13}$. The list of nilmanifolds can be found in \cite{BM2}.

%%%%%%%%%%%%%%%%%%%%%%%%%%%%%%%%%%%%%%%%%%%%%%%%%%%%%%%%%%%%%%%
\subsection{4-dimensional nilmanifolds}\label{subsec:4-nilm}
%%%%%%%%%%%%%%%%%%%%%%%%%%%%%%%%%%%%%%%%%%%%%%%%%%%%%%%%%%%%%%%

In terms of an orthonormal nilpotent basis, a list of non-abelian $4$-dimensional metric nilpotent Lie algebras is:

%\begin{lemma} \label{4-classification}
%The list of $4$-dimensional metric nilpotent algebras is: \\
\begin{center}
% \rotatebox{90}
{\tabulinesep=1.2mm
\begin{tabu}{|l|c|c|c|}
\hline
                    &                & $de^3$ & $de^4$ \\
\hline
$\rL_3\oplus \rA_1$ & $(0,0,0,12) $ & $0$ & $\m_{12}e^{12}$  \\ \hline
$\rL_4 $            & $(0,0,12,13)$ & $\m_{12}e^{12}$ & $e^1(\lambda_{12}e^2 + \m_{13}e^{3})$ \\ \hline
\end{tabu}}
\end{center}
Here $\mu_{ij}$ denote structure constants which are necessarily non-zero, while $\l_{ij}$ may vanish.
%\end{lemma}

\begin{theorem}\label{4-harmonic}
$4$-dimensional non-abelian nilmanifolds have no left-invariant harmonic spinors. 
\end{theorem}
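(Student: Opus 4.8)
The plan is to use the explicit formula for the Dirac operator on a nilmanifold from Corollary \ref{nilpotent-Dirac-operator}, namely $4\sD\p = -\sum_i (e^i\wedge de^i)\p$, and to show directly that for each of the two non-abelian 4-dimensional nilpotent Lie algebras the equation $\sD\p=0$ forces $\p=0$ for any choice of invariant metric. Since the list above is given in terms of an orthonormal nilpotent frame, it suffices to run through the two cases using the $4$-dimensional real spinor representation; concretely one works with the restriction of the representation of $\Cl_6$ given explicitly in Subsection \ref{Cl-representations}, or equivalently any irreducible $\Cl_4$-module $W\cong\HH$, and treats $\p\in W\cong\RR^4$ as a vector of unknowns.

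First I would treat $\rL_3\oplus\rA_1 = (0,0,0,12)$: here $de^1=de^2=de^3=0$ and $de^4=\mu_{12}e^{12}$, so by Corollary \ref{nilpotent-Dirac-operator} one has $4\sD\p = -\mu_{12}\, e^4\wedge e^{12}\,\p = -\mu_{12}\, e^{412}\p = -\mu_{12}\, e_4e_1e_2\p$. Since $\mu_{12}\neq 0$ and $e_4e_1e_2$ is an invertible element of the Clifford algebra (it is a product of invertible vectors), acting invertibly on $W$, the only solution is $\p=0$. Next I would treat $\rL_4=(0,0,12,13)$: here $de^3=\mu_{12}e^{12}$ and $de^4 = e^1\wedge(\lambda_{12}e^2+\mu_{13}e^3) = \lambda_{12}e^{12}+\mu_{13}e^{13}$, so $4\sD\p = -(e^3\wedge de^3 + e^4\wedge de^4)\p = -\bigl(\mu_{12}e^{312} + \lambda_{12}e^{412} + \mu_{13}e^{413}\bigr)\p = -\bigl(\mu_{12}e_3e_1e_2 + \lambda_{12}e_4e_1e_2 + \mu_{13}e_4e_1e_3\bigr)\p$. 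The claim is that this Clifford element is invertible whenever $\mu_{12}\neq 0$ and $\mu_{13}\neq 0$. To see this I would factor out $e_1$ on the right (it is invertible) and analyze $A = \mu_{12}e_3e_2 + \lambda_{12}e_4e_2 + \mu_{13}e_4e_3$, then compute $A^2$ (or $AA^*$) using the anticommutation relations; the cross terms $e_3e_2\cdot e_4e_2$, $e_3e_2\cdot e_4e_3$, $e_4e_2\cdot e_4e_3$ should each be $\pm$ a single basis bivector and one checks they cancel in the symmetrization, leaving $A^2 = -(\mu_{12}^2+\lambda_{12}^2+\mu_{13}^2)\,\rI$, which is a nonzero scalar, hence $A$ — and therefore $\sD$ on invariant spinors — is invertible.

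The main obstacle, such as it is, is the bookkeeping in the second case: one must verify that no nontrivial linear combination of the three bivectors $e_3e_2$, $e_4e_2$, $e_4e_3$ can become a non-invertible (zero-square) element, which is exactly the statement that $A^2$ is a negative-definite scalar. This follows because these three bivectors span a copy of $\frs\fru(2)$ (or because pairwise they anticommute), so $A^2$ equals $-\|A\|^2\rI$ with respect to the natural norm; I would phrase it via Lemma \ref{dx^2} applied to $\omega = \mu_{12}e^{32}+\lambda_{12}e^{42}+\mu_{13}e^{43}\in\L^2\frg^*$, noting that here $\omega\wedge\omega=0$ for dimension reasons once one is careful, so $\omega\cdot\omega = -\|\omega\|^2$; but since we only need the conclusion that $\sD$ is injective on invariant spinors, even the cruder observation that $\sD^2$ has a strictly positive scalar symbol on this finite-dimensional space (Corollary \ref{4-form}, with all the wedge-of-$de^i$ terms vanishing in dimension $4$) suffices. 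Either way one concludes $\sD\p=0\Rightarrow\p=0$, proving the theorem.
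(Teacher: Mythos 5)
Your proposal is correct and follows essentially the same route as the paper: compute the invariant Dirac operator via Corollary~\ref{nilpotent-Dirac-operator} on the two non-abelian $4$-dimensional nilpotent Lie algebras and check it is invertible, the $\rL_3\oplus\rA_1$ case being identical, and the $\rL_4$ case amounting to showing $16\sD^2=(\mu_{12}^2+\lambda_{12}^2+\mu_{13}^2)\,\rI$ — which is exactly what your factorization $4\sD\p = Ae_1\p$ with $A^2=-(\mu_{12}^2+\lambda_{12}^2+\mu_{13}^2)\rI$ gives. You simply spell out the Clifford-algebra bookkeeping that the paper's one-line proof leaves implicit.
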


\begin{proof}
The Dirac operator on $\rL_3\oplus \rA_1$ is
$
\sD\p=\m_{12}e^{124}\p,
$
and the square of the Dirac operator on $\rL_4$ is
$
16 \sD{}^2\p = (\m_{12}^2 + \m_{13}^2 + \l_{12}^2 )\p.
$
Both are invertible.
\end{proof}

%%%%%%%%%%%%%%%%%%%%%%%%%%%%%%%%%%%%%%%%%%%%%%%%%%%%%%%%%%%%%%%
\subsection{5-dimensional nilmanifolds}\label{subsec:5-nilm}
%%%%%%%%%%%%%%%%%%%%%%%%%%%%%%%%%%%%%%%%%%%%%%%%%%%%%%%%%%%%%%%

As in Section \ref{subsec:spinors}, we fix an irreducible representation of $\Cl_5$, $\rho_5 \colon \Cl_5 \to \End_\CC(W)$, with complex structure $\jota_1=\rho_5(\nu_5)$ and a quaternionic stucture $\jota_2$ that anticommutes with the Clifford product; define $\jota_3=\jota_1\circ \jota_2$. For instance, let $\rho_6$ be the representation of the real $6$-dimensional Clifford algebra described on subsection \ref{subsec:spinors} and define $\rho_5=\rho_6\circ i_5$, as in Proposition \ref{restriction}. Then, $\jota_1=\rho_5(\nu_5)$ and $\jota_2=\rho_6(e_6)$.

We first use Corollary \ref{4-form} to obtain the eigenvalues of the Dirac operator. In the presence of a harmonic spinor $\eta$, we can relate the operator $16 \sD{}^2$ with the 1-form $\a$ of the $\SU(2)$ structure defined by $\eta$.

\begin{proposition} \label{5-square}
Let $(e_1,\dots,e_5)$ be an orthonormal nilpotent basis of $\frg$ and let $\p$ be an invariant spinor. Then
$16 \sD{}^2\p= \m \p + v \jota_1\p$
where $\m = \sum {\|de^i\|^2}$ and 
\begin{align*}
 v^\sharp  =& \star(de^5\wedge de^5) + 2  \star\left(\sum_{i=3}^4de^i \wedge i(e_i)de^5 \wedge e^5\right) \\
     & - 2 \star\left(\sum_{i=3}^4\sum_{k=1}^3 {i(e_k)de^i \wedge i(e_k)(de^5|_{\la e_i \ra^\perp})\wedge e^{i5}  }\right) .
\end{align*}
In addition, $\mu\geq \|v\|$ and the restriction of the operator $4 \sD$ to the space of invariant spinors has four complex eigenspaces, associated to $\pm (\m \pm \|v\|)^{\frac{1}{2}}$. The endomorphism $\jota_2$ maps the eigenspace associated to $(\m \pm \|v\|)^{\frac{1}{2}}$ to the eigenspace associated to $-(\m \pm \|v\|)^{\frac{1}{2}}$.
In particular, there exist left-invariant harmonic spinors if and only if $\m = \|v\|$.
\end{proposition}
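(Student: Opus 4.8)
The starting point is Corollary \ref{4-form}, which expresses $16\sD{}^2\p$ as the Clifford action of a sum of three terms built from the $de^i$. The plan is to show that, in dimension $5$ and with respect to a nilpotent orthonormal frame, this operator preserves the space of invariant spinors and, as an endomorphism of the $4$-dimensional complex space $W$, has a very rigid form: a scalar $\m$ plus the Clifford action of a $1$-form. First I would analyze each of the three summands of Corollary \ref{4-form} separately. The term $\sum_i \|de^i\|^2$ is the scalar $\m$. The $4$-form piece $\sum_i de^i\wedge de^i$ must be handled using that $\L^4(\RR^5)^*\cong\RR^5$ via $\star$; more precisely, since $de^i=0$ unless $i\in\{3,4,5\}$ in a nilpotent frame and $de^3,de^4\in\L^2\la e^1,e^2\ra^*$ while $de^5$ can involve $e^3,e^4$, a bookkeeping argument identifies $\sum de^i\wedge de^i$ with $\star(de^5\wedge de^5)$ plus lower terms, i.e.\ with a $4$-form, hence with the Clifford action of its Hodge dual $1$-form. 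The same applies to the remaining two summands $\sum_{i<j}de^i\wedge i(e_i)de^j\wedge e^j$ and $\sum_{k<i<j}i(e_k)de^i\wedge i(e_k)(de^j|_{\la e_i\ra^\perp})\wedge e^{ij}$: in dimension $5$ these are $3$-forms and $5$-forms respectively, but one checks that the nilpotency constraints force the only nonzero contributions to come from $j=5$, producing $4$-forms after wedging with $e^5$ or $e^{i5}$; assembling everything yields exactly the stated $v^\sharp$.

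**Clifford action of a $1$-form on $W$.** The next step is to identify the Clifford action of a $1$-form $v^\sharp\in\RR^5$ on $W=\D_4$. Since Clifford multiplication by a unit vector is an isometry and squares to $-\rI$, it is conjugate to multiplication by $\imag$ on a suitable summand; combined with the fact that $\jota_1=\rho_5(\nu_5)$ and that $\jota_2$ anticommutes with $\rho_5(v)$ for every $v$, one sees that the endomorphism $\rho_5(v^\sharp)$ of $W$, regarded $\CC$-linearly with respect to the complex structure $\jota_1$, acts as $\|v\|$ times a structure whose square is $-\|v\|^2$; hence $16\sD{}^2 = \m\,\rI + \rho_5(v^\sharp)$ has, as a $\CC$-linear endomorphism, eigenvalues $\m\pm\imag\|v\|$... but here one must be careful: actually $\sD$ is $\la\cdot,\cdot\ra$-symmetric on invariant spinors by Corollary \ref{nilpotent-Dirac-operator}, so $\sD^2$ is symmetric and positive semidefinite and its eigenvalues are real. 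The resolution is that $\rho_5(v^\sharp)$ is $\la\cdot,\cdot\ra$-symmetric (Clifford action by a vector is skew for $h$ but one is taking the real part, or rather $v^\sharp$ here is being Clifford-multiplied in a way that makes it self-adjoint on the real inner product space $W$), with eigenvalues $\pm\|v\|$, each of real multiplicity $4$; I would verify this using the explicit model $\rho_5=\rho_6\circ i_5$ with $\jota_2=\rho_6(e_6)$, noting $\rho_5(v^\sharp)=\rho_6(v^\sharp e_6)$ and that $v^\sharp e_6$ is a decomposable $2$-form of norm $\|v\|$ squaring to $-\rI$ times... no: $(v^\sharp e_6)^2 = -\|v\|^2$, so $\rho_6(v^\sharp e_6)$ has eigenvalues $\pm\imag\|v\|$ on the complex space $W$, but its $\jota_1$-conjugate-linear behavior swaps these, making the \emph{real} eigenvalues $\pm\|v\|$. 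From $16\sD{}^2=\m\rI+\rho_5(v^\sharp)$ acting on the $4$ real dimensions one reads off that $\sD$ has the four eigenvalues $\pm(\m\pm\|v\|)^{1/2}$, which forces $\m\ge\|v\|$ since $\sD^2\ge 0$.

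**The harmonic criterion and the $\jota_2$-statement.** Once the eigenvalue description of $\sD$ is in place, the harmonicity criterion is immediate: $\sD\eta=0$ for some invariant unit spinor $\eta$ iff $0$ is an eigenvalue of $\sD$ iff $\m=\|v\|$ (the smallest eigenvalue $(\m-\|v\|)^{1/2}$ vanishes). For the statement about $\jota_2$: since $\jota_2$ anticommutes with $\rho_5(e_i)$ for all $i$, it anticommutes with $\sD=\sum e_i\nabla_{e_i}$ on invariant spinors (the connection term is a sum of products of \emph{two} Clifford vectors in \eqref{cov}, with which $\jota_2$ commutes, but $\sD$ itself has a single leading Clifford vector $e_i$ — one checks $\jota_2\sD\p = -\sD(\jota_2\p)$ for invariant $\p$ directly from Corollary \ref{nilpotent-Dirac-operator}, since $\jota_2$ anticommutes with $e^i\wedge de^i$ acting by Clifford multiplication, the $1$-form $e^i$ being a single vector). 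Hence $\jota_2$ carries the $\l$-eigenspace of $\sD$ to the $(-\l)$-eigenspace, which gives the claim with $\l=(\m\pm\|v\|)^{1/2}$.

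**Main obstacle.** The genuinely delicate part is the first step: carefully reducing the three $\L^k$-valued summands of Corollary \ref{4-form} — which a priori live in $\L^4$, $\L^3$, $\L^5$ of $\RR^5$ — to a single $1$-form via the nilpotency of the frame and the structure of $de^5$, and getting the signs and contractions in $v^\sharp$ exactly right; in particular one must track which of $de^3,de^4,de^5$ can be nonzero and which indices can appear, and apply $\star$ correctly in dimension $5$. The subsequent spinorial step — pinning down that $\rho_5(v^\sharp)$ has real eigenvalues $\pm\|v\|$ each with multiplicity $4$ and commutes appropriately with $\jota_1,\jota_2$ — is routine once one fixes the concrete model $\rho_5=\rho_6\circ i_5$.
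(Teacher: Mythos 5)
Your plan follows the paper's route (start from Corollary~\ref{4-form}, use the nilpotency of the frame to kill most terms, identify the surviving forms as a $4$-form, Hodge-dualize to a $1$-form, read off eigenvalues of $4\sD$). However, there is a genuine error at the central step. You claim that the $4$-form piece contributes to $16\sD^2$ as \emph{``the Clifford action of its Hodge dual $1$-form''}, i.e.\ that $16\sD^2 = \m\,\rI + \rho_5(v^\sharp)$. This drops the volume form. The correct identity — and the one the paper's proof uses as its first observation — is that for $\gamma\in\L^4\frg^*$ one has $\gamma\,\p = -(\star\gamma)\,\jota_1\p$, so the operator is $16\sD^2 = \m\,\rI + \rho_5(v^\sharp)\,\jota_1$, with the complex structure $\jota_1 = \rho_5(\nu_5)$ appearing essentially. (This follows because $\nu_5$ is central in $\Cl_5$ and $e_{i_1}e_{i_2}e_{i_3}e_{i_4} = -\nu_5\cdot\star(e^{i_1 i_2 i_3 i_4})$ for any four distinct indices.)

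The distinction is not cosmetic: $\rho_5(v^\sharp)$ commutes with $\jota_1$ and is skew-symmetric for $\la\cdot,\cdot\ra$, with $\rho_5(v^\sharp)^2 = -\|v\|^2\rI$, hence it has purely imaginary eigenvalues $\pm\imag\|v\|$ and \emph{no} nonzero real ones. This makes $\m\rI + \rho_5(v^\sharp)$ non-self-adjoint, contradicting that $\sD^2\geq 0$. You noticed the tension (``here one must be careful\ldots'') but the proposed resolution — invoking a ``$\jota_1$-conjugate-linear behaviour'' of $\rho_5(v^\sharp)$ that ``swaps'' imaginary eigenvalues into real ones — does not work: $\rho_5(v^\sharp)$ is $\CC$-linear for the complex structure $\jota_1$, not conjugate-linear, so its real eigenvalues genuinely don't exist. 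With the missing $\jota_1$ restored, $(v\jota_1)^2 = \rho_5(v^\sharp)^2\jota_1^2 = (-\|v\|^2)(-1) = \|v\|^2\rI$, which is symmetric with real eigenvalues $\pm\|v\|$; this is what makes $\m\pm\|v\|\geq 0$, i.e.\ $\m\geq\|v\|$, and produces the four eigenvalues $\pm(\m\pm\|v\|)^{1/2}$ of $4\sD$. Everything downstream in your argument (the harmonicity criterion $\m=\|v\|$, the $\jota_2$-swapping of eigenspaces via anticommutation with Clifford action of odd forms) is fine once this is corrected.
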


\begin{proof}
First observe that if $\gamma \in \L^4\frg^*$, then $\gamma \p = -(\star\gamma) \jota_1 \p$. This computation is straightforward for simple forms and is extended to $\L^4 \frg^*$ by linearity. Note also that the nilpotency property guarantees that $de^j\wedge de^j=0$ for $j\leq 4$ and that $\gamma_{34}=0$.
Those remarks and Corollary \ref{4-form} allow 
us to conclude the first statement. From this we get that the eigenvalues of $16\sD{}^2$ are $\m \pm \|v\|\geq 0$ and the eigenvalues of $4\sD$ are therefore, $\pm (\m \pm \|v\|)^{\frac{1}{2}}$.
Finally, the equality $\nabla_X \jota_k\p=\jota_k\nabla_X\p$, implies $\sD \jota_k=\e_k\jota_k \sD$ which is sufficient to conclude the rest.
\end{proof}

%\begin{remark} 
%In general, this is a degree-$4$ polynomial equation. In some cases, (when the vector $v$ is parallel to some $e_i$), the equation has order $2$.
%\end{remark}

\begin{proposition} \label{alpha-v}
Let $(\a,\o_1,\o_2,\o_3)$ be the $\SU(2)$ structure determined by a left-invariant unit-length spinor $\eta$. Let $(e_1,\dots,e_5)$ be an  orthonormal nilpotent frame and consider $\mu$ and $v$ defined as in Proposition \ref{5-square}.
The spinor $\eta$ is harmonic if and only if $\|v\|= \m$ and $v=-\m\a^{\sharp}$. 
\end{proposition}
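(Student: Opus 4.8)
The plan is to combine the two previous results: Proposition \ref{5-square} tells us that $\eta$ is harmonic if and only if $\mu=\|v\|$, so we must show that, granting $\sD\eta=0$, the vector $v$ is forced to equal $-\mu\alpha^\sharp$ (and conversely that $v=-\mu\alpha^\sharp$ together with $\|v\|=\mu$ forces harmonicity, which is already immediate from Proposition \ref{5-square}). Thus the real content is the identity $v=-\mu\alpha^\sharp$ under the harmonicity hypothesis. First I would note that $16\sD{}^2\eta = \mu\eta + v\,\jota_1\eta$ by Proposition \ref{5-square}; if $\sD\eta=0$ then a fortiori $\sD{}^2\eta=0$, so $\mu\eta + v\,\jota_1\eta = 0$ in $\S(M)$. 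Now decompose $v = v_\xi + \alpha(v)R$ along $\xi\oplus\langle R\rangle$ and apply Clifford multiplication: by Lemma \ref{su2-equalities}(2), $R\,\jota_1\eta = \alpha\,\jota_1\eta = -\jota_1^2\eta = \eta$ (using $\alpha\eta=-\jota_1\eta$ and that $\jota_1$ is Clifford multiplication by $\nu_5$, hence $\alpha\jota_1\eta = -\jota_1\alpha\eta = \jota_1\jota_1\eta = -\eta$ — I would double-check the sign here against Lemma \ref{su2-equalities}). So $v\,\jota_1\eta = v_\xi\,\jota_1\eta + \alpha(v)\,R\,\jota_1\eta$, and the second term is a multiple of $\eta$ while the first term $v_\xi\,\jota_1\eta$ lies in $\xi\eta$, which by Lemma \ref{decomposition-spinorial-bundle} is orthogonal to $\langle\eta\rangle$.

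Armed with this, the relation $\mu\eta + v_\xi\,\jota_1\eta + \alpha(v)(R\,\jota_1\eta) = 0$ splits into its $\mathbb{H}_\eta$-component and its $\xi\eta$-component. The $\xi\eta$-component gives $v_\xi\,\jota_1\eta = 0$; since $X\mapsto X\eta$ is injective on $\xi$ and $\jota_1$ is invertible, this forces $v_\xi = 0$, i.e. $v = \alpha(v)R$ is parallel to $\alpha^\sharp$. The $\mathbb{H}_\eta$-component then reads $(\mu + \alpha(v))\eta = 0$ (once we have identified $R\,\jota_1\eta$ as a specific multiple of $\eta$), whence $\alpha(v) = -\mu$ and therefore $v = -\mu\,\alpha^\sharp$. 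This simultaneously yields $\|v\| = |\mu| = \mu$ (as $\mu = \sum\|de^i\|^2 \ge 0$), so the harmonicity criterion of Proposition \ref{5-square} is satisfied; conversely, if $v = -\mu\alpha^\sharp$ and $\|v\|=\mu$ then by Proposition \ref{5-square} there exist harmonic spinors, and one checks the given $\eta$ is one of them by running the $\mathbb{H}_\eta$/$\xi\eta$ decomposition backwards. One may alternatively, and perhaps more cleanly, avoid passing through $\sD{}^2$ and instead read off $v$ directly from the formula for $\sD\eta$ in Proposition \ref{Dirac-su2-spinors} together with the expressions for $d\alpha$, $d\omega_k$ in Propositions \ref{dalpha}–\ref{domegak} and the definition of $v$ in Proposition \ref{5-square}; this is the route I would ultimately write up, since it keeps everything at the level of the first-order operator.

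The main obstacle I anticipate is purely bookkeeping: pinning down the precise scalar by which $R\,\jota_1\eta$ is a multiple of $\eta$, and more generally tracking the signs $\varepsilon_k$ and the interplay of $\jota_1,\jota_2,\jota_3$ with Clifford multiplication (Lemma \ref{su2-equalities} and Remark \ref{quaternionic-ieta}) so that the $\mathbb{H}_\eta$-component of $\mu\eta + v\,\jota_1\eta = 0$ is correctly extracted. There is also a small subtlety in that $v$ as defined in Proposition \ref{5-square} is built from wedge/contraction operations on the $de^i$, so to match it against $-\mu\alpha^\sharp$ one needs the expression for $\alpha$ (equivalently $d\alpha$) in terms of the tensors $S,V_\xi,\Theta_l,\phi_l$ from Proposition \ref{dalpha}; verifying that the $\xi$-part of $v$ vanishes amounts to the vanishing of the combination $J_1(V_\xi+\Theta_1^\sharp)-J_2\Theta_2^\sharp-J_3\Theta_3^\sharp$ appearing in $\sD\eta$, which is exactly the $\xi\eta$-part of the harmonicity equation. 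Once the dictionary between the two descriptions of $v$ is set up, the argument is a short linear-algebra computation in the fiber $W = \mathbb{H}_\eta\oplus\xi\eta$.
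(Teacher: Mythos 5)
Your proposal matches the paper's proof essentially step for step: both pass from $\sD\eta=0$ to $16\sD{}^2\eta=\mu\eta+v\jota_1\eta=0$, decompose $v$ along $\xi\oplus\langle\alpha^\sharp\rangle$, and split the resulting spinor equation using Lemma \ref{decomposition-2} to force the $\xi$-component of $v$ to vanish and $\alpha(v)=-\mu$. The sign you flag resolves in favour of your first computation, $\alpha^\sharp\jota_1\eta=\eta$, because $\jota_1=\rho_5(\nu_5)$ is Clifford multiplication by the volume form, which is \emph{central} in $\Cl_5$ ($n=5$ odd) and hence commutes with $\alpha$, while $\jota_1^2=\rho_5(\nu_5^2)=-\rI$.
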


\begin{proof}
Decompose $v= \l \a^{\sharp}+ w$ according to the orthogonal decomposition $\la \a^\sharp \ra \oplus \xi$. 
By Corollary \ref{4-form}, $\sD{}^2\eta = \m \eta + (\l\a^\sharp + w)\jota_1\eta = (\m+ \l)  \eta + w\jota_1\eta$, using that 
$\a^\sharp \jota_1 \eta = \jota_1 \a^\sharp \eta = \jota_1 (-\jota_1 \eta ) = \eta$, from Lemma \ref{su2-equalities}(2).
This implies, according to Lemma \ref{decomposition-2}, that  $w=0$ and $\mu=-\l$.
Thus, $v= -\m \a^{\sharp}$.
\end{proof}

{}From these results we observe that on a nilpotent Lie algebra, the component of $v$ on the subspace $\la e^5 \ra$ depends on the non-degeneracy of $de^5$. Moreover, taking into account the structure equations of $5$-dimensional nilpotent Lie algebras given in Lemma \ref{5-classification}, one deduces that the component of $v$ on $\la e^4 \ra$ is always $0$. In any case, the vector $v$ is going to be determined in Theorem \ref{5-harmonic}.

The non-abelian nilpotent 5-dimensional Lie algebras are the following:

\noindent\begin{minipage}{0.49\textwidth}
\begin{itemize}
\item $\rL_3\oplus \rA_2$, $(0,0,0,0,12)$
\item $\rL_4\oplus \rA_1$, $(0,0,0,12,14)$
\item $\rL_{5,1}$, $(0,0,0,0,12+34)$
\item $\rL_{5,2}$, $(0,0,0,12,13)$
\end{itemize}
\end{minipage}
\begin{minipage}{0.49\textwidth}
\begin{itemize}
\item $\rL_{5,3}$, $(0,0,0,12,14+23)$
\item $\rL_{5,5}$, $(0,0,12,13,23)$
\item $\rL_{5,4}$, $(0,0,12,13,14)$
\item $\rL_{5,6}$, $(0,0,12,13,14+23)$
\end{itemize}
\end{minipage}

%We refine the above list to obtain a {\em metric} classification of such Lie algebras.

\begin{lemma} \label{5-classification}
The following table contains a list of non-abelian $5$-dimensional metric nilpotent Lie algebras in terms of an orthonormal nilpotent basis $(e_1,\ldots,e_5)$ with dual basis $(e^1,\ldots,e^5)$. Here 
$\mu_{ij}$ denote structure constants which are non-zero, while $\l_{ij}$ or $\l_{ij;k}$ denote those which may be zero.
\begin{center}
% \rotatebox{90}
\resizebox{14.75cm}{!}
{\tabulinesep=1.2mm
\begin{tabu}{|l|c|c|c|} 
\hline
 & $de^3$ & $de^4$ & $de^5$ \\
\hline
$\rL_3\oplus \rA_2$ & $0$ & $0$ & $\m_{12}e^{12}$  \\ 
\hline
$\rL_4 \oplus \rA_1 $ & $0$ & $\m_{12}e^{12}$ & $e^1(\l_{12}e^2 + \l_{13}e^{3} + \m_{14}e^4)$ \\ 
\hline
$\rL_{5,1}$ & $0$ & $0$ & $\m_{12}e^{12} + \m_{34}e^{34}$ \\ 
\hline
$\rL_{5,2}$ & $0$ & $\m_{12}e^{12}$ & $  \m_{13}e^{13}$ \\ 
\hline
$\rL_{5,3}$ & $0$ & $\m_{12}e^{12}$ & $e^1(\l_{12}e^2+\l_{13}e^3+\mu_{14}e^{4})+\mu_{23}e^{23}$ \\ 
\hline
$\rL_{5,5}$ & $\m_{12}e^{12}$ & $e^1(\l_{12;4}e^2+\mu_{13}e^{3})$ & $\l_{12;5}e^{12}+\mu_{23}e^{23}$ \\ 
\hline
$\rL_{5,4}$ & $\m_{12}e^{12}$ & $ e^1(\l_{12;4}e^2+\mu_{13}e^{3})$ & $e^1(\l_{12;5}e^2+\l_{13}e^3+\mu_{14}e^{4})$ \\ 
\hline
$\rL_{5,6}$ & $\m_{12}e^{12}$ & $ e^1(\l_{12;4}e^2+\mu_{13}e^{3})$ & $e^1(\l_{12;5}e^2+\l_{13}e^3+\mu_{14}e^{4})+\mu_{23}e^{23}$ \\ 
\hline
\end{tabu}}
\end{center}

\end{lemma}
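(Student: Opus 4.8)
The plan is to combine the known abstract classification of $5$-dimensional nilpotent Lie algebras with a normalization of an arbitrary left-invariant metric on each of them. Up to isomorphism there are exactly eight non-abelian $5$-dimensional nilpotent Lie algebras, namely $\rL_3\oplus\rA_2$, $\rL_4\oplus\rA_1$ and $\cN_{5,1},\dots,\cN_{5,6}$ (see \cite{BM2}); for each one we must exhibit, given any inner product on $\frg$, an orthonormal \emph{nilpotent} basis $(e_1,\dots,e_5)$ whose Chevalley--Eilenberg equations have the tabulated shape. The organizing principle is that $\frg$ carries a canonical flag of characteristic ideals --- built from its lower central series, its centre, and the annihilator of $[\frg,\frg]$ (equivalently $\ker(d\colon\frg^*\to\Lambda^2\frg^*)$). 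Refining this flag to a full flag $0=V_0\subset V_1\subset\dots\subset V_5=\frg^*$ with $dV_i\subseteq\Lambda^2 V_{i-1}$ and applying Gram--Schmidt produces an orthonormal coframe $(e^1,\dots,e^5)$ adapted to it, which is automatically nilpotent. The residual freedom is the stabilizer of the \emph{characteristic} flag inside $\mathrm{O}(5)$, which acts on adapted orthonormal coframes through the product $\prod_j\mathrm{O}(d_j)$ of the orthogonal groups of the characteristic quotients $\frg^{(j-1)}/\frg^{(j)}$ (in particular only by sign changes on the one-dimensional ones); this is precisely what one spends to delete the non-tabulated components of each $de^i$ and to fix the signs of the surviving coefficients.

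I would proceed by increasing nilpotency step. The $2$-step algebras $\rL_3\oplus\rA_2$, $\cN_{5,6}$, $\cN_{5,5}$ and the ``low'' cases $\rL_4\oplus\rA_1$, $\cN_{5,4}$ come first: here $[\frg,\frg]$ is small, so some characteristic quotient has dimension $2$ or $3$, and the corresponding $\mathrm{O}(2)$ or $\mathrm{O}(3)$ action diagonalizes the symmetric part of the relevant $2$-form and kills its off-block pieces, leaving $de^5=\mu_{12}e^{12}$, resp.\ $de^5=\mu_{12}e^{12}+\mu_{34}e^{34}$, etc. Then come the filiform and near-filiform algebras $\cN_{5,3}$, $\cN_{5,2}$, $\cN_{5,1}$: here the characteristic flag is (essentially) already a full flag, so the residual group collapses to sign changes only, and one can no longer kill terms --- consequently the cross terms (the $\lambda_{12}e^2$ inside $de^4$, the $\lambda_{12;4}e^2+\lambda_{13}e^3$ inside $de^5$, and so on) genuinely persist for a general metric, which is exactly why they appear with $\lambda$'s. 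For each row one writes down $d^2e^i=0$ to confirm that no further terms can occur and, crucially, that the coefficients denoted $\mu_{ij}$ must be nonzero: each $\mu_{ij}$ encodes an isomorphism invariant --- $\dim[\frg,\frg]$, $\dim[\frg,[\frg,\frg]]$, the rank of $de^i$, or the fact that a given $e_j$ is (not) central --- so that letting any $\mu_{ij}$ vanish would move $\frg$ to a different class in the list or make it abelian.

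I expect the real work to be the bookkeeping for $\cN_{5,1}$ and $\cN_{5,2}$, where $de^4$ and $de^5$ are both nonzero and share the covectors $e^1,e^2,e^3$: one has to check that after the adapted Gram--Schmidt construction, and using the last sign changes together with any residual rotation that survives, every metric is brought \emph{exactly} to the tabulated form --- with no spurious $e^{23}$, $e^{24}$ or $e^{34}$ terms in $de^4$ and with $de^5$ of the stated shape. This is a short but careful case analysis; once it is carried out for these two cases the remaining rows follow by the same mechanism with strictly more residual freedom at one's disposal, and the list is exhaustive because it runs through all eight non-abelian isomorphism types.
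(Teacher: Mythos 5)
The paper actually states Lemma~\ref{5-classification} without proof, citing only \cite{BM2} for the abstract classification of nilpotent Lie algebras; the metric normalization is left implicit. The closest model in the paper is the proof of the analogous six-dimensional decomposable statement, Lemma~\ref{6-decomposable-classification}, and your proposal runs on exactly the same rails: refine the canonical filtration on $\frg^*$ (namely $F_1=\ker d$, $F_2=d^{-1}(\L^2 F_1)$, and so on) to an orthogonal full flag, run Gram--Schmidt to obtain a nilpotent orthonormal coframe, and spend the residual block-orthogonal stabilizer $\prod_j\mathrm{O}(d_j)$ to normalize the surviving structure constants; then use $d^2=0$ and the isomorphism invariants (dimensions of the lower central series, of $[\frg,v]$ for $v\in\frg^{(1)}\setminus\frg^{(2)}$, etc.)\ to pin down which coefficients must be nonzero, which must vanish, and which remain free. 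So in spirit this matches the authors' method.

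One point in your sketch is imprecise and worth flagging, because it is exactly where the real work hides. For $\cN_{5,3}=(0,0,12,13,23)$ the characteristic filtration has quotients of dimensions $(2,1,2)$, not $(2,1,1,1)$, so the residual group is $\mathrm{O}(2)\times\mathrm{O}(1)\times\mathrm{O}(2)$ --- genuinely larger than sign changes --- and one must use a singular-value-type diagonalization of the $2\times2$ block of $e^{13},e^{23}$-components of $(de^4,de^5)$ under the two $\mathrm{SO}(2)$ factors to reach the tabulated shape $de^4=\l_{12;4}e^{12}+\m_{13}e^{13}$, $de^5=\l_{12;5}e^{12}+\m_{23}e^{23}$. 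This is precisely what the authors do in their proof of Lemma~\ref{6-decomposable-classification} when they treat $\cN_{5,3}\oplus\rA_1$ (the spaces $F_1$, $F_2$, the plane $\pi$, and the diagonal map $\pi\to\pi$ with eigenvalue $\l$), so the five-dimensional version should mimic that paragraph rather than appeal to ``sign changes only.'' With that correction, and with the explicit $d^2=0$ bookkeeping you promise for $\cN_{5,1}$, $\cN_{5,2}$ (which in fact forces $\a_{24}=\a_{34}=0$ in $de^5$, while the vanishing or nonvanishing of the $e^{23}$-coefficient is the invariant separating $\cN_{5,2}$ from $\cN_{5,1}$, via $\dim[\frg,v]$ for $v\in\frg^{(1)}\setminus\frg^{(2)}$), your outline does deliver the table.
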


\begin{theorem}\label{5-harmonic}
If a 5-dimensional nilmanifold $\Gamma \backslash \rG$ admits left-invariant harmonic spinors, then $\frg=\rL_{5,j}$, $j=1,2,3,4,6$.
\end{theorem}

\begin{proof} 

Following the notation of Lemma \ref{5-classification}, we compute $\mu$ and $v$ defined as in Proposition \ref{4-form}. Obviously, $\mu$ is the sum of the squares of the parameters involved. In order to compute the vector $v$, we suppose that the nilpotent basis is positively oriented. This assumption does not depend on the existence of harmonic spinors. We summarize the result in the following table:
\begin{center}
\resizebox{14.75cm}{!}
{\tabulinesep=1.2mm
\begin{tabu}{|l|c|}
\hline
 & $v$ \\
\hline
$\rL_3\oplus\rA_2$ & $0$ \\ 
\hline
$\rL_4\oplus\rA_1 $ & $-2\mu_{12}\lambda_{13}e_1$ \\
\hline
$\rL_{5,1}$ & $2\mu_{12}\m_{34}e_5$ \\ 
\hline
$\rL_{5,2}$ & $-2\mu_{12}\m_{13}e_1$ \\ 
\hline
$\rL_{5,3}$ & $2(-\m_{12}\l_{13}e_1-\m_{12}\m_{23}e_2+\m_{14}\m_{23}e_5)$ \\
\hline
$\rL_{5,5}$ & $2(\m_{13}\l_{12;5}e_1 + \l_{12;4}\m_{23}e_2 -\mu_{12}\m_{13}e_3)$ \\ 
\hline
$\rL_{5,4}$ & $2(\m_{12}\m_{14}-\l_{12;4}\l_{13}+\m_{13}\l_{12;5})e_1$ \\ 
\hline
$\rL_{5,6}$ & $2((\m_{12}\m_{14}-\l_{12;4}\l_{13}+\m_{13}\l_{12;5})e_1-\m_{23}(\l_{12;4}e_2+\m_{13}e_3)+\m_{14}\m_{23}e_5)$ \\ 
\hline
\end{tabu}}
\end{center}

We now study, on each Lie algebra, the equation that determines the presence of left-invariant harmonic spinors: $\mu = \| v \|$.

$\rL_3\oplus \rA_2$ and $\rL_4 \oplus \rA_1 $ do not admit any left-invariant harmonic spinor because $\mu > \| v \|$. Left-invariant metrics admitting left-invariant harmonic spinors on $\rL_{5,1}$ are characterized by the equation $\mu_{12}=\pm \mu_{34}$. On the algebra $\rL_{5,2}$ are characterized by $\mu_{12}=\pm \mu_{13}$.

On the algebra $\rL_{5,3}$, the smallest eigenvalue of $16 \sD{}^2$ is
\[
 \l_{12}^2 + \mu_{12}^2 +  \lambda_{13}^2 + \mu_{14}^2 + \mu_{23}^2 - 2( \mu_{12}^2 (\lambda_{13}^2 + \mu_{23}^2) +  \mu_{14}^2\mu_{23}^2 )^{\frac{1}{2}} \geq 0.
\]
If the metric has harmonic spinors, necessarily $\l_{12}=0$. In addition, the previous condition leads us to $\l_{13}^2 = \m_{12}^2 - \m_{13}^2-\m_{14}^2 \pm  2 (\m_{14}^2\m_{23}^2 - \m_{12}^2 \m_{13}^2)^{\frac{1}{2}}$, whose solutions are $\l_{13}=0$, $\m_{23}^2>\m_{12}^2$ and $\m_{14}^2 = \m_{23}^2 - \m_{12}^2$.

On $\rL_{5,5}$ the smallest eigenvalue of $16\sD{}^2$ is,
\[ 
\mu_{12}^2 + \l_{12;4}^2 + \m_{13}^2 + \l_{12;5}^2 + \m_{23}^2 - 2(\m_{13}^2\m_{23}^2 + \l_{12;4}^2\m_{23}^2 + \l_{12;5}^2\m_{13}^2)^{\frac{1}{2}}\geq 0.
\]
Since this value is non-negative for every choice of the parameters, necessarily $\l_{12;4}^2 + \m_{13}^2 + \l_{12;5}^2 + \m_{23}^2 - 2(\m_{13}^2\m_{23}^2 + \l_{12;4}^2\m_{23}^2 + \l_{12;5}^2\m_{13}^2)^{\frac{1}{2}}\geq 0$. The smallest eigenvalue is therefore greater or equal to $\m_{12}^2> 0$. Consequently, the metric has no left-invariant harmonic spinors.

On $\rL_{5,4}$ the eigenvalues of $16 \sD{}^2$ are:
\[
(\mu_{12} \mp \mu_{14})^2 + (\lambda_{12;4} \pm \lambda_{13})^2 + (\mu_{13} \mp \lambda_{12;5})^2.
\]
Metrics which admit left-invariant harmonic spinors are such that: $\mu_{12}= \pm \mu_{14}$, $\lambda_{12;4}= \mp\lambda_{13}$ and $\mu_{13} = \pm \lambda_{12;5}$.

Finally, a metric on $\rL_{5,6}$ %the eigenvalues of $\sD{}^2$ are:
%$$
 %\mu_{12}^2 + \lambda_{12;4}^2 + \mu_{13}^2 + \lambda_{12;5}^2 + \lambda_{13}^2 + \mu_{14}^2 + \mu_{23}^2 \pm 2\left( \mu_{14}^2\mu_{13}^2 + (-\mu_{13}\lambda_{12;5} + \lambda_{13}\lambda_{12;4} - \mu_{12}\mu_{14})^2 + \lambda_{12;4}^2\mu_{23}^2 + \mu_{13}^2\mu_{23}^2 \right)^\frac{1}{2}.
%$$
has left-invariant harmonic spinors if and only if:
\begin{align*}
 ( \mu_{12}^2 +& \lambda_{12;4}^2 + \mu_{13}^2 + \lambda_{12;5}^2 + \lambda_{13}^2 + \mu_{14}^2 + \mu_{23}^2)^2 = \\
&=  4\left( \mu_{14}^2\mu_{23}^2 + (-\mu_{13}\lambda_{12;5} + \lambda_{13}\lambda_{12;4} - \mu_{12}\mu_{14})^2 + \lambda_{12;4}^2\mu_{23}^2 + \mu_{13}^2\mu_{23}^2 \right).
\end{align*}

We now show that this equation has solutions. If we suppose that $\lambda_{12;4}=0$ then the condition $\l_{13}=0$ is necessary for the presence of harmonic spinors. Moreover, the previous equation leads us to: $\m_{23}^2= \m_{13}^2 + \m_{14}^2 - \m_{12}^2 - \l_{12;5}^2 \pm 2 \imag (\l_{12;5}^2\m_{14}^2- \m_{12}^2\m_{13}^2) $.
Therefore the solutions are $\l_{12;5}^2= \dfrac{\m_{13}^2\m_{12}^2}{\m_{14}^2}$ and $\m_{23}^2 = \frac{1}{\m_{14^2}}(\m_{14}^2-\m_{12}^2)(\m_{13}^2 + \m_{14}^2)$ with $\m_{14}^2>\m_{12}^2$.
\end{proof}

Lemma \ref{5-classification} is a list in which one fixes an orthonormal basis of $\RR^5$ and varies the Lie bracket within an isomorphism class of Lie brackets. 

From Lemma \ref{alpha-v} and the proof of Theorem \ref{5-harmonic} we obtain that for every harmonic invariant $\SU(2)$ structure on nilmanifolds with Lie algebras $\rL_{5,1}$, $\rL_{5,2}$ and $\rL_{5,4}$, the direction of the $1$-form $\a$ does not depend on the isomorphism class of the Lie bracket. We analyze each case separately, giving an example of the forms that determine the structure which have been computed using the representation fixed at the beginning of the section. We also suppose that the basis $(e_1,e_2,e_3,e_4,e_5)$ is positively oriented.

On the algebra $\rL_{5,1}$, $\a$ is parallel to $e^5$, in particular, if $\mu_{12}=\pm \mu_{34}$ then $\a= \mp e^5$. Then $\a$ is contact because $d\a=\m_{34}(\pm e^{12} + e^{34})$. Moreover, $\xi= \la e_1,\dots,e_4\ra$ and therefore, $d\o_k=0$ for $k=1,2,3$.

If $\mu_{12}=-\mu_{34}$, then $\ker(\jota + \a \cdot)= \ker(\jota + e_5\cdot )= \la \p_1,\p_2,\p_3, \p_4 \ra$. If we take $\eta=\p_1$, then $\o_1=e^{12}+ e^{34}$, $\o_2= e^{14}+ e^{23}$ and $\o_3= e^{13}- e^{24}$. 
Thus, $d\a=\tau_2^4 \in \frsu(2)$ with $\tau_2^4= \m_{12}(e^{12}-e^{34})$.
Since $d\o_1=0$ and $d(\a\wedge \o_2)=d(\a\wedge \o_3)=0$, the structure is hypo.
In the same manner, when $\m_{12}=\m_{34}$ we take $\eta=\p_5$ and obtain $\o_1= -e^{12}+ e^{34}$, $\o_2= e^{14}- e^{23}$ and $\o_3=-e^{13}+ e^{24}$. 
Again, $d\a=\tau_2^4 \in \frsu(2)$ with $\tau_2^4= \m_{12}(e^{12}+ e^{34})$.

On the algebras $\rL_{5,2}$ and $\rL_{5,4}$, $\a$ is parallel to $e^1$ and, consequently, $d\a=0$. These algebras are quasi-abelian, that is, they have a codimension-$1$ abelian ideal, which is $\xi=\la e_2,e_3,e_4,e_5\ra$. In particular, taking into account the equations in terms of forms of harmonic structures, $d\o_k= \a \wedge \tau_2^k$. Thus, $d(\o_k\wedge \a)=0$. 
In the case $\a=-e^1$ we choose $\eta=2^{-\frac{1}{2}}(\p_1 + \p_5) \in \ker(\jota - e_1)$. 
Therefore, $\o_1=-e^{25} + e^{34}$, $\o_2=e^{23}-e^{45}$ and $\o_3=e^{24} + e^{35}$.
On the one hand, the nilpotency of the basis implies that $i(e_5)d\o_1=0$. On the other, $i(-e_1)d\o_1=\tau_2^1$ which is $0$ or non-degenerate on $\xi$. Hence, $d\o_1=0$. The same argument holds for $d\o_3$ on $\cN_{5,5}$ because $e^3$ is closed. Thus, the structure is of type hypo and the torsions which may be non-zero are $\tau_2^2$ and $\tau_2^3$; we compute them:

\begin{enumerate}
\item On $\rL_{5,2}$ the condition $\a=-e^1$ implies $\mu_{12}=-\mu_{13}$. Then, $d\o_2=\m_{13}(e^{125}+e^{134})$ so that the unique non-zero torsion is $\tau_2^2=\m_{13}(e^{25}+e^{34})$.
\item On $\rL_{5,4}$ the condition $\a=e^1$ implies $\m_{12}=\m_{14}$, $\l_{13}=-\l_{12;4}$ and $\m_{13}=\l_{12;5}$. Then, $d\o_1=0$, $d\o_2=e^1(\l_{13}(e^{25}+ e^{34}) + \m_{13}(e^{24}-e^{35}))$ and $d\o_3=\m_{12}(e^{25}+e^{34})$.
\end{enumerate}

On $\rL_{5,3}$ metrics with harmonic spinors verify $\l_{12}=\l_{13}=0$ and $\m_{14}^2 = \m_{23}^2 - \m_{12}^2>0$. Therefore, $v=2(-\m_{12}\m_{23}e_2 \pm \m_{23}(\m_{23}^2 - \m_{12}^2)^{\frac{1}{2}}e_5)$. Thus, $d\a$ is proportional to $\m_{14} e^{14}+ \mu_{23}e^{23}$ and harmonic invariant structures are contact.

\begin{remark}
 Fern\'andez' first example of a balanced $\Spin(7)$-manifold was a nilmanifold $\Gamma\backslash \rG$ with $\frg=\rL_{5,2}\oplus \rA_3$.
\end{remark}

%%%%%%%%%%%%%%%%%%%%%%%%%%%%%%%%%%%%%%%%%%%%%%%%%%%%%%%%%%%%%%%
\subsection{6-dimensional nilmanifolds}\label{subsec:6-nilm}
%%%%%%%%%%%%%%%%%%%%%%%%%%%%%%%%%%%%%%%%%%%%%%%%%%%%%%%%%%%%%%%

We fix the irreducible representation of $\Cl_6$ described in Section \ref{Cl-representations} and denote by $\jota$ 
the Clifford multiplication by the volume form, which anticommutes with the Clifford product with a vector. As in the $5$-dimensional case we have the following:

\begin{proposition} \label{6-square}
Let $(e_1,\dots,e_6)$ be an orthonormal nilpotent frame of $\frg$ and let $\p$ be an invariant spinor. Then
$16 \sD{}^2\p= \m \p +  \gamma j\p$, where $\m = \sum {\|de^i\|^2}$ and 

\begin{align*}
 \gamma =& \sum_{l=5}^6\star \left((de^l\wedge de^l) + \sum_{i=3}^4{de^i \wedge i(e_i)de^l \wedge e^l} \right) \\
    &-  \sum_{l=5}^6\star\left(\sum_{i=3}^4{\sum_{k=1}^3 {i(e_k)de^i \wedge i(e_k)(de^l|_{\la e_i \ra^\perp})\wedge e^{il}}} \right) \\
    &+\star( de^5 \wedge i(e_5)de^6 \wedge e^6) -\star\left(\sum_{k=1}^4 {i(e_k)de^5 \wedge i(e_k)(de^6|_{\la e_5 \ra^\perp} )}  \right).
\end{align*}
In addition, the restriction of the operator $\sD{}^2$ over the space of invariant spinors has eight eigenspaces, 
$\D_j$, associated to $\pm \l_1, \pm \l_2, \pm \l_3, \pm \l_4$ for some $0\leq \l_1 \leq \l_2 \leq \l_3 \leq \l_4$ 
and $\jota$ restricts to a map, $\jota \colon \D_{\l_j} \to \D_{-\l_j}$.
\end{proposition}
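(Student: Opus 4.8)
The plan is to imitate the proof of Proposition~\ref{5-square}. The starting point is Corollary~\ref{4-form}, which already displays $16\sD^2\p$ as $\m\p$, with $\m=\sum_i\|de^i\|^2$ coming from the first sum, plus the Clifford action on $\p$ of the $4$-form $\Gamma=-\sum_i de^i\wedge de^i+\sum_{i<j}\gamma_{ij}$, where $\gamma_{ij}$ is the element of $\L^4\frg^*$ given by Lemma~\ref{dxidxj}. First I would record the $\Cl_6$-identity that, for every $\omega\in\L^4\frg^*$ identified with an element of the Clifford algebra, one has $\omega\cdot\p=-(\star\omega)\cdot\jota\p$; this is the $6$-dimensional counterpart of the relation $\gamma\p=-(\star\gamma)\jota_1\p$ used in dimension $5$, and it follows from $\jota^2=-\rI$, the fact that $\jota$ commutes with even elements of $\Cl_6$ (since it anticommutes with vectors), and the equality $\nu_6\,e_{1234}=e_{56}$ in $\Cl_6$, so that it suffices to verify it on one simple $4$-form and extend by linearity. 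This already yields $16\sD^2\p=\m\p+\gamma\,\jota\p$ with $\gamma=-\star\Gamma$.

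It then remains to see which terms of $\star\Gamma$ actually survive, and here I would use the defining property of a nilpotent frame, $de^i\in\L^2\la e^1,\dots,e^{i-1}\ra$ (so in particular $de^1=de^2=0$). From this, $de^i\wedge de^i\in\L^4\la e^1,\dots,e^{i-1}\ra$ vanishes for $i\le 4$, leaving only $\star(de^l\wedge de^l)$ for $l=5,6$. For $\gamma_{ij}$: its first summand $-de^i\wedge i(e_i)de^j\wedge e^j$ is nonzero only if $e^i$ occurs in $de^j$, which forces $j\in\{5,6\}$, and then, since $de^i\in\L^2\la e^1,\dots,e^{i-1}\ra$, the index $i$ must lie in $\{3,4\}$ unless $(i,j)=(5,6)$; the second summand is governed by the same range (this generalises the vanishing $\gamma_{34}=0$ used in the $5$-dimensional argument). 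Collecting the survivors one obtains, for each $l\in\{5,6\}$, exactly the expression appearing in Proposition~\ref{5-square} with the index $5$ replaced by $l$, together with the genuinely new interaction term $\gamma_{56}$, whose Hodge dual contributes $\star(de^5\wedge i(e_5)de^6\wedge e^6)$ and $-\star\bigl(\sum_{k=1}^4 i(e_k)de^5\wedge i(e_k)(de^6|_{\la e_5\ra^\perp})\wedge e^{56}\bigr)$. This is precisely the stated formula for $\gamma$. I expect this bookkeeping to be the main obstacle: one must check carefully which $\gamma_{ij}$ are nonzero under the nilpotency constraint and match each surviving Hodge dual with the corresponding term in the statement, and in dimension $6$ both $de^5$ and $de^6$ may be non-degenerate, so the cross-term $\gamma_{56}$ has to be carried along in addition to everything already present in the $5$-dimensional computation.

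For the spectral assertion, set $A:=16\sD^2=\m\,\rI+\gamma\,\jota$, an endomorphism of the $8$-dimensional real space $W$ of left-invariant spinors. It is self-adjoint: Clifford multiplication by the $2$-form $\gamma$ is skew-symmetric, $\jota$ is skew-symmetric (an orthogonal map with $\jota^2=-\rI$), and the two commute since $\jota$ commutes with $2$-forms; moreover $A=(4\sD)^2$ and $4\sD$ is self-adjoint on invariant spinors by Corollary~\ref{nilpotent-Dirac-operator}, so $A$ is positive semidefinite. Diagonalising $4\sD$, its eigenvalues are the numbers $\pm\sqrt a$ with $a\ge 0$ an eigenvalue of $A$. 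Since $\jota$ is an isometry and $\sD\jota=-\jota\sD$ (because $\jota$ anticommutes with every $e_i$ and is $\nabla$-parallel), $\jota$ maps the eigenspace of $4\sD$ for $\lambda$ isomorphically onto the one for $-\lambda$, so the eigenvalues occur in pairs $\pm\lambda$; listing them with multiplicity and ordering the nonnegative ones as $0\le\lambda_1\le\lambda_2\le\lambda_3\le\lambda_4$ (four of them, as $\dim_\RR W=8$) gives the decomposition into eigenspaces $\D_{\pm\lambda_j}$ with $\jota(\D_{\lambda_j})=\D_{-\lambda_j}$, as asserted.
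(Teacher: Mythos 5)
Your proposal is correct and takes essentially the same route the paper intends (the paper offers no written proof of Proposition~\ref{6-square}, introducing it only with ``As in the $5$-dimensional case we have the following,'' which is exactly the argument you reconstruct from Corollary~\ref{4-form}, Lemma~\ref{dxidxj}, and the $\Cl_6$-duality $\omega\,\p=-(\star\omega)\,\jota\p$). You also correctly read the proposition's second assertion as being about the operator $\sD$ (or $4\sD$), not $\sD^2$, which must be a typo in the statement since $\sD^2$ is positive semidefinite.
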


%\begin{proposition} \label{6-square}
%There exist $\m \geq 0$ and $\b \in \L^2 \frn^*$ such that
%$$
%\sD{}^2\p= \m \p + \o j\p
%$$
%Therefore, the eigenvalues of $\sD{}^2$ are $\m^2 \pm \| v \|^2$ and the eigenspaces are quaternionic $1$-dimensional if $v\neq 0$.
%In particular, there exists hamonic spinors if and only if $\m = \|v\|$.
%\end{proposition}
%\begin{proof}
%Let $(\o, \Theta)$ be the $\SU(3)$ structure determined by $\eta$. Decompose $\b= \l \o +  \b_1 +\b_2$ with $\b_1\in [[\L^{2,0}\frn^*]]$ and $\b_2\in [[\L^{1,1}_0\frn^*]]$. 
%Then, $\b j\eta= \l \o j\eta + \beta_1\eta = -3\l \eta + \beta_1\eta $, with $\beta_1\eta \in \frn \eta$. Therefore, if $\sD\eta=0$, then $\beta_1=0$ and $\m=-3\l$.
%\end{proof}

\subsubsection{Decomposable algebras} 
Except for $\rL_3\oplus \rL_3=(0,0,0,0,12,34)$, the structure constants of decomposable Lie algebras can easily be obtained by those in dimension 5, listed above. We proceed to obtain a metric classification of such Lie algebras, characterizing the structure equations in terms of an orthonormal basis. 
%The decomposable 6-dimensional nilpotent algebras are obtained from the 5-dimensional ones by adding $A_1$.
%But this decomposition may not be orthogonal.

\begin{lemma} \label{6-decomposable-classification}
The list of $6$-dimensional decomposable metric nilpotent algebras is: \\
\noindent \resizebox{15cm}{!} 
{\tabulinesep=1.2mm
\begin{tabu}{|l|c|c|c|} 
%{\begin{tabular}{|l|c|c|c|}
\hline
                                        &    $de^4$     &            $de^5$ & $de^6$ \\
\hline
$\rL_3\oplus \rA_3$  &   $0$      &     $0$         & $\m_{12}e^{12}$  \\ \hline

$\rL_3\oplus \rL_3$   &     $0$         & $\m_{12}e^{12} + \lambda_{13;5}e^{13}$ & $e^3(\mu_{14}e^4 + \lambda_{13;6}e^{1} + \lambda_{23}e^2)$ \\ \hline

$\rL_4 \oplus \rA_2 $            &    $0$      & $\m_{12}e^{12}$ & $e^1(\l_{12}e^2 + \l_{13}e^{3} + \m_{15}e^5)$ \\ \hline

$\rL_{5,1}\oplus \rA_1$      & $0$        & $0$ &  $\m_{12}e^{12} + \m_{34}e^{34}$ \\ \hline

$\rL_{5,2}\oplus \rA_1$      &    $0$        & $\m_{12}e^{12}$ & $  \m_{13}e^{13}$ \\ \hline

$\rL_{5,3}\oplus \rA_1$        &    $0$        &   $\m_{12}e^{12}$ & $e^1(\l_{12}e^2 + \l_{13}e^3 + \l_{14}e^4 + \m_{15}e^{5})+ \m_{23}e^{23}$ \\ \hline

$\rL_{5,5}\oplus \rA_1$       &         $\mu_{12}e^{12}$   &   $\lambda_{12;5} e^{12} + \l e^{23} + \mu_{14}e^{14}$ & $\lambda_{12;6} e^{12} + \l e^{13} + \mu_{24}e^{24}$ \\ \hline

$\rL_{5,4}\oplus \rA_1$        &       $\m_{12}e^{12}$ & $ e^1(\l_{12;5}e^2 + \mu_{14}e^{4} + \l_{13}e^3)$ & $e^1(\l_{12;6}e^2 + \l_{13}e^3 + \l_{14}e^4 + \mu_{15}e^{5})$ \\ \hline

$\rL_{5,6}\oplus \rA_1$       &   $\m_{12}e^{12}$ & $ e^1(\l_{12;5}e^2 + \m_{14}(\l_{13;4}e^{3} + e^4))$ & $e^1(\l_{12;6}e^2 + \l_{13;6}e^3  + \l_{14}e^{14}+ \mu_{15}e^{5}) + \mu_{24} e^2 (\l_{13}e^{3} + e^4) $ \\ \hline

\end{tabu} }

\end{lemma}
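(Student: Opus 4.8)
The starting point is the classification of $6$-dimensional nilpotent Lie algebras of \cite{BM2}; among those, the algebras which split as a direct sum of two nilpotent ideals of lower dimension are exactly the nine appearing in the table, namely $\rL_3\oplus\rA_3$, $\rL_3\oplus\rL_3$, $\rL_4\oplus\rA_2$, and $\cN_{5,k}\oplus\rA_1$ for $k=1,\dots,6$, and singling these out is a finite inspection of that list. For each such $\frg$ there are then two things to prove: that the displayed structure equations, for arbitrary non-zero $\m_{ij}$ and arbitrary (possibly zero) $\l$'s, define a nilpotent Lie algebra isomorphic to $\frg$ for which $(e_1,\dots,e_6)$ is orthonormal; and, conversely, that \emph{every} inner product on $\frg$ admits an orthonormal nilpotent basis in which the structure equations take the tabulated form. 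Completeness of the list, and the fact that the stated parametrization exhausts all metrics, follow.

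\textbf{The easy direction.} Given a row of the table, the frame $(e_1,\dots,e_6)$ is nilpotent by inspection, as each $de^i$ is a combination of the $e^{jk}$ with $j,k<i$; the Jacobi identity is $d^2e^i=0$, which in every row reduces to a few monomials and is checked directly. To identify the isomorphism type one performs the obvious unipotent change of basis absorbing the $\l$-parameters (they record the freedom in choosing complements of the successive terms of the lower central series), recovering Salamon's normal form for $\frg$ in \cite{BM2}; alternatively one compares the dimensions of the descending central series and of the center, which already distinguish the nine algebras.

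\textbf{The substantive direction.} Fix an inner product $g$ on $\frg$. Refine the lower central series to a full chain of ideals $\frg=I_0\supset I_1\supset\cdots\supset I_6=0$ with $[\frg,I_{k-1}]\subseteq I_k$ (possible as $\frg$ is nilpotent), and build the orthonormal basis by taking $e_6$ a unit spanning vector of $I_5$, $e_5$ a unit vector spanning $I_4\cap\la e_6\ra^\perp$, and so on, so that $e_k,\dots,e_6$ span $I_{k-1}$. Then $[e_i,e_j]=\sum_{k>i,j}c^k_{ij}e_k$, i.e.\ $(e_1,\dots,e_6)$ is a nilpotent frame, equivalently each $de^i$ is a combination of the $e^{jk}$ with $j,k<i$. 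The residual freedom is the group of $g$-orthogonal automorphisms of this flag, acting block-diagonally on the associated graded $\bigoplus_k I_{k-1}/I_k$; one uses these rotations to normalize the remaining structure constants — diagonalizing the bilinear maps $\L^2(\,\cdot\,)\to(\,\cdot\,)$ encoded by $d$ to reach the pattern $de^i=\sum\m_{jk}e^{jk}$, rotating away the cross-terms for which a symmetry is available, and retaining as the $\l$-parameters precisely the coefficients that no such rotation removes. Carried out algebra by algebra, this yields the nine normal forms, the simplest $2$-step cases being immediate and the $\oplus\rA_1$ families with long derived series, together with $\rL_3\oplus\rL_3$, demanding genuine work.

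\textbf{Main obstacle.} The delicate point is exactly this last normalization for the less rigid algebras ($\rL_3\oplus\rL_3$, $\cN_{5,1}\oplus\rA_1$, $\cN_{5,2}\oplus\rA_1$, $\cN_{5,3}\oplus\rA_1$): there the flag leaves only a small residual rotation group, so one must argue carefully both that the tabulated shape is \emph{reachable} and that it is \emph{exhaustive} — no metric is missed, and the recorded relations among the $\m$'s and $\l$'s are precisely those forced. Keeping track of which structure constants are genuine moduli and which are removable is where essentially all the effort lies; the secondary question of whether distinct admissible parameter values can still produce isometrically isomorphic metric algebras is settled afterwards by comparing invariants.
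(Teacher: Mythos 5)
Your high-level scheme --- build an orthonormal nilpotent coframe from a metric refinement of the lower central series, then normalize using the residual $g$-orthogonal automorphisms of the flag --- is a legitimate opening, but it is a genuinely different route from the paper's and it is left as an outline precisely at the point where the content of the lemma lives. The paper does not do a generic flag normalization. For the eight algebras of the form $\cN_5\oplus\rA_1$ it observes that $\ker d=\ker d_5\oplus\la dt\ra$, picks the unit closed covector $\alpha$ orthogonal to $\ker d_5$, and uses the identity $i(\alpha^\sharp)\,d\beta=0$ for $\beta\in d^{-1}(\L^2\ker d)$. In the $2$-step cases this gives an honest orthogonal splitting of the metric Lie algebra and the normal forms simply come from Lemma~\ref{5-classification}; for $\cN_{5,1}\oplus\rA_1$, $\cN_{5,2}\oplus\rA_1$, $\cN_{5,4}\oplus\rA_1$ a Gram--Schmidt run starting from the adapted closed covectors suffices; and $\cN_{5,3}\oplus\rA_1$ and $\rL_3\oplus\rL_3$ each receive a tailored argument. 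Your proposal makes no use of the five-dimensional classification, which is the paper's main shortcut.

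The more serious issue is that you explicitly defer the ``main obstacle'' --- the normalization for the less rigid algebras --- and this is not a residual technicality: it is exactly what produces the specific constraints that the table records. For instance, for $\cN_{5,3}\oplus\rA_1$ the table asserts that the \emph{same} coefficient $\lambda$ multiplies $e^{23}$ in $de^5$ and $e^{13}$ in $de^6$; the paper derives this by constructing a particular symmetric endomorphism of a $2$-plane out of $d$ and diagonalizing it, a step that does not emerge automatically from ``rotating away cross-terms.'' Similarly the coupling in $\cN_{5,1}\oplus\rA_1$ between the coefficients of $e^3$ and $e^4$ inside $de^5$ and $de^6$, and the precise shape of $de^5,de^6$ for $\rL_3\oplus\rL_3$, are things the table claims and your sketch does not establish. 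Until these normalizations are carried out, completeness of the tabulated family --- the assertion that no metric on any of the nine algebras escapes the listed shape --- remains unproved.
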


\begin{proof}

The equations for $\rL_3\oplus \rL_3$ are obtained from a basis $(x^1,\dots,x^6)$ associated to the stucture equations $(0,0,0,0,12,34)$. First observe that we can suppose that $x^i$ is orthogonal to $x^{i+1}$ for $i\in \{1,3\}$ and that $x^1$ is orthogonal to $x^3$.
The Gram-Schmidt process allows us to obtain an orthonormal basis $e^1=\frac{x^1}{\|x^1\|}$, $e^3=\frac{x^3}{\|x^3\|}$, $e^2 = \mu_{22} x^2 + \mu_{23} e^3$ and $e^4= \mu_{44}x^4 + \l_{14}e^1 + \l_{24}e^2 + \l_{34}e^3$.

Finally take two orthogonal and unit-length forms $e^5, e^6 \in \ker(d)^\perp$ with $de^{5}=x^{12}$. 

The rest of the algebras can be decomposed as $\rL_5\oplus \rA_1$, where $\rL_5$ is a $5$-dimensional nilpotent Lie algebra. Let $d_5$ be the corresponding differential. Let $dt$ be a generator of $\rA_1^*$ and observe that $\ker(d)=\ker(d_5)\oplus \la dt \ra$ and
$d \colon d^{-1}(\L^2 \ker(d))\to \L^2 \ker(d_5)$. Therefore, a unit-length $1$-form $\alpha \in \ker(d)$ orthogonal to $\ker(d_5)$ verifies $i(\a^\sharp)d\b=0$ for all $\beta \in d^{-1}(\L^2\ker d)$. 

If the Lie algebra is $2$-step there the decomposition $\rL_5\oplus \rA_1$ is orthogonal and the equations follow from Lemma \ref{5-classification}.

The equations for $\rL_{5,6}\oplus \rA_1$, $\rL_{5,4}\oplus \rA_1$ and $\rL_{5,3}\oplus \rA_1$ can be arranged using the Gram-Schmidt process, starting with an orthonormal basis $(e^1,\dots,e^k, \a)$ with $e^i \in \ker(d_5)$.

To obtain the equations for $\rL_{5,5}\oplus A_1$ consider $F_1=d^{-1}(\L^2\ker d)\cap \ker(d)^{\perp}$ 
and $F_2=d^{-1}(\L^2 F_1) %d^{-1}\ker(d))
\cap F_1^\perp$. Let $\pi$ the plane generated in $(\rL_{5,5}\oplus A_1)^*$ by $dF_1$ and observe that there is an isomorphism $\tilde{d} \colon F_2 \longmapsto \pi \otimes F_1$ obtainted from $d$ and the projection of the space of closed forms to $\pi \otimes F_1$. 
Take $e^4 \in F_1$ unit-length and let $e^5,e^6 \in F_2$ and $e^1,e^2\in \pi$ orthonormal such that $\tilde{d}e^5= \mu_{14} e^{14}$ and $\tilde{d}e^6= \mu_{24} e^{24}$. Define the map $\pi \to \pi$, $ \b \longmapsto \star p(d \tilde{d}^{-1}(\b \otimes e^4))$, where $\star$ is the Hodge star and $p\colon \L^2 \ker(d) \oplus (\pi \otimes F_1 ) \to \L^2 \ker(d) \cap dF_1^\perp$
is the orthogonal projection.
This map is diagonal with eigenvalue $\l$ (see \cite[pp.\ 1017-1018]{BM2}),
so that $de^{5}=\lambda_{12;5} e^{12} + \l e^{23} + \mu_{14}e^{14}$ and 
$de^{6}=\lambda_{12;6} e^{12} + \l e^{13} + \mu_{24}e^{24}$.
\end{proof}

We describe the set of metrics on $\rL_3\oplus \rL_3$ with harmonic spinors.

\begin{lemma} \label{L3+L3}
Following the notation of Lemma \ref{6-decomposable-classification}, metrics with 
harmonic spinors on $\rL_3\oplus \rL_3$ are those which verify one of the following conditions:
\begin{enumerate}
\item $\l_{23}=0$, $\l_{13;6}= \sigma_1 \m_{12}$ and $\l_{13;5}=\sigma_2 \m_{34}$, for some $\sigma_1,\s_2\in \{\pm 1\}$.
\item $
4\l_{23}^2(\l_{13}^2 + \m_{12}^2)= \m_{12}^2 + \l_{13;5}^2 + \l_{13;6}^2 + \l_{23}^2 + \m_{34}^2 - 4( \sigma \m_{12}\l_{13;6} + \l_{13;5}\m_{34})^2 
$
for some $\sigma \in \{ \pm 1 \}$.
\end{enumerate}
\end{lemma}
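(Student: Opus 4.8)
The plan is to compute the pair $(\mu,\gamma)$ of Proposition \ref{6-square} for the metric nilpotent Lie algebra $\rL_3\oplus\rL_3$ described in Lemma \ref{6-decomposable-classification}, and then to extract from the condition ``$\sD\p=0$ has a nontrivial solution'' the announced equations. Recall from Proposition \ref{6-square} that $16\sD{}^2\p = \mu\p + \gamma \jota\p$ with $\mu=\sum_i\|de^i\|^2$; since $\jota$ anticommutes with Clifford multiplication by vectors and hence commutes with $\sD{}^2$ while interchanging chirality-type eigenspaces, the operator $16\sD{}^2$ restricted to invariant spinors has, on a suitable $2$-dimensional reducing subspace, ``eigenvalue'' $\mu\pm\|\gamma\|$ (this is the content of the last sentence of Proposition \ref{6-square}, exactly as in the $5$-dimensional Proposition \ref{5-square}). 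Consequently invariant harmonic spinors exist precisely when $\mu=\|\gamma\|$. So the whole lemma reduces to writing down $\mu$ and $\gamma$ explicitly and analysing the single scalar equation $\mu^2=\|\gamma\|^2$.

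First I would record the differentials from Lemma \ref{6-decomposable-classification}: $de^1=de^2=de^3=de^4=0$, $de^5=\m_{12}e^{12}+\l_{13;5}e^{13}$, and $de^6=e^3(\m_{14}e^4+\l_{13;6}e^1+\l_{23}e^2)=-\m_{14}e^{34}+\l_{13;6}e^{13}+\l_{23}e^{23}$. Hence immediately $\mu=\m_{12}^2+\l_{13;5}^2+\l_{13;6}^2+\l_{23}^2+\m_{14}^2$. Next I would plug these two $2$-forms into the formula for $\gamma$ in Proposition \ref{6-square}. Only the index values $l=5,6$ contribute, and within those only the pieces of $de^5,de^6$ of the shape $e^i\wedge(\cdot)$ with $i=3,4$, plus the mixed ``$de^5\wedge i(e_5)de^6$'' and ``$\sum_k i(e_k)de^5\wedge i(e_k)(de^6|_{\langle e_5\rangle^\perp})$'' terms; all of these are elementary contractions of decomposable $2$-forms, handled exactly as in Lemmas \ref{dxidxj} and the proof of Theorem \ref{5-harmonic}. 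Taking the Hodge star of each $4$-form produced, $\gamma$ comes out as an explicit linear combination of $e_j$'s; I expect it to be supported on $e_1,e_2$ (and possibly $e_4$), with coefficients that are quadratic in the structure constants, one of them being $2(\sigma\m_{12}\l_{13;6}+\l_{13;5}\m_{34})$ up to sign and relabelling of $\m_{34}$ — note the lemma statement uses $\m_{34}$ where the table in Lemma \ref{6-decomposable-classification} writes $\m_{14}$, so I would reconcile that notational clash at the outset. Then $\|\gamma\|^2$ is the sum of the squares of those coefficients.

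Having $\mu$ and $\|\gamma\|^2$, the equation $\mu^2=\|\gamma\|^2$ is a single polynomial relation among the parameters. I would then do the case split that appears in the statement: the subcase $\l_{23}=0$ collapses several terms and forces, after completing squares, $\l_{13;6}=\pm\m_{12}$ and $\l_{13;5}=\pm\m_{34}$ (this is the analogue of the ``$\cN_{5,2}$-type'' rigid solution in Theorem \ref{5-harmonic}); the generic subcase $\l_{23}\neq0$ leaves precisely the stated quartic identity $4\l_{23}^2(\l_{13}^2+\m_{12}^2)=\m_{12}^2+\l_{13;5}^2+\l_{13;6}^2+\l_{23}^2+\m_{34}^2-4(\sigma\m_{12}\l_{13;6}+\l_{13;5}\m_{34})^2$. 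The main obstacle is purely computational: correctly evaluating the $\gamma$-formula of Proposition \ref{6-square}, since it involves several contraction terms and a Hodge star in dimension $6$, and keeping the sign $\sigma$ bookkeeping straight when a product of two structure constants can have either sign. Once $\gamma$ is in hand, the algebra of solving $\mu^2=\|\gamma\|^2$ and organising it into the two listed families is routine completion of squares.
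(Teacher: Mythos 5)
The fatal flaw in your plan is the sentence asserting that ``invariant harmonic spinors exist precisely when $\mu=\|\gamma\|$.'' This is the correct criterion in dimension~$5$ (Proposition~\ref{5-square}), where the extra term $v\jota_1$ is Clifford multiplication by a {\em vector}, so $(v\jota_1)^2=\|v\|^2\rI$ and the eigenvalues of $16\sD{}^2$ are exactly $\mu\pm\|v\|$. In dimension~$6$ the object $\gamma$ coming out of Proposition~\ref{6-square} is a {\em $2$-form}, not a vector; a Hodge star of a $4$-form in a $6$-dimensional space lands in $\L^2$, not $\L^1$, so your expectation that ``$\gamma$ comes out as an explicit linear combination of $e_j$'s'' already reveals the confusion. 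For a $2$-form $\gamma$ one has, by Lemma~\ref{dx^2}, $\gamma^2=-\|\gamma\|^2+\gamma\wedge\gamma$, and since $\jota$ (the volume form in dimension~$6$) commutes with $2$-forms and squares to $-\rI$, one gets $(\gamma\jota)^2=\|\gamma\|^2\rI-\gamma\wedge\gamma$. This is a scalar only when $\gamma$ is decomposable. For $\rL_3\oplus\rL_3$ the actual $\gamma$ is $-2(\m_{12}\l_{13;6}e^{14}+\l_{13;5}\l_{23}e^{34}+\m_{12}\l_{23}e^{24}-\l_{13;5}\m_{34}e^{23})$, and $\gamma\wedge\gamma$ is a nonzero multiple of $\m_{12}\l_{13;6}\l_{13;5}\m_{34}\,e^{1234}$, so $(\gamma\jota)^2$ is genuinely not a homothety. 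This is exactly why the conclusion of Proposition~\ref{6-square} is only ``there are eight eigenspaces associated to $\pm\l_1,\dots,\pm\l_4$'' with {\em no} closed formula for the $\l_j$ in terms of $\mu$ and $\|\gamma\|$; your reduction of the lemma to the single scalar equation $\mu^2=\|\gamma\|^2$ has no justification and produces a condition that differs from the correct one by the cross-term $\pm 8\m_{12}\l_{13;6}\l_{13;5}\m_{34}$.

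The paper's argument has to work harder precisely because of this: it observes that the operators $e^{14}\jota$ and $e^{23}\jota$ commute, while the piece $A=-2(\l_{13;5}\l_{23}e^{34}+\m_{12}\l_{23}e^{24})\jota$ anticommutes with both, so $A$ permutes the simultaneous eigenspaces $\D_\pm^\pm$; then one computes $A^2$ and solves a $2\times 2$ block problem. The case split $\l_{23}=0$ vs.\ $\l_{23}\neq 0$ in the statement is the split $A=0$ vs.\ $A$ invertible, not a by-product of completing squares in $\mu^2=\|\gamma\|^2$. Your proposal, as written, would need to be rebuilt around this (anti)commutation structure; without it, the announced family of solutions in item~(2) cannot be recovered, because it explicitly contains the cross-term $(\sigma\m_{12}\l_{13;6}+\l_{13;5}\m_{34})^2$ whose expansion produces exactly the $\gamma\wedge\gamma$ contribution that $\mu^2=\|\gamma\|^2$ misses.
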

\begin{proof}
We first take an orthonormal basis $(e^1,\dots,e^6)$ associated to the structure equations given in Lemma \ref{6-decomposable-classification}. Then, $\mu$ is the sum of the squares of the parameters involved and supposing that the basis is positively oriented,
$$
\gamma=-2(\m_{12}\l_{13;6}e^{14} + \l_{13;5}\l_{23}e^{34} + \m_{12}\l_{23}e^{24} - \l_{13;5}\m_{34}e^{23}).
$$

Note that the operators $e^{14}\jota$ and $e^{23}\jota$ commute. 
Define the operator 
 $$
 A= -2(\l_{13;5}\l_{23}e^{34} + \m_{12}\l_{23} e^{24})\jota\cdot
 $$ 
and observe that it anticommutes with the previous operators and that 
$A^2= 4\l_{23}^2(\l_{13;5}^2 + \m_{12}^2)\rI$. We distinguish two cases:

\begin{itemize}
\item If $\l_{23}=0$ then $A=0$ and the eigenvalues of $\sD{}^2$ are
$(\m_{12}^2 \pm \l_{13;6})^2 + (\l_{13;5} \pm \m_{24})^2$.
Therefore, the metric has harmonic spinors if $\l_{13;6}= \pm \m_{12} \neq 0$ and $\l_{13;5}=\pm \m_{34} \neq 0$.

\item If $\l_{23}\neq 0$ then $A$ is invertible. Denote $\mu= \m_{12}^2 + \l_{13;5}^2 + \l_{13;6}^2 + \l_{23}^2 + \m_{34}^2$. Let $\D_{\pm}$ be the eigenspaces associated to the eigenvalue $\pm 1$ of $e^{14}\jota$ 
and decompose $\D_{\pm}=\D_{\pm}^{+}\oplus \D_{\pm}^{-}$ according to the eigenspaces of $e^{23}\jota$.
Note that $A(\D_{\pm}^{+})=\Delta_{\mp}^{-}$ and that $A^2= 4\l_{23}^2(\l_{13}^2 + \m_{12}^2)\rI$. Thus, the eigenvalues are of the form $\p_{\pm}^{+} + \p_{\mp}^{-}$ with $\p_{\pm}^{+} \in \D_{\pm}^{+}$ and $\p_{\mp}^{-} \in \D_{\mp}^{-}$.
The eigenvalue $0$ occurs on $\D_{+}^{+}\oplus \D_{-}^{-}$ if and only if:
%%%%%%%%%%%%%
\begin{align*}
 A\p_{+}^{+} = & (\m - 2\m_{12}\l_{13;6} + 2\l_{13;5}\m_{34})\p_{-}^{-}\, , \\
 A\p_{-}^{-} = & (\m + 2\m_{12}\l_{13;6} - 2\l_{13;5}\m_{34})\p_{+}^{+}\, .
\end{align*}
This implies that $4\l_{23}^2(\l_{13}^2 + \m_{12}^2) = (\m - 2\m_{12}\l_{13;6} + 2\l_{13;5}\m_{34})(\m + 2\m_{12}\l_{13;6} - 2\l_{13;5}\m_{34})$.
Moreover, if this equation holds we can take $\p_{+}^{+} \in \D_{\pm}^{+}$, define $\p_{-}^{-}=(\m + 2\m_{12}\l_{13;6} - 2\l_{13;5}\m_{34})A^{-1}\p_{+}^{+}$. Then,
 \begin{align*}
 A\p_{+}^{+} &= (\m + 2\m_{12}\l_{13;6} - 2\l_{13;5}\m_{34})^{-1} A^2\p_{-}^{-} \\ &
 = (\m - 2\m_{12}\l_{13;6} + 2\l_{13;5}\m_{34})\p_{-}^{-}\, .
 \end{align*}
We can do a similar analysis on $\D_{+}^{-}\oplus \D_{-}^{+}$ to conclude that the metric has harmonic spinors if and only if 
 \begin{align*}
 4\l_{23}^2 & (\l_{13}^2 + \m_{12}^2)\\ &
= \m_{12}^2 + \l_{13;5}^2 + \l_{13;6}^2 + \l_{23}^2 + \m_{34}^2 - 4( \sigma \m_{12}\l_{13;6} + \l_{13;5}\m_{34})^2 \, ,
 \end{align*}
for some $\sigma \in \{ \pm 1 \}$.
If $\m_{12}=1$, this equation has solutions if and only if,
$ 1 + \l_{13;5}^2 + \l_{13;6}^2 + \m_{34}^2 - 4(\l_{13;6} + \l_{13;5}\m_{34})^2 >0$.
This inequality holds taking the parameters small enough.
\end{itemize}
\end{proof}

The other decomposable cases can be obtained by taking into account the results of the previous sections.
It is clear from Theorem \ref{5-harmonic} and Lemma \ref{6-decomposable-classification} that the algebras $\rL_3 \oplus \rA_3$ and $\rL_4\oplus \rA_2$ do not admit left-invariant harmonic spinors and that $\rL_{5,j}\oplus \rA_1$ has harmonic spinors for $j\neq 5$. Finally take an orthonormal basis $(e^1,\dots,e^6)$ associated to the structure equations of $\rL_{5,5}\oplus \rA_1$ given in Lemma \ref{6-decomposable-classification} and suppose $\m_{12}=1$. Now we write the Dirac operator using the formula obtained in Corollary \ref{nilpotent-Dirac-operator} 
and then we use the fix representation to obtain an endomorphism of the spinoral bundle. The metric has left-invariant harmonic spinors if and only if the determinant of the endomorphism is $0$. Solving the equation we get:
$$
 \l = \frac{1}{2}(1 +(\m_{14} + \m_{24})^2 )^{-\frac{1}{2}}((1+ \l_{12;5}^2 + \l_{12;6}^2 + \m_{14}^2- \m_{24}^2)^2 - 4\l_{12;6}^2 + 4\m_{24}^2 )^{\frac{1}{2}}\, .
 $$
But the number on the square root is obviously positive if $\l_{12;6}=0$. Therefore, there are metrics with harmonic spinors.
 
Hence we have proved:
 \begin{theorem}\label{6-harmonic-decomposable}
 Let $\Gamma \backslash \rG$ be a non-abelian $6$-dimensional nilmanifold with $\frg$ decomposable. Then, unless $\frg$ equals $\rL_3 \oplus \rA_3$ or $\rL_4\oplus \rA_2$, $\Gamma \backslash \rG$ admits an invariant metric with left-invariant harmonic spinors. %unless if $\frg$ is distinct of $\rL_3 \oplus \rA_3$ and $\rL_4\oplus \rA_2$.
 \end{theorem}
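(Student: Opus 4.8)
The plan is to combine the classification of decomposable $6$-dimensional metric nilpotent Lie algebras in Lemma \ref{6-decomposable-classification} with the $5$-dimensional classification of Theorem \ref{5-harmonic} and the explicit analysis of $\rL_3\oplus\rL_3$ in Lemma \ref{L3+L3}. By Lemma \ref{6-decomposable-classification} every decomposable non-abelian $\frg$ is either $\rL_3\oplus\rL_3$ or of the form $\frn_5\oplus\rA_1$ for a $5$-dimensional nilpotent Lie algebra $\frn_5$ (namely $\rL_3\oplus\rA_3$, $\rL_4\oplus\rA_2$, or $\cN_{5,j}\oplus\rA_1$ for $j=1,\dots,6$). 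For $\rL_3\oplus\rL_3$ there is nothing more to do: case (1) of Lemma \ref{L3+L3} — take $\lambda_{23}=0$, $\lambda_{13;6}=\mu_{12}\neq 0$ and $\lambda_{13;5}=\mu_{34}\neq 0$ — already exhibits invariant metrics carrying left-invariant harmonic spinors.

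For $\frn_5\oplus\rA_1$ I would first dispose of the cases $\frn_5\in\{\cN_{5,6},\cN_{5,5},\cN_{5,4},\cN_{5,2},\cN_{5,1}\}$ by a product argument. In the normal form of Lemma \ref{6-decomposable-classification} one can choose the parameters so that the distinguished covector $e^6$ adjoined to $\frn_5$ is orthogonal to $\frn_5^*$ and satisfies $de^6=0$; the resulting invariant metric is a metric product. For such a metric Corollary \ref{nilpotent-Dirac-operator} gives $4\sD\phi=-\sum_{i=1}^6(e^i\wedge de^i)\phi=-\sum_{i=1}^5(e^i\wedge de^i)\phi$ on invariant spinors, and since $\rho_6$ restricts through $i_5$ to (a representative of) the irreducible representation of $\Cl_5$ by Proposition \ref{restriction}(3), this operator has the same kernel dimension as the Dirac operator of $(\frn_5,g|_{\frn_5})$, the two differing only by the isomorphism $i_5$ of Clifford algebras and composition with the invertible operator $\rho_6(e_6)$. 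Hence left-invariant harmonic spinors of $\frn_5$ induce left-invariant harmonic spinors of $\frn_5\oplus\rA_1$, and by Theorem \ref{5-harmonic} the five algebras above admit invariant metrics with harmonic spinors; so do $\cN_{5,j}\oplus\rA_1$ for $j\in\{1,2,4,5,6\}$. (For completeness, the two excluded algebras $\rL_3\oplus\rA_3$ and $\rL_4\oplus\rA_2$ admit no left-invariant harmonic spinors for any invariant metric: feeding their normal forms from Lemma \ref{6-decomposable-classification} into Corollary \ref{4-form} shows that $16\sD^2$ acts invertibly on invariant spinors. This is not needed for the theorem.)

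The one algebra that does not reduce to lower dimension — and hence the main obstacle — is $\cN_{5,3}\oplus\rA_1$: the factor $\cN_{5,3}$ itself carries no harmonic spinors by Theorem \ref{5-harmonic}, and, crucially, the orthonormal structure equations for $\cN_{5,3}\oplus\rA_1$ in Lemma \ref{6-decomposable-classification} describe a genuinely twisted family in which $e^6$ is not closed, so no choice of invariant metric splits it off. Here I would normalise $\mu_{12}=1$ by scaling, write the Dirac operator on invariant spinors via Corollary \ref{nilpotent-Dirac-operator}, and convert it into an explicit $8\times 8$ real matrix by applying the fixed representation $\rho_6$ of Subsection \ref{Cl-representations}; this matrix depends on the remaining parameters $\lambda$, $\lambda_{12;5}$, $\lambda_{12;6}$, $\mu_{14}$, $\mu_{24}$. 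Left-invariant harmonic spinors exist precisely when the matrix is singular, and carrying out the determinant computation produces the single relation
\[
\lambda=\tfrac12\bigl(1+(\mu_{14}+\mu_{24})^2\bigr)^{-1/2}\Bigl((1+\lambda_{12;5}^2+\lambda_{12;6}^2+\mu_{14}^2-\mu_{24}^2)^2-4\lambda_{12;6}^2+4\mu_{24}^2\Bigr)^{1/2}.
\]
To finish, I would note that the quantity under the outer square root is strictly positive whenever $\lambda_{12;6}=0$ — it then equals $(1+\lambda_{12;5}^2+\mu_{14}^2-\mu_{24}^2)^2+4\mu_{24}^2>0$ — so the equation has real solutions in $\lambda$, and the corresponding invariant metrics on $\cN_{5,3}\oplus\rA_1$ carry left-invariant harmonic spinors. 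Assembling the three cases yields the theorem; the only step requiring real computation is the determinant of the $8\times 8$ endomorphism for $\cN_{5,3}\oplus\rA_1$, the rest being bookkeeping against results already established.
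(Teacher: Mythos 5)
Your proposal is correct and follows essentially the same three-part strategy as the paper: invoke Lemma \ref{L3+L3} for $\rL_3\oplus\rL_3$, reduce $\cN_{5,j}\oplus\rA_1$ ($j\neq 3$) to Theorem \ref{5-harmonic} via a product metric, and treat $\cN_{5,3}\oplus\rA_1$ by the explicit $8\times 8$ computation with the fixed $\Cl_6$ representation. You actually spell out the reduction step (irreducibility of $\rho_5=\rho_6\circ i_5$ via Proposition \ref{restriction}(3), and the intertwining by the invertible $\rho_6(e_6)$, which sends one Dirac kernel to the other) that the paper dispatches with the phrase ``it is clear''; this is a genuine detail worth making explicit, since Lemma \ref{Harmonic-Spinors-Relation} only covers the jump to dimension $8p$ and does not directly apply to a $5\to 6$ product. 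One small imprecision: in the normal form of Lemma \ref{6-decomposable-classification} the covector $e^6$ is the last term of the derived filtration and has $de^6\neq 0$ by construction, so you cannot literally ``choose the parameters'' there to make $de^6=0$ --- what you mean (and what works) is to pick a product metric $g_5+dt^2$ and an orthonormal basis in which the $\rA_1^*$ generator $dt$ is put in the sixth slot; that basis will generally not be the normal-form basis of Lemma \ref{6-decomposable-classification}, but Corollary \ref{nilpotent-Dirac-operator} applies to it just as well.
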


\subsubsection{Non-decomposable algebras}\label{6-harmonic-non-decomposable}
Using the fixed representation of $\Cl_6$ we are able to find a metric with harmonic spinors on each nilmanifold associated to a non-decomposable Lie algebra. We follow the same procedure that we used to determine metrics with left-invariant harmonic spinors on $\rL_{5,5}\oplus \rA_1$. In many cases we will not be able to determine the roots of the polynomial in terms of the parameters. Thus, we will have to make some choices as the following example explains:

We consider the algebra $\rL_{6,7}$, which has structure equations $(0,0,0,12,13,15+24)$. We first declare the canonical basis orthonormal and compute the Dirac operator. One can show that this metric does not have left-invariant harmonic spinors. Neither does any metric constructed by declaring orthonormal a basis which is obtained by rescaling the canonical basis.
%Indeed, if the structure equations are $de^4=\m_{12}e^{12}$, $de^5=\m_{13}e^{13}$, $de^6=\m_{15}e^{15} + \m_{24}e^{24}$. Then, 
%$$
%\sD{}^2\eta = (\m_{12}^2 + \m_{13}^2 + \m_{15}^2 + \m_{24}^2) + (-2\m_{12}\m_{13}e^{16} + 2\m_{15}\m_{24}e^{36} + 2\m_{12}\m_{15})j,
%$$
%which can be proved to be invertible.

Now we proceed to write the structure equations by means of an orthonormal basis with respect to a metric. 
First, write $F_1=\ker(d)$, $F_2=d^{-1}(\L^2 F_1)$ and $F_3= d^{-1}(\L^2 F_2)=\rL_{6,7}$. 
One can take an orthonormal basis of $F_2$ such that $de^4=\m_{13}e^{12}$ and $de^{5}=\m_{13}e^{13}$. Now take $e^6$ orthogonal to $F_2$, then according to \cite{BM2}, $de^6$ is a closed form of $\L^2F_2$ such that $e^1\wedge(de^6)^2=0$, $e^1\wedge de^6 \not\in \L^3 F_1$ and $de^6 \notin \ker(d)\otimes F_2$. Those equations imply: 
 \begin{align*}
 de^6=&\,  \l_{12}e^{12} + \l_{13}e^{13} + \l_{14}e^{14} + \l_{15}e^{15} \\ &+ \l_{23}e^{23} +\l_{24}e^{24} 
 + \l_{35}e^{35} + \left(\dfrac{\l_{24}\l_{35}}{\m_{12}\m_{13}}\right)^{\frac{1}{2}}(\m_{13}e^{34} + \m_{12}e^{25}), 
 \end{align*}
with $\l_{24}\l_{35}\geq 0$ and $-\l_{14}\left(\dfrac{\l_{24}\l_{35}}{\m_{12}\m_{13}}\right)^{\frac{1}{2}} \m_{12} + \l_{15}\l_{24}\neq 0$.
We choose $\l_{35}=0$ and therefore, $de^6= \l_{12}e^{12} + \l_{13}e^{13} + \l_{14}e^{14} + \l_{15}e^{15} + \l_{24}e^{24} $ with $\l_{15}\l_{24}\neq 0$. We fix $1=\m_{13}=\m_{12}=\l_{15}=\l_{24}$ and vary the rest of the parameters. 

The choice $\l_{12}=1=\l_{23}$ leads to the condition that $\l_{13}$ is a root of the polynomial
$ Z^8 + 8(\l_{14}^2 +8)Z^6 + 16\l_{14}^2 + 24 \l_{14}^2 + 32)Z^4 + 32\l_{14}^3Z^3 + 4(\l_{14}^6+24\l_{14}^4 + 128\l_{14}^2)Z^2 + (16\l_{14}^5 + 128 \l_{14}^3)Z + \l_{14}^8+8\l_{14}^6+32\l_{14}^4$. 
Hence, $(\l_{13},\l_{14})=(0,0)$ is a solution.

We finish with a list of the non-decomposable metric nilpotent Lie algebras in dimension $6$ which admit a harmonic spinor.

% list where we have found 
% structure equations of a non-decomposable Lie algebra 
% with respect to orthonomal basis associated to a metric with harmonic spinors, in each case.

\noindent \resizebox{15cm}{!}
{\tabulinesep=0.7mm
\begin{tabu}{|l|l|c|c|c|c|}
\hline
&  & $de^3$  &   $de^4$     &    $de^5$ & $de^6$  \\ 
\hline
$\rL_{6,1}$ & $(0,0,0,0,12,13 + 24)$ & $0$ & $0$ & $e^{12}$ & $2e^{13} + e^{24}$ \\ \hline

$\rL_{6,2}$ & $(0,0,0,0,13-24,14 + 23)$& $0$ & $0$ & $e^{13}-e^{24}$& $e^{14} + e^{23}$ \\ \hline 

$\rL_{6,3}$ & $(0,0,0,0,12,15+34)$& $0$ & $0$ & $e^{12}$& $ e^{14} + e^{15} + e^{34}$ \\ \hline 

$\rL_{6,4}$ & $(0,0,0,12,13,23)$& $0$ &  $e^{12}$ & $e^{13}$ & $ 2 e^{23}$ \\ \hline 

$\rL_{6,5}$ & $(0,0,0,12,13,14)$& $0$& $e^{12}$ & $2^{\frac{1}{2}}e^{13}$ & $e^{14}$ \\ \hline 

$\rL_{6,6}$ & $(0,0,0,12,13,24)$& $0$ & $e^{12}$ & $e^{13}$ & $2e^{13} + 3^{\frac{1}{2}}e^{24} + e^{23}$ \\ \hline 

$\rL_{6,7}$ & $(0,0,0,12,13,15 + 24)$& $0$ & $e^{12}$ & $e^{13}$ & $e^{12}+ e^{15} +e^{23} +e^{24} + e^{23}$
 \\ \hline 

$\rL_{6,8}^+$ & $(0,0,0,12,13, 24 + 35)$&  $0$ & $e^{12}$ & $e^{13}$ & $e^{24} + e^{35}$
 \\ \hline 

$\rL_{6,8}^-$ & $(0,0,0,12,13, 24-35)$& $0$ & $e^{12}$ & $e^{13}$ & $- 2e^{23} +e^{24} - e^{35}$
 \\ \hline

$\rL_{6,9}$ & $(0,0,0,12,13,14+ 23)$&  $0$ & $e^{12}$ & $e^{13}$ & $e^{14} + e^{23} + (2(2^\frac{1}{2}-1))^\frac{1}{2} e^{12}$
 \\ \hline 
 
$\rL_{6,10}$ & $(0,0,0,12,14,23+24)$&  $0$ & $e^{12}$ & $e^{14}$ & $e^{23} + e^{24}$
 \\ \hline 
 
$\rL_{6,11}$ & $(0,0,0,12,14,13+24)$&  $0$ & $e^{12}$ & $e^{14}$ & $e^{13} + e^{24}$
 \\ \hline 
 
$\rL_{6,12}$ & $(0,0,0,12,14 + 23,13- 24)$&  $0$ & $2^{-\frac{1}{2}}e^{12}$ & $2^{\frac{1}{2}}e^{14}+ e^{23} $ & $e^{13} - 2^{\frac{1}{2}} e^{24}$
 \\ \hline 

$\rL_{6,13}$ & $(0,0,0,12,14,15 + 23)$& $0$ & $e^{12}$ & $e^{14}$ & $e^{15}+ 2^{\frac{1}{2}}e^{13} + e^{23} $ 
 \\ \hline 
 
$\rL_{6,14}$ & $(0,0,0,12,14,15 + 23 + 24)$& $0$ & $e^{12}$ & $e^{14}- \frac{7}{4}e^{13}$ & $e^{15}+ e^{24} - \frac{3}{4} e^{23} + 2 e^{12} $ 
 \\ \hline 
 
$\rL_{6,15}$ & $(0,0,0,12,14 + 23, 15- 34)$& $0$ & $e^{12}$ & $e^{14}+e^{23}$ & $\frac{1}{4}(e^{15}+ e^{34}) $ 
 \\ \hline

$\rL_{6,16}$ & $(0,0,12,13,23,14)$&  $e^{12}$ & $e^{13}$ & $e^{23}$ & $e^{14}$ 
 \\ \hline

$\rL_{6,17}^+$ & $(0,0,12,13,23,14+25)$&  $e^{12}$ & $e^{13}$ & $e^{23}$ & $e^{14} + e^{24} + e^{12} + 2^{\frac{1}{2}}e^{23}$ 
 \\ \hline 
 
$\rL_{6,17}^-$ & $(0,0,12,13,23,14-25)$&  $e^{12}$ & $e^{13}$ & $e^{23}$ & $e^{14} - e^{25} - e^{12} + 2^{\frac{1}{2}}e^{23}$ 
 \\ \hline 
 
$\rL_{6,18}$ & $(0,0,12,13,14,15)$&  $e^{12}$ & $e^{13}$ & $\frac{1}{5}(e^{14}+ e^{12})$ & $\frac{1}{5}(e^{12} + e^{14} + 46^{\frac{1}{2}}e^{15})$ 
 \\ \hline

$\rL_{6,19}$ & $(0,0,12,13,14,15 +23)$&  $e^{12}$ & $e^{13}$ & $e^{14}$ & $e^{15} + e^{23} + e^{12}$ 
 \\ \hline 
  
$\rL_{6,20}$ & $(0,0,12,13,14,15 -34)$&  $e^{12}$ & $e^{13}$ & $e^{14}$ & $e^{25} - e^{34} + 5^{\frac{1}{2}}e^{12}$ 
 \\ \hline 

$\rL_{6,21}$ & $(0,0,12,13,14+23,15+24)$&  $e^{12}$ & $e^{13}$ & $\frac{1}{m}(e^{14}+ e^{23})$ & $me^{15} + e^{24}$ 
 \\ \hline 
 
$\rL_{6,22}$ & $(0,0,12,13,14+23,15-34)$&  $e^{12}$ & $e^{13}$ & $e^{14}+ e^{23}$ & $e^{25} - e^{34}+ (1 + 5^{\frac{1}{2}})e^{12}$ 
 \\ \hline
 
\end{tabu} }

\noindent where
$m=\frac{\sqrt{3} \left( (459+12\sqrt{177})^{\frac{1}{3}} ((459+12*\sqrt{177})^{\frac{2}{3}}+6(459+12\sqrt{177})^{\frac{1}{3}}+57)\right){}^{\frac{1}{2}} }{3(459+12\sqrt{177})^{\frac{1}{3}}}$.

%%%%%%%%%%%%%%%%%%%%%%%%%%%%%%%%%%%%%%%%%%%%%%%%%%%%%%%%%%%%%%%%%%%%%%%%%%%%%%%%%%%%%%%%%%%%%%%%%%%%%%%%%%%%%%%%%%%%%%%%%
\subsection{8-dimensional nilmanifolds with balanced \texorpdfstring{$\Spin(7)$}{Lg} structures}\label{subsec:8-balanced}
%%%%%%%%%%%%%%%%%%%%%%%%%%%%%%%%%%%%%%%%%%%%%%%%%%%%%%%%%%%%%%%%%%%%%%%%%%%%%%%%%%%%%%%%%%%%%%%%%%%%%%%%%%%%%%%%%%%%%%%%%

The results collected so far allow us to obtain examples of invariant balanced $\Spin(7)$-structures on nilmanifolds $N_k\times T^{8-k}$ with $N_k$ a $k$-dimensional nilmanifold, $k=5,6$. By considering $N_k\times T^{7-k}$, one obtains a 7-dimensional nilmanifold with a spin-harmonic $\rG_2$-structure. If $M$ is any 7-dimensional manifold endowed with a spin-harmonic $\rG_2$-structure, then $M\times S^1$ admits a balanced $\Spin(7)$-structure. According to Theorem \ref{G2-Dirac}, every closed $\rG_2$-structure is spin-harmonic and a coclosed $\rG_2$-structure is spin-harmonic if and only if it is of pure type $\chi_3$. Now 7-dimensional nilpotent Lie algebras with closed and coclosed $\rG_2$-structures are classified by Conti-Fern\'andez \cite{Conti-Fernandez} and Bagaglini \cite{Bagaglini} respectively. We show that not all our examples of balanced $\Spin(7)$ nilmanifolds can be obtained by Conti-Fern\'andez and Bagaglini. To do this we compare decomposable 7-dimensional Lie algebras admitting closed, coclosed and spin-harmonic $\rG_2$-structures in the table below. %All such Lie algebras will clearly be the product of a 6-dimensional nilpotent Lie algebra with an abelian factor.

We have seen in Theorem \ref{6-harmonic-decomposable} that $\rL_3\oplus\rA_3$ and $\rL_4\oplus\rA_2$ do not admit any metric with harmonic spinors; we show that the same happens when we add abelian factors of dimension 1 and 2 to these Lie algebras.

\begin{proposition} The Lie algebras $L_3\oplus A_4$, $L_3\oplus A_5$, $L_4\oplus A_3$ and $L_4\oplus A_4$ do not admit any metric with harmonic spinors.
\end{proposition}

\begin{proof} We prove the result for $L_3\oplus A_4$ and $L_4\oplus A_3$, the other cases being similar. Let us write the structure equations in term of a suitable orthonormal basis $(e^1,\dots, e^7)$ of each Lie algebra.
\begin{enumerate}

\item[1.] For $L_3\oplus A_4$ the structure equations are $de^i=0$ for $i=1,\ldots,6$, and $de^7=\mu e^{12}$, for some $\mu\neq 0$. One computes that $\sD\phi=\mu e^{127}\phi$, which has no kernel.
\item[2.] The structure equations of $L_4\oplus A_3$ are $de^i=0$ for $i=1,\ldots,5$
\[
 de^6=\mu_{12}e^{12} \quad \textrm{and} \quad  de^7=\mu_{16}e^{16}+\lambda_{12}e^{12}+\lambda_{13}e^{13}\,.
\]
With this one computes $\sD\phi=e^{1}(\mu_{12}e^{26}+\mu_{16}e^{67}+\lambda_{12}e^{27}+\lambda_{13}e^{37})\phi$. Note that $\mu_{16}e^{167}\phi$ is orthogonal to $e^{1}(\mu_{12}e^{26}+\lambda_{12}e^{27}+\lambda_{13}e^{37})\phi$ hence, since $\mu_{16}\neq 0$, the kernel of the Dirac operator is trivial.

% $ If $\g= A_3\oplus L_4$ we can suppose that $\ker(\eee)=\langle e_1,\dots,e_5 \rangle$, $\eee(e_6)=- \lambda_1 e_5$ and $\eee(e_7)=- \lambda_2 e_4 - \lambda_3 e_5 - \lambda_4 e_6$, 
% where $\lambda_1\lambda_4 \neq 0$. Thefore, $\gamma= \lambda_1 e^{56} + (\lambda_2 e^{4} + \lambda_3 e^5)\wedge e^7 + \lambda_4 e^{67}$.
% But $\lambda_4 e^{67}\eta$ is orthogonal to $(\lambda_1 e^{56} + (\lambda_2 e^{4} + \lambda_3 e^5)\wedge e^7)\eta$. Therefore, $\gamma \eta \neq 0$ for all $\eta$.
\end{enumerate}
\end{proof}

\vfill \pagebreak

\begin{center}
{\tabulinesep=1.2mm
\begin{tabu}{|l|c|c|c|} 
%{\begin{tabular}{|l|c|c|c|}
\hline
  &    closed & coclosed & spin-harmonic \\
\hline
$\rL_3\oplus\rA_4$ & $\times$ & $\checkmark$ & $\times$\\ 
\hline
$\rL_3\oplus\rL_3\oplus A_1$ & $\checkmark$ & $\checkmark$ & $\checkmark$\\ \hline
$\rL_4\oplus\rA_3$ & $\times$ & $\times$ & $\times$ \\ 
\hline
$\rL_{5,1}\oplus\rA_2$ & $\times$ & $\checkmark$ & $\checkmark$ \\ \hline
$\rL_{5,2}\oplus\rA_2$ & $\checkmark$ & $\checkmark$ & $\checkmark$ \\ \hline
$\rL_{5,3}\oplus\rA_2$ & $\times$ & $\checkmark$ & $\checkmark$ \\
\hline
$\rL_{5,5}\oplus\rA_2$ & $\times$ & $\times$ & $\checkmark$ \\ \hline
$\rL_{5,4}\oplus\rA_2$ & $\times$ & $\checkmark$ & $\checkmark$ \\ \hline
$\rL_{5,6}\oplus\rA_2$ & $\times$ & $\checkmark$ & $\checkmark$ \\ \hline
$\rL_{6,1}\oplus\rA_1$ & $\times$ & $\checkmark$ & $\checkmark$ \\ 
\hline
$\rL_{6,2}\oplus\rA_1$ & $\times$ & $\checkmark$ & $\checkmark$ \\
\hline
$\rL_{6,3}\oplus\rA_1$ & $\times$ & $\checkmark$ & $\checkmark$ \\
\hline
$\rL_{6,4}\oplus\rA_1$ & $\times$ & $\checkmark$ & $\checkmark$ \\
\hline 
$\rL_{6,5}\oplus\rA_1$ & $\times$ & $\times$ & $\checkmark$ \\
\hline
$\rL_{6,6}\oplus\rA_1$ & $\times$ & $\checkmark$ & $\checkmark$ \\
\hline 
$\rL_{6,7}\oplus\rA_1$ & $\times$ & $\times$ & $\checkmark$ \\
\hline
$\rL_{6,8}^+\oplus\rA_1$ & $\times$ & $\checkmark$ & $\checkmark$ \\
\hline
$\rL_{6,8}^-\oplus\rA_1$ & $\times$ & $\checkmark$ & $\checkmark$ \\
\hline
$\rL_{6,9}\oplus\rA_1$ & $\times$ & $\checkmark$ & $\checkmark$ \\ 
\hline 
$\rL_{6,10}\oplus\rA_1$ & $\times$ & $\times$ & $\checkmark$ \\ 
\hline 
$\rL_{6,11}\oplus\rA_1$ & $\times$ & $\times$ & $\checkmark$ \\ 
\hline 
$\rL_{6,12}\oplus\rA_1$ & $\times$ & $\times$ & $\checkmark$ \\ 
\hline
$\rL_{6,13}\oplus\rA_1$ & $\times$ & $\checkmark$ & $\checkmark$ \\
\hline 
$\rL_{6,14}\oplus\rA_1$ & $\times$ & $\checkmark$ & $\checkmark$ \\ 
\hline 
$\rL_{6,15}\oplus\rA_1$ & $\times$ & $\checkmark$ & $\checkmark$ \\ 
\hline 
$\rL_{6,16}\oplus\rA_1$ & $\times$ & $\checkmark$ & $\checkmark$ \\ 
\hline
$\rL_{6,17}^+\oplus\rA_1$ & $\times$ & $\checkmark$ & $\checkmark$ \\ 
\hline
$\rL_{6,17}^-\oplus\rA_1$ & $\times$ & $\checkmark$ & $\checkmark$ \\ 
\hline
$\rL_{6,18}\oplus\rA_1$ & $\times$ & $\times$ & $\checkmark$ \\ 
\hline
$\rL_{6,19}\oplus\rA_1$ & $\times$ & $\times$ & $\checkmark$ \\ 
\hline
$\rL_{6,20}\oplus\rA_1$ & $\times$ & $\times$ & $\checkmark$ \\ 
\hline
$\rL_{6,21}\oplus\rA_1$ & $\times$ & $\checkmark$ & $\checkmark$ \\ 
\hline
$\rL_{6,22}\oplus\rA_1$ & $\times$ & $\times$ & $\checkmark$ \\ 
\hline
\end{tabu} }
\end{center}

\bibliographystyle{plain}
\bibliography{bibliography}

\end{document}